\DeclareRobustCommand{\greektext}{%
  \fontencoding{LGR}\selectfont\def\encodingdefault{LGR}}
\DeclareRobustCommand{\textgreek}[1]{\leavevmode{\greektext #1}}
\providecommand{\tabularnewline}{\\}
\DeclareRobustCommand{\lyxsout}[1]{\ifx\\#1\else\sout{#1}\fi}
\numberwithin{equation}{section}
\numberwithin{figure}{section}
\theoremstyle{plain}
\newtheorem{thm}{\protect\theoremname}
\theoremstyle{definition}
\newtheorem{defn}[thm]{\protect\definitionname}
\theoremstyle{plain}
\newtheorem{lem}[thm]{\protect\lemmaname}
\theoremstyle{definition}
\newtheorem{example}[thm]{\protect\examplename}
\theoremstyle{remark}
\newtheorem{rem}[thm]{\protect\remarkname}
\theoremstyle{plain}
\newtheorem{cor}[thm]{\protect\corollaryname}
\setlist[enumerate]{leftmargin=*,label=(\roman*),align=left}
\newcommand{\xyR}[1]{ \makeatletter
\xydef@\xymatrixrowsep@{#1} \makeatother} 
\newcommand{\xyC}[1]{ \makeatletter
\xydef@\xymatrixcolsep@{#1} \makeatother} 
\newcommand{\ra}{\longrightarrow}
\newcommand{\field}[1]{\mathbb{#1}}
\newcommand{\R}{\field{R}} 
\newcommand{\N}{\field{N}} 
\newcommand{\D}{\mathcal{D}}
\newcommand{\eps}{\varepsilon} 
\renewcommand{\phi}{\varphi}
\newcommand{\diff}[1]{\ifmmode\mathchoice{\hbox{\rm d}#1}  
 {\hbox{\rm d}#1}  
 {\scalebox{0.75}{$\hbox{\rm d}#1$}}  
 {\scalebox{0.35}{$\hbox{\rm d}#1$}}  
 \fi} 
\newcommand{\abs}[2][\empty]{\ifx#1\empty\left|#2\right|%
\else#1\vert #2 #1\vert\fi}
\newcommand{\cinfty}{\mathcal{C}^\infty}
\newcommand{\Coo}{\mbox{\ensuremath{\mathcal{C}}}^{\infty}} 
\newcommand{\Rtil}{\widetilde \R} 
\newcommand{\gs}{\mathcal{G}^s} 
\newcommand{\ns}{\mathcal{N}^s} 
\newcommand{\sint}[1]{\langle#1\rangle} 
\newcommand{\Eball}{B^{{\scriptscriptstyle \text{\rm E}}}} 
\newcommand{\csp}[1]{{\text{\rm c}}({#1})}
\newcommand{\fcmp}{\Subset_{\text{f}}}
\newcommand{\frontRise}[2]{\ifmmode\mathchoice{{\vphantom{#1}}^{\scalebox{0.6}{$#2$}}}  
 {{\vphantom{#1}}^{\scalebox{0.56}{$#2$}}}  
 {{\vphantom{#1}}^{\scalebox{0.47}{$#2$}}}  
 {{\vphantom{#1}}^{\scalebox{0.35}{$#2$}}}\fi} 
\newcommand{\RC}[1]{\frontRise{\R}{#1}\Rtil}
\newcommand{\rcrho}{\RC{\rho}}
\newcommand{\rti}{\RC{\rho}}
\newcommand{\gsf}{\frontRise{\mathcal{G}}{\rho}\mathcal{GC}^{\infty}}
\newcommand{\gckf}{\frontRise{\mathcal{G}}{\rho}\mathcal{GC}^{k}}
\newcommand{\gcf}[1]{\frontRise{\mathcal{G}}{\rho}\mathcal{GC}^{#1}}
\newcommand{\norm}[1]{\Vert{#1}\Vert}
\newcommand{\ptind}{\displaystyle \mathop {\ldots\ldots\,}} 
\renewcommand{\L}{\textrm L}
\newcommand{\gf}{\rcrho}
\newcommand{\zs}{\gsf}
\newcommand{\gfs}{\rcrho^d}
\newcommand{\fonction}[5]{\begin{array}[t]{lrcl}#1 :&#2 &\longrightarrow &#3\\&#4& \longmapsto &#5 \end{array}}
\providecommand{\corollaryname}{Corollary}
\providecommand{\definitionname}{Definition}
\providecommand{\examplename}{Example}
\providecommand{\lemmaname}{Lemma}
\providecommand{\remarkname}{Remark}
\providecommand{\theoremname}{Theorem}
\begin{document}

\title[Calculus of variations and optimal control for GF]{Calculus of variations and optimal control for generalized functions}
\author{Gast\~ao S.F. Frederico \and Paolo Giordano$^{*}$ \and Aleksandr A.
Bryzgalov \and Matheus J. Lazo}
\address{\textsc{Address: Federal University of Cear\`a, Brazil}}
\email{gastao.frederico@ua.pt}
\address{\textsc{Faculty of Mathematics, University of Vienna, Austria}}
\thanks{A.A. Bryzgalov has been supported by grants P34113 of the Austrian
Science Fund FWF}
\email{aleksandr.bryzgalov@univie.ac.at}
\thanks{M.J.~Lazo has been supported by CNPq and FAPERGS}
\address{\textsc{Federal University of Rio Grande, Brazil}}
\email{matheuslazo@furg.br}
\thanks{$^{*}$Corresponding author. P.~Giordano has been supported by grants
P30407, P34113, P33538 of the Austrian Science Fund FWF}
\address{\textsc{Faculty of Mathematics, University of Vienna, Austria}}
\email{paolo.giordano@univie.ac.at}
\subjclass[2020]{49-XX, 46Fxx, 46F30}
\keywords{Calculus of variations, optimal control, Schwartz distributions, generalized
functions for nonlinear analysis.}
\begin{abstract}
We present an extension of some results of higher order calculus of
variations and optimal control to generalized functions. The framework
is the category of generalized smooth functions, which includes Schwartz
distributions, while sharing many nonlinear properties with ordinary
smooth functions. We prove the higher order Euler-Lagrange equations,
the D'Alembert principle in differential form, the du Bois-Reymond
optimality condition and the Noether's theorem. We start the theory
of optimal control proving a weak form of the Pontryagin maximum principle
and the Noether's theorem for optimal control. We close with a study
of a singularly variable length pendulum, oscillations damped by two
media and the Pais\textendash Uhlenbeck oscillator with singular frequencies.
\end{abstract}

\maketitle
\tableofcontents{}

\section{Introduction: singular Lagrangian mechanics}

Leibniz's axiom \emph{Natura non facit saltus }seems to be inconsistent
not only at the atomic level, but also for several macroscopic phenomena.
Even the simple motion of an elastic bouncing ball seems to be more
easily modeled using non-differentiable functions than classical $\mathcal{C}^{2}$
ones, at least if we are not interested to model the non-trivial behavior
at the collision times. Clearly, we can always use smooth functions
to approximately model the forces acting on the ball, but this would
introduce additional parameters (see e.g.~\cite{RaRKTu07,TuRK98}
for smooth approximations of steep potentials in the billiard problem).
Therefore, the motivation to introduce a suitable kind of generalized
functions formalism in classical mechanics is clear: is it possible,
e.g., to generalize Lagrangian mechanics so that the Lagrangian can
be presented as a generalized function? This would undoubtedly be
of an applicable advantage, since many relevant systems are described
by singular Lagrangians: non-smooth constraints, collisions between
two or more bodies, motion in different or in granular media, discontinuous
propagation of rays of light, even turning on the switch of an electrical
circuit, to name but a few, and only in the framework of classical
physics. Indeed, this type of problems is widely studied (see e.g.~\cite{MoSa16,LiHuZh14,KKO:08,Bro99,Kunzle96,LeMoSj91,Tan93,Tuc93,Lim89,Gra30}),
but sometimes the presented solutions are not general or hold only
for special conditions and particular potentials.

From the purely mathematical point of view, an enlarged space of solutions
(with distributional derivatives) for variational problems leads to
the the so-called Lavrentiev phenomenon, see \cite{ChMi94,GrPr11}.
Variational problems with singular solutions (e.g.~non-differentiable
on a dense set, or with infinite derivatives at some points) are also
studied, see \cite{GrPr11,Syc11,CKOPW,Dav88}.

In this sense, the fact that since J.D.~Marsden's works \cite{Mar68,Mar68b,Mar69}
there has not been many further attempt to use Sobolev-Schwartz distributions
for the description of Hamiltonian mechanics, can be considered as
a clue that the classical distributional framework is not well suited
to face this problem in general terms (see also \cite{Par79} for
a similar approach with a greater focus on the generalization in geometry).
In \cite{Mar68,Mar68b,Mar69}, J.D.~Marsden introduces distributions
on manifolds based on flows. Since the traditional system of Hamilton's
equations breaks down, Marsden considers the flow as a limit of smooth
ones. On the other hand, Kunzinger et al.~in \cite{KuObStVi} (using
Colombeau generalized functions) critically analyses the regularization
approach put forward by Marsden and constructed a counter example
for the main flow theorems of \cite{Mar68}. In this field, a number
of problems remains open, see again \cite{Mar68}. For example, in
the non-smooth case the variational theorems fail, and the study of
the virial equation for hard spheres in a box is still an open question.
All this motivates the use of different spaces of nonlinear generalized
functions in mathematical physics and in singular calculus of variations,
see e.g.~\cite{BeLuSq20,Vic12,KKO:08,Col07,Giu03,MO01,GKOS}.

In this paper, we introduce an approach to variational problems involving
singularities that allows the extension of the classical theory with
very natural statements and proofs. We are interested in extremizing
functionals which are either distributional themselves or whose set
of extremals includes generalized functions. Clearly, distribution
theory, being a linear theory, has certain difficulties when nonlinear
problems are in play.

To overcome this type of problems, we are going to use the category
of \emph{generalized smooth functions} (GSF), see \cite{GiKuVe15,GiKu16,GIO,GiKu18,GIO1}.
This theory seems to be a good candidate, since it is an extension
of classical distribution theory which allows to model nonlinear singular
problems, while at the same time sharing many nonlinear properties
with ordinary smooth functions, like the closure with respect to composition
and several non trivial classical theorems of the calculus. One could
describe GSF as a methodological restoration of Cauchy-Dirac's original
conception of generalized function, see \cite{Dir26,Lau89,Kat-Tal12}.
In essence, the idea of Cauchy and Dirac (but also of Poisson, Kirchhoff,
Helmholtz, Kelvin and Heaviside) was to view generalized functions
as suitable types of smooth set-theoretical maps obtained from ordinary
smooth maps depending on suitable infinitesimal or infinite parameters.
For example, the density of a Cauchy-Lorentz distribution with an
infinitesimal scale parameter was used by Cauchy to obtain classical
properties which nowadays are attributed to the Dirac delta, cf.~\cite{Kat-Tal12}.

In the present work, the foundation of the calculus of variations
is set for functionals defined by arbitrary GSF. This in particular
applies to any Schwartz distribution and any Colombeau generalized
function (see e.g.~\cite{Col92,GKOS}).

The main aim of the paper is to start the higher-order calculus of
variations and the theory of optimal control for GSF. The structure
of the paper is as follows. We start with an introduction into the
setting of GSF and give basic notions concerning GSF and their calculus
that are needed for the calculus of variations (Sec.~\ref{sec:Basic-notions}).
After the basic definitions to set the problem in the framework of
GSF, we prove the higher order Euler-Lagrange equations, the D'Alembert
principle in differential form, the du Bois-Reymond optimality condition
and the Noether's theorem (Sec.~\ref{sec:MRHO}). In Sec.~\ref{sec:Optimal-control},
we start the theory of optimal control proving a weak form of the
Pontryagin maximum principle and the Noether's theorem, and in Sec.~\ref{sec:Examples-and-applications}
we close with a study of a singularly variable length pendulum, oscillations
damped by two media and the Pais\textendash Uhlenbeck oscillator with
singular frequencies.

The paper is self-contained, in the sense that it contains all the
statements required for the proofs of calculus of variations we are
going to present. If proofs of preliminaries are omitted, we clearly
give references to where they can be found. Therefore, to understand
this paper, only a basic knowledge of distribution theory is needed.

\section{\label{sec:Basic-notions}Basic notions}

\subsubsection*{The new ring of scalars}

In this work, $I$ denotes the interval $(0,1]\subseteq\R$ and we
will always use the variable $\eps$ for elements of $I$; we also
denote $\eps$-dependent nets $x\in\R^{I}$ simply by $(x_{\eps})$.
By $\N$ we denote the set of natural numbers, including zero.

We start by defining a new simple non-Archimedean ring of scalars
that extends the real field $\R$. The entire theory is constructive
to a high degree, e.g.~neither ultrafilter nor non-standard method
are used. For all the proofs of results in this section, see \cite{GiKu18,GiKuVe15,GIO1,GiKu16}.
\begin{defn}
\label{def:RCGN}Let $\rho=(\rho_{\eps})\in(0,1]^{I}$ be a net such
that $(\rho_{\eps})\to0$ as $\eps\to0^{+}$ (in the following, such
a net will be called a \emph{gauge}), then
\begin{enumerate}
\item $\mathcal{I}(\rho):=\left\{ (\rho_{\eps}^{-a})\mid a\in\R_{>0}\right\} $
is called the \emph{asymptotic gauge} generated by $\rho$.
\item If $\mathcal{P}(\eps)$ is a property of $\eps\in I$, we use the
notation $\forall^{0}\eps:\,\mathcal{P}(\eps)$ to denote $\exists\eps_{0}\in I\,\forall\eps\in(0,\eps_{0}]:\,\mathcal{P}(\eps)$.
We can read $\forall^{0}\eps$ as \emph{for $\eps$ small}.
\item We say that a net $(x_{\eps})\in\R^{I}$ \emph{is $\rho$-moderate},
and we write $(x_{\eps})\in\R_{\rho}$ if 
\[
\exists(J_{\eps})\in\mathcal{I}(\rho):\ x_{\eps}=O(J_{\eps})\text{ as }\eps\to0^{+},
\]
i.e., if 
\[
\exists N\in\N\,\forall^{0}\eps:\ |x_{\eps}|\le\rho_{\eps}^{-N}.
\]
\item Let $(x_{\eps})$, $(y_{\eps})\in\R^{I}$, then we say that $(x_{\eps})\sim_{\rho}(y_{\eps})$
if 
\[
\forall(J_{\eps})\in\mathcal{I}(\rho):\ x_{\eps}=y_{\eps}+O(J_{\eps}^{-1})\text{ as }\eps\to0^{+},
\]
that is if 
\begin{equation}
\forall n\in\N\,\forall^{0}\eps:\ |x_{\eps}-y_{\eps}|\le\rho_{\eps}^{n}.\label{eq:negligible}
\end{equation}
This is a congruence relation on the ring $\R_{\rho}$ of moderate
nets with respect to pointwise operations, and we can hence define
\[
\RC{\rho}:=\R_{\rho}/\sim_{\rho},
\]
which we call \emph{Robinson-Colombeau ring of generalized numbers}.
This name is justified by \cite{Rob73,C1}: Indeed, in \cite{Rob73}
A.~Robinson introduced the notion of moderate and negligible nets
depending on an arbitrary fixed infinitesimal $\rho$ (in the framework
of nonstandard analysis); independently, J.F.~Colombeau, cf.~e.g.~\cite{C1}
and references therein, studied the same concepts without using nonstandard
analysis, but considering only the particular gauge $\rho_{\eps}=\eps$.
\end{enumerate}
\end{defn}

We can also define an order relation on $\RC{\rho}$ by saying that
$[x_{\eps}]\le[y_{\eps}]$ if there exists $(z_{\eps})\in\R^{I}$
such that $(z_{\eps})\sim_{\rho}0$ (we then say that $(z_{\eps})$
is \emph{$\rho$-negligible}) and $x_{\eps}\le y_{\eps}+z_{\eps}$
for $\eps$ small. Equivalently, we have that $x\le y$ if and only
if there exist representatives $[x_{\eps}]=x$ and $[y_{\eps}]=y$
such that $x_{\eps}\le y_{\eps}$ for all $\eps$. Although the order
$\le$ is not total, we still have the possibility to define the infimum
$[x_{\eps}]\wedge[y_{\eps}]:=[\min(x_{\eps},y_{\eps})]$, the supremum
$[x_{\eps}]\vee[y_{\eps}]:=\left[\max(x_{\eps},y_{\eps})\right]$
of a finite number of generalized numbers. Henceforth, we will also
use the customary notation $\RC{\rho}^{*}$ for the set of invertible
generalized numbers, and we write $x<y$ to say that $x\le y$ and
$x-y\in\rcrho^{*}$. Our notations for intervals are: $[a,b]:=\{x\in\RC{\rho}\mid a\le x\le b\}$,
$[a,b]_{\R}:=[a,b]\cap\R$, and analogously for segments $[x,y]:=\left\{ x+r\cdot(y-x)\mid r\in[0,1]\right\} \subseteq\RC{\rho}^{n}$
and $[x,y]_{\R^{n}}=[x,y]\cap\R^{n}$.

As in every non-Archimedean ring, we have the following
\begin{defn}
\label{def:nonArchNumbs}Let $x\in\RC{\rho}$ be a generalized number,
then
\begin{enumerate}
\item $x$ is \emph{infinitesimal} if $|x|\le r$ for all $r\in\R_{>0}$.
If $x=[x_{\eps}]$, this is equivalent to $\lim_{\eps\to0^{+}}x_{\eps}=0$.
We write $x\approx y$ if $x-y$ is infinitesimal.
\item $x$ is \emph{infinite} if $|x|\ge r$ for all $r\in\R_{>0}$. If
$x=[x_{\eps}]$, this is equivalent to $\lim_{\eps\to0^{+}}\left|x_{\eps}\right|=+\infty$.
\item $x$ is \emph{finite} if $|x|\le r$ for some $r\in\R_{>0}$.
\end{enumerate}
\end{defn}

\noindent For example, setting $\diff{\rho}:=[\rho_{\eps}]\in\RC{\rho}$,
we have that $\diff{\rho}^{n}\in\RC{\rho}$, $n\in\N_{>0}$, is an
invertible infinitesimal, whose reciprocal is $\diff{\rho}^{-n}=[\rho_{\eps}^{-n}]$,
which is necessarily a positive infinite number. Of course, in the
ring $\RC{\rho}$ there exist generalized numbers which are not in
any of the three classes of Def.~\ref{def:nonArchNumbs}, like e.g.~$x_{\eps}=\frac{1}{\eps}\sin\left(\frac{1}{\eps}\right)$.

The following result is useful to deal with positive and invertible
generalized numbers. For its proof, see e.g.~\cite{GKOS}.
\begin{lem}
\label{lem:mayer} Let $x\in\RC{\rho}$. Then the following are equivalent:
\begin{enumerate}
\item \label{enu:positiveInvertible}$x$ is invertible and $x\ge0$, i.e.~$x>0$.
\item \label{enu:strictlyPositive}For each representative $(x_{\eps})\in\R_{\rho}$
of $x$ we have $\forall^{0}\eps:\ x_{\eps}>0$.
\item \label{enu:greater-i_epsTom}For each representative $(x_{\eps})\in\R_{\rho}$
of $x$ we have $\exists m\in\N\,\forall^{0}\eps:\ x_{\eps}>\rho_{\eps}^{m}$.
\item \label{enu:There-exists-a}There exists a representative $(x_{\eps})\in\R_{\rho}$
of $x$ such that $\exists m\in\N\,\forall^{0}\eps:\ x_{\eps}>\rho_{\eps}^{m}$.
\end{enumerate}
\end{lem}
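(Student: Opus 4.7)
The plan is to prove the equivalences by running the cycle $(iii)\Rightarrow(iv)\Rightarrow(i)\Rightarrow(iii)$ and then hooking $(ii)$ into it by the trivial $(iii)\Rightarrow(ii)$ (since $\rho_{\eps}^{m}>0$) together with its near-converse $(ii)\Rightarrow(iv)$. Among the cycle arrows, $(iii)\Rightarrow(iv)$ is immediate, and $(iv)\Rightarrow(i)$ follows at once from the second characterization of $\le$ (after zeroing the given representative out on a neighborhood of $1$, which is a $\rho$-negligible modification) together with the observation that the reciprocal net is bounded by $\rho_{\eps}^{-m}$, hence $\rho$-moderate, so it represents an inverse of $x$ in $\RC{\rho}$.

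The core of the work is $(i)\Rightarrow(iii)$. Given an arbitrary representative $(x_{\eps})$ of $x$, I would fix an inverse $y=[y_{\eps}]$ and write $x_{\eps}y_{\eps}=1+z_{\eps}$ with $(z_{\eps})\sim_{\rho}0$ and $|y_{\eps}|\le\rho_{\eps}^{-N}$ for some $N\in\N$ on small $\eps$; this forces
\[
|x_{\eps}|\ \ge\ \frac{1-|z_{\eps}|}{|y_{\eps}|}\ \ge\ \tfrac{1}{2}\rho_{\eps}^{N}\qquad(\eps\text{ small}).
\]
Separately, $x\ge 0$ provides a second representative $(x^{0}_{\eps})$ of $x$ that is pointwise nonnegative, and $(x_{\eps}-x^{0}_{\eps})\sim_{\rho}0$ yields $x_{\eps}\ge -\rho_{\eps}^{n}$ for every $n\in\N$ on small $\eps$. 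Combining these two bounds rules out the branch $x_{\eps}\le -\tfrac{1}{2}\rho_{\eps}^{N}$, so $x_{\eps}\ge\tfrac{1}{2}\rho_{\eps}^{N}>\rho_{\eps}^{N+1}$ on small $\eps$, which is precisely $(iii)$.

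The remaining step $(ii)\Rightarrow(iv)$ is handled by contraposition. If $(iv)$ fails, then every representative of $x$ admits, for every $m$, arbitrarily small $\eps$ with $x_{\eps}\le\rho_{\eps}^{m}$; a diagonal extraction from a fixed representative produces a sequence $\eps_{k}\searrow 0$ with $x_{\eps_{k}}\le\rho_{\eps_{k}}^{k}$. If $x_{\eps_{k}}\le 0$ on a cofinal subsequence, the representative itself refutes $(ii)$; otherwise, after thinning, $0<x_{\eps_{k}}\le\rho_{\eps_{k}}^{k}$, and I would subtract the sparsely supported perturbation $z_{\eps}:=2x_{\eps}\,\mathbf{1}_{\{\eps_{k}\}}$, which is $\rho$-negligible because $|z_{\eps}|\le 2\rho_{\eps}^{n}$ for $\eps\le\eps_{n}$, to obtain a new representative of $x$ whose value at $\eps_{k}$ is $-x_{\eps_{k}}<0$, again contradicting $(ii)$.

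The main obstacle is the combined use of invertibility and positivity in $(i)\Rightarrow(iii)$: invertibility alone only delivers a two-sided bound $|x_{\eps}|\ge\tfrac{1}{2}\rho_{\eps}^{N}$, and one must carefully exploit the fact that $x\ge 0$ entails pointwise nonnegativity only up to a $\rho$-negligible perturbation in order to exclude the negative branch and upgrade to the one-sided lower bound demanded by $(iii)$. A minor secondary subtlety is the sparse-support construction in $(ii)\Rightarrow(iv)$, which is what makes $(ii)$ a genuine characterization rather than a weaker consequence of positivity.
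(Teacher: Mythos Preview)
Your argument is correct in all essentials. The paper itself does not prove this lemma; it simply refers the reader to \cite{GKOS}, so there is no in-paper proof to compare against. Your cycle $(iii)\Rightarrow(iv)\Rightarrow(i)\Rightarrow(iii)$ together with $(iii)\Rightarrow(ii)\Rightarrow(iv)$ is the standard route, and each step is sound. The core implication $(i)\Rightarrow(iii)$ is handled exactly as it should be: moderateness of the inverse gives the two-sided bound $|x_{\eps}|\ge\tfrac{1}{2}\rho_{\eps}^{N}$, and the $\rho$-negligible gap between $(x_{\eps})$ and a nonnegative representative kills the negative branch.

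One small wording issue in $(iv)\Rightarrow(i)$: ``zeroing the given representative out on a neighborhood of $1$'' literally produces values $x_{\eps}=0$ there, which is fine for establishing $x\ge0$ via the pointwise characterization of $\le$, but the reciprocal net is then undefined on that neighborhood. You clearly intend to define the candidate inverse $(y_{\eps})$ only for small $\eps$ (where $x_{\eps}>\rho_{\eps}^{m}$) and arbitrarily elsewhere; since equivalence classes depend only on behavior for small $\eps$, this is harmless, but it would be cleaner to say so explicitly rather than speak of ``the reciprocal net'' of a representative that vanishes somewhere.
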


\subsubsection*{Topologies on $\RC{\rho}^{n}$}

On the $\RC{\rho}$-module $\RC{\rho}^{n}$ we can consider the natural
extension of the Euclidean norm, i.e.~$|[x_{\eps}]|:=[|x_{\eps}|]\in\RC{\rho}$,
where $[x_{\eps}]\in\RC{\rho}^{n}$. Even if this generalized norm
takes values in $\RC{\rho}$, it shares some essential properties
with classical norms: 
\begin{align*}
 & |x|=x\vee(-x)\\
 & |x|\ge0\\
 & |x|=0\Rightarrow x=0\\
 & |y\cdot x|=|y|\cdot|x|\\
 & |x+y|\le|x|+|y|\\
 & ||x|-|y||\le|x-y|.
\end{align*}
It is therefore natural to consider on $\RC{\rho}^{n}$ a topology
generated by balls defined by this generalized norm and the set of
radii $\RC{\rho}_{>0}$ of positive invertible numbers:
\begin{defn}
\label{def:setOfRadii}Let $c\in\RC{\rho}^{n}$ and $x$, $y\in\RC{\rho}$,
then:
\begin{enumerate}
\item $B_{r}(c):=\left\{ x\in\RC{\rho}^{n}\mid\left|x-c\right|<r\right\} $
for each $r\in\rcrho_{>0}$.
\item $\Eball_{r}(c):=\{x\in\R^{n}\mid|x-c|<r\}$, for each $r\in\R_{>0}$,
denotes an ordinary Euclidean ball in $\R^{n}$ if $c\in\R^{n}$.
\end{enumerate}
\end{defn}

\noindent The relation $<$ has better topological properties as compared
to the usual strict order relation $a\le b$ and $a\ne b$ (that we
will \emph{never} use) because the set of balls $\left\{ B_{r}(c)\mid r\in\rcrho_{>0},\ c\in\RC{\rho}^{n}\right\} $
is a base for a sequentially Cauchy complete topology on $\RC{\rho}^{n}$
called \emph{sharp topology}. We will call \emph{sharply open set}
any open set in the sharp topology. The existence of infinitesimal
neighborhoods (e.g.~$r=\diff{\rho}$) implies that the sharp topology
induces the discrete topology on $\R$. This is a necessary result
when one has to deal with continuous generalized functions which have
infinite derivatives. In fact, if $f'(x_{0})$ is infinite, we have
$f(x)\approx f(x_{0})$ only for $x\approx x_{0}$ , see \cite{GiKu18}.
Also open intervals are defined using the relation $<$, i.e.~$(a,b):=\{x\in\RC{\rho}\mid a<x<b\}$.
Note that by Lem.~\ref{lem:mayer}.\ref{enu:greater-i_epsTom}, for
all $r\in\rti_{>0}$ there exists $m\in\N$ such that $r\ge\diff{\rho}^{m}$.
Therefore, also the set of balls $\left\{ B_{\diff{\rho}^{m}}(c)\mid m\in\N,\ c\in\RC{\rho}^{n}\right\} $
is a base for the sharp topology.

We will also need the following result. 
\begin{lem}
\label{lem:approxOfBoundaryPointsWithInterior}Let $a$, $b\in\rcrho$
such that $a<b$, then the interior $\text{\emph{int}}\left([a,b]\right)$
in the sharp topology is dense in $[a,b]$. 
\end{lem}

The reader can feel uneasy in considering a ring of scalars such as
$\rti$ instead of a field. On the other hand, as mentioned above,
we will see that all our generalized functions are continuous in the
sharp topology. Therefore, the following result is partly reassuring:
\begin{lem}
\label{lem:invDense}Invertible elements of $\rti$ are dense in the
sharp topology, i.e.
\[
\forall h\in\rti\,\forall\delta\in\rti_{>0}\,\exists k\in(h-\delta,h+\delta):\ k\text{ is invertible}.
\]
\end{lem}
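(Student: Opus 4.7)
The idea is to perturb any representative of $h$ by a signed multiple of $\rho_\eps^{m+1}$, chosen to push each component away from $0$, where $m$ comes from the positive invertibility of $\delta$. This yields a net with a uniform lower bound $\rho_\eps^{m+1}$ in absolute value---hence invertible via Lemma~\ref{lem:mayer}(\ref{enu:There-exists-a})---while keeping the perturbation safely inside the $\delta$-ball around $h$.

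First I would apply Lemma~\ref{lem:mayer} to the positive invertible $\delta$: fix a representative $(\delta_\eps)$ and some $m\in\N$ with $\delta_\eps>\rho_\eps^{m}$ for $\eps$ small. Fix any representative $(h_\eps)$ of $h$, set $\sigma_\eps:=1$ if $h_\eps\ge 0$ and $\sigma_\eps:=-1$ otherwise, and define
\[
k_\eps\ :=\ h_\eps+\sigma_\eps\,\rho_\eps^{m+1}.
\]
Then $|k_\eps|\ge\rho_\eps^{m+1}$ for every $\eps$. Since $(h_\eps)\in\R_\rho$ and the perturbation is bounded, $(k_\eps)$ is $\rho$-moderate, so $k:=[k_\eps]\in\rti$, and Lemma~\ref{lem:mayer}(\ref{enu:There-exists-a}) (with witness $m+1$) shows that $k$ is invertible.

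It remains to verify $|k-h|<\delta$, i.e.~that $\delta-|k-h|$ is positive and invertible. By construction $|k_\eps-h_\eps|=\rho_\eps^{m+1}$, so
\[
\delta_\eps-|k_\eps-h_\eps|\ >\ \rho_\eps^{m}-\rho_\eps^{m+1}\ =\ \rho_\eps^{m}(1-\rho_\eps)\ >\ \rho_\eps^{m+1}
\]
whenever $\rho_\eps<\tfrac{1}{2}$, which holds for $\eps$ small. A final application of Lemma~\ref{lem:mayer}(\ref{enu:There-exists-a}) yields $\delta-|k-h|>0$, so $k\in(h-\delta,h+\delta)$.

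The only subtle step is the sign of the perturbation: the naive additive choice $k_\eps:=h_\eps+\rho_\eps^{m+1}$ could vanish along an $\eps$-sequence on which $h_\eps=-\rho_\eps^{m+1}$, destroying the uniform lower bound needed for invertibility. Symmetrizing so that the perturbation always moves $h_\eps$ further from $0$ removes this obstruction, and moderateness together with the $\delta$-estimate are then routine consequences of the characterization of strict positivity in Lemma~\ref{lem:mayer}.
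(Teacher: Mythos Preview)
The paper does not actually include a proof of Lemma~\ref{lem:invDense}; it is stated among the preliminaries with the blanket remark that proofs can be found in the cited references. There is therefore nothing in the paper to compare your argument against directly.

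Your argument is correct and is essentially the standard one. The only point worth tightening is the appeal to Lemma~\ref{lem:mayer}(\ref{enu:There-exists-a}) to conclude that $k$ is invertible: as stated, that lemma characterises \emph{positive} invertibility ($x>0$), not bare invertibility. What your estimate $|k_\eps|\ge\rho_\eps^{m+1}$ gives, via Lemma~\ref{lem:mayer}(\ref{enu:There-exists-a}) applied to $|k|=[|k_\eps|]$, is $|k|>0$; from this, invertibility of $k$ follows because $(k_\eps^{-1})$ is then $\rho$-moderate and $[k_\eps^{-1}]\cdot k=1$. This is routine and well known in the Colombeau/Robinson--Colombeau setting, so the gap is purely one of citation, not of substance. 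The sign choice $\sigma_\eps$ is exactly the right device here, and your final estimate $\delta_\eps-|k_\eps-h_\eps|>\rho_\eps^{m+1}$ cleanly yields $k\in(h-\delta,h+\delta)$.
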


\subsection{Open, closed and bounded sets generated by nets}

A natural way to obtain sharply open, closed and bounded sets in $\RC{\rho}^{n}$
is by using a net $(A_{\eps})$ of subsets $A_{\eps}\subseteq\R^{n}$.
We have two ways of extending the membership relation $x_{\eps}\in A_{\eps}$
to generalized points $[x_{\eps}]\in\RC{\rho}^{n}$ (cf.\ \cite{ObVe08,GiKuVe15}):
\begin{defn}
\label{def:internalStronglyInternal}Let $(A_{\eps})$ be a net of
subsets of $\R^{n}$, then
\begin{enumerate}
\item $[A_{\eps}]:=\left\{ [x_{\eps}]\in\RC{\rho}^{n}\mid\forall^{0}\eps:\,x_{\eps}\in A_{\eps}\right\} $
is called the \emph{internal set} generated by the net $(A_{\eps})$.
\item Let $(x_{\eps})$ be a net of points of $\R^{n}$, then we say that
$x_{\eps}\in_{\eps}A_{\eps}$, and we read it as $(x_{\eps})$ \emph{strongly
belongs to $(A_{\eps})$}, if
\begin{enumerate}
\item $\forall^{0}\eps:\ x_{\eps}\in A_{\eps}$.
\item If $(x'_{\eps})\sim_{\rho}(x_{\eps})$, then also $x'_{\eps}\in A_{\eps}$
for $\eps$ small.
\end{enumerate}
\noindent Moreover, we set $\sint{A_{\eps}}:=\left\{ [x_{\eps}]\in\RC{\rho}^{n}\mid x_{\eps}\in_{\eps}A_{\eps}\right\} $,
and we call it the \emph{strongly internal set} generated by the net
$(A_{\eps})$.
\item We say that the internal set $K=[A_{\eps}]$ is \emph{sharply bounded}
if there exists $M\in\RC{\rho}_{>0}$ such that $K\subseteq B_{M}(0)$.
\item Finally, we say that the $(A_{\eps})$ is a \emph{sharply bounded
net} if there exists $N\in\R_{>0}$ such that $\forall^{0}\eps\,\forall x\in A_{\eps}:\ |x|\le\rho_{\eps}^{-N}$.
\end{enumerate}
\end{defn}

\noindent Therefore, $x\in[A_{\eps}]$ if there exists a representative
$[x_{\eps}]=x$ such that $x_{\eps}\in A_{\eps}$ for $\eps$ small,
whereas this membership is independent from the chosen representative
in case of strongly internal sets. An internal set generated by a
constant net $A_{\eps}=A\subseteq\R^{n}$ will simply be denoted by
$[A]$.

The following theorem (cf.~\cite{ObVe08,GiKuVe15,GIO1}) shows that
internal and strongly internal sets have dual topological properties:
\begin{thm}
\noindent \label{thm:strongMembershipAndDistanceComplement}For $\eps\in I$,
let $A_{\eps}\subseteq\R^{n}$ and let $x_{\eps}\in\R^{n}$. Then
we have
\begin{enumerate}
\item \label{enu:internalSetsDistance}$[x_{\eps}]\in[A_{\eps}]$ if and
only if $\forall q\in\R_{>0}\,\forall^{0}\eps:\ d(x_{\eps},A_{\eps})\le\rho_{\eps}^{q}$.
Therefore $[x_{\eps}]\in[A_{\eps}]$ if and only if $[d(x_{\eps},A_{\eps})]=0\in\RC{\rho}$.
\item \label{enu:stronglyIntSetsDistance}$[x_{\eps}]\in\sint{A_{\eps}}$
if and only if $\exists q\in\R_{>0}\,\forall^{0}\eps:\ d(x_{\eps},A_{\eps}^{\text{c}})>\rho_{\eps}^{q}$,
where $A_{\eps}^{\text{c}}:=\R^{n}\setminus A_{\eps}$. Therefore,
if $(d(x_{\eps},A_{\eps}^{\text{c}}))\in\R_{\rho}$, then $[x_{\eps}]\in\sint{A_{\eps}}$
if and only if $[d(x_{\eps},A_{\eps}^{\text{c}})]>0$.
\item \label{enu:internalAreClosed}$[A_{\eps}]$ is sharply closed.
\item \label{enu:stronglyIntAreOpen}$\sint{A_{\eps}}$ is sharply open.
\item \label{enu:internalGeneratedByClosed}$[A_{\eps}]=\left[\text{\emph{cl}}\left(A_{\eps}\right)\right]$,
where $\text{\emph{cl}}\left(S\right)$ is the closure of $S\subseteq\R^{n}$.
\item \label{enu:stronglyIntGeneratedByOpen}$\sint{A_{\eps}}=\sint{\text{\emph{int}\ensuremath{\left(A_{\eps}\right)}}}$,
where $\emph{int}\left(S\right)$ is the interior of $S\subseteq\R^{n}$.
\end{enumerate}
\end{thm}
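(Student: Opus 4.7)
The plan is to establish the distance characterizations (i) and (ii) directly from the definitions of $\sim_\rho$, internal, and strongly internal sets, and then deduce the four topological statements (iii)--(vi) as formal consequences.

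For (i), the forward implication is immediate: if $[x_\eps]\in[A_\eps]$, there is a representative $(x'_\eps)\sim_\rho(x_\eps)$ with $x'_\eps\in A_\eps$ for $\eps$ small, hence $d(x_\eps,A_\eps)\le|x_\eps-x'_\eps|$ is $\rho$-negligible, giving the bound for every $q$. Conversely, given $\forall q\in\R_{>0}\,\forall^{0}\eps:\,d(x_\eps,A_\eps)\le\rho_\eps^q$, a standard diagonal selection across a decreasing sequence $\eps_n\to 0$ yields $x'_\eps\in A_\eps$ with $|x_\eps-x'_\eps|\le 2\rho_\eps^n$ for $\eps\in(\eps_{n+1},\eps_n]$, so that $(x'_\eps)$ is moderate (inherited from $(x_\eps)$) and $\sim_\rho(x_\eps)$. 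The ``therefore'' clause is then automatic: the bound forces $(d(x_\eps,A_\eps))\in\R_\rho$ and $\rho$-negligible, so its class equals $0\in\rcrho$.

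For (ii), the direction $\Leftarrow$ is a direct application of the triangle inequality: if $d(x_\eps,A_\eps^c)>\rho_\eps^q$ eventually and $(x'_\eps)\sim_\rho(x_\eps)$, then choosing $n=q+1$ in the negligibility estimate yields $|x'_\eps-x_\eps|\le\rho_\eps^{q+1}<d(x_\eps,A_\eps^c)$ for $\eps$ small, hence $x'_\eps\in A_\eps$. The direction $\Rightarrow$ is a contradiction-plus-diagonal argument: if no such $q$ exists, then for each $n\in\N$ one finds $\eps_n\to 0$ and $z_n\in A_{\eps_n}^c$ with $|z_n-x_{\eps_n}|\le 2\rho_{\eps_n}^n$; replacing $x_{\eps_n}$ by $z_n$ (and keeping $x_\eps$ elsewhere) produces a representative of $x$ lying outside $A_\eps$ infinitely often, contradicting $x_\eps\in_\eps A_\eps$. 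The equivalence $[d(x_\eps,A_\eps^c)]>0$ under moderation of the distance net then follows from Lem.~\ref{lem:mayer}.

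From (i) and (ii) the topological statements fall out. For (iii), if $x\notin[A_\eps]$, negation of (i) supplies $q\in\R_{>0}$ and arbitrarily small $\eps_k$ with $d(x_{\eps_k},A_{\eps_k})>\rho_{\eps_k}^q$; then for any putative $y\in B_{\diff{\rho}^{q+1}}(x)\cap[A_\eps]$, the triangle inequality applied to an in-$A_\eps$ representative of $y$ forces $d(x_{\eps_k},A_{\eps_k})\le 3\rho_{\eps_k}^{q+1}$ at those same $\eps_k$, contradicting $\rho_{\eps_k}\to 0$. An entirely parallel argument using (ii) shows $B_{\diff{\rho}^{q+1}}(x)\subseteq\sint{A_\eps}$, which is (iv). Finally (v) and (vi) reduce to (i) and (ii) via the elementary identities $d(x_\eps,A_\eps)=d(x_\eps,\text{cl}(A_\eps))$, $(\text{int}(A_\eps))^c=\text{cl}(A_\eps^c)$, and $d(x_\eps,S)=d(x_\eps,\text{cl}(S))$ for arbitrary $S\subseteq\R^n$.

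The main obstacle is the converse of (i) and the $\Rightarrow$ direction of (ii): both require fabricating a representative by a diagonal/selection procedure across all $\eps$, and one must verify that the resulting net is moderate, $\sim_\rho$-equivalent to the original, and has the desired membership/non-membership behavior. The remainder is routine bookkeeping with the triangle inequality inside $\rcrho$.
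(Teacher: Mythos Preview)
The paper does not itself prove this theorem: it is stated in the preliminaries with the blanket remark that ``for all the proofs of results in this section, see \cite{GiKu18,GiKuVe15,GIO1,GiKu16}''. Your argument is correct and is essentially the standard proof given in those references (in particular \cite{GiKuVe15,GIO1}): the distance characterizations (i)--(ii) are obtained by a diagonal selection producing the required representative, and (iii)--(vi) are then routine consequences via the triangle inequality and the elementary identities $d(\cdot,A)=d(\cdot,\text{cl}(A))$ and $(\text{int}\,A)^{c}=\text{cl}(A^{c})$.

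One small point worth making explicit in the write-up: in the $\Leftarrow$ direction of (ii), you should also record that $d(x_\eps,A_\eps^{c})>\rho_\eps^{q}>0$ already forces $x_\eps\in A_\eps$ for $\eps$ small (the first clause of $x_\eps\in_\eps A_\eps$), not only the stability under $\sim_\rho$; and in (iii), the contradiction $\rho_{\eps_k}^{q}<3\rho_{\eps_k}^{q+1}$ needs $\rho_{\eps_k}<1/3$, which of course holds for $k$ large since $\rho_\eps\to 0$. These are cosmetic, not gaps.
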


\noindent For example, it is not hard to show that the closure in
the sharp topology of a ball of center $c=[c_{\eps}]$ and radius
$r=[r_{\eps}]>0$ is 
\begin{equation}
\overline{B_{r}(c)}=\left\{ x\in\rti^{d}\mid\left|x-c\right|\le r\right\} =\left[\overline{\Eball_{r_{\eps}}(c_{\eps})}\right],\label{eq:closureBall}
\end{equation}
whereas
\[
B_{r}(c)=\left\{ x\in\rti^{d}\mid\left|x-c\right|<r\right\} =\sint{\Eball_{r_{\eps}}(c_{\eps})}.
\]

\subsection{Generalized smooth functions and their calculus}

Using the ring $\rti$, it is easy to consider a Gaussian with an
infinitesimal standard deviation. If we denote this probability density
by $f(x,\sigma)$, and if we set $\sigma=[\sigma_{\eps}]\in\RC{\rho}_{>0}$,
where $\sigma\approx0$, we obtain the net of smooth functions $(f(-,\sigma_{\eps}))_{\eps\in I}$.
This is the basic idea we are going to develop in the following
\begin{defn}
\label{def:netDefMap}Let $X\subseteq\RC{\rho}^{n}$ and $Y\subseteq\RC{\rho}^{d}$
be arbitrary subsets of generalized points. Then we say that 
\[
f:X\longrightarrow Y\text{ is a \emph{generalized smooth function}}
\]
if there exists a net $f_{\eps}\in\cinfty(\Omega_{\eps},\R^{d})$
defining $f$ in the sense that
\begin{enumerate}
\item $X\subseteq\langle\Omega_{\eps}\rangle$,
\item $f([x_{\eps}])=[f_{\eps}(x_{\eps})]\in Y$ for all $x=[x_{\eps}]\in X$,
\item $(\partial^{\alpha}f_{\eps}(x_{\eps}))\in\R_{{\scriptscriptstyle \rho}}^{d}$
for all $x=[x_{\eps}]\in X$ and all $\alpha\in\N^{n}$.
\end{enumerate}
The space of generalized smooth functions (GSF) from $X$ to $Y$
is denoted by $\gsf(X,Y)$.
\end{defn}

Let us note explicitly that this definition states minimal logical
conditions to obtain a set-theoretical map from $X$ into $Y$ and
defined by a net of smooth functions of which we can take arbitrary
derivatives still remaining in the space of $\rho$-moderate nets.
In particular, the following Thm.~\ref{thm:propGSF} states that
the equality $f([x_{\eps}])=[f_{\eps}(x_{\eps})]$ is meaningful,
i.e.~that we have independence from the representatives for all derivatives
$[x_{\eps}]\in X\mapsto[\partial^{\alpha}f_{\eps}(x_{\eps})]\in\RC{\rho}^{d}$,
$\alpha\in\N^{n}$.
\begin{thm}
\label{thm:propGSF}Let $X\subseteq\RC{\rho}^{n}$ and $Y\subseteq\RC{\rho}^{d}$
be arbitrary subsets of generalized points. Let $f_{\eps}\in\cinfty(\Omega_{\eps},\R^{d})$
be a net of smooth functions that defines a generalized smooth map
of the type $X\longrightarrow Y$, then
\begin{enumerate}
\item \label{enu:wellDef}$\forall\alpha\in\N^{n}\,\forall(x_{\eps}),(x'_{\eps})\in\R_{\rho}^{n}:\ [x_{\eps}]=[x'_{\eps}]\in X\ \Rightarrow\ [\partial^{\alpha}f_{\eps}(x_{\eps})]=[\partial^{\alpha}f_{\eps}(x'_{\eps})]$.
\item \label{enu:GSF-cont}Each $f\in\gsf(X,Y)$ is continuous with respect
to the sharp topologies induced on $X$, $Y$.
\item \label{enu:globallyDefNet}$f:X\longrightarrow Y$ is a GSF if and
only if there exists a net $v_{\eps}\in\cinfty(\R^{n},\R^{d})$ defining
a generalized smooth map of type $X\longrightarrow Y$ such that $f=[v_{\eps}(-)]|_{X}$.
\item \label{enu:category}GSF are closed with respect to composition, i.e.~subsets
$S\subseteq\RC{\rho}^{s}$ with the trace of the sharp topology, and
GSF as arrows form a subcategory of the category of topological spaces.
We will call this category $\gsf$, the \emph{category of GSF}. Therefore,
with pointwise sum and product, any space $\gsf(X,\rcrho)$ is an
algebra.
\end{enumerate}
\end{thm}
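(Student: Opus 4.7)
The plan is to handle the four items roughly in the order (i) $\Rightarrow$ (ii) $\Rightarrow$ (iii) $\Rightarrow$ (iv), since each later part tends to build on the earlier ones.

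For (i), the fundamental tool is Taylor's formula together with the moderateness hypothesis~(3) of Def.~\ref{def:netDefMap}. Given $[x_\eps]=[x'_\eps]\in X$, set $h_\eps:=x'_\eps-x_\eps$, so that $(h_\eps)\sim_\rho 0$, i.e.\ $|h_\eps|\le\rho_\eps^n$ for every $n$ and $\eps$ small. Since $X\subseteq\langle\Omega_\eps\rangle$, Thm.~\ref{thm:strongMembershipAndDistanceComplement}.\ref{enu:stronglyIntSetsDistance} gives some fixed $q$ with $d(x_\eps,\Omega_\eps^c)>\rho_\eps^q$ for small $\eps$; then for $n>q$ the whole segment $[x_\eps,x'_\eps]$ lies inside $\Omega_\eps$, and we may Taylor-expand $\partial^\alpha f_\eps$ at $x_\eps$ to any order $N$:
\[
\partial^\alpha f_\eps(x'_\eps)-\partial^\alpha f_\eps(x_\eps)=\sum_{1\le|\beta|<N}\tfrac{1}{\beta!}\partial^{\alpha+\beta}f_\eps(x_\eps)\,h_\eps^\beta+R_{N,\eps}.
\]
The coefficients $\partial^{\alpha+\beta}f_\eps(x_\eps)$ and the remainder (which is a derivative of order $N$ evaluated on the segment) are $\rho$-moderate by hypothesis~(3), while each $h_\eps^\beta$ is $\rho$-negligible of arbitrarily high order. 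Choosing $N$ large enough makes the whole difference $\rho$-negligible. The main subtlety here, which I would underline, is precisely that Taylor's theorem is applied on a segment that remains inside $\Omega_\eps$; this uses the strong internality of $X$.

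For (ii), I would apply the same Taylor idea at order one. If $x_0=[x_{0,\eps}]\in X$ and $x=[x_\eps]\in X$ with $|x-x_0|\le\diff{\rho}^m$, then on the segment joining $x_{0,\eps}$ to $x_\eps$ (again contained in $\Omega_\eps$ for $m$ large, exactly as in (i)) the mean value inequality gives $|f_\eps(x_\eps)-f_\eps(x_{0,\eps})|\le\sup|\partial f_\eps|\cdot|x_\eps-x_{0,\eps}|$. Moderateness of $\partial f_\eps$ along the segment bounds the supremum by some $\rho_\eps^{-N}$, so choosing $m\ge N+k$ produces $|f(x)-f(x_0)|\le\diff{\rho}^k$. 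Since the balls $B_{\diff{\rho}^k}(f(x_0))$ form a base of neighborhoods, this is sharp continuity.

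For (iii), the strategy is a cutoff–extension argument. From $X\subseteq\langle\Omega_\eps\rangle$ one builds a net $\chi_\eps\in\cinfty(\R^n,[0,1])$ equal to $1$ on a sharply large region interior to $\Omega_\eps$, supported in $\Omega_\eps$, and with derivatives of polynomial growth in $\rho_\eps^{-1}$; a standard mollification of indicator functions of $\{y\in\Omega_\eps:d(y,\Omega_\eps^c)\ge\rho_\eps^{1/\eps}\}$ (or any similar shrinking margin going to $0$ faster than every $\rho_\eps^N$) does the job. Setting $v_\eps:=\chi_\eps f_\eps$ (extended by $0$) gives a net in $\cinfty(\R^n,\R^d)$; moderateness of derivatives of $v_\eps$ on $X$ follows from the Leibniz rule, the controlled growth of $\partial^\alpha\chi_\eps$, and hypothesis~(3) for $f_\eps$, while $v_\eps(x_\eps)=f_\eps(x_\eps)$ for $\eps$ small and $x=[x_\eps]\in X$ because any such $x_\eps$ sits in the region where $\chi_\eps=1$. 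This is the technically heaviest point, and I expect the cutoff construction (matching the two scales $\rho_\eps^q$ and $\rho_\eps^{1/\eps}$) to be the main obstacle.

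For (iv), given $f\in\gsf(X,Y)$ and $g\in\gsf(Y,Z)$, use (iii) to pick globally defined representatives $v_\eps\in\cinfty(\R^n,\R^d)$ of $f$ and $w_\eps\in\cinfty(\R^d,\R^s)$ of $g$; then $(w_\eps\circ v_\eps)\in\cinfty(\R^n,\R^s)$ is a candidate defining net for $g\circ f$. Well-definedness of values follows from $f(x)\in Y$ and $g\in\gsf(Y,Z)$; the Faà di Bruno formula expresses $\partial^\alpha(w_\eps\circ v_\eps)(x_\eps)$ as a polynomial in derivatives $\partial^\beta w_\eps(v_\eps(x_\eps))$ and $\partial^\gamma v_\eps(x_\eps)$, and both families are $\rho$-moderate on $X$ (the first by hypothesis~(3) applied to $g$ at $f(x)\in Y$, using part (i) to read this off any representative of $f(x)$, the second by hypothesis~(3) for $f$), so their polynomial combinations remain $\rho$-moderate. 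Continuity of composition in the sharp topology is then automatic from (ii), yielding the subcategory structure. The algebra structure on $\gsf(X,\rti)$ is a direct consequence by composing with addition and multiplication $\R^2\to\R$, which are themselves GSF.
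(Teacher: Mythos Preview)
The paper itself does not prove Thm.~\ref{thm:propGSF}; it is one of the preliminary results for which proofs are explicitly deferred to \cite{GiKu18,GiKuVe15,GIO1,GiKu16}. Your outline is essentially the standard route taken in those references: mean value/Taylor for (i), cutoff-extension for (iii), Fa\`a di Bruno on globally defined nets for (iv). On those three items your sketch is sound; in (iii) note only that the bound ``derivatives of $\chi_\eps$ of polynomial growth in $\rho_\eps^{-1}$'' is actually false for a cutoff at scale $\rho_\eps^{1/\eps}$, but this is irrelevant: at every $[x_\eps]\in X$ the cutoff is identically $1$ on a whole neighborhood, so $\partial^\alpha v_\eps(x_\eps)=\partial^\alpha f_\eps(x_\eps)$ and moderateness is inherited directly.

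There is, however, a genuine gap in your argument for (ii). You write ``Moderateness of $\partial f_\eps$ along the segment bounds the supremum by some $\rho_\eps^{-N}$'', but hypothesis~(3) of Def.~\ref{def:netDefMap} only asserts moderateness of $(\partial^\alpha f_\eps(x_\eps))$ at nets representing points of $X$. In (i) this is harmless: the segment $[x_\eps,x'_\eps]$ has \emph{negligible} length, so every intermediate point $\xi_\eps$ satisfies $[\xi_\eps]=[x_\eps]\in X$, and (3) applies. In (ii) the segment has length $\sim\rho_\eps^m$, which is small but not negligible; an intermediate $\xi_\eps$ gives a genuine point $[\xi_\eps]\in B_{\diff{\rho}^m}(x_0)$ that need not lie in $X$, and (3) says nothing about it. The phrase ``exactly as in~(i)'' is therefore misleading: what carries over from (i) is only that the segment sits inside $\Omega_\eps$, not that derivatives along it are moderate. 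In the cited references this is handled by first proving a locally uniform moderateness lemma (via a diagonal/contradiction argument producing a net $(\xi_\eps)$ with $[\xi_\eps]=x_0\in X$ at which moderateness fails), and then running the mean value inequality with that uniform bound. You should insert that step before invoking the mean value estimate.
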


Similarly, we can define generalized functions of class $\gckf$,
with $k\le+\infty$:
\begin{defn}
Let $X\subseteq\RC{\rho}^{n}$ and $Y\subseteq\RC{\rho}^{d}$ be arbitrary
subsets of generalized points and $k\in\N\cup\{+\infty\}$. Then we
say that 
\[
f:X\longrightarrow Y\text{ is a \emph{generalized }}\mathcal{C}^{k}\text{ \emph{function}}
\]
if there exists a net $f_{\eps}\in\mathcal{C}^{k}(\Omega_{\eps},\R^{d})$
defining $f$ in the sense that
\begin{enumerate}
\item $X\subseteq\langle\Omega_{\eps}\rangle$,
\item $f([x_{\eps}])=[f_{\eps}(x_{\eps})]\in Y$ for all $x=[x_{\eps}]\in X$,
\item \label{enu:moderCk}$(\partial^{\alpha}f_{\eps}(x_{\eps}))\in\R_{{\scriptscriptstyle \rho}}^{d}$
for all $x=[x_{\eps}]\in X$ and all $\alpha\in\N^{n}$ such that
$|\alpha|\le k$.
\item \label{enu:k1}$\forall\alpha\in\N^{n}\,\forall[x_{\eps}],[x'_{\eps}]\in X:\ |\alpha|=k,\ [x_{\eps}]=[x'_{\eps}]\ \Rightarrow\ [\partial^{\alpha}f_{\eps}(x_{\eps})]=[\partial^{\alpha}f_{\eps}(x'_{\eps})]$.
\item \label{enu:k2}For all $\alpha\in\N^{n}$, with $|\alpha|=k$, the
map $[x_{\eps}]\in X\mapsto[\partial^{\alpha}f_{\eps}(x_{\eps})]\in\rti^{d}$
is continuous in the sharp topology.
\end{enumerate}
The space of generalized $\mathcal{C}^{k}$ functions from $X$ to
$Y$ is denoted by $\gckf(X,Y)$.
\end{defn}

\noindent Note that properties \ref{enu:k1}, \ref{enu:k2} are required
only for $|\alpha|=k$ because for lower length they can be proved
using property \ref{enu:moderCk} and the classical mean value theorem
for $f_{\eps}$ (see e.g.~\cite{GiKuVe15,GIO1}). From \ref{enu:wellDef}
and \ref{enu:GSF-cont} of Thm.~\ref{thm:propGSF} it follows that
this definition of $\gckf$ is equivalent to Def.~\ref{def:netDefMap}
if $k=+\infty$. Moreover, properties similar to \ref{enu:globallyDefNet},
\ref{enu:category} of Thm.~\ref{thm:propGSF} can also be proved
for $\gckf$.

The differential calculus for $\gcf{k+1}$ maps can be introduced
by showing existence and uniqueness of another $\gckf$ map serving
as incremental ratio (sometimes this is called \emph{derivative �
la Carath�odory}, see e.g.~\cite{Kuh91}).
\begin{thm}
\label{thm:FR-forGSF} Let $U\subseteq\RC{\rho}^{n}$ be a sharply
open set, let $v=[v_{\eps}]\in\RC{\rho}^{n}$, $k\in\N\cup\{+\infty\}$,
and let $f\in\gcf{k+1}(U,\RC{\rho})$ be a $\gcf{k+1}$map generated
by the net of functions $f_{\eps}\in\mathcal{C}^{k+1}(\Omega_{\eps},\R)$.
Then
\begin{enumerate}
\item \label{enu:existenceRatio}There exists a sharp neighborhood $T$
of $U\times\{0\}$ and a map $r\in\gckf(T,\RC{\rho})$, called the
\emph{generalized incremental ratio} of $f$ \emph{along} $v$, such
that 
\[
\forall(x,h)\in T:\ f(x+hv)=f(x)+h\cdot r(x,h).
\]
\item \label{enu:uniquenessRatio}Any two generalized incremental ratios
coincide on a sharp neighborhood of $U\times\{0\}$, so that we can
use the notation $\frac{\partial f}{\partial v}[x;h]:=r(x,h)$ if
$(x,h)$ are sufficiently small.
\item \label{enu:defDer}We have $\frac{\partial f}{\partial v}[x;0]=\left[\frac{\partial f_{\eps}}{\partial v_{\eps}}(x_{\eps})\right]$
for every $x\in U$ and we can thus define $Df(x)\cdot v:=\frac{\partial f}{\partial v}(x):=\frac{\partial f}{\partial v}[x;0]$,
so that $\frac{\partial f}{\partial v}\in\gckf(U,\RC{\rho})$.
\end{enumerate}
\end{thm}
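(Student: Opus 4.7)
The plan is to define the incremental ratio $r$ via the classical Hadamard integral representation at the level of representatives, and then verify that it satisfies the three $\gckf$ conditions on an appropriate sharp neighborhood $T$ of $U\times\{0\}$. First I would construct $T$: since $U$ is sharply open and, as a domain of a $\gcf{k+1}$ map, is contained in $\sint{\Omega_{\eps}}$, Thm.~\ref{thm:strongMembershipAndDistanceComplement}.(ii) provides, for each $x=[x_\eps]\in U$, an exponent $q$ with $d(x_\eps,\Omega_\eps^{\text c})>\rho_\eps^{q}$ for $\eps$ small. Combining this with the moderateness of $v=[v_\eps]$ and the sharp-openness of $U$, one obtains a sharp neighborhood $T$ of $U\times\{0\}$ and representatives such that $x_\eps+t h_\eps v_\eps\in\Omega_\eps$ for all $t\in[0,1]_{\R}$ whenever $(x,h)=([x_\eps],[h_\eps])\in T$. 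On this $T$, set
\[
r_\eps(x_\eps,h_\eps):=\int_{0}^{1}\frac{\partial f_\eps}{\partial v_\eps}(x_\eps+t h_\eps v_\eps)\,dt,
\]
so that the fundamental theorem of calculus yields $f_\eps(x_\eps+h_\eps v_\eps)-f_\eps(x_\eps)=h_\eps\cdot r_\eps(x_\eps,h_\eps)$, which is the desired functional equation for $f$.

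Next I would check that $(r_\eps)$ defines a $\gckf$ map into $\rti$. Differentiating under the integral sign expresses $\partial^{\alpha}r_\eps$, for $|\alpha|\le k$, as an integral in $t$ of a polynomial combination of derivatives of $f_\eps$ of order at most $|\alpha|+1\le k+1$ evaluated at $x_\eps+t h_\eps v_\eps$, together with powers of $t$ and components of $v_\eps$. Because $f\in\gcf{k+1}$ and the arguments $x_\eps+t h_\eps v_\eps$ stay inside a sharply bounded internal set obtained from $T$, each such expression is $\rho$-moderate, establishing condition (iii) in the definition of $\gckf$. For the independence of representatives and sharp continuity at top order $|\alpha|=k$, the $\gcf{k+1}$ regularity of $f$ supplies moderateness of $f_\eps$-derivatives of order $k+1$, so the classical mean value theorem applied inside the integral gives conditions (iv) and (v), exactly as in the standard GSF arguments (cf.~\cite{GiKuVe15,GIO1}).

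For uniqueness, suppose $r,\tilde r\in\gckf$ both satisfy the identity on sharp neighborhoods $T,\tilde T$ of $U\times\{0\}$; then on $T\cap\tilde T$, itself a sharp neighborhood of $U\times\{0\}$, one has $h\cdot(r(x,h)-\tilde r(x,h))=0$. Given $(x,h)\in T\cap\tilde T$ and a sharp ball around $(x,h)$ contained in $T\cap\tilde T$, Lem.~\ref{lem:invDense} yields $h_n\to h$ with each $h_n$ invertible and $(x,h_n)$ still in the ball; dividing by $h_n$ gives $r(x,h_n)=\tilde r(x,h_n)$, and the sharp continuity of $r$ and $\tilde r$ (which is the $\gckf$ analogue of Thm.~\ref{thm:propGSF}.(ii)) yields $r(x,h)=\tilde r(x,h)$. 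This justifies the notation $\frac{\partial f}{\partial v}[x;h]$. Finally, part (iii) of the theorem is immediate: evaluating at $h_\eps=0$ collapses the integrand to the constant $\frac{\partial f_\eps}{\partial v_\eps}(x_\eps)$, so $r(x,0)=\left[\frac{\partial f_\eps}{\partial v_\eps}(x_\eps)\right]$, and $x\mapsto r(x,0)$ is in $\gckf(U,\rti)$ as the composition of $r$ with the $\gckf$ embedding $x\mapsto(x,0)$.

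The main obstacle is the careful simultaneous construction of $T$ and verification that $r\in\gckf(T,\rti)$: one must reconcile the sharp-openness of $U$ with the possibly shrinking $\Omega_\eps$, and then confirm that every derivative of the integral $r_\eps$ up to order $k$ remains $\rho$-moderate, well-defined on classes, and sharply continuous at top order. It is precisely here that the extra regularity in $\gcf{k+1}$ (one order beyond the target class $\gckf$) is essential, through differentiation under the integral sign and an application of the classical mean value theorem to the integrand.
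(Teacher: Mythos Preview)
The paper does not actually prove this theorem: it is stated in the preliminaries of Sec.~\ref{sec:Basic-notions}, where the authors explicitly defer all proofs to \cite{GiKu18,GiKuVe15,GIO1,GiKu16} (and the parenthetical remark ``derivative \`a la Carath\'eodory, see e.g.~\cite{Kuh91}'' already signals the intended method). Your approach via the Hadamard integral representation $r_\eps(x_\eps,h_\eps)=\int_0^1\partial_{v_\eps}f_\eps(x_\eps+th_\eps v_\eps)\,\diff{t}$ is precisely the standard one used in those references, and your outline of the verification of the $\gckf$ conditions, the uniqueness argument via density of invertibles (Lem.~\ref{lem:invDense}) plus sharp continuity, and the evaluation at $h=0$ for part~\ref{enu:defDer} are all correct and complete in spirit.

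One minor point worth tightening: your construction of $T$ is described pointwise (for each $x\in U$ you find a suitable exponent $q$), but to obtain a genuine sharp \emph{neighborhood} of $U\times\{0\}$ and a globally defined net $(r_\eps)$ one should invoke Thm.~\ref{thm:propGSF}.\ref{enu:globallyDefNet} to pass to a net $v_\eps\in\cinfty(\R^n,\R)$ defined on all of $\R^n$, after which the integral $r_\eps$ is globally defined and the only issue is identifying on which sharp open set it yields a $\gckf$ map. This is routine, but it is the cleanest way to avoid the ``reconcile the sharp-openness of $U$ with the possibly shrinking $\Omega_\eps$'' difficulty you flag at the end.
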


Note that this result permits us to consider the partial derivative
of $f$ with respect to an arbitrary generalized vector $v\in\RC{\rho}^{n}$
which can be, e.g., infinitesimal or infinite. Using recursively this
result, we can also define subsequent differentials $D^{j}f(x)$ as
$j-$multilinear maps, and we set $D^{j}f(x)\cdot h^{j}:=D^{j}f(x)(h,\ptind^{j},h)$
if $j\le k$. The set of all the $j-$multilinear maps $\left(\rti^{n}\right)^{j}\ra\rti^{d}$
over the ring $\rti$ will be denoted by $L^{j}(\rti^{n},\rti^{d})$.
For $A=[A_{\eps}(-)]\in L^{j}(\rti^{n},\rti^{d})$, we set $|{A}|:=[|{A_{\eps}}|]$,
the generalized number defined by the operator norms of the multilinear
maps $A_{\eps}\in L^{j}(\R^{n},\R^{d})$.

The following result follows from the analogous properties for the
nets of smooth functions defining $f$ and $g$.
\begin{thm}
\label{thm:rulesDer} Let $U\subseteq\rcrho^{n}$ be an open subset
in the sharp topology, let $v\in\rcrho^{n}$ and $f$, $g\in\gcf{k+1}(U,\rti^{d})$.
Then
\begin{enumerate}
\item $\frac{\partial(f+g)}{\partial v}=\frac{\partial f}{\partial v}+\frac{\partial g}{\partial v}$
\item $\frac{\partial(r\cdot f)}{\partial v}=r\cdot\frac{\partial f}{\partial v}\quad\forall r\in\rcrho$
\item $\frac{\partial(f\cdot g)}{\partial v}=\frac{\partial f}{\partial v}\cdot g+f\cdot\frac{\partial g}{\partial v}$
\item For each $x\in U$, the map $\diff{f}(x).v:=\frac{\partial f}{\partial v}(x)\in\rcrho$
is $\rcrho$-linear in $v\in\rcrho^{n}$
\item Let $U\subseteq\rcrho^{n}$ and $V\subseteq\rcrho^{d}$ be open subsets
in the sharp topology and $g\in\gcf{k+1}(V,U)$, $f\in\gcf{k+1}(U,\rcrho)$
be generalized maps. Then for all $x\in V$ and all $v\in\rcrho^{d}$,
we have $\frac{\partial\left(f\circ g\right)}{\partial v}(x)=\diff{f}\left(g(x)\right).\frac{\partial g}{\partial v}(x)$.
\end{enumerate}
\end{thm}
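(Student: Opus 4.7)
The plan is to reduce each identity to its classical pointwise analogue at the level of the defining nets, and then invoke Thm.~\ref{thm:FR-forGSF}.\ref{enu:defDer} to identify $\frac{\partial f}{\partial v}(x)$ with the class $[\partial_{v_\eps} f_\eps(x_\eps)] = [Df_\eps(x_\eps) \cdot v_\eps]$. Fix defining nets $f = [f_\eps(-)]$, $g = [g_\eps(-)]$ of class $\mathcal{C}^{k+1}$ on the respective $\Omega_\eps$, together with representatives $v = [v_\eps]$, $r = [r_\eps]$, and $x = [x_\eps] \in U$.

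Points (i)--(iii) then follow immediately from the classical linearity of the directional derivative and the Leibniz rule applied componentwise to the representatives: $\partial_{v_\eps}(f_\eps + g_\eps) = \partial_{v_\eps} f_\eps + \partial_{v_\eps} g_\eps$, $\partial_{v_\eps}(r_\eps f_\eps) = r_\eps \partial_{v_\eps} f_\eps$, and $\partial_{v_\eps}(f_\eps \cdot g_\eps) = \partial_{v_\eps} f_\eps \cdot g_\eps + f_\eps \cdot \partial_{v_\eps} g_\eps$. Passing to equivalence classes in $\rti^d$ yields the three identities, provided that $f + g$, $r \cdot f$, and $f \cdot g$ are themselves $\gcf{k+1}$ maps defined by the obvious pointwise nets; this is an easy consequence of the $\rho$-moderation of $(f_\eps)$, $(g_\eps)$, $(r_\eps)$ together with the usual derivative formulas up to order $k+1$. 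For (iv), I use the $\R$-linearity of each classical differential $Df_\eps(x_\eps) \colon \R^n \to \R^d$: for $v, w \in \rti^n$ and $r \in \rti$ one has $Df_\eps(x_\eps) \cdot (v_\eps + r_\eps w_\eps) = Df_\eps(x_\eps) \cdot v_\eps + r_\eps \, Df_\eps(x_\eps) \cdot w_\eps$ for every $\eps$, and passing to $\rti$ via Thm.~\ref{thm:FR-forGSF}.\ref{enu:defDer} gives $\rti$-linearity of $v \mapsto \diff{f}(x).v$.

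For the chain rule (v), the first step is to certify that $f \circ g$ is a GSF defined on a suitable sharp neighborhood of $x$ by the composed net $(f_\eps \circ g_\eps)$; this is the content of Thm.~\ref{thm:propGSF}.\ref{enu:category} together with \ref{enu:globallyDefNet}, ensuring both $\rho$-moderation of all derivatives of the composed net up to order $k+1$ and representative-independence of the resulting generalized map. The classical chain rule at $x_\eps$ then gives $\partial_{v_\eps}(f_\eps \circ g_\eps)(x_\eps) = Df_\eps(g_\eps(x_\eps)) \cdot \partial_{v_\eps} g_\eps(x_\eps)$; passing to equivalence classes and invoking (iv) applied to $f$ at the point $g(x)$ with direction $\frac{\partial g}{\partial v}(x)$ yields the claimed formula. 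The main obstacle is not conceptual but bookkeeping: at each step one must verify that the pointwise manipulations preserve $\rho$-moderation of all derivatives up to order $k+1$ and therefore descend to well-defined operations in the quotient. This is guaranteed by the standing hypotheses and by Thm.~\ref{thm:propGSF}, so the proof is, as the authors indicate, essentially a transcription of the classical identities through the representative-wise correspondence furnished by Thm.~\ref{thm:FR-forGSF}.\ref{enu:defDer}.
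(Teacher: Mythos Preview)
Your proposal is correct and matches the paper's own approach: the paper does not give a detailed proof but simply remarks that the result ``follows from the analogous properties for the nets of smooth functions defining $f$ and $g$,'' which is precisely the representative-wise reduction via Thm.~\ref{thm:FR-forGSF}.\ref{enu:defDer} that you carry out. Your write-up is in fact a careful unpacking of what the paper leaves implicit.
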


Note that the absolute value function $|-|:\rti\ra\rti$ is not a
GSF because its derivative is not sharply continuous at the origin;
clearly, it is a $\gcf{0}$ function. This is a good motivation to
introduce integration of $\gckf$ functions. One dimensional integral
calculus is based on the following
\begin{thm}
\label{thm:existenceUniquenessPrimitives}Let $k\in\N\cup\{+\infty\}$
and $f\in\gckf([a,b],\rcrho)$ be defined in the interval $[a,b]\subseteq\RC{\rho}$,
where $a<b$. Let $c\in[a,b]$. Then, there exists one and only one
generalized $\mathcal{C}^{k+1}$ map $F\in\gcf{k+1}([a,b],\rcrho)$
such that $F(c)=0$ and $F'(x)=f(x)$ for all $x\in[a,b]$. Moreover,
if $f$ is defined by the net $f_{\eps}\in\mathcal{C}^{k}(\R,\R)$
and $c=[c_{\eps}]$, then $F(x)=\left[\int_{c_{\eps}}^{x_{\eps}}f_{\eps}(s)\diff{s}\right]$
for all $x=[x_{\eps}]\in[a,b]$.
\end{thm}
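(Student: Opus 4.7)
The plan is to take the explicit formula $F(x):=\bigl[\int_{c_\eps}^{x_\eps} f_\eps(s)\,ds\bigr]$ as the \emph{definition} of the primitive, and then to verify four things in order: (a) the net of integrals is $\rho$-moderate and its class is independent of the representatives of $f$, $c$ and $x$; (b) the resulting map lies in $\gcf{k+1}([a,b],\rcrho)$; (c) $F(c)=0$ and $F'=f$; (d) uniqueness.

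For (a), I would first apply Thm.~\ref{thm:propGSF}.\ref{enu:globallyDefNet} to assume $f$ is defined by a net $f_\eps\in\mathcal{C}^k(\R,\R)$ globally on $\R$. Moderateness of $\int_{c_\eps}^{x_\eps}f_\eps(s)\,ds$ then reduces to the classical bound $|x_\eps-c_\eps|\cdot\sup_{s}|f_\eps(s)|$, the first factor being moderate since $x,c\in[a,b]$, and the second being controlled on the relevant $\eps$-interval via pointwise moderateness of $f_\eps$ at $c_\eps$ combined with the classical mean value theorem applied to $f_\eps$ using moderateness of $f_\eps'$ (available since $k\ge 0$ and condition (iii) of the $\gckf$ definition, or, if $k=0$, by an analogous argument using the sharp continuity of $f$). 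Independence of the representative of $x$ then follows by viewing a change $(x_\eps)\rightsquigarrow(x'_\eps)$ with $x'_\eps-x_\eps$ $\rho$-negligible as an extra integral bounded by negligible $\times$ moderate, hence negligible; independence with respect to $(c_\eps)$ and with respect to different defining nets of $f$ is entirely analogous.

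For (b), setting $F_\eps(x):=\int_{c_\eps}^{x}f_\eps(s)\,ds$ gives $F_\eps\in\mathcal{C}^{k+1}$ with $F_\eps^{(j+1)}=f_\eps^{(j)}$ for $0\le j\le k$, so $\rho$-moderateness of all derivatives up to order $k+1$, together with the well-definedness at top order and the sharp continuity required by the $\gcf{k+1}$ definition, are inherited directly from $f\in\gckf([a,b],\rcrho)$. By construction $F(c)=0$, and Thm.~\ref{thm:FR-forGSF}.\ref{enu:defDer} yields $F'(x)=[F'_\eps(x_\eps)]=[f_\eps(x_\eps)]=f(x)$ for all $x\in[a,b]$.

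The real difficulty is (d). Given a second primitive $G$ with $G(c)=0$, the difference $H:=F-G\in\gcf{k+1}$ satisfies $H(c)=0$ and $H'\equiv0$ on $[a,b]$, and I want to conclude $H(x)=0$ for every $x=[x_\eps]\in[a,b]$ from the classical estimate
\[
|H_\eps(x_\eps)-H_\eps(c_\eps)|\ \le\ |x_\eps-c_\eps|\,\sup_{s\in[c_\eps,x_\eps]}|H'_\eps(s)|.
\]
The main obstacle is that the hypothesis $H'\equiv 0$ gives only \emph{pointwise} negligibility $[H'_\eps(y_\eps)]=0$ at each generalized point $[y_\eps]\in[a,b]$, whereas the estimate needs \emph{uniform} negligibility of $H'_\eps$ across the entire $\eps$-interval between $c_\eps$ and $x_\eps$. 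I expect to resolve this by a second mean value argument: from moderateness of $H''_\eps$ (inherited from $f\in\gckf$ for $k\ge 1$; for $k=0$ one instead uses sharp continuity of $H'$ and Lem.~\ref{lem:invDense} to sample representatives densely) one converts the pointwise bound into a uniform one up to a negligible $\times$ moderate error. Plugging the resulting uniform negligibility back into the displayed inequality yields moderate $\times$ negligible $=$ negligible, hence $H(x)=0$ as required.
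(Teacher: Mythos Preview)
The paper does not actually supply a proof of this theorem: it is listed among the preliminaries in Sec.~\ref{sec:Basic-notions}, with the blanket sentence ``For all the proofs of results in this section, see \cite{GiKu18,GiKuVe15,GIO1,GiKu16}.'' So there is no in-paper argument to compare against, and your outline is already more than the paper provides.

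Your strategy is correct, but you are making step (d), and the analogous uniform bound in (a), harder than necessary. The ``pointwise versus uniform'' obstacle you flag dissolves once you choose, for each fixed $\eps$, the classical mean value point (or, equivalently, a point where the sup is attained). Concretely, for $H:=F-G$ with $H'\equiv 0$ on $[a,b]$, pick a defining net $H_\eps\in\mathcal{C}^{k+1}$ and apply the ordinary mean value theorem on $\R$:
\[
H_\eps(x_\eps)-H_\eps(c_\eps)=(x_\eps-c_\eps)\,H'_\eps(\eta_\eps),\qquad \eta_\eps\ \text{between}\ c_\eps\ \text{and}\ x_\eps.
\]
The net $(\eta_\eps)$ is moderate because it lies between the moderate nets $(c_\eps)$ and $(x_\eps)$, so $[\eta_\eps]\in[a,b]$. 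Then $H'([\eta_\eps])=0$ says precisely that $(H'_\eps(\eta_\eps))$ is $\rho$-negligible, and multiplying by the moderate factor $(x_\eps-c_\eps)$ yields $H(x)=H(c)=0$. No appeal to $H''$, no separate treatment of $k=0$, and no density argument via Lem.~\ref{lem:invDense} is needed. Exactly the same device handles the uniform moderateness you need in (a): the sup of $|f_\eps|$ over the $\eps$-interval is $|f_\eps(\xi_\eps)|$ for some $\xi_\eps$ with $[\xi_\eps]\in[a,b]$, and moderateness of $[f_\eps(\xi_\eps)]$ is part of the definition of $f\in\gckf([a,b],\rcrho)$. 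This is the standard mechanism by which pointwise moderateness/negligibility at \emph{all} generalized points of an internal set such as $[a,b]$ automatically upgrades to uniform $\eps$-wise estimates, and it is exactly what underlies results like Lem.~\ref{lem:normSpaceGSF}.\ref{enu:normAndDefNet} and Thm.~\ref{thm:extremeValues}.
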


\noindent We can thus define
\begin{defn}
\label{def:integral}Under the assumptions of Thm.~\ref{thm:existenceUniquenessPrimitives},
we denote by $\int_{c}^{(-)}f:=\int_{c}^{(-)}f(s)\,\diff{s}\in\gcf{k+1}([a,b],\rcrho)$
the unique generalized $\mathcal{C}^{k+1}$ map such that:
\begin{enumerate}
\item $\int_{c}^{c}f=0$
\item $\left(\int_{u}^{(-)}f\right)'(x)=\frac{\diff{}}{\diff{x}}\int_{u}^{x}f(s)\diff{s}=f(x)$
for all $x\in[a,b]$.
\end{enumerate}
\end{defn}

\noindent All the classical rules of integral calculus hold in this
setting:
\begin{thm}
\label{thm:intRules}Let $f\in\gckf(U,\rcrho)$ and $g\in\gckf(V,\rcrho)$
be generalized $\mathcal{C}^{k}$ maps defined on sharply open domains
in $\rcrho$. Let $a$, $b\in\rcrho$ with $a<b$ and $c$, $d\in[a,b]\subseteq U\cap V$,
then
\begin{enumerate}
\item \label{enu:additivityFunction}$\int_{c}^{d}\left(f+g\right)=\int_{c}^{d}f+\int_{c}^{d}g$
\item \label{enu:homog}$\int_{c}^{d}\lambda f=\lambda\int_{c}^{d}f\quad\forall\lambda\in\rcrho$
\item \label{enu:additivityDomain}$\int_{c}^{d}f=\int_{c}^{e}f+\int_{e}^{d}f$
for all $e\in[a,b]$
\item \label{enu:chageOfExtremes}$\int_{c}^{d}f=-\int_{d}^{c}f$
\item \label{enu:foundamental}$\int_{c}^{d}f'=f(d)-f(c)$
\item \label{enu:intByParts}$\int_{c}^{d}f'\cdot g=\left[f\cdot g\right]_{c}^{d}-\int_{c}^{d}f\cdot g'$
\item \label{enu:intMonotone}If $f(x)\le g(x)$ for all $x\in[a,b]$, then
$\int_{a}^{b}f\le\int_{a}^{b}g$.
\item Let $a$, $b$, $c$, $d\in\rcrho$, with $a<b$ and $c<d$, and $f\in\gcf{k+1}([a,b]\times[c,d],\rcrho^{d})$,
then
\[
\frac{\diff{}}{\diff{s}}\int_{a}^{b}f(\tau,s)\,\diff{\tau}=\int_{a}^{b}\frac{\partial}{\partial s}f(\tau,s)\,\diff{\tau}\quad\forall s\in[c,d].
\]
\end{enumerate}
\end{thm}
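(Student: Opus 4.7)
The uniform strategy is to exploit the uniqueness in Thm.~\ref{thm:existenceUniquenessPrimitives}: two elements of $\gcf{k+1}([a,b],\rcrho)$ that agree at a single point and have the same derivative must coincide. Equivalently, one may use the explicit representative formula $\int_{c}^{x} f = \bigl[\int_{c_\eps}^{x_\eps} f_\eps(s)\,ds\bigr]$ furnished by the same theorem and transport the classical identity to $\rcrho$. With this toolbox, items (i)--(vi) and (viii) are essentially mechanical, whereas (vii) requires a genuine argument.

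\textbf{Items (i)--(vi) and (viii).} For (i) and (ii), both sides vanish at $c$ and share the same derivative in $d$ by Thm.~\ref{thm:rulesDer} together with Def.~\ref{def:integral}, so uniqueness of the primitive forces equality. For (iii), fix $e\in[a,b]$ and apply the same argument in the variable $d$. Item (iv) follows directly from the representative formula and the classical identity $\int_{c_\eps}^{d_\eps} f_\eps = -\int_{d_\eps}^{c_\eps} f_\eps$. Item (v) is the fundamental theorem: both $\int_c^d f'$ and $f(d)-f(c)$ vanish at $d=c$ and have derivative $f'(d)$ in $d$. Item (vi) follows by integrating the product rule of Thm.~\ref{thm:rulesDer} from $c$ to $d$ and applying (v). For (viii), the net $s_\eps\mapsto \int_{a_\eps}^{b_\eps} f_\eps(\tau,s_\eps)\,d\tau$ is smooth with classical derivative $\int_{a_\eps}^{b_\eps} \partial_s f_\eps(\tau,s_\eps)\,d\tau$, and both nets are $\rho$-moderate because $f\in\gcf{k+1}$ and $b_\eps-a_\eps$ is moderate; hence they represent the two members of (viii) as GSF.

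\textbf{Main obstacle: item (vii).} The subtlety is that $f(x)\le g(x)$ for every generalized $x\in[a,b]$ does not automatically give $f_\eps(s)\le g_\eps(s)$ for every real $s\in[a_\eps,b_\eps]_\R$ and every small $\eps$, because the order on $\rcrho$ is not total. Setting $h:=g-f$, I would argue by contradiction: if $\int_a^b h \not\ge 0$, then by Lem.~\ref{lem:mayer} there exist $q\in\N$ and a sequence $\eps_n\downarrow 0$ along which $\int_{a_{\eps_n}}^{b_{\eps_n}} h_{\eps_n}(s)\,ds < -\rho_{\eps_n}^{q}$. Since $b_\eps-a_\eps$ is $\rho$-moderate, a mean-value argument produces points $s_{\eps_n}\in[a_{\eps_n},b_{\eps_n}]_\R$ at which $h_{\eps_n}(s_{\eps_n})$ cannot be controlled from below by $-\rho_{\eps_n}^{N}$ for some fixed $N\in\N$. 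Filling in arbitrary values $s_\eps\in[a_\eps,b_\eps]_\R$ for the remaining $\eps$ produces a $\rho$-moderate net whose class $s:=[s_\eps]$ lies in $[a,b]$, which is sharply closed by Thm.~\ref{thm:strongMembershipAndDistanceComplement}.\ref{enu:internalAreClosed}, yet satisfies $h(s)\not\ge 0$. This contradicts the hypothesis $h\ge 0$ on $[a,b]$. The delicate point is engineering the net $(s_\eps)$ so that the bad behaviour of $h_\eps$ at $s_\eps$ survives passage to the quotient $\sim_\rho$; once this is done, monotonicity follows.
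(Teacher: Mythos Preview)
The paper does not actually prove Thm.~\ref{thm:intRules}: it is listed among the preliminaries of Section~\ref{sec:Basic-notions}, where the authors explicitly defer all proofs to \cite{GiKu18,GiKuVe15,GIO1,GiKu16}. So there is no in-paper argument to compare against, and your proposal must be judged on its own merits.

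Your treatment of (i)--(vi) and (viii) via uniqueness of the primitive (Thm.~\ref{thm:existenceUniquenessPrimitives}) and the representative formula is the standard and correct route; nothing is missing there.

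For (vii) your contradiction argument can be made to work, but it is more contorted than necessary and the phrasing leaves the key quantitative step vague. A much cleaner direct proof uses the pointwise minimum, and in fact this is the argument implicit in the extreme value Thm.~\ref{thm:extremeValues} already recorded in the paper. With $h:=g-f$, pick for each $\eps$ a point $m_\eps\in[a_\eps,b_\eps]_\R$ where the continuous function $h_\eps$ attains its minimum. Then $m:=[m_\eps]\in[a,b]$ (an internal set, Thm.~\ref{thm:strongMembershipAndDistanceComplement}.\ref{enu:internalAreClosed}), so by hypothesis $h(m)\ge 0$, i.e.\ for every $q\in\N$ we have $h_\eps(m_\eps)\ge -\rho_\eps^{q}$ for $\eps$ small. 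Since $h_\eps(m_\eps)$ is the global minimum on $[a_\eps,b_\eps]_\R$, this gives $\int_{a_\eps}^{b_\eps}h_\eps\ge -\rho_\eps^{q}(b_\eps-a_\eps)$, and moderateness of $b-a$ then yields $\int_a^b h\ge 0$. This avoids having to ``engineer'' a bad net along a subsequence and makes the passage to the quotient a non-issue; your worry about the bad behaviour ``surviving $\sim_\rho$'' disappears because one never leaves the level of a single well-defined generalized point $m$.
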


\begin{thm}
\label{thm:changeOfVariablesInt}Let $f\in\gckf(U,\rcrho)$ and $\phi\in\gcf{k+1}(V,U)$
be GSF defined on sharply open domains in $\rcrho$. Let $a$, $b\in\rcrho$,
with $a<b$, such that $[a,b]\subseteq V$, $\phi(a)<\phi(b)$, $[\phi(a),\phi(b)]\subseteq U$.
Finally, assume that $\phi([a,b])\subseteq[\phi(a),\phi(b)]$. Then
\[
\int_{\phi(a)}^{\phi(b)}f(t)\diff{t}=\int_{a}^{b}f\left[\phi(s)\right]\cdot\phi'(s)\diff{s}.
\]
\end{thm}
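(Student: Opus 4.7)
The plan is to reduce the identity to the uniqueness part of Theorem~\ref{thm:existenceUniquenessPrimitives} by exhibiting both sides as values at $b$ of two primitives that agree at $a$ and have the same derivative on $[a,b]$.

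First I would introduce the auxiliary GSF $G(y):=\int_{\phi(a)}^{y}f(t)\,\diff t$ for $y\in[\phi(a),\phi(b)]\subseteq U$; this is legitimate since $\phi(a)<\phi(b)$ by hypothesis, and by Definition~\ref{def:integral} we have $G\in\gcf{k+1}([\phi(a),\phi(b)],\rcrho)$ with $G'(y)=f(y)$ and $G(\phi(a))=0$. Next I would set
\[
H(x):=G(\phi(x)),\qquad K(x):=\int_{a}^{x}f[\phi(s)]\cdot\phi'(s)\,\diff s\qquad(x\in[a,b]).
\]
The hypothesis $\phi([a,b])\subseteq[\phi(a),\phi(b)]$ guarantees that the composition defining $H$ is meaningful as a GSF on $[a,b]$, and both $H$ and $K$ belong to $\gcf{k+1}([a,b],\rcrho)$ (for $H$, by the category property of Theorem~\ref{thm:propGSF}\ref{enu:category} applied to $G$ and $\phi$; for $K$, by Definition~\ref{def:integral}).

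Then I would verify the two hypotheses of the uniqueness statement in Theorem~\ref{thm:existenceUniquenessPrimitives}. The initial condition is immediate: $H(a)=G(\phi(a))=0=K(a)$. For the derivatives, the chain rule in Theorem~\ref{thm:rulesDer} gives
\[
H'(x)=G'(\phi(x))\cdot\phi'(x)=f(\phi(x))\cdot\phi'(x),
\]
while the fundamental theorem embodied in Definition~\ref{def:integral} yields $K'(x)=f[\phi(x)]\cdot\phi'(x)$. Hence $(H-K)'\equiv 0$ on $[a,b]$ and $(H-K)(a)=0$, so by the uniqueness of primitives (Theorem~\ref{thm:existenceUniquenessPrimitives}, applied to the zero function with $c=a$) we conclude $H\equiv K$ on $[a,b]$. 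Evaluating at $x=b$ gives the desired identity.

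The main obstacle I anticipate is not the analytic content, which is essentially the classical argument, but rather the book-keeping of regularity and domains: one has to make sure that $G$ is defined on a genuine interval $[\phi(a),\phi(b)]$ in $\rti$ (which is the role of the hypothesis $\phi(a)<\phi(b)$), that $\phi(x)$ stays inside this interval (the role of $\phi([a,b])\subseteq[\phi(a),\phi(b)]$), and that both $H$ and $K$ live in $\gcf{k+1}$ so that Theorem~\ref{thm:existenceUniquenessPrimitives} is applicable to $H-K$. Once these compatibilities are checked, the uniqueness of primitives delivers the conclusion with no further computation.
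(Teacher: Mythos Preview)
The paper does not actually supply a proof of this theorem: it is stated in the preliminaries (Section~\ref{sec:Basic-notions}), where the authors explicitly defer all proofs to \cite{GiKu18,GiKuVe15,GIO1,GiKu16}. So there is no in-paper argument to compare against.

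That said, your proposal is correct and is precisely the standard route one would expect in this setting. You correctly identify that the whole content reduces to the uniqueness clause of Theorem~\ref{thm:existenceUniquenessPrimitives}, and you invoke exactly the right ingredients from the paper: Definition~\ref{def:integral} for $G$ and $K$ and their derivatives, the chain rule from Theorem~\ref{thm:rulesDer} for $H'$, and closure under composition (Theorem~\ref{thm:propGSF}\ref{enu:category}) to ensure $H=G\circ\phi$ is a legitimate generalized $\mathcal{C}^{k+1}$ map on $[a,b]$. Your remarks about the roles of the hypotheses $\phi(a)<\phi(b)$ and $\phi([a,b])\subseteq[\phi(a),\phi(b)]$ are also on point: the first is needed so that $[\phi(a),\phi(b)]$ is a genuine interval on which Theorem~\ref{thm:existenceUniquenessPrimitives} produces $G$, and the second so that $G\circ\phi$ is defined on all of $[a,b]$. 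No gap.
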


We also have a generalization of Taylor formula:
\begin{thm}
\label{thm:Taylor}Let $f\in\gckf(U,\rcrho)$ be a generalized $\mathcal{C}^{k+1}$
function defined in the sharply open set $U\subseteq\rcrho^{d}$.
Let $a$, $b\in\rcrho^{d}$ such that the line segment $[a,b]\subseteq U$,
and set $h:=b-a$. Then, for all $n\in\N_{\le k}$ we have
\begin{enumerate}
\item \label{enu:LagrangeRest}$\exists\xi\in[a,b]:\ f(a+h)=\sum_{j=0}^{n}\frac{\diff{^{j}f}(a)}{j!}\cdot h^{j}+\frac{\diff{^{n+1}f}(\xi)}{(n+1)!}\cdot h^{n+1}.$
\item \label{enu:integralRest}$f(a+h)=\sum_{j=0}^{n}\frac{\diff{^{j}f}(a)}{j!}\cdot h^{j}+\frac{1}{n!}\cdot\int_{0}^{1}(1-t)^{n}\,\diff{^{n+1}f}(a+th)\cdot h^{n+1}\,\diff{t}.$
\end{enumerate}
\noindent Moreover, there exists some $R\in\rcrho_{>0}$ such that
\begin{equation}
\forall k\in B_{R}(0)\,\exists\xi\in[a,a+k]:\ f(a+k)=\sum_{j=0}^{n}\frac{\diff{^{j}f}(a)}{j!}\cdot k^{j}+\frac{\diff{^{n+1}f}(\xi)}{(n+1)!}\cdot k^{n+1}\label{eq:LagrangeInfRest}
\end{equation}
\begin{equation}
\frac{\diff{^{n+1}f}(\xi)}{(n+1)!}\cdot k^{n+1}=\frac{1}{n!}\cdot\int_{0}^{1}(1-t)^{n}\,\diff{^{n+1}f}(a+tk)\cdot k^{n+1}\,\diff{t}\approx0.\label{eq:integralInfRest}
\end{equation}
\end{thm}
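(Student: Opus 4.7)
The plan is to derive both remainder forms by transferring the classical Taylor expansion from the defining nets $f_\eps \in \mathcal{C}^{k+1}(\Omega_\eps,\R)$, combined with the GSF differential and integral calculus encoded in Thms.~\ref{thm:rulesDer}--\ref{thm:intRules}. No new deep machinery is needed beyond what has already been developed; the main subtleties are packaging the classical pointwise mean value theorem so that it produces a bona fide generalized point $\xi\in\rcrho$, and obtaining uniform moderate bounds in the ``moreover'' part.

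For the integral form \ref{enu:integralRest}, I would introduce the one-variable GSF $\phi\in\gcf{k+1}([0,1],\rcrho)$ given by $\phi(t):=f(a+th)$; by the chain rule (Thm.~\ref{thm:rulesDer}, item (v)), its $j$-th derivative is $\phi^{(j)}(t)=\diff{^{j}f}(a+th)\cdot h^{j}$ for all $j\le k+1$. Starting from the fundamental theorem $\phi(1)-\phi(0)=\int_{0}^{1}\phi'(t)\,\diff{t}$ (Thm.~\ref{thm:intRules}(\ref{enu:foundamental})), I iterate the integration-by-parts rule (Thm.~\ref{thm:intRules}(\ref{enu:intByParts})) $n$ times, at the $j$-th step using the antiderivative $-\frac{(1-t)^{j+1}}{(j+1)!}$ of $\frac{(1-t)^{j}}{j!}$. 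Each iteration contributes the boundary term $\frac{\diff{^{j+1}f}(a)}{(j+1)!}h^{j+1}$ at $t=0$, and after $n$ steps the residual integral is exactly the claimed remainder.

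For the Lagrange form \ref{enu:LagrangeRest}, I fix representatives $(a_{\eps})$, $(b_{\eps})$ with $a_{\eps}<b_{\eps}$ for $\eps$ small, which is possible by Lem.~\ref{lem:mayer}(\ref{enu:strictlyPositive}) applied to $h=b-a>0$. Applying classical Taylor with Lagrange remainder to $f_{\eps}$ on $[a_{\eps},b_{\eps}]$ yields $\xi_{\eps}\in[a_{\eps},b_{\eps}]$ satisfying the pointwise identity; the squeeze $a_{\eps}\le\xi_{\eps}\le b_{\eps}$ shows $(\xi_{\eps})\in\R_{\rho}$, so $\xi:=[\xi_{\eps}]\in[a,b]\subseteq U$. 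Well-definedness of $\diff{^{n+1}f}(\xi)=[f_{\eps}^{(n+1)}(\xi_{\eps})]$ is then guaranteed by Thm.~\ref{thm:propGSF}(\ref{enu:wellDef}), and the equality of nets lifts to the required equality in $\rcrho$.

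For the ``moreover'' part, sharp openness of $U$ at $a$ provides $r_{0}\in\rcrho_{>0}$ with $\overline{B_{r_{0}}(a)}\subseteq U$, so that for every $k\in B_{r_{0}}(0)$ the segment $[a,a+k]$ lies in $U$ and parts \ref{enu:LagrangeRest}, \ref{enu:integralRest} supply \eqref{eq:LagrangeInfRest} together with the first equality of \eqref{eq:integralInfRest}. To force infinitesimality of the remainder uniformly in $k$, I would bound
\[
\left|\frac{1}{n!}\int_{0}^{1}(1-t)^{n}\,\diff{^{n+1}f}(a+tk)\cdot k^{n+1}\,\diff{t}\right|\le\frac{|k|^{n+1}}{(n+1)!}\,M,
\]
where $M\in\rcrho_{>0}$ is a moderate upper bound for $|\diff{^{n+1}f}|$ on the sharply closed and sharply bounded set $\overline{B_{r_{0}}(a)}=[\overline{\Eball_{r_{0,\eps}}(a_{\eps})}]$ (cf.~\eqref{eq:closureBall}). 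Securing such an $M$ is the main obstacle: it relies on the fact that GSF are globally bounded by a $\rho$-moderate net on functionally compact sets, a standard property of GSF that I would invoke from \cite{GiKu18,GiKuVe15,GIO1}. After replacing $M$ by $M\vee 1$ (invertible by Lem.~\ref{lem:invDense} if need be), the choice $R:=r_{0}\wedge(\diff{\rho}\,(M\vee 1)^{-1})^{1/(n+1)}\in\rcrho_{>0}$ yields $R^{n+1}(M\vee 1)\le\diff{\rho}\approx 0$, so the remainder is infinitesimal for every $k\in B_{R}(0)$.
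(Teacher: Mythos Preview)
The paper does not actually prove Thm.~\ref{thm:Taylor}; it is listed among the preliminaries of Sec.~\ref{sec:Basic-notions}, for which the paper refers to \cite{GiKu18,GiKuVe15,GIO1,GiKu16}. Your overall strategy---reducing to the one-variable auxiliary $\phi(t)=f(a+th)$, iterating integration by parts for the integral remainder, lifting the classical Lagrange mean value point $\eps$-wise, and bounding $|\diff{^{n+1}f}|$ on a functionally compact ball for the infinitesimal remainder---is exactly the approach taken in those references, so in spirit you match what the paper intends.

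There is one genuine slip in your treatment of \ref{enu:LagrangeRest}. You write ``fix representatives $(a_\eps),(b_\eps)$ with $a_\eps<b_\eps$, possible by Lem.~\ref{lem:mayer} applied to $h=b-a>0$''. But $a,b\in\rcrho^{d}$, the hypothesis does \emph{not} assume $a<b$ (nor even $d=1$, nor that $h$ is invertible), and Lem.~\ref{lem:mayer} concerns scalars. The correct reduction is the one you already set up for \ref{enu:integralRest}: work with $\phi_\eps(t):=f_\eps(a_\eps+th_\eps)$ on $[0,1]_\R$, apply the classical one-variable Lagrange form to obtain $\tau_\eps\in[0,1]_\R$ with $\phi_\eps(1)=\sum_{j\le n}\phi_\eps^{(j)}(0)/j!+\phi_\eps^{(n+1)}(\tau_\eps)/(n+1)!$, and set $\xi:=a+[\tau_\eps]h\in[a,b]$. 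Moderateness of $(\tau_\eps)$ is trivial, $\xi\in U$ since $[a,b]\subseteq U$, and $\diff{^{n+1}f}(\xi)\cdot h^{n+1}=[\phi_\eps^{(n+1)}(\tau_\eps)]$ by Thm.~\ref{thm:FR-forGSF}.\ref{enu:defDer} and the chain rule. With this adjustment your argument goes through.
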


Formulas \ref{enu:LagrangeRest} and \ref{enu:integralRest} correspond
to a plain generalization of Taylor's theorem for ordinary functions
with Lagrange and integral remainder, respectively. Dealing with generalized
functions, it is important to note that this direct statement also
includes the possibility that the differential $\diff{^{n+1}f}(\xi)$
may be infinite at some point. For this reason, in \eqref{eq:LagrangeInfRest}
and \eqref{eq:integralInfRest}, considering a sufficiently small
increment $k$, we get more classical infinitesimal remainders $\diff{^{n+1}f}(\xi)\cdot k^{n+1}\approx0$.
We can also define right and left derivatives as e.g.~$f'(a):=f'_{+}(a):=\lim_{\substack{t\to a\\
a<t
}
}f'(t)$, which always exist if $f\in\gcf{k+1}([a,b],\RC{\rho}^{d})$.

Analogously to the classical case, we say that $x_{0}\in X$ is a
local minimum of $f\in\gckf(X,\rcrho)$ if there exists a sharply
open neighbourhood (in the trace topology) $Y\subseteq X$ of $x_{0}$
such that $f(x_{0})\leq f(y)$ for all $y\in Y$. A local maximum
is defined accordingly.
\begin{lem}
\label{lem:local_min_diff}Let $X\subseteq\rcrho$ and let $f\in\gsf(X,\rcrho)$.
\begin{enumerate}
\item If $x_{0}\in X$ is a sharply interior local minimum of $f$ then
$f'(x_{0})=0$.
\item Let $a$, $b\in\rcrho$ with $a<b$, $[a,b]\subseteq X$ and $x_{0}$
be a sharply interior point of $[a,b]$. Assume that $x_{0}$ is a
local minimum of $f$. Then $f''(x_{0})\geq0$. Vice versa, if $f'(x_{0})=0$
and $f''(x_{0})>0$, then $x_{0}$ is a local minimum of $f$.
\end{enumerate}
\end{lem}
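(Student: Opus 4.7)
The plan is to follow the classical proofs of the first- and second-order derivative tests via Taylor's theorem (Thm.~\ref{thm:Taylor}, formula~\eqref{eq:LagrangeInfRest}), replacing the trichotomy of $\R$, which is not available in $\rcrho$, by three substitutes: density of invertibles in the sharp topology (Lem.~\ref{lem:invDense}), moderateness of generalized numbers (so that $|x|\leq\diff{\rho}^{-N}$ for some $N\in\N$), and the quantitative positivity of Lem.~\ref{lem:mayer}.

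For (i), fix an invertible $r>0$ such that $f(x_0+k)\geq f(x_0)$ for all $k\in B_r(0)$, and apply Taylor at order $n=1$: for sharply small $k$ there exist $\xi_\pm\in[x_0,x_0\pm k]$ with $\pm f'(x_0)k+\tfrac{1}{2}f''(\xi_\pm)k^2\geq 0$. Combining the $\pm$ inequalities gives $|f'(x_0)|\cdot|k|\leq\tfrac{1}{2}(|f''(\xi_+)|+|f''(\xi_-)|)|k|^2$; sharp continuity of $f''$ at $x_0$ yields a sharp neighbourhood on which $|f''(\xi_\pm)|\leq M:=|f''(x_0)|+1$, which is positive invertible and moderate. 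Dividing by the invertible $|k|$ gives $|f'(x_0)|\leq M|k|$. Choose $N\in\N$ with $M\leq\diff{\rho}^{-N}$; then for every $n\in\N$ the test increment $k=\diff{\rho}^{n+N+1}$ is invertible and sharply small, and yields $|f'(x_0)|\leq\diff{\rho}^n$. This holding for all $n$ forces $f'(x_0)=0$.

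For (ii), necessity follows the same pattern: by (i), $f'(x_0)=0$, so Taylor at order $n=2$ gives $0\leq\tfrac{1}{2}f''(x_0)k^2+\tfrac{1}{6}f'''(\xi)k^3$ for sharply small $k$; dividing by the invertible $k^2$ and using sharp continuity/moderateness of $f'''$ at $x_0$ (with bound $M':=|f'''(x_0)|+1$) produces $f''(x_0)\geq-\diff{\rho}^n$ for every $n\in\N$. To upgrade this to $f''(x_0)\geq 0$, pick any representative $(f''(x_0)_\eps)$ of $f''(x_0)$ and replace it by $\max(f''(x_0)_\eps,0)$: the preceding estimates guarantee this modification is $\rho$-negligible, so it is another representative of $f''(x_0)$, and it is nonnegative for $\eps$ small. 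For sufficiency, Lem.~\ref{lem:mayer} provides $m\in\N$ with $f''(x_0)\geq\diff{\rho}^m$. Taylor at order $n=2$ gives $f(x_0+k)-f(x_0)=\tfrac{k^2}{2}\bigl(f''(x_0)+\tfrac{1}{3}f'''(\xi)k\bigr)$; choosing $|k|<\delta':=\min\bigl(R,\tfrac{3}{2M'}f''(x_0)\bigr)$, which is positive invertible as a min/product of positive invertibles, makes the bracket $\geq\tfrac{1}{2}f''(x_0)\geq 0$, hence $f(x_0+k)\geq f(x_0)$.

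The main obstacle is the absence of trichotomy in $\rcrho$: from $f'(x_0)\neq 0$ or $f''(x_0)<0$ one cannot extract a sign and build a descent direction as in the real case. The systematic substitute is the quantitative bookkeeping above --- invertible test increments at arbitrarily sharp scale, moderate upper bounds by powers of $\diff{\rho}$, and the positive-part representative trick to pass from ``$\geq-\diff{\rho}^n$ for all $n$'' to ``$\geq 0$''.
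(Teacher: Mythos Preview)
The paper does not actually prove Lem.~\ref{lem:local_min_diff}: it is stated among the preliminaries of Sec.~\ref{sec:Basic-notions}, whose proofs are deferred to \cite{GiKu18,GiKuVe15,GIO1,GiKu16,GIO}. So there is no ``paper's own proof'' to compare against.

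Your argument is correct and is the natural adaptation of the classical first- and second-order derivative tests to the non-Archimedean, non-totally-ordered ring $\rcrho$. The key points---Taylor expansions with Lagrange remainder (Thm.~\ref{thm:Taylor}), sharp continuity to bound the remainder term uniformly near $x_0$, testing against the invertible infinitesimals $k=\diff{\rho}^{n+N+1}$ to force $|f'(x_0)|\le\diff{\rho}^n$ for all $n$, and the positive-part representative trick to pass from ``$f''(x_0)\ge-\diff{\rho}^n$ for all $n$'' to ``$f''(x_0)\ge0$''---are all sound and used exactly where the lack of trichotomy in $\rcrho$ blocks the classical route. The sufficiency argument in (ii) is likewise fine: $\delta'=\min\bigl(R,\tfrac{3}{2M'}f''(x_0)\bigr)$ is a minimum of positive invertibles, hence positive invertible by Lem.~\ref{lem:mayer}, and on $B_{\delta'}(0)$ the bracket is bounded below by $\tfrac{1}{2}f''(x_0)\ge0$. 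One small cosmetic point: in (i) you invoke Lem.~\ref{lem:invDense} in the plan but never use it, since you explicitly pick the invertible increments $\diff{\rho}^{n+N+1}$; you may drop that citation.
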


\subsection{\label{subsec:Embedding}Embedding of Sobolev-Schwartz distributions
and Colombeau generalized functions}

We finally recall two results that give a certain flexibility in constructing
embeddings of Schwartz distributions. Note that both the infinitesimal
$\rho$ and the embedding of Schwartz distributions have to be chosen
depending on the problem we aim to solve. A trivial example in this
direction is the ODE $y'=y/\diff{\eps}$, which cannot be solved for
$\rho=(\eps)$, but it has a solution for $\rho=(e^{-1/\eps})$. As
another simple example, if we need the property $H(0)=1/2$, where
$H$ is the Heaviside function, then we have to choose the embedding
of distributions accordingly. See also \cite{GiLu16,LuGi17} for further
details.\\
 If $\phi\in\mathcal{D}(\R^{n})$, $r\in\R_{>0}$ and $x\in\R^{n}$,
we use the notations $r\odot\phi$ for the function $x\in\R^{n}\mapsto\frac{1}{r^{n}}\cdot\phi\left(\frac{x}{r}\right)\in\R$
and $x\oplus\phi$ for the function $y\in\R^{n}\mapsto\phi(y-x)\in\R$.
These notations permit us to highlight that $\odot$ is a free action
of the multiplicative group $(\R_{>0},\cdot,1)$ on $\mathcal{D}(\R^{n})$
and $\oplus$ is a free action of the additive group $(\R_{>0},+,0)$
on $\mathcal{D}(\R^{n})$. We also have the distributive property
$r\odot(x\oplus\phi)=rx\oplus r\odot\phi$.
\begin{lem}
\label{lem:strictDeltaNet}Let $b\in\R_{\rho}$ be a net such that
$\lim_{\eps\to0^{+}}b_{\eps}=+\infty$. Let $d\in(0,1)$. There exists
a net $\left(\psi_{\eps}\right)_{\eps\in I}$ of $\mathcal{D}(\R^{n})$
with the properties:
\begin{enumerate}
\item \label{enu:suppStrictDeltaNet}$supp(\psi_{\eps})\subseteq B_{1}(0)$
and $\psi_{\eps}$ is even for all $\eps\in I$.
\item Let $\omega_{n}$ denote the surface area of $S^{n-1}$ and set $c_{n}:=\frac{2n}{\omega_{n}}$
for $n>1$ and $c_{1}:=1$, then $\psi_{\eps}(0)=c_{n}$ for all $\eps\in I$.
\item \label{enu:intOneStrictDeltaNet}$\int\psi_{\eps}=1$ for all $\eps\in I$.
\item \label{enu:moderateStrictDeltaNet}$\forall\alpha\in\N^{n}\,\exists p\in\N:\ \sup_{x\in\R^{n}}\left|\partial^{\alpha}\psi_{\eps}(x)\right|=O(b_{\eps}^{p})$
as $\eps\to0^{+}$.
\item \label{enu:momentsStrictDeltaNet}$\forall j\in\N\,\forall^{0}\eps:\ 1\le|\alpha|\le j\Rightarrow\int x^{\alpha}\cdot\psi_{\eps}(x)\diff{x}=0$.
\item \label{enu:smallNegPartStrictDeltaNet}$\forall\eta\in\R_{>0}\,\forall^{0}\eps:\ \int\left|\psi_{\eps}\right|\le1+\eta$.
\item \label{enu:int1Dim}If $n=1$, then the net $(\psi_{\eps})_{\eps\in I}$
can be chosen so that $\int_{-\infty}^{0}\psi_{\eps}=d$.
\end{enumerate}
\noindent In particular $\psi_{\eps}^{b}:=b_{\eps}^{-1}\odot\psi_{\eps}$
satisfies \ref{enu:intOneStrictDeltaNet} - \ref{enu:smallNegPartStrictDeltaNet}.
\end{lem}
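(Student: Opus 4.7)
The plan is to construct $\psi_\eps$ via a diagonal procedure applied to a family of fixed mollifiers $(\phi^{(N)})_{N \in \N}$ with increasing order of moment vanishing. Concretely, for each $N$ I build $\phi^{(N)} \in \mathcal{D}(\R^n)$ supported in $B_1(0)$, even, satisfying $\int \phi^{(N)} = 1$, $\phi^{(N)}(0) = c_n$, $\int x^\alpha \phi^{(N)}(x)\,dx = 0$ for $1 \le |\alpha| \le N$, and $\int |\phi^{(N)}| \to 1$ as $N \to \infty$. Then I choose $N(\eps) \to \infty$ slowly enough that, for every multi-index $\alpha$, some power $b_\eps^{p_\alpha}$ dominates $\sup_x |\partial^\alpha \phi^{(N(\eps))}|$, and set $\psi_\eps := \phi^{(N(\eps))}$ (with a small odd correction added in dimension one to get (vii)).

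For the mollifiers $\phi^{(N)}$ I use a superposition of rescaled copies of a fixed non-negative even bump $\chi \in \mathcal{D}(\R^n)$ with $\mathrm{supp}(\chi) \subseteq B_1(0)$ and $\int \chi = 1$. Fix scales $0 < \lambda_0^{(N)} < \lambda_1^{(N)} < \cdots < \lambda_N^{(N)} < 1$ and define
\[
\tilde\phi^{(N)}(x) := \sum_{k=0}^{N} c_k^{(N)} (\lambda_k^{(N)})^{-n} \chi(x/\lambda_k^{(N)}),
\]
where the coefficients $c_k^{(N)}$ solve the Vandermonde-type system $\sum_{k=0}^{N} c_k^{(N)} (\lambda_k^{(N)})^{2m} = \delta_{m,0}$ for $m = 0, 1, \ldots, N$. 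Evenness of $\chi$ in each variable forces $\int y^\alpha \chi(y)\,dy = 0$ whenever some $\alpha_i$ is odd, so all such moments of $\tilde\phi^{(N)}$ vanish automatically. For the remaining multi-indices with all $\alpha_i$ even and $1 \le |\alpha| \le 2N$, a change of variables gives $\int x^\alpha \tilde\phi^{(N)}\,dx = \left(\int y^\alpha \chi(y)\,dy\right) \sum_k c_k^{(N)}(\lambda_k^{(N)})^{|\alpha|} = 0$ by the Vandermonde conditions. Letting $\lambda_0^{(N)} \to 0$ while keeping $\lambda_1^{(N)},\ldots,\lambda_N^{(N)}$ in a fixed compact subset of $(0,1)$, the explicit Vandermonde formula yields $c_0^{(N)} \to 1$ and $c_k^{(N)} \to 0$ for $k \ge 1$, whence $\int |\tilde\phi^{(N)}| \le \sum_k |c_k^{(N)}| \to 1$, delivering (vi). The pointwise normalization $\phi^{(N)}(0) = c_n$ is then restored by adding $\gamma^{(N)}\eta^{(N)}$, where $\eta^{(N)} \in \mathcal{D}(B_1(0))$ is an even function with $\int \eta^{(N)} = 0$, moments vanishing up to order $N$, and $\eta^{(N)}(0) \ne 0$ (e.g.\ a high Laplacian of a bump); the scalar $\gamma^{(N)}$ is uniquely determined by this requirement and tends to $0$ so that (vi) is preserved. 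Property (vii), in dimension one, is obtained analogously by adding an odd corrector $c^{(N)} g^{(N)}$ with vanishing odd moments up to order $N$ and prescribed $\int_{-\infty}^0 g^{(N)}$; evenness in (i) is understood as dropped under the hypothesis of (vii).

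The diagonal selection of $N(\eps)$ uses only that $b_\eps \to +\infty$: since each $\sup_x |\partial^\alpha \phi^{(N)}|$ is a finite number depending only on $N$ and $\alpha$, a standard diagonal argument over the countable family of constraints indexed by $\alpha$ produces $N(\eps) \to \infty$ satisfying (iv). Setting $\psi_\eps := \phi^{(N(\eps))}$, properties (i)-(iii) are immediate; (iv) is by choice of $N(\eps)$; (v) holds since $N(\eps) \ge j$ eventually for each fixed $j$; (vi) follows from $\int|\phi^{(N)}| \to 1$. The main technical obstacle is precisely property (vi): any compactly supported function with vanishing moments of order $1,\ldots,N$ for $N \ge 2$ must take negative values, so a naive polynomial-correction construction $\phi^{(N)} = \chi \cdot P_N$ yields bounded but not necessarily convergent-to-one $L^1$ norms. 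The superposition-of-scales ansatz sidesteps this by concentrating most of the mass in a single positive bump at the vanishing scale $\lambda_0^{(N)}$, with the moment-killing contributions confined to larger scales with vanishingly small coefficients, which is exactly what makes $\int|\tilde\phi^{(N)}| \to 1$ possible.
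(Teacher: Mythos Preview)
The paper does not prove this lemma: it is listed among the preliminaries whose proofs are delegated to the references \cite{GiKu18,GiKuVe15,GIO1,GiKu16}. So there is nothing in the paper to compare against, and I evaluate your argument on its own.

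Your overall plan is sound. The superposition--of--scales ansatz with Vandermonde coefficients cleanly produces (iii) and (v), and your observation that extrapolating the Lagrange weights to $t=0$ forces $c_0^{(N)}\to 1$, $c_k^{(N)}\to 0$ ($k\ge 1$) as $\lambda_0^{(N)}\to 0$ is exactly what makes $\int|\tilde\phi^{(N)}|\to 1$ work; the diagonal selection of $N(\eps)$ against $b_\eps\to\infty$ for (iv) is also correct.

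The gap is in your treatment of (ii). Achieving (vi) via your mechanism \emph{requires} $\lambda_0^{(N)}\to 0$ (otherwise the Lagrange extrapolation weights blow up in $N$), and then
\[
\tilde\phi^{(N)}(0)=\chi(0)\sum_{k}c_k^{(N)}(\lambda_k^{(N)})^{-n}
\]
either diverges (if $\chi(0)>0$) or is identically $0$ (if you take $\chi$ annular with $\chi(0)=0$). In either case the corrector $\gamma^{(N)}\eta^{(N)}$ must absorb a contribution at the origin that does \emph{not} vanish as $N\to\infty$, so your assertion ``$\gamma^{(N)}\to 0$ so that (vi) is preserved'' is not justified. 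What must tend to zero is $|\gamma^{(N)}|\,\|\eta^{(N)}\|_{L^1}$, not $\gamma^{(N)}$ itself, and for the suggested choice $\eta^{(N)}=\Delta^{k_N}\zeta$ with a \emph{fixed} $\zeta$ there is no reason for the ratio $\|\Delta^{k_N}\zeta\|_{L^1}\big/|\Delta^{k_N}\zeta(0)|$ to decay. The repair is to introduce a further scale in the corrector, e.g.\ $\eta^{(N)}(x)=\Delta^{k_N}\!\bigl(\mu_N^{-n}\zeta(x/\mu_N)\bigr)$ with $k_N>N/2$ and $\mu_N\to 0$ chosen \emph{after} $\lambda_0^{(N)}$: a direct computation gives
\[
|\gamma^{(N)}|\,\|\eta^{(N)}\|_{L^1}\ \le\ C\,\mu_N^{\,n}\,\max\bigl(1,(\lambda_0^{(N)})^{-n}\bigr)\,\frac{\|\Delta^{k_N}\zeta\|_{L^1}}{|\Delta^{k_N}\zeta(0)|},
\]
which can be made $\le 1/N$ by choosing $\mu_N$ small enough (provided $\zeta$ is picked with $\Delta^{k}\zeta(0)\neq 0$ for all $k$). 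The resulting growth of $\sup_x|\partial^\alpha\phi^{(N)}|$ is still finite for each $N$, so it is absorbed into the diagonal argument for (iv). The same care is needed for the odd corrector in (vii), whose $L^1$ contribution you also leave uncontrolled. Finally, you are right that (vii) with $d\neq\tfrac12$ is incompatible with the evenness clause of (i); the lemma is to be read as dropping evenness in that case.
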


\noindent Concerning embeddings of Schwartz distributions, we have
the following result, where $\csp{\Omega}:=\{[x_{\eps}]\in[\Omega]\mid\exists K\Subset\Omega\,\forall^{0}\eps:\ x_{\eps}\in K\}$
is called the set of \emph{compactly supported points in }$\Omega\subseteq\R^{n}$.
\begin{thm}
\label{thm:embeddingD'}Under the assumptions of Lemma \ref{lem:strictDeltaNet},
let $\Omega\subseteq\R^{n}$ be an open set and let $(\psi_{\eps}^{b})$
be the net defined in \ref{lem:strictDeltaNet}. Then the mapping
\[
\iota_{\Omega}^{b}:T\in\mathcal{E}'(\Omega)\mapsto\left[\left(T\ast\psi_{\eps}^{b}\right)(-)\right]\in\gsf(\csp{\Omega},\rti)
\]
uniquely extends to a sheaf morphism of real vector spaces 
\[
\iota^{b}:\mathcal{D}'\ra\gsf(\csp{(-)},\rti),
\]
and satisfies the following properties:
\begin{enumerate}
\item \label{enu:embSmooth}If $b\ge\diff{\rho}^{-a}$ for some $a\in\R_{>0}$,
then $\iota^{b}|_{\Coo(-)}:\Coo(-)\ra\gsf(\csp{(-)},\RC{\rho})$ is
a sheaf morphism of algebras and $\iota_{\Omega}^{b}(f)(x)=f(x)$
for all smooth functions $f\in\Coo(\Omega)$ and all $x\in\Omega$;
\item If $T\in\mathcal{E}'(\Omega)$ then $\text{\text{\emph{supp}}}(T)=\text{\emph{\text{supp}}}(\iota_{\Omega}^{b}(T))$;
\item \label{enu:eps-D'}$\lim_{\eps\to0^{+}}\int_{\Omega}\iota_{\Omega}^{b}(T)_{\eps}\cdot\phi=\langle T,\phi\rangle$
for all $\phi\in\mathcal{D}(\Omega)$ and all $T\in\mathcal{D}'(\Omega)$;
\item $\iota^{b}$ commutes with partial derivatives, i.e.~$\partial^{\alpha}\left(\iota_{\Omega}^{b}(T)\right)=\iota_{\Omega}^{b}\left(\partial^{\alpha}T\right)$
for each $T\in\mathcal{D}'(\Omega)$ and $\alpha\in\N$.
\end{enumerate}
\end{thm}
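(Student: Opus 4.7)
The plan is to proceed in three stages. First, I would establish that $\iota_\Omega^b$ is well-defined on compactly supported distributions: by the structure theorem, any $T \in \mathcal{E}'(\Omega)$ can be written $T = \sum_{|\alpha|\le N}\partial^\alpha f_\alpha$ with continuous $f_\alpha$ supported in a common compact $K\Subset\Omega$, so $T*\psi_\eps^b = \sum_\alpha f_\alpha*\partial^\alpha\psi_\eps^b$. Property \ref{enu:moderateStrictDeltaNet} of Lemma \ref{lem:strictDeltaNet} then yields polynomial bounds in $b_\eps$ on all derivatives of $T*\psi_\eps^b$, hence $\rho$-moderacy. For $[x_\eps]\in\csp{\Omega}$ one has $x_\eps\in K'\Subset\Omega$ for $\eps$ small and $b_\eps^{-1}$ eventually smaller than the distance from $K'$ to $\partial\Omega$, so the convolution is classically well-defined there. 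Independence of the representative of $x$ follows from smoothness of $T*\psi_\eps^b$ combined with the moderate derivative bound and the mean value theorem.

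Second, I would extend $\iota^b$ to all of $\mathcal{D}'$ by a sheaf-theoretic gluing argument. Fixing a locally finite open cover $(\Omega_i)$ of $\Omega$ by precompact open subsets $\Omega_i\Subset\Omega$ with a subordinate smooth partition of unity $(\chi_i)$, each $\chi_iT$ lies in $\mathcal{E}'(\Omega)$ and I would set $\iota_\Omega^b(T):=\sum_i \iota_\Omega^b(\chi_iT)$, a sum that is locally finite at any $[x_\eps]\in\csp{\Omega}$. Independence of the chosen partition follows from linearity together with the key microlocal observation that, for fixed $x=[x_\eps]\in\csp{\Omega}$, the value $(T*\psi_\eps^b)(x_\eps)$ depends up to $\rho$-negligible terms only on $T$ restricted to arbitrarily small sharp neighbourhoods of $x$, since $\mathrm{supp}(\psi_\eps^b(\cdot - x_\eps)) \subseteq \overline{B_{b_\eps^{-1}}(x_\eps)}$. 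Compatibility with restrictions then yields the sheaf morphism $\iota^b:\mathcal{D}'\ra\gsf(\csp{(-)},\rti)$, and uniqueness of the extension reduces to the fact that $\mathcal{D}'$-sections are locally compactly supported.

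Third, I would verify the four listed properties. Commutation with derivatives is immediate from the classical identity $\partial^\alpha(T*\psi_\eps^b) = (\partial^\alpha T)*\psi_\eps^b$. The weak limit in \ref{enu:eps-D'} follows from the standard fact that $(\psi_\eps^b)$ is a Dirac net, using \ref{enu:suppStrictDeltaNet} and \ref{enu:intOneStrictDeltaNet} of Lemma \ref{lem:strictDeltaNet}. For the support equality, the inclusion $\mathrm{supp}(\iota_\Omega^b(T))\subseteq\mathrm{supp}(T)$ follows from $\mathrm{supp}(T*\psi_\eps^b)\subseteq\mathrm{supp}(T)+\overline{B_{b_\eps^{-1}}(0)}$ together with the characterization of internal sets in Theorem \ref{thm:strongMembershipAndDistanceComplement}; the reverse inclusion uses that if $\iota_\Omega^b(T)$ were zero on a classical neighbourhood of $x_0\in\mathrm{supp}(T)$, testing against a bump function near $x_0$ via the limit property would contradict $x_0\in\mathrm{supp}(T)$. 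For the consistency on smooth functions in \ref{enu:embSmooth}, a Taylor expansion of $f\in\Coo(\Omega)$ at $x_\eps$ combined with the vanishing moments \ref{enu:momentsStrictDeltaNet} gives
\[
(f*\psi_\eps^b)(x_\eps) - f(x_\eps) = O(b_\eps^{-(n+1)})
\]
for arbitrary $n\in\N$, and the hypothesis $b\ge\diff{\rho}^{-a}$ renders this $\rho$-negligible.

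The main obstacle is the algebra morphism property in \ref{enu:embSmooth}: one must show $\iota_\Omega^b(fg)(x) = \iota_\Omega^b(f)(x)\cdot\iota_\Omega^b(g)(x)$ for smooth $f,g$ and $x\in\csp{\Omega}$. Since convolution is not pointwise multiplication, this does not hold at the representative level; the strategy is to apply the Taylor-plus-moments argument above to each of $fg$, $f$ and $g$ separately, so that each of the three convolutions agrees with its pointwise value up to $O(b_\eps^{-n})$ for every $n\in\N$, and then the bound $b\ge\diff{\rho}^{-a}$ simultaneously absorbs all three errors and their products into $\rho$-negligibility, yielding the identity in $\rti$.
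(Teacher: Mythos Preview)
The paper does not actually contain a proof of Theorem~\ref{thm:embeddingD'}: it is presented in Section~\ref{sec:Basic-notions} as a preliminary result, and the paper explicitly states that all proofs in that section are to be found in the references \cite{GiKu18,GiKuVe15,GIO1,GiKu16}. There is therefore no in-paper proof to compare your proposal against.

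That said, your outline follows the standard route used in the Colombeau literature (in particular in \cite{GKOS,GIO1,GiLu16}): structure theorem for $\mathcal{E}'$ to get moderateness, sheaf extension via a locally finite partition of unity to reach $\mathcal{D}'$, and then the Taylor-plus-vanishing-moments argument for consistency on $\Coo$. One point to sharpen: for the algebra morphism you need $\iota_\Omega^b(f)(x)=f(x)$ for all $x\in\csp{\Omega}$, not merely for standard $x\in\Omega$ as the theorem statement literally asserts; your Taylor argument does deliver this (the constants coming from $\sup_{K'}|\partial^\beta f|$ stay finite because $x_\eps$ ranges in a fixed compact $K'\Subset\Omega$), and once that stronger identity is established the algebra property is immediate since both sides equal $f(x)g(x)$ in $\rti$---there is no need to track three separate error terms. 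Also be careful in the error estimate: the Taylor remainder of order $n$ against a mollifier supported in $B_{b_\eps^{-1}}(0)$ gives $O(b_\eps^{-n})$ only after the moments up to order $n-1$ kill the lower terms, and property~\ref{enu:momentsStrictDeltaNet} of Lemma~\ref{lem:strictDeltaNet} guarantees this only for $\eps$ sufficiently small depending on $n$; this is harmless for $\rho$-negligibility but should be stated.
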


Concerning the embedding of Colombeau generalized functions, we recall
that the special Colombeau algebra on $\Omega$ is defined as the
quotient $\gs(\Omega):=\mathcal{E}_{M}(\Omega)/\ns(\Omega)$ of \emph{moderate
nets} over \emph{negligible nets}, where the former is 
\[
\mathcal{E}_{M}(\Omega):=\{(u_{\eps})\in\cinfty(\Omega)^{I}\mid\forall K\Subset\Omega\,\forall\alpha\in\N^{n}\,\exists N\in\N:\sup_{x\in K}|\partial^{\alpha}u_{\eps}(x)|=O(\eps^{-N})\}
\]
and the latter is 
\[
\ns(\Omega):=\{(u_{\eps})\in\cinfty(\Omega)^{I}\mid\forall K\Subset\Omega\,\forall\alpha\in\N^{n}\,\forall m\in\N:\sup_{x\in K}|\partial^{\alpha}u_{\eps}(x)|=O(\eps^{m})\}.
\]
Using $\rho=(\eps)$, we have the following compatibility result:
\begin{thm}
\label{thm:inclusionCGF}A Colombeau generalized function $u=(u_{\eps})+\ns(\Omega)^{d}\in\gs(\Omega)^{d}$
defines a GSF $u:[x_{\eps}]\in\csp{\Omega}\longrightarrow[u_{\eps}(x_{\eps})]\in\Rtil^{d}$.
This assignment provides a bijection of $\gs(\Omega)^{d}$ onto $\gsf(\csp{\Omega},\rti^{d})$
for every open set $\Omega\subseteq\R^{n}$.
\end{thm}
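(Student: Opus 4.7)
The plan is to verify that the assignment $u\mapsto([x_\eps]\mapsto[u_\eps(x_\eps)])$ is a well-defined bijection by checking well-definedness, the GSF axioms, injectivity, and surjectivity in that order. The main idea recurring in the last two steps will be to extract a \emph{single} net of compactly supported points from hypothetical pointwise/local failures of moderateness or negligibility, using that $\csp{\Omega}$ is exactly the set of points represented by nets eventually lying in some $K\Subset\Omega$.

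For well-definedness, fix $[x_\eps]\in\csp{\Omega}$, so there is $K\Subset\Omega$ with $x_\eps\in K$ eventually. From the $\mathcal{E}_M$-estimate $\sup_{K}|\partial^\alpha u_\eps|=O(\eps^{-N})$ I immediately obtain that $(\partial^\alpha u_\eps(x_\eps))\in\R_\rho^d$ for $\rho=(\eps)$, giving both condition (iii) of Def.~\ref{def:netDefMap} and the moderateness of the value. Independence from the representative of $u$ follows because, if $(u_\eps-u'_\eps)\in\ns(\Omega)^d$, then $\sup_{K}|u_\eps-u'_\eps|=O(\eps^m)$ for every $m$. Independence from the representative of $x$ is the delicate point: if $(x_\eps)\sim_\rho(x'_\eps)$, I enlarge $K$ slightly to a compact $K'\Subset\Omega$ containing the segment between $x_\eps$ and $x'_\eps$ for small $\eps$ (using that $|x_\eps-x'_\eps|$ is infinitesimal), and apply a first-order Taylor/mean value expansion componentwise to write $u_\eps(x_\eps)-u_\eps(x'_\eps)=\int_0^1 \nabla u_\eps(x'_\eps+t(x_\eps-x'_\eps))\,\diff t\cdot(x_\eps-x'_\eps)$. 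The $\mathcal{E}_M$-estimate on $K'$ combined with the negligibility of $(x_\eps-x'_\eps)$ forces the difference to be negligible. Conditions (i) and (ii) of Def.~\ref{def:netDefMap} are then immediate.

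For injectivity, suppose $u,u'\in\gs(\Omega)^d$ give the same GSF $f$. Then for every $[x_\eps]\in\csp{\Omega}$ and every $\alpha\in\N^n$ one has $[\partial^\alpha u_\eps(x_\eps)]=[\partial^\alpha u'_\eps(x_\eps)]$ in $\rti^d$, because as GSFs the partial derivatives of $u$ and $u'$ coincide (they are computed from defining nets by the same formula, see Thm.~\ref{thm:FR-forGSF}). If $(u_\eps-u'_\eps)\notin\ns(\Omega)^d$, there exist $K\Subset\Omega$, $\alpha\in\N^n$, $m\in\N$, a sequence $\eps_k\downarrow 0$, and points $y_k\in K$ with $|\partial^\alpha(u_{\eps_k}-u'_{\eps_k})(y_k)|>\eps_k^m$. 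Define $x_\eps:=y_k$ when $\eps=\eps_k$ and $x_\eps:=y_1$ otherwise; since $x_\eps\in K$ for all $\eps$, one has $[x_\eps]\in\csp{\Omega}$. But then $(\partial^\alpha u_\eps(x_\eps)-\partial^\alpha u'_\eps(x_\eps))$ is not $\rho$-negligible, a contradiction.

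For surjectivity, given $f\in\gsf(\csp{\Omega},\rti^d)$, Thm.~\ref{thm:propGSF}.\ref{enu:globallyDefNet} supplies a defining net $v_\eps\in\Coo(\R^n,\R^d)$, and I set $u_\eps:=v_\eps|_\Omega$. The only thing to check is $(u_\eps)\in\mathcal{E}_M(\Omega)^d$; then $u:=(u_\eps)+\ns(\Omega)^d$ is clearly sent to $f$. This is the mirror image of the injectivity argument: if $(u_\eps)\notin\mathcal{E}_M(\Omega)^d$, I get $K\Subset\Omega$, $\alpha\in\N^n$, a sequence $\eps_k\downarrow 0$ and $y_k\in K$ with $|\partial^\alpha u_{\eps_k}(y_k)|>\eps_k^{-k}$; extending $(y_k)$ to a net $(x_\eps)$ in $K$ as before, I obtain $[x_\eps]\in\csp{\Omega}$ but $(\partial^\alpha v_\eps(x_\eps))\notin\R_\rho$, contradicting condition (iii) of Def.~\ref{def:netDefMap} for $f$. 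The hardest step is the $x$-independence in well-definedness, since it is the only place where the mean value theorem and a careful enlargement of the compact set must be combined; the other steps are all straightforward once the net-extraction trick is in hand.
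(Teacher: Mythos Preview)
The paper does not prove this theorem: it is listed among the preliminary results of Sec.~\ref{sec:Basic-notions}, for which the reader is referred to \cite{GiKu18,GiKuVe15,GIO1,GiKu16}. Your argument is correct and is essentially the standard one given in those references (in particular \cite{GiKuVe15}): moderateness and negligibility on compact sets are reduced to pointwise conditions on compactly supported generalized points via the net-extraction trick, and independence from the representative of $x$ is handled by the mean value expansion on a slightly enlarged compact set. One minor remark: in the injectivity step you do not actually need arbitrary $\alpha$, since for moderate nets negligibility of order zero on every compact set already implies $(u_\eps-u'_\eps)\in\ns(\Omega)^d$; but your version with general $\alpha$ is also fine.
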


\begin{example}
\label{enu:deltaCompDelta}~
\begin{enumerate}
\item \label{enu:delta}Let $\delta$, $H\in\gsf(\rcrho,\rcrho)$ be the
corresponding $\iota^{b}$-embeddings of the Dirac delta and of the
Heaviside function. Then $\delta(x)=b\cdot\psi(b\cdot x)$, where
$\psi(x):=[\psi_{\eps}(x_{\eps})]$. We have that $\delta(0)=b\ge\diff{\rho}^{-a}$
and $\delta(x)=0$ if $|bx|\ge1$ because of Lem.~\ref{lem:strictDeltaNet}.\ref{enu:suppStrictDeltaNet}.
The condition $|bx|\ge1$ surely holds e.g.~if $|x|>\frac{-1}{k\log\diff{\rho}}\approx0$
for some $k\in\R_{>0}$ because $b\ge\diff{\rho}^{-a}$; in particular,
it is satisfied if $|x|>r$ for some $r\in\R_{>0}$. By the intermediate
value theorem (see \cite{GIO1}), $\delta$ takes any value in the
interval $[0,b]\subseteq\rcrho$. Similar properties can be stated
e.g.~for $\delta^{2}(x)=b^{2}\cdot\psi(b\cdot x)^{2}$.
\item Analogously, we have $H(x)=1$ if $|bx|\ge1$ and $x>0$; $H(x)=0$
if $|bx|\ge1$ and $x<0$; finally $H(0)=\frac{1}{2}$ because of
Lem.~\ref{lem:strictDeltaNet}.\ref{enu:suppStrictDeltaNet}. By
the intermediate value theorem, $H$ takes any value in the interval
$[0,1]\subseteq\rcrho$.
\item The composition $\delta\circ\delta\in\gsf(\rcrho,\rcrho)$ is given
by $(\delta\circ\delta)(x)=b\psi\left(b^{2}\psi(bx)\right)$ and is
an even function. If $|bx|\ge1$, then $(\delta\circ\delta)(x)=b$.
Since $(\delta\circ\delta)(0)=0$, again using the intermediate value
theorem, we have that $\delta\circ\delta$ takes any value in the
interval $[0,b]\subseteq\rcrho$. Suitably choosing the net $(\psi_{\eps})$
it is possible to have that if $-\frac{1}{2b}\le x\le\frac{1}{2b}$
(hence $x$ is infinitesimal), then $(\delta\circ\delta)(x)=0$. If
$x=\frac{k}{b}$ for some $k\in\N_{>0}$, then $x$ is still infinitesimal
but $(\delta\circ\delta)(x)=b$. Analogously, one can deal with compositions
such as $H\circ\delta$ and $\delta\circ H$.
\end{enumerate}
\noindent See Fig.~\ref{fig:MollifierHeaviside} for a graphical
representations of $\delta$ and $H$. The infinitesimal oscillations
shown in this figure occur only in an infinitesimal neighborhood of
the origin because of \ref{enu:delta} in example \ref{enu:deltaCompDelta}
and because $\frac{-1}{k\log\diff{\rho}}\approx0$. They can be proved
to actually occur as a consequence of Lem.~\ref{lem:strictDeltaNet}.\ref{enu:momentsStrictDeltaNet}
which is a necessary property to prove Thm.~\ref{thm:embeddingD'}.\ref{enu:embSmooth},
see \cite{GIO1,GiLu16}. It is well-known that the latter property
is one of the core ideas to bypass the Schwartz's impossibility theorem,
see e.g.~\cite{GKOS}.
\end{example}

\noindent \begin{center}
\begin{figure}
\label{fig: Col_mol}
\begin{centering}
\includegraphics[scale=0.15]{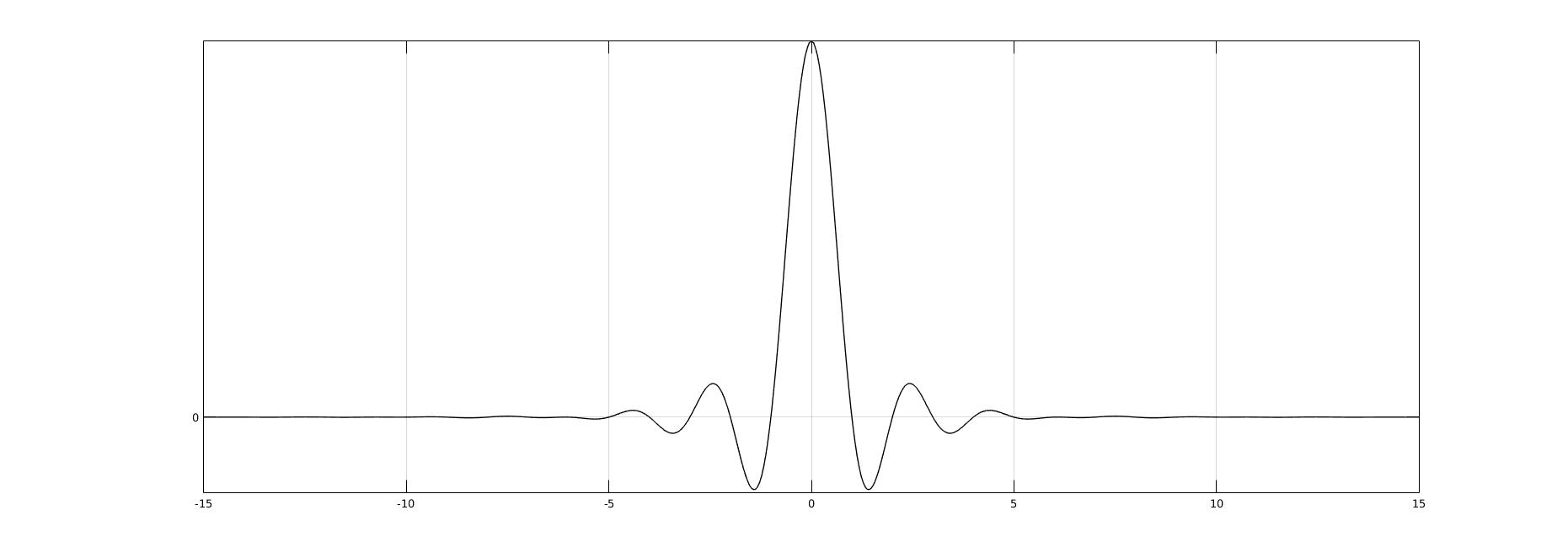}
\par\end{centering}
\begin{centering}
\includegraphics[scale=0.15]{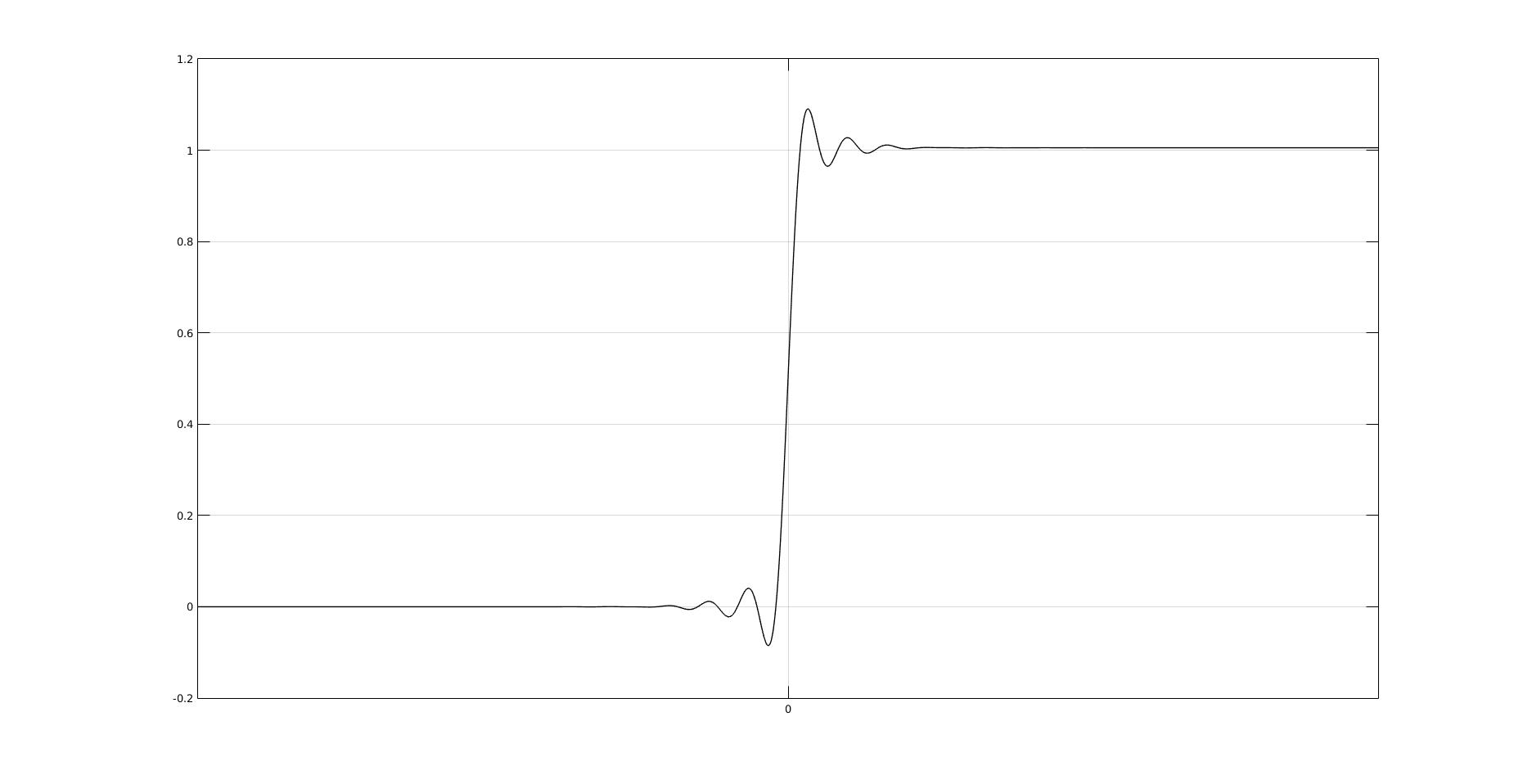}
\par\end{centering}
\caption{\label{fig:MollifierHeaviside}Representations of Dirac delta and
Heaviside function}
\end{figure}
\par\end{center}

\subsection{Functionally compact sets and multidimensional integration}

\subsubsection{\label{subsec:EVTandFcmp}Extreme value theorem and functionally
compact sets}

For GSF, suitable generalizations of many classical theorems of differential
and integral calculus hold: intermediate value theorem, mean value
theorems, suitable sheaf properties, local and global inverse function
theorems, Banach fixed point theorem and a corresponding Picard-Lindel�f
theorem both for ODE and PDE, see \cite{GiKuVe15,GiKu16,GIO1,LuGi17,GiLu16}.

Even though the intervals $[a,b]\subseteq\Rtil$, $a$, $b\in\R$,
are not compact in the sharp topology (see \cite{GiKuVe15}), analogously
to the case of smooth functions, a $\gckf$ map satisfies an extreme
value theorem on such sets. In fact, we have:
\begin{thm}
\label{thm:extremeValues}Let $f\in\gckf(X,\Rtil)$ be a generalized
$\mathcal{C}^{k}$ function defined on the subset $X$ of $\Rtil^{n}$.
Let $\emptyset\ne K=[K_{\eps}]\subseteq X$ be an internal set generated
by a sharply bounded net $(K_{\eps})$ of compact sets $K_{\eps}\Subset\R^{n}$
, then 
\begin{equation}
\exists m,M\in K\,\forall x\in K:\ f(m)\le f(x)\le f(M).\label{eq:epsExtreme}
\end{equation}
\end{thm}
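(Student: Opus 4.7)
The plan is to construct $m$ and $M$ representative-by-representative, using the classical Weierstrass extreme value theorem at each level $\eps$, and then verify that the resulting nets give well-defined generalized points of $K$.

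First I would fix a defining net $f_\eps \in \mathcal{C}^k(\Omega_\eps,\R)$ for $f$, and for every $\eps\in I$ with $K_\eps\ne\emptyset$ pick, by the classical extreme value theorem, points $m_\eps$, $M_\eps\in K_\eps$ such that
\[
f_\eps(m_\eps)=\min_{x\in K_\eps} f_\eps(x),\qquad f_\eps(M_\eps)=\max_{x\in K_\eps} f_\eps(x).
\]
On the remaining (small) set of $\eps$ where $K_\eps=\emptyset$, set $m_\eps=M_\eps=0$, say. Since $K\ne\emptyset$, picking any $[x_\eps]\in K$ shows that $x_\eps\in K_\eps$ for $\eps$ small, so $K_\eps\ne\emptyset$ eventually and the adjustment at the remaining $\eps$ is irrelevant to the equivalence classes.

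Next I would verify moderateness. By sharp boundedness of $(K_\eps)$ there is $N\in\R_{>0}$ with $|y|\le\rho_\eps^{-N}$ for all $y\in K_\eps$ and all small $\eps$, which gives $(m_\eps),(M_\eps)\in\R_\rho^n$. Hence $m:=[m_\eps]$, $M:=[M_\eps]$ are well-defined elements of $\RC{\rho}^n$, and since $m_\eps,M_\eps\in K_\eps$ for $\eps$ small, Definition \ref{def:internalStronglyInternal} yields $m,M\in[K_\eps]=K\subseteq X$.

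Finally I would establish the extremal inequalities. Let $x=[x_\eps]\in K$; then $x_\eps\in K_\eps$ for $\eps$ small, and by the choice of $m_\eps,M_\eps$,
\[
f_\eps(m_\eps)\le f_\eps(x_\eps)\le f_\eps(M_\eps)\quad\text{for $\eps$ small.}
\]
Passing to equivalence classes and using $f([y_\eps])=[f_\eps(y_\eps)]$ (which is available from the definition of $\gckf$, and independent of the chosen representative by Thm.~\ref{thm:propGSF}\ref{enu:wellDef}), together with the characterization of $\le$ on $\RC{\rho}$ via eventual inequalities of representatives, yields $f(m)\le f(x)\le f(M)$, which is \eqref{eq:epsExtreme}. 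No real obstacle arises here: the only mildly delicate point is that $m$ and $M$ are a priori defined only through a particular choice of representative, but membership in the internal set $K$ and the order on $\RC{\rho}$ are insensitive to the choice, so the argument goes through directly.
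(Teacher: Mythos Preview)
The paper does not actually prove this theorem: it is stated in Section~\ref{subsec:EVTandFcmp} as a preliminary result, with proofs deferred to the references \cite{GiKuVe15,GiKu16,GIO1,LuGi17,GiLu16}. Your $\eps$-wise construction via the classical Weierstrass theorem is exactly the standard argument used in those references, and it is correct as written. The only small point you glossed over is that the defining net $f_\eps$ need not a priori be defined on all of $K_\eps$ (only on $\Omega_\eps$, with $X\subseteq\sint{\Omega_\eps}$), but the paper explicitly notes after Def.~11 that the analogue of Thm.~\ref{thm:propGSF}\ref{enu:globallyDefNet} holds for $\gckf$, so you may take $f_\eps\in\mathcal{C}^k(\R^n,\R)$ globally defined and the issue disappears.
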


We shall use the assumptions on $K$ and $(K_{\eps})$ given in this
theorem to introduce a notion of ``compact subset'' which behaves
better than the usual classical notion of compactness in the sharp
topology.
\begin{defn}
\label{def:functCmpt} A subset $K$ of $\Rtil^{n}$ is called \emph{functionally
compact}, denoted by $K\fcmp\Rtil^{n}$, if there exists a net $(K_{\eps})$
such that
\begin{enumerate}
\item \label{enu:defFunctCmpt-internal}$K=[K_{\eps}]\subseteq\Rtil^{n}$.
\item \label{enu:defFunctCmpt-sharpBound}$\exists R\in\rcrho_{>0}:\ K\subseteq B_{R}(0)$,
i.e.~$K$ is sharply bounded.
\item \label{enu:defFunctCmpt-cmpt}$\forall\eps\in I:\ K_{\eps}\Subset\R^{n}$.
\end{enumerate}
If, in addition, $K\subseteq U\subseteq\Rtil^{n}$ then we write $K\fcmp U$.
Finally, we write $[K_{\eps}]\fcmp U$ if \ref{enu:defFunctCmpt-sharpBound},
\ref{enu:defFunctCmpt-cmpt} and $[K_{\eps}]\subseteq U$ hold. Any
net $(K_{\eps})$ such that $[K_{\eps}]=K$ is called a \emph{representative}
of $K$.
\end{defn}

\noindent We motivate the name \emph{functionally compact subset}
by noting that on this type of subsets, GSF have properties very similar
to those that ordinary smooth functions have on standard compact sets.
\begin{rem}
\noindent \label{rem:defFunctCmpt}\ 
\begin{enumerate}
\item \label{enu:rem-defFunctCmpt-closed}By Thm.~\ref{thm:strongMembershipAndDistanceComplement}.\ref{enu:internalAreClosed},
any internal set $K=[K_{\eps}]$ is closed in the sharp topology.
In particular, the open interval $(0,1)\subseteq\Rtil$ is not functionally
compact since it is not closed.
\item \label{enu:rem-defFunctCmpt-ordinaryCmpt}If $H\Subset\R^{n}$ is
a non-empty ordinary compact set, then the internal set $[H]$ is
functionally compact. In particular, $[0,1]=\left[[0,1]_{\R}\right]$
is functionally compact.
\item \label{enu:rem-defFunctCmpt-empty}The empty set $\emptyset=\widetilde{\emptyset}\fcmp\Rtil$.
\item \label{enu:rem-defFunctCmpt-equivDef}$\Rtil^{n}$ is not functionally
compact since it is not sharply bounded.
\item \label{enu:rem-defFunctCmpt-cmptlySuppPoints}The set of compactly
supported points $\csp{\R}$ is not functionally compact because the
GSF $f(x)=x$ does not satisfy the conclusion \eqref{eq:epsExtreme}
of Thm.~\ref{thm:extremeValues}.
\end{enumerate}
\end{rem}

\noindent In the present paper, we need the following properties of
functionally compact sets.
\begin{thm}
\label{thm:image}Let $K\subseteq X\subseteq\Rtil^{n}$, $f\in\gckf(X,\Rtil^{d})$.
Then $K\fcmp\Rtil^{n}$ implies $f(K)\fcmp\Rtil^{d}$.
\end{thm}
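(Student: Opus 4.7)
The plan is to exhibit a defining net for $f(K)$ that satisfies the three conditions of Def.~\ref{def:functCmpt}. First, I would pick a representative $(K_\eps)$ of $K$ witnessing functional compactness, so that each $K_\eps\Subset\R^n$ and the net is sharply bounded in the sense of Def.~\ref{def:internalStronglyInternal}.(iv). Using the $\gckf$-analogue of Thm.~\ref{thm:propGSF}.(iii) (explicitly noted right after the definition of $\gckf$), I may assume $f$ is defined by a net $f_\eps\in\mathcal{C}^k(\R^n,\R^d)$; this sidesteps the nuisance of matching $K_\eps$ with the original domains $\Omega_\eps$. Then setting $L_\eps:=f_\eps(K_\eps)$ yields a non-empty compact subset of $\R^d$ (for $\eps$ small), so condition (iii) of Def.~\ref{def:functCmpt} is immediate.

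Second, I would check that $f(K)=[L_\eps]$. For the inclusion $f(K)\subseteq[L_\eps]$: any $y=f(x)$ with $x\in K=[K_\eps]$ admits a representative $(x_\eps)$ with $x_\eps\in K_\eps$ for $\eps$ small by Def.~\ref{def:internalStronglyInternal}.(i); well-definedness of $f$ on representatives (Thm.~\ref{thm:propGSF}.(i), whose $\gckf$-analogue is built into the definition) gives $y=[f_\eps(x_\eps)]\in[L_\eps]$. For the reverse inclusion $[L_\eps]\subseteq f(K)$: any $y\in[L_\eps]$ has a representative $(y_\eps)$ with $y_\eps\in L_\eps$ eventually; choosing $x_\eps\in K_\eps$ with $y_\eps=f_\eps(x_\eps)$, sharp boundedness of $(K_\eps)$ forces $(x_\eps)\in\R_\rho^n$, so $x:=[x_\eps]\in[K_\eps]=K\subseteq X$ and $y=f(x)\in f(K)$.

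Third, for the sharp boundedness of $f(K)$ (condition (ii)), I would argue as follows: for each $\eps$ with $K_\eps\neq\emptyset$, by compactness pick $x_\eps^\star\in K_\eps$ realizing $|f_\eps(x_\eps^\star)|=\max_{x\in K_\eps}|f_\eps(x)|$; since $(K_\eps)$ is sharply bounded, $(x_\eps^\star)$ is $\rho$-moderate, so $[x_\eps^\star]\in K\subseteq X$, and the moderateness clause in the definition of $\gckf$ makes $(f_\eps(x_\eps^\star))$ a $\rho$-moderate net. Hence $(L_\eps)$ is itself sharply bounded as a net of subsets of $\R^d$, i.e.\ $f(K)\subseteq B_R(0)$ for a suitable $R\in\rcrho_{>0}$. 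As an alternative route, one can apply the extreme value theorem (Thm.~\ref{thm:extremeValues}) to each coordinate GSF $f_i\in\gckf(X,\rcrho)$ and directly synthesize $R$ from the $\rcrho$-valued maxima.

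The main obstacle I anticipate is the verification of the equality $f(K)=[L_\eps]$: one must transfer between a generic representative of $x\in K$ and a representative that actually lies in $K_\eps$, which is done via Def.~\ref{def:internalStronglyInternal}.(i) together with the representative-independence in Thm.~\ref{thm:propGSF}.(i). The potential domain-mismatch between $K_\eps$ and the $\Omega_\eps$ of a given defining net is handled once and for all by the initial reduction to a globally defined net $f_\eps\in\mathcal{C}^k(\R^n,\R^d)$.
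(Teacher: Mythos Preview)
The paper does not actually prove Thm.~\ref{thm:image}: it is stated in Section~\ref{sec:Basic-notions} as one of the preliminary results whose proofs are delegated to the references \cite{GiKu18,GiKuVe15,GIO1,GiKu16}. Your proof plan is correct and is precisely the standard argument used in those references (see in particular \cite{GiKu18}): pass to a globally defined net $f_\eps\in\mathcal{C}^k(\R^n,\R^d)$, set $L_\eps:=f_\eps(K_\eps)$, verify $f(K)=[L_\eps]$ by the two inclusions you describe, and obtain sharp boundedness from moderateness of $f$ evaluated at an $\eps$-wise maximizer. There is nothing to compare against in the present paper, and no gap in your outline.
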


\noindent As a corollary of this theorem and Rem.\ \eqref{rem:defFunctCmpt}.\ref{enu:rem-defFunctCmpt-ordinaryCmpt}
we get
\begin{cor}
\label{cor:intervalsFunctCmpt}If $a$, $b\in\Rtil$ and $a\le b$,
then $[a,b]\fcmp\Rtil$.
\end{cor}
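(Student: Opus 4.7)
The plan is to exhibit an explicit net of compact subsets of $\R$ that represents $[a,b]$ as an internal set satisfying Def.~\ref{def:functCmpt}. Fix representatives $a=[a_\eps]$, $b=[b_\eps]$ and set
\[
\tilde{a}_\eps := \min(a_\eps,b_\eps), \qquad \tilde{b}_\eps := \max(a_\eps,b_\eps), \qquad K_\eps := [\tilde{a}_\eps,\tilde{b}_\eps]_\R.
\]
I would then show that $K=[K_\eps]=[a,b]$ and that $(K_\eps)$ fulfils conditions \ref{enu:defFunctCmpt-internal}--\ref{enu:defFunctCmpt-cmpt} of Def.~\ref{def:functCmpt}. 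Once this is done, $[a,b]\fcmp\Rtil$ by definition.

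The first step is to verify that the truncation to nondecreasing representatives is legitimate, i.e.~that $[\tilde{a}_\eps]=a$ and $[\tilde{b}_\eps]=b$. The hypothesis $a\le b$ means that there is a $\rho$-negligible net $(z_\eps)$ with $a_\eps-b_\eps\le z_\eps$ for $\eps$ small, so $(a_\eps-b_\eps)_+\le (z_\eps)_+$ is $\rho$-negligible; since $\tilde{a}_\eps-a_\eps=-(a_\eps-b_\eps)_+$ and $\tilde{b}_\eps-b_\eps=(a_\eps-b_\eps)_+$, the claim follows. By construction each $K_\eps$ is a nonempty compact interval in $\R$ (so \ref{enu:defFunctCmpt-cmpt} holds), and from the $\rho$-moderateness of $(a_\eps)$, $(b_\eps)$ we get $N\in\N$ with $|\tilde{a}_\eps|,|\tilde{b}_\eps|\le\rho_\eps^{-N}$ for small $\eps$, whence $K_\eps\subseteq[-\rho_\eps^{-N},\rho_\eps^{-N}]$ and $[K_\eps]\subseteq \overline{B_{\diff{\rho}^{-N}}(0)}$, giving \ref{enu:defFunctCmpt-sharpBound}.

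It remains to prove $[K_\eps]=[a,b]$. The inclusion $[K_\eps]\subseteq[a,b]$ is immediate: if $[x_\eps]\in[K_\eps]$ then $\tilde{a}_\eps\le x_\eps\le \tilde{b}_\eps$ for small $\eps$, and passing to classes yields $a\le[x_\eps]\le b$. For the converse, given $x=[x_\eps]\in[a,b]$, define the truncated representative
\[
\bar x_\eps := \max\bigl(\tilde{a}_\eps,\min(x_\eps,\tilde{b}_\eps)\bigr)\in K_\eps.
\]
One has $|\bar x_\eps-x_\eps|\le(\tilde{a}_\eps-x_\eps)_++(x_\eps-\tilde{b}_\eps)_+$, and the inequalities $a\le x\le b$ provide $\rho$-negligible nets bounding $\tilde{a}_\eps-x_\eps$ and $x_\eps-\tilde{b}_\eps$ from above; hence both positive parts are $\rho$-negligible, so $[\bar x_\eps]=[x_\eps]=x$. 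This shows $x\in[K_\eps]$.

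The only genuinely delicate point is the representative-switching argument in the last inclusion: one must replace an arbitrary representative of $x$ by one that actually lies in $K_\eps$ for all small $\eps$, and I expect this to be the main obstacle. The $\min/\max$ truncation handles it cleanly because the pointwise positive parts involved are controlled by the same $\rho$-negligible witnesses that encode the order relations $a\le x$ and $x\le b$. No heavier machinery (such as Thm.~\ref{thm:image} applied to the polynomial $r\mapsto a+r(b-a)$) is needed, and this direct argument also avoids the issue that $b-a$ need not be invertible when one tries to parametrise $[a,b]$ by $[0,1]$.
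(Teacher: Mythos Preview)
Your proof is correct. The paper, however, takes a different and much shorter route: it simply applies Thm.~\ref{thm:image} to the GSF $f:r\mapsto a+r(b-a)$ defined on $[0,1]$, using Rem.~\ref{rem:defFunctCmpt}.\ref{enu:rem-defFunctCmpt-ordinaryCmpt} to say that $[0,1]=[[0,1]_\R]$ is functionally compact, and concludes that $f([0,1])=[a,b]$ is functionally compact.

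Your approach is more elementary in that it verifies Def.~\ref{def:functCmpt} directly from scratch, avoiding the image theorem entirely; this has the advantage of being self-contained and of making explicit the representing net $K_\eps=[\tilde a_\eps,\tilde b_\eps]_\R$. The paper's approach is shorter once Thm.~\ref{thm:image} is in hand. One small remark: your final comment that non-invertibility of $b-a$ creates an ``issue'' for the parametrisation route is not quite accurate. The map $r\mapsto a+r(b-a)$ is still surjective from $[0,1]$ onto $\{x:a\le x\le b\}$ even when $b-a$ is a zero divisor: given $a\le x\le b$, pick representatives with $a_\eps\le x_\eps\le b_\eps$ and set $r_\eps=(x_\eps-a_\eps)/(b_\eps-a_\eps)$ when $b_\eps>a_\eps$ and $r_\eps=0$ otherwise; then $(r_\eps)$ is bounded in $[0,1]$, hence moderate, and $x=a+[r_\eps](b-a)$. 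So the paper's one-line argument does go through, but your direct verification is equally valid and arguably more transparent.
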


\noindent Let us note that $a$, $b\in\Rtil$ can also be infinite
numbers, e.g.~$a=\diff{\rho}^{-N}$, $b=\diff{\rho}^{-M}$ or $a=-\diff{\rho}^{-N}$,
$b=\diff{\rho}^{-M}$ with $M>N$, so that e.g.~$[-\diff{\rho}^{-N},\diff{\rho}^{M}]\supseteq\R$.
Finally, in the following result we consider the product of functionally
compact sets:
\begin{thm}
\noindent \label{thm:product}Let $K\fcmp\Rtil^{n}$ and $H\fcmp\Rtil^{d}$,
then $K\times H\fcmp\Rtil^{n+d}$. In particular, if $a_{i}\le b_{i}$
for $i=1,\ldots,n$, then $\prod_{i=1}^{n}[a_{i},b_{i}]\fcmp\Rtil^{n}$.
\end{thm}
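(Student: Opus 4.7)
The plan is to verify the three defining properties of functional compactness from Def.~\ref{def:functCmpt} for the candidate representative $(K_\eps \times H_\eps)$, and then deduce the statement about boxes from Cor.~\ref{cor:intervalsFunctCmpt} by iterating.

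First, I would fix representatives $(K_\eps)$ and $(H_\eps)$ of $K$ and $H$ respectively, so that $K=[K_\eps]\subseteq\Rtil^n$ with $K\subseteq B_{R_1}(0)$ and each $K_\eps\Subset\R^n$, and analogously $H=[H_\eps]\subseteq\Rtil^d$ with $H\subseteq B_{R_2}(0)$ and $H_\eps\Subset\R^d$. The candidate representative of $K\times H$ is the net $(K_\eps\times H_\eps)$ of subsets of $\R^{n+d}$. Step~(iii) is immediate: a finite Cartesian product of compact sets in Euclidean spaces is compact, so $K_\eps\times H_\eps\Subset\R^{n+d}$ for every $\eps\in I$.

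Next I would prove step (i), namely the set equality $K\times H=[K_\eps\times H_\eps]$ inside $\Rtil^{n+d}$. For $\subseteq$: given $(x,y)\in K\times H$ with $x=[x_\eps]\in K$ and $y=[y_\eps]\in H$, Def.~\ref{def:internalStronglyInternal} gives $\forall^0\eps:x_\eps\in K_\eps$ and $\forall^0\eps:y_\eps\in H_\eps$, hence $\forall^0\eps:(x_\eps,y_\eps)\in K_\eps\times H_\eps$, i.e.\ $(x,y)=[(x_\eps,y_\eps)]\in[K_\eps\times H_\eps]$. For $\supseteq$: any $z\in[K_\eps\times H_\eps]$ admits a representative $(z_\eps)=(x_\eps,y_\eps)$ with $x_\eps\in K_\eps$ and $y_\eps\in H_\eps$ for small $\eps$; then $[x_\eps]\in K$, $[y_\eps]\in H$, and $z=([x_\eps],[y_\eps])\in K\times H$. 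For step (ii), the extended Euclidean norm satisfies $|(x,y)|\le|x|+|y|$, so $K\times H\subseteq B_{R_1+R_2}(0)$ is sharply bounded by the positive invertible number $R_1+R_2\in\rti_{>0}$.

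For the ``in particular'' part, Cor.~\ref{cor:intervalsFunctCmpt} gives $[a_i,b_i]\fcmp\Rtil$ for each $i$. An easy induction on $n$ then yields $\prod_{i=1}^n[a_i,b_i]\fcmp\Rtil^n$: the base case is Cor.~\ref{cor:intervalsFunctCmpt}, and the inductive step applies the product statement just proved to $K=\prod_{i=1}^{n-1}[a_i,b_i]\fcmp\Rtil^{n-1}$ and $H=[a_n,b_n]\fcmp\Rtil$.

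The only point that requires any care is the set-theoretic equality $K\times H=[K_\eps\times H_\eps]$, and even there the verification is mechanical once one unpacks Def.~\ref{def:internalStronglyInternal}; there is no real obstacle, since the defining properties of functional compactness are preserved termwise by the Cartesian product of the representatives and sharp boundedness is preserved by the triangle inequality for the extended norm.
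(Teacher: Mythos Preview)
Your proof is correct. The paper does not actually supply a proof of this theorem: it is stated in Section~\ref{sec:Basic-notions} as a preliminary result, with the blanket remark that proofs of results in that section can be found in \cite{GiKu18,GiKuVe15,GIO1,GiKu16}. Your argument---verifying Def.~\ref{def:functCmpt} directly for the product net $(K_\eps\times H_\eps)$ and then inducting via Cor.~\ref{cor:intervalsFunctCmpt} for boxes---is the natural one and is complete as written.
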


A theory of compactly supported GSF has been developed in \cite{GiKu16},
and it closely resembles the classical theory of LF-spaces of compactly
supported smooth functions. It establishes that for suitable functionally
compact subsets, the corresponding space of compactly supported GSF
contains extensions of all Colombeau generalized functions, and hence
also of all Schwartz distributions.

As in the classical case (see e.g.~\cite{GeFo00}), thanks to the
extreme value theorem \ref{thm:extremeValues} and the property of
functionally compact sets $K$, we can naturally define a topology
on the space $\gckf(K,\rcrho^{d})$:
\begin{defn}
\label{def:genNormsSpaceGSF}Let $K\fcmp\rcrho^{n}$ be a functionally
compact set such that $K=\overline{\accentset{\circ}{K}}$ (so that
partial derivatives at sharply boundary points can be defined as limits
of partial derivatives at sharply interior points; such $K$ are called
\emph{solid} sets). Let $l\in\N_{\le k}$ and $v\in\gckf(K,\rcrho^{d})$.
Then 
\[
\Vert v\Vert_{l}:=\max_{\substack{|\alpha|\le l\\
1\le i\le d
}
}\max\left(\left|\partial^{\alpha}v^{i}(M_{ni})\right|,\left|\partial^{\alpha}v^{i}(m_{ni})\right|\right)\in\rcrho,
\]
where $M_{ni}$, $m_{ni}\in K$ satisfy
\[
\forall x\in K:\ \partial^{\alpha}v^{i}(m_{ni})\le\partial^{\alpha}v^{i}(x)\le\partial^{\alpha}v^{i}(M_{ni}).
\]
\end{defn}

\noindent The following result (see \cite{GIO} and \cite{ArGaJu09}
for a similar approach) permits us to calculate the (generalized)
norm $\Vert v\Vert_{l}$ using any net $(v_{\eps})$ that defines
$v$.
\begin{lem}
\label{lem:normSpaceGSF}Under the assumptions of Def.~\ref{def:genNormsSpaceGSF},
let $[K_{\eps}]=K\fcmp\rcrho^{n}$ be any representative of $K$.
Then we have:
\begin{enumerate}
\item \label{enu:normAndDefNet}If the net $(v_{\eps})$ defines $v$, then
$\Vert v\Vert_{l}=\left[\max_{\substack{|\alpha|\le l\\
1\le i\le d
}
}\max_{x\in K_{\eps}}\left|\partial^{\alpha}v_{\eps}^{i}(x)\right|\right]\in\rcrho$;
\item \label{enu:normPos}$\Vert v\Vert_{l}\ge0$;
\item $\Vert v\Vert_{l}=0$ if and only if $v=0$;
\item $\forall c\in\rcrho:\ \Vert c\cdot v\Vert_{l}=|c|\cdot\Vert v\Vert_{l}$;
\item \label{enu:normTriang}For all $u\in\gckf(K,\rcrho^{d})$, we have
$\Vert u+v\Vert_{l}\le\Vert u\Vert_{l}+\Vert v\Vert_{l}$ and $\Vert u\cdot v\Vert_{l}\le c_{l}\cdot\Vert u\Vert_{l}\cdot\Vert v\Vert_{l}$
for some $c_{l}\in\rcrho_{>0}$.
\end{enumerate}
\end{lem}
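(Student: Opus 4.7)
The plan is to first establish (i), the representative formula, which is the main content of the lemma, and then to obtain (ii)--(v) as routine consequences. For (i), the idea is to produce extremizers at the $\eps$-level using compactness. For each multi-index $\alpha$ with $|\alpha|\le l$ and each component $i\in\{1,\dots,d\}$, classical compactness of $K_\eps\Subset\R^n$ together with smoothness of $\partial^\alpha v_\eps^i$ yields $M_\eps^{\alpha,i}, m_\eps^{\alpha,i}\in K_\eps$ with $\partial^\alpha v_\eps^i(m_\eps^{\alpha,i})\le\partial^\alpha v_\eps^i(x)\le\partial^\alpha v_\eps^i(M_\eps^{\alpha,i})$ for every $x\in K_\eps$. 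Setting $M^{\alpha,i}:=[M_\eps^{\alpha,i}]$ and $m^{\alpha,i}:=[m_\eps^{\alpha,i}]$, both lie in $K=[K_\eps]$ and, passing the $\eps$-level inequality to the quotient, they satisfy the extremality condition of Definition \ref{def:genNormsSpaceGSF}; hence they are admissible choices for $M_{ni},m_{ni}$, while the resulting generalized numbers $\partial^\alpha v^i(M_{ni})$, $\partial^\alpha v^i(m_{ni})$ are intrinsic to $v$ since they equal the unique sharp maximum and minimum. Combining this with the elementary real identity $\max_{x\in K_\eps}|f(x)| = \max(|\max_{K_\eps} f|,|\min_{K_\eps} f|)$ applied to $f=\partial^\alpha v_\eps^i$, and commuting the finite outer maximum over $(\alpha,i)$ with the equivalence class, yields the claimed formula.

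Properties (ii) and (iv) are immediate from the representative expression in (i). For (iii), if $v=0$ then $\partial^\alpha v^i(M^{\alpha,i})=0$ and $\partial^\alpha v^i(m^{\alpha,i})=0$ in $\rcrho$, so the $\eps$-nets $\max_{K_\eps}\partial^\alpha v_\eps^i$ and $\min_{K_\eps}\partial^\alpha v_\eps^i$ are $\rho$-negligible and (i) gives $\Vert v\Vert_l=0$; conversely, $\Vert v\Vert_l=0$ forces $\max_{x\in K_\eps}|v_\eps^i(x)|\sim_\rho 0$ and hence $v^i(x)=0$ for every $x\in K$. (Alternatively one may argue that $v\equiv 0$ implies $\partial^\alpha v\equiv 0$ on $\accentset{\circ}{K}$ via iterated use of Theorem \ref{thm:FR-forGSF}, extending to $K$ by solidity $K=\overline{\accentset{\circ}{K}}$ and sharp continuity, Theorem \ref{thm:propGSF}.\ref{enu:GSF-cont}.) For (v), the triangle inequality follows from $|\partial^\alpha(u+v)_\eps^i|\le|\partial^\alpha u_\eps^i|+|\partial^\alpha v_\eps^i|$ at the $\eps$-level and monotonicity of the equivalence class; submultiplicativity comes from the Leibniz rule $\partial^\alpha(u^i v^i)=\sum_{\beta\le\alpha}\binom{\alpha}{\beta}\partial^\beta u^i\,\partial^{\alpha-\beta}v^i$ applied componentwise, producing a constant $c_l$ depending only on the Leibniz multinomials up to order $l$.

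I expect the main obstacle to lie in (i): Definition \ref{def:genNormsSpaceGSF} uses abstract extremizers $M_{ni}, m_{ni}\in K$, whose existence is guaranteed by Theorem \ref{thm:extremeValues} but which a priori need not arise from $\eps$-level maximizers of the defining net $(v_\eps)$. The resolution is that the generalized value $\partial^\alpha v^i(M_{ni})$ is the unique sharp supremum of $\partial^\alpha v^i$ over $K$, independent of which maximizer one picks, so one is free to compute the norm using the specific $\eps$-level extremizers $M_\eps^{\alpha,i}$, $m_\eps^{\alpha,i}$ produced above. A secondary technical point is that the right-hand side of (i) must be shown independent both of the defining net $(v_\eps)$ for $v$ and of the representative $(K_\eps)$ for $K$; this is handled by Theorem \ref{thm:propGSF}.\ref{enu:wellDef} applied at the generalized points $M^{\alpha,i}$, $m^{\alpha,i}$, combined with the pointwise extremality property established above.
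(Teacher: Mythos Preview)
The paper does not actually supply a proof of this lemma; it merely states the result and refers the reader to \cite{GIO}. Your proposal therefore cannot be compared line-by-line against an in-paper argument, but it is a correct and essentially standard proof of the statement. The key observation you make---that although Definition~\ref{def:genNormsSpaceGSF} allows any extremizers $M_{ni},m_{ni}\in K$, the \emph{values} $\partial^\alpha v^i(M_{ni})$ and $\partial^\alpha v^i(m_{ni})$ are uniquely determined (any two maximizers $M_1,M_2$ satisfy $f(M_1)\le f(M_2)\le f(M_1)$)---is exactly what makes the $\eps$-level construction legitimate, and the rest follows routinely. One small remark: once you have shown that the right-hand side of (i) equals the intrinsically defined left-hand side for \emph{one} choice of $(v_\eps)$ and $(K_\eps)$, independence from these choices is automatic; invoking Theorem~\ref{thm:propGSF}.\ref{enu:wellDef} separately is not needed.
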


\noindent Using these $\rcrho$-valued norms, we can naturally define
a topology on the space $\gckf(K,\rcrho^{d})$.
\begin{defn}
\label{def:sharpTopSpaceGSF}Let $K\fcmp\rcrho^{n}$ be a solid set.
Let $l\in\N_{\le k}$, $u\in\gckf(K,\rcrho^{d})$, $r\in\rcrho_{>0}$,
then
\begin{enumerate}
\item $B_{r}^{l}(u):=\left\{ v\in\gckf(K,\rcrho^{d})\mid\Vert v-u\Vert_{l}<r\right\} $
\item If $U\subseteq\gckf(K,\rcrho^{d})$, then we say that $U$ is a \emph{sharply
open set} if 
\[
\forall u\in U\,\exists l\in\N_{\le k}\,\exists r\in\rcrho_{>0}:\ B_{r}^{l}(u)\subseteq U.
\]
\end{enumerate}
\end{defn}

\noindent One can easily prove that sharply open sets form a sequentially
Cauchy complete topology on $\gckf(K,\rcrho^{d})$, see e.g.~\cite{GiKu18,baglinipaolo}.
The structure $\left(\gckf(K,\rcrho^{d}),\left(\norm{-}_{l}\right)_{l\le k}\right)$
has the usual properties of a graded Fr�chet space if we replace everywhere
the field $\R$ with the ring $\rcrho$, and for this reason it is
called an $\rcrho$-graded Fr�chet space.

\subsubsection{Multidimensional integration}

Finally, to deal with higher-order calculus of variations, we have
to introduce multidimensional integration of GSF on suitable subsets
of $\rcrho^{n}$ (see \cite{GIO1}).
\begin{defn}
\label{def:intOverCompact}Let $\mu$ be a measure on $\R^{n}$ and
let $K$ be a functionally compact subset of $\RC{\rho}^{n}$. Then,
we call $K$ $\mu$-\emph{measurable} if the limit 
\begin{equation}
\mu(K):=\lim_{m\to\infty}[\mu(\overline{\Eball}_{\rho_{\eps}^{m}}(K_{\eps}))]\label{eq:muMeasurable}
\end{equation}
exists for some representative $(K_{\eps})$ of $K$. Here $m\in\N$,
the limit is taken in the sharp topology on $\RC{\rho}$ since $[\mu(\overline{\Eball}_{\rho_{\eps}^{m}}(K_{\eps}))]\in\rcrho$,
and $\overline{\Eball}_{r}(A):=\{x\in\R^{n}:d(x,A)\le r\}$.
\end{defn}

\noindent In the following result, we show that this definition generates
a correct notion of multidimensional integration for GSF.
\begin{thm}
\label{thm:muMeasurableAndIntegral}Let $K\subseteq\RC{\rho}^{n}$
be $\mu$-measurable.
\begin{enumerate}
\item \label{enu:indepRepr}The definition of $\mu(K)$ is independent of
the representative $(K_{\eps})$.
\item \label{enu:existsRepre}There exists a representative $(K_{\eps})$
of $K$ such that $\mu(K)=[\mu(K_{\eps})]$.
\item \label{enu:epsWiseDefInt}Let $(K_{\eps})$ be any representative
of $K$ and let $f\in\gsf(K,\RC{\rho})$ be a GSF defined by the net
$(f_{\eps})$. Then 
\[
\int_{K}f\,\diff{\mu}:=\lim_{m\to\infty}\biggl[\int_{\overline{\Eball}_{\rho_{\eps}^{m}}(K_{\eps})}f_{\eps}\,\diff{\mu}\biggr]\in\rcrho
\]
exists and its value is independent of the representative $(K_{\eps})$.
\item \label{enu:existsReprDefInt}There exists a representative $(K_{\eps})$
of $K$ such that 
\begin{equation}
\int_{K}f\,\diff{\mu}=\biggl[\int_{K_{\eps}}f_{\eps}\,\diff{\mu}\biggr]\in\rcrho\label{eq:measurable}
\end{equation}
for each $f\in\gsf(K,\RC{\rho})$.
\item If $K=\prod_{i=1}^{n}[a_{i},b_{i}]$, then $K$ is $\lambda$-measurable
($\lambda$ being the Lebesgue measure on $\R^{n}$) and for all $f\in\gsf(K,\RC{\rho})$
we have
\[
\int_{K}f\,\diff{\lambda}=\left[\int_{a_{1,\eps}}^{b_{1,\eps}}\,dx_{1}\dots\int_{a_{n,\eps}}^{b_{n,\eps}}f_{\eps}(x_{1},\dots,x_{n})\,\diff{x_{n}}\right]\in\rcrho
\]
for any representatives $(a_{i,\eps})$, $(b_{i,\eps})$ of $a_{i}$
and $b_{i}$ respectively. Therefore, if $n=1$, this notion of integral
coincides with that of Thm.~\ref{thm:existenceUniquenessPrimitives}
and Def.~\ref{def:integral}.
\item Let $K\subseteq\RC{\rho}^{n}$ be $\lambda$-measurable, where $\lambda$
is the Lebesgue measure, and let $\phi\in\gsf(K,\RC{\rho}^{d})$ be
such that $\phi^{-1}\in\gsf(\phi(K),\RC{\rho}^{n})$. Then $\phi(K)$
is $\lambda$-measurable and 
\[
\int_{\phi(K)}f\,\diff{\lambda}=\int_{K}(f\circ\phi)\left|\det(\diff{\phi})\right|\,\diff{\lambda}
\]
for each $f\in\gsf(\phi(K),\RC{\rho})$.
\end{enumerate}
\end{thm}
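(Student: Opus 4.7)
The plan is to leverage two structural facts throughout: first, the characterization of internal-set membership via distance from Theorem~\ref{thm:strongMembershipAndDistanceComplement}.\ref{enu:internalSetsDistance}, which lets us compare any two representatives of $K$ through their pairwise Hausdorff distance; second, the sharp boundedness of $K$, so that every fattening $\overline{\Eball}_{\rho_\eps^m}(K_\eps)$ stays inside a single fixed compact subset $R\Subset\R^n$. For item~\ref{enu:indepRepr}, if $(K_\eps)$ and $(K'_\eps)$ are representatives of the same $K$, then Theorem~\ref{thm:strongMembershipAndDistanceComplement}.\ref{enu:internalSetsDistance} applied to the points of $K_\eps$ (using sharp boundedness to pass from pointwise to uniform control) yields that for each $q\in\R_{>0}$, eventually $K_\eps\subseteq\overline{\Eball}_{\rho_\eps^q}(K'_\eps)$ and symmetrically. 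Consequently, for any fixed $m$, the symmetric difference of $\overline{\Eball}_{\rho_\eps^m}(K_\eps)$ and $\overline{\Eball}_{\rho_\eps^m}(K'_\eps)$ lies in an annular neighborhood of width $\rho_\eps^q$ inside $R$, whose $\mu$-measure is $\rho$-negligible for $q$ large, and the two sequences of measures share the same sharp limit.

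For item~\ref{enu:existsRepre}, I extract the representative by a diagonal process: choose a non-decreasing sequence $m_k\uparrow\infty$ such that $[\mu(\overline{\Eball}_{\rho_\eps^{m_k}}(K_\eps))]$ lies within $\diff{\rho}^k$ of $\mu(K)$, let $m_\eps$ be the largest $m_k$ compatible with $\rho$-moderation of the resulting net, and set $K''_\eps:=\overline{\Eball}_{\rho_\eps^{m_\eps}}(K_\eps)$. Then $[K''_\eps]=K$ follows directly from $K_\eps\subseteq K''_\eps$ and the fact that $m_\eps\to\infty$, while $[\mu(K''_\eps)]=\mu(K)$ holds by construction. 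Items~\ref{enu:epsWiseDefInt} and~\ref{enu:existsReprDefInt} follow by the identical scheme applied to the net $(\int_{\cdot} f_\eps\,\diff{\mu})$: the key estimate is
$\bigl|\int_{\overline{\Eball}_{\rho_\eps^{m+1}}(K_\eps)} f_\eps\,\diff{\mu}-\int_{\overline{\Eball}_{\rho_\eps^m}(K_\eps)} f_\eps\,\diff{\mu}\bigr|\le\sup_{x\in R}|f_\eps(x)|\cdot\mu\bigl(\overline{\Eball}_{\rho_\eps^m}(K_\eps)\setminus\overline{\Eball}_{\rho_\eps^{m+1}}(K_\eps)\bigr)$,
where the supremum is $\rho$-moderate by Definition~\ref{def:netDefMap} and the measure factor is $\rho$-negligible in the limit by the very existence of~\eqref{eq:muMeasurable}. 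Thus the limit in~\ref{enu:epsWiseDefInt} exists in the sharp topology; representative-independence repeats the argument of~\ref{enu:indepRepr}, and the representative of~\ref{enu:existsReprDefInt} is the diagonal one produced in~\ref{enu:existsRepre}.

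For item~(v), the $\rho_\eps^m$-fattening of $\prod_i[a_{i,\eps},b_{i,\eps}]$ is sandwiched between a product of fattened intervals and a slightly larger such product, and in the Lebesgue case the difference in measure is $O(\rho_\eps^m)$ times a constant fixed by the sharp bound on $K$; iterated Fubini applied $\eps$-wise then delivers the product formula and compatibility with Definition~\ref{def:integral}. For item~(vi), I use~\ref{enu:existsReprDefInt} to fix a representative $(K_\eps)$ of $K$ and Theorem~\ref{thm:propGSF}.\ref{enu:globallyDefNet} to obtain globally defined nets for $\phi$ and $\phi^{-1}$; the GSF identity $\phi^{-1}\circ\phi=\mathrm{id}$ gives a $\rho$-moderate lower bound on $|\det(\diff{\phi_\eps})|$ on a neighborhood of $K_\eps$, so classical change of variables applies $\eps$-wise and yields the desired sharp identity after passing to the limit via the scheme of~\ref{enu:existsReprDefInt}. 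The main obstacle throughout is the interlocking of the two limits $m\to\infty$ and $\eps\to0^+$: the diagonal extraction in~\ref{enu:existsRepre} must simultaneously realize the sharp limit and preserve $\rho$-moderation, and producing a \emph{sharply} convergent (not merely $\eps$-wise convergent) limit in~\ref{enu:epsWiseDefInt} depends on the above a priori estimate cooperating with the $\rho$-moderation of $(f_\eps)$ uniformly on a fixed compact neighborhood of $K$.
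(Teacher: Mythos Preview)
The paper does not prove Theorem~\ref{thm:muMeasurableAndIntegral}: it is stated in the preliminaries section with an explicit reference to~\cite{GIO1} for the proof (see the sentence introducing Def.~\ref{def:intOverCompact}). There is therefore no proof in the paper to compare your proposal against.

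That said, your outline has the right architecture (Hausdorff comparison of representatives, sandwiching of fattenings, diagonal extraction, $\eps$-wise change of variables), but two of your load-bearing claims are not justified as written. First, you assert that ``every fattening $\overline{\Eball}_{\rho_\eps^m}(K_\eps)$ stays inside a single fixed compact subset $R\Subset\R^n$'' and that ``sharp boundedness'' lets you pass from pointwise to uniform Hausdorff control. Sharp boundedness in Def.~\ref{def:functCmpt} is a property of the internal set $K$, not of an arbitrary representative net $(K_\eps)$: nothing prevents a representative from carrying spurious non-moderate points (e.g.\ $K_\eps=\{0,e^{1/\eps}\}$ still represents $K=\{0\}$). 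You must first replace an arbitrary representative by a sharply bounded one (intersecting with $\overline{B}_{\rho_\eps^{-N}}(0)$) and argue that this does not change the limit in~\eqref{eq:muMeasurable}; only then does the contradiction argument producing a moderate witness for non-negligible Hausdorff distance go through. Second, for general $\mu$ a thin annulus need not have small measure, so your sentence ``an annular neighborhood of width $\rho_\eps^q$ \dots\ whose $\mu$-measure is $\rho$-negligible for $q$ large'' is unsupported: the smallness must come from the Cauchy property of the sequence $m\mapsto[\mu(\overline{\Eball}_{\rho_\eps^m}(K_\eps))]$, i.e.\ from the assumed existence of the limit, via a sandwiching $\overline{\Eball}_{\rho_\eps^{m'}}(K_\eps)\subseteq\overline{\Eball}_{\rho_\eps^{m}}(K'_\eps)\subseteq\overline{\Eball}_{\rho_\eps^{m''}}(K_\eps)$ rather than from geometric thinness alone. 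Once these two points are fixed, the rest of your scheme (diagonal extraction for~\ref{enu:existsRepre} and~\ref{enu:existsReprDefInt}, the moderate-times-negligible estimate for~\ref{enu:epsWiseDefInt}, Fubini for boxes, and $\eps$-wise change of variables after securing an invertibility bound on $\det\diff{\phi_\eps}$) is sound.
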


\section{\label{sec:MRHO}Higher-order calculus of variations for generalized
functions}

In this section, we prove for GSF the higher-order Euler-Lagrange
equation, the du Bois-Reymond optimality condition and the Noether's
theorem.

\noindent We start with some definition of basic notions and notations.
Using the sharp topology of Def.~\ref{def:sharpTopSpaceGSF}, we
can define when a curve is a minimizer of a given functional. Note
explicitly that there are no restrictions on the generalized numbers
$a$, $b\in\rcrho$, $a<b$, e.g.~they can also both be infinite
numbers.
\begin{defn}
\label{def:minimizer}Let $t_{1}$, $t_{2}\in\rcrho$, with $t_{1}<t_{2}$,
and let $m\in\N_{>0}$, then
\begin{enumerate}
\item For all $q_{t_{1}}:=\left(q_{t_{1}}^{0},\ldots,q_{t_{1}}^{m-1}\right)$,
$q_{t_{2}}:=\left(q_{t_{2}}^{0},\ldots,q_{t_{2}}^{m-1}\right)\in\rcrho^{d\cdot m}$,
we set
\[
\gsf_{\text{bd}}(q_{t_{1}},q_{t_{2}};m):=\left\{ q\in\gsf([t_{1},t_{2}],\rcrho^{d})\mid q^{(i)}(t_{j})=q_{t_{j}}^{i}\ \forall i=0,\ldots,m-1,j=1,2\right\} .
\]
We simply set $\gsf_{0}(m):=\gsf_{\text{bd}}(0,0;m)$, $\gsf_{0}:=\gsf_{\text{bd}}(0,0;1)$
and
\[
\gsf_{0}(t_{1},t_{2}):=\bigcap_{m\in\N}\gsf_{0}(m)=\left\{ q\in\gsf([t_{1},t_{2}],\rcrho^{d})\mid q^{(i)}(t_{j})=0\ \forall i\in\N,j=1,2\right\} .
\]
The subscript ``bd'' stands here for ``boundary values''. Note
explicitly that both $\gsf_{0}(m)$ and $\gsf_{0}(t_{1},t_{2})$ are
$\rti$-submodules of $\left(\rti^{d}\right)^{[t_{1},t_{2}]}$.
\item Let $t_{1}$, $t_{2}\in\rcrho$ with $t_{1}<t_{2}$. Let $q\in\gsf([t_{1},t_{2}],\rcrho^{d})$
and $L\in\gsf([t_{1},t_{2}]\times\rcrho^{d\cdot(m+1)},\rcrho)$ and
define
\begin{align}
[q]^{m}(t):= & \Bigl(t,q(t),q^{(1)}(t),\ldots,q^{(m)}(t)\Bigr)\quad\forall t\in[t_{1},t_{2}]\label{eq:def_I}\\
L[q]^{m}(t):= & L\left(t,q(t),q^{(1)}(t),\ldots,q^{(m)}(t)\right)\quad\forall t\in[t_{1},t_{2}]\\
J[q]:= & \int_{t_{1}}^{t_{2}}L[q]^{m}(t)\,\diff{t}\in\rcrho.\label{eq:Pm}
\end{align}
Note explicitly that Thm.~\ref{enu:category}.\ref{thm:propGSF}
(closure of GSF with respect to composition) and Def.~\ref{def:integral}
of 1-dimensional integral of GSF (i.e.~Thm.~\ref{thm:existenceUniquenessPrimitives})
allows us to say that $J(q)$ is a well-defined number in $\rcrho$.
\item We say that $q$ is a \emph{local minimizer of }$J$ \emph{in} $\gsf_{\text{bd}}(q_{t_{1}},q_{t_{2}};m)$
if $q\in\gsf_{\text{bd}}(q_{t_{1}},q_{t_{2}};m)$ and 
\begin{equation}
\exists r\in\rcrho_{>0}\,\exists l\in\N\,\forall p\in B_{r}^{l}(q)\cap\gsf_{\text{bd}}(q_{t_{1}},q_{t_{2}};m):\ J(p)\ge I(q)\label{eq:defMinimizer}
\end{equation}
\item Let $q\in\gsf\left([t_{1},t_{2}],\rcrho\right)$. We define the \emph{first
}and \emph{second variation }of $J$ \emph{in direction} $h\in\gsf_{0}(m)$
\emph{at} $q$ as
\[
\delta J(q;h):=\left.\frac{\diff{}}{\diff{s}}J(q+sh)\right|_{s=0}\quad\delta^{2}J(q;h):=\left.\frac{\diff{}^{2}}{\diff{^{2}s}}J(q+sh)\right|_{s=0}.
\]
The GSF $q$ is called \emph{weak extremal of $J$ if $\delta J(q;h)=0$}
for all $h\in\gsf_{0}(m)$\emph{.}
\item \label{enu:Dalembert}More generally, if $q\in\gsf\left([t_{1},t_{2}],\rcrho^{d}\right)$
and $Q\in\gsf([t_{1},t_{2}]\times\rcrho^{d\cdot(m+1)},\rcrho)$, we
say that $J$ \emph{satisfies at $q$ the D'Alembert's principle with
generalized forces $Q$} if for all $h\in\gsf_{0}(t_{1},t_{2})$:
\[
\delta J(q;h)=\int_{t_{1}}^{t_{2}}h(t)\cdot Q[q]^{m}(t)\,\diff{t}.
\]
\end{enumerate}
\end{defn}

The following results establish classical necessary and sufficient
conditions to decide if a function $u$ is a local minimizer for the
functional \eqref{eq:Pm}.
\begin{thm}
\label{thm:necessCondsForMinimizer}Let $t_{1}$, $t_{2}\in\rti$
with $t_{1}<t_{2}$, let $L\in\gsf([t_{1},t_{2}]\times\rcrho^{d\cdot(m+1)},\rcrho)$,
let $q_{t_{1}}$, $q_{t_{2}}\in\rti^{d}$ and let $q$ be a local
minimizer of $J$ in $\gsf_{\text{\emph{bd}}}(q_{t_{1}},q_{t_{2}};m)$.
Then

\begin{enumerate}
\item $\delta J(q;h)=0$ for all $h\in\gsf_{0}(m)$;
\item \label{enu:2ndVarPosNec}$\delta^{2}J(q;h)\geq0$ for all $h\in\gsf_{0}(m)$.
\end{enumerate}
\end{thm}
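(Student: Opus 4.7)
The plan is to reduce the two statements to the one-dimensional necessary conditions for a local minimum of a GSF of one variable (Lemma \ref{lem:local_min_diff}) by freezing the direction $h$ and studying $\phi(s):=J(q+sh)$ as a function of $s\in\rti$.

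First, I would fix an arbitrary $h\in\gsf_{0}(m)$ and note that $q+sh\in\gsf_{\text{bd}}(q_{t_{1}},q_{t_{2}};m)$ for every $s\in\rti$, because by definition $h^{(i)}(t_{j})=0$ for $i=0,\dots,m-1$ and $j=1,2$. Next, I would verify that $\phi:s\mapsto J(q+sh)$ is itself a GSF on a sharp neighborhood of $0\in\rti$: this follows from closure of GSF under composition (Thm.~\ref{thm:propGSF}.\ref{enu:category}), the fact that derivatives $(q+sh)^{(k)}=q^{(k)}+sh^{(k)}$ remain GSF, and the fact that integration in $t$ of a GSF jointly depending on $(t,s)$ yields a GSF in $s$ (Thm.~\ref{thm:intRules}.8, which also legitimizes differentiating $\phi$ under the integral sign).

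The main technical step is to turn ``$q$ is a local minimizer in the $\norm{\cdot}_{l}$-topology on $\gsf_{\text{bd}}$'' into ``$0$ is a sharply interior local minimum of $\phi$ on $\rti$''. By hypothesis there exist $r\in\rti_{>0}$ and $l\in\N$ such that $J(p)\ge J(q)$ for all $p\in B_{r}^{l}(q)\cap\gsf_{\text{bd}}(q_{t_{1}},q_{t_{2}};m)$. Set
\[
\sigma:=\frac{r}{\norm{h}_{l}+1}\in\rti_{>0},
\]
which is well defined and invertible because $\norm{h}_{l}+1\ge 1$ is invertible (and nonnegativity of $\norm{h}_{l}$ is given by Lem.~\ref{lem:normSpaceGSF}.\ref{enu:normPos}). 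For every $s\in\rti$ with $|s|<\sigma$, using Lem.~\ref{lem:normSpaceGSF} we have
\[
\norm{(q+sh)-q}_{l}=|s|\cdot\norm{h}_{l}\le|s|\cdot(\norm{h}_{l}+1)<r,
\]
so $q+sh\in B_{r}^{l}(q)\cap\gsf_{\text{bd}}(q_{t_{1}},q_{t_{2}};m)$ and hence $\phi(s)=J(q+sh)\ge J(q)=\phi(0)$. Thus $0$ is a sharply interior local minimum of $\phi\in\gsf\bigl((-\sigma,\sigma),\rti\bigr)$.

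Finally, I would apply Lem.~\ref{lem:local_min_diff} to conclude $\phi'(0)=0$ and $\phi''(0)\ge 0$. By Def.~\ref{def:minimizer} we have $\phi'(0)=\delta J(q;h)$ and $\phi''(0)=\delta^{2}J(q;h)$, so both required assertions follow. The main obstacle is the third step, namely ensuring the existence of an \emph{invertible} radius $\sigma\in\rti_{>0}$ controlling the $\norm{\cdot}_{l}$-ball from the scalar absolute value; this is precisely where one must be careful with the non-Archimedean ring $\rti$, and the little trick of dividing by $\norm{h}_{l}+1$ (rather than $\norm{h}_{l}$, which need not be invertible) is what makes the reduction to the one-variable Lem.~\ref{lem:local_min_diff} clean.
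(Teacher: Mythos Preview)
Your proposal is correct and follows essentially the same approach as the paper: reduce to the one-variable GSF $s\mapsto J(q+sh)$, find a sharply positive radius so that $0$ is an interior local minimum, and invoke Lem.~\ref{lem:local_min_diff}. The only cosmetic difference is that the paper obtains the radius by appealing to continuity of $s\mapsto q+sh$ into $\gsf_{\text{bd}}(q_{t_1},q_{t_2};m)$, whereas you construct it explicitly via $\sigma=r/(\norm{h}_{l}+1)$; both arguments are equivalent.
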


\begin{proof}
Let $r\in\rti_{>0}$ be such that \eqref{eq:defMinimizer} holds.
Since $h\in\gsf_{0}(m)$, the map $s\in\rti\mapsto q+sh\in\gsf_{\text{bd}}(q_{t_{1}},q_{t_{2}};m)$
is well defined and continuous with respect to the trace of the sharp
topology in its codomain. Therefore, we can find $\bar{r}\in\rti_{>0}$
such that $q+sh\in B_{r}^{l}(q)\cap\gsf_{\text{bd}}(q_{t_{1}},q_{t_{2}};m)$
for all $s\in B_{\bar{r}}(0)$. We hence have $J(q+sh)\ge J(q)$.
This shows that the GSF $s\in B_{\bar{r}}(0)\mapsto J(q+sh)\in\rti$
has a local minimum at $s=0$. Now, we apply Lem.~\ref{lem:local_min_diff}
and thus the claims are proven.
\end{proof}
\begin{thm}
\label{thm:suffCondsForMinimizer}Let $t_{1}$, $t_{2}\in\rti$ with
$t_{1}<t_{2}$ and $q_{t_{1}}$, $q_{t_{2}}\in\rti^{d}$. Let $q\in\gsf_{\text{\emph{bd}}}(q_{t_{1}},q_{t_{2}};m)$
be such that

\begin{enumerate}
\item \label{enu:1stVarZero}$\delta J(q;h)=0$ for all $h\in\gsf_{0}(m)$.
\item \label{enu:2ndVarZero}$\delta^{2}J(v;h)\geq0$ for all $h\in\gsf_{0}(m)$
and for all $v\in B_{r}^{l}(q)\cap\gsf_{\text{\emph{bd}}}(q_{t_{1}},q_{t_{2}};m)$,
where $r\in\rti_{>0}$ and $l\in\N$.
\end{enumerate}
Then $q$ is a local minimizer of the functional $J$ in $\gsf_{\text{\emph{bd}}}(q_{t_{1}},q_{t_{2}};m)$.
Moreover, if $\delta^{2}J(v;h)>0$ for all $h\in\gsf_{0}(m)$ such
that $\Vert h\Vert_{l}>0$ and for all $v\in B_{2r}^{l}(u)\cap\gsf_{\text{\emph{bd}}}(q_{t_{1}},q_{t_{2}};m)$,
then $J(v)>J(q)$ for all $v\in B_{r}^{l}(q)\cap\gsf_{\text{\emph{bd}}}(q_{t_{1}},q_{t_{2}};m)$
such that $\Vert v-q\Vert_{l}>0$.

\end{thm}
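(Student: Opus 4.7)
The natural approach is a Taylor expansion of the one-variable GSF $\phi(s):=J(q+sh)$ for $s\in[0,1]\subseteq\rcrho$, where $h:=v-q$ and $v$ ranges over $B_{r}^{l}(q)\cap\gsf_{\text{bd}}(q_{t_{1}},q_{t_{2}};m)$. The point is that $v$ and $q$ share boundary data up to order $m-1$, so $h\in\gsf_{0}(m)$, and the whole segment $q+sh$, $s\in[0,1]$, lies in $\gsf_{\text{bd}}(q_{t_{1}},q_{t_{2}};m)$ as well.

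First I would verify that $\phi\in\gsf([0,1],\rcrho)$. This follows because $L\in\gsf$, the composition $(t,s)\mapsto L[q+sh]^{m}(t)$ is a GSF by Thm.~\ref{thm:propGSF}.\ref{enu:category}, and integration against $t$ produces a GSF in $s$ by Thm.~\ref{thm:existenceUniquenessPrimitives} (or by Thm.~\ref{thm:muMeasurableAndIntegral}, together with Thm.~\ref{thm:intRules} applied to differentiation under the integral sign). In particular $\phi'(0)=\delta J(q;h)$ and $\phi''(s)=\delta^{2}J(q+sh;h)$ for every $s\in[0,1]$.

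Next I apply Taylor's theorem with Lagrange remainder (Thm.~\ref{thm:Taylor}.\ref{enu:LagrangeRest}) to $\phi$ with base point $0$, increment $1$, and order $n=1$: there exists $\xi\in[0,1]\subseteq\rcrho$ such that
\[
J(v)-J(q)=\phi(1)-\phi(0)=\phi'(0)+\tfrac{1}{2}\phi''(\xi)=\delta J(q;h)+\tfrac{1}{2}\,\delta^{2}J(q+\xi h;h).
\]
By hypothesis \ref{enu:1stVarZero} the first term vanishes. To use hypothesis \ref{enu:2ndVarZero} I must check that $q+\xi h\in B_{r}^{l}(q)\cap\gsf_{\text{bd}}(q_{t_{1}},q_{t_{2}};m)$. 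The boundary condition is clear since $\xi h\in\gsf_{0}(m)$. For the norm estimate, from $\xi\in[0,1]$ one has $|\xi|=\xi\leq 1$, and by Lem.~\ref{lem:normSpaceGSF} (homogeneity and the representation of $\|-\|_{l}$ via representatives) it follows that $\|\xi h\|_{l}=\xi\,\|h\|_{l}\le\|h\|_{l}=\|v-q\|_{l}<r$. Hence $\delta^{2}J(q+\xi h;h)\geq 0$, which gives $J(v)\geq J(q)$, establishing the local minimizer property.

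For the strict statement, assume the stronger positivity condition on $B_{2r}^{l}(q)$. If $v\in B_{r}^{l}(q)\cap\gsf_{\text{bd}}(q_{t_{1}},q_{t_{2}};m)$ with $\|v-q\|_{l}>0$, then $\|h\|_{l}>0$, and the same Taylor identity applies. The containment $q+\xi h\in B_{2r}^{l}(q)$ is immediate from $\|\xi h\|_{l}\le\|h\|_{l}<r<2r$ (here I use that $r\in\rcrho_{>0}$ so $2r<2r+0$, i.e.\ $r<2r$ in the sense of the strict order of Def.~\ref{def:nonArchNumbs}), so by hypothesis $\delta^{2}J(q+\xi h;h)>0$, yielding $J(v)>J(q)$.

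The main technical point—and the one that would be the chief obstacle if one tried to copy the classical argument blindly—is the manipulation of the strict order $<$ in the non-Archimedean ring $\rcrho$: in particular, one must argue with $\xi\in[0,1]\subseteq\rcrho$ rather than a real number, and confirm that $\|\xi h\|_{l}\le\|h\|_{l}$ holds as an inequality in $\rcrho$ (via the representative-wise formula in Lem.~\ref{lem:normSpaceGSF}.\ref{enu:normAndDefNet}). Once this is in hand, and once the GSF-regularity of $\phi$ justifying Thm.~\ref{thm:Taylor} is recorded, the rest is a direct transcription of the classical sufficient-condition argument.
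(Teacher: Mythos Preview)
Your proof is correct and follows essentially the same route as the paper: reduce to the one-variable GSF $\phi(s)=J(q+s(v-q))$, use $\phi'(0)=\delta J(q;v-q)=0$ and $\phi''(s)=\delta^{2}J(q+s(v-q);v-q)\ge 0$, and invoke the Lagrange-remainder Taylor formula (Thm.~\ref{thm:Taylor}). The only cosmetic difference is that for the first (non-strict) part the paper works on the open ball $B_{1}(0)$, obtains $\phi(s)\ge\phi(0)$ there, and then passes to the limit $s\to 1^{-}$ using sharp continuity and closedness of $[0,\infty)$, whereas you apply Taylor directly with increment $1$ and verify $q+\xi h\in B_{r}^{l}(q)$ via $\|\xi h\|_{l}\le\|h\|_{l}<r$; both are valid, and in fact the paper itself uses your direct evaluation at $s=1$ for the strict part.
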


\begin{proof}
For any $v\in B_{r}^{l}(q)\cap\gsf_{\text{bd}}(q_{t_{1}},q_{t_{2}};m)$,
we set $\psi(s):=J(q+s(v-q))\in\rti$ for all $s\in B_{1}(0)$, so
that $q+s(v-q)\in B_{r}^{l}(q)$. Since $(v-q)(t_{1})=0=(v-q)(t_{2})$,
we have $v-q\in\gsf_{0}(m)$, and properties \ref{enu:1stVarZero},
\ref{enu:2ndVarZero} yield $\psi'(0)=\delta J(q;v-q)=0$ and $\psi''(s)=\delta^{2}J(q+s(v-q);v-q)\ge0$
for all $s\in B_{1}(0)$. We claim that $s=0$ is a minimum of $\psi$.
In fact, for all $s\in B_{1}(0)$ by Taylor's Thm.~\ref{thm:Taylor}
\[
\psi(s)=\psi(0)+s\psi'(0)+\frac{s^{2}}{2}\psi''(\xi)
\]
for some $\xi\in[0,s]$. But $\psi'(0)=0$ and hence $\psi(s)-\psi(0)=\frac{s^{2}}{2}\psi''(\xi)\ge0$.
Finally, since by Thm.~\ref{thm:strongMembershipAndDistanceComplement}.\ref{enu:internalAreClosed}
the interval $[0,+\infty)$ is closed in the sharp topology
\[
\lim_{s\to1^{-}}\psi(s)=J(v)\ge\psi(0)=J(u),
\]
which is our conclusion. Note explicitly that if $\delta^{2}J(v;h)=0$
for all $h\in\gsf_{0}(m)$ and for all $v\in B_{r}^{l}(u)\cap\gsf_{\text{bd}}(q_{t_{1}},q_{t_{2}};m)$,
then $\psi''(\xi)=0$ and hence $J(v)=J(q)$.

Now, assume that $\delta^{2}J(v;h)>0$ for all $h\in\gsf_{0}(m)$
such that $\Vert h\Vert_{l}>0$ and for all $v\in B_{2r}^{l}(u)\cap\gsf_{\text{bd}}(q_{t_{1}},q_{t_{2}};m)$,
and take $v\in B_{r}^{l}(u)\cap\gsf_{\text{bd}}(q_{t_{1}},q_{t_{2}};m)$
such that $\Vert v-q\Vert_{l}>0$. As above set $\psi(s):=J(q+s(v-q))\in\rti$
for all $s\in B_{3/2}(0)$, so that $q+s(v-q)\in B_{2r}^{l}(q)$.
We have $\psi'(0)=0$ and $\psi''(s)=\delta^{2}J(q+s(v-q);v-q)>0$
for all $s\in B_{3/2}(0)$ because $\Vert v-u\Vert_{l}>0$. Using
Taylor's theorem, we get $\psi(1)=\psi(0)+\frac{1}{2}\psi''(\xi)$
for some $\xi\in[0,1]$. Therefore $\psi(1)-\psi(0)=J(v)-J(q)=\frac{1}{2}\psi''(\xi)>0$.
\end{proof}
In the following, we always assume the notations of Def.~\ref{def:minimizer}.
Moreover, we also set
\[
\rcrho_{d}[t]:=\left\{ p\in\rcrho[t]\mid\deg(p)\le d\right\} 
\]
for the set of all the polynomials with coefficients in the ring $\rcrho$
having degree less of equal to $d\in\N$.

In order to prove the higher-order version of the fundamental lemma,
we need the following preliminary results:
\begin{lem}
\label{lem:higherFundLem}Let $a$, $b\in\rcrho$ be such that $a<b$,
and let $f\in\zs\left([a,b],\gf_{\ge0}\right)$. If $\int_{a}^{b}f(t)\,\diff{t}=0$,
then $f=0$.
\end{lem}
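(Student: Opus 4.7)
The plan is to reduce the problem to showing that the antiderivative of $f$ vanishes identically, then conclude by differentiating. Specifically, define $F(t) := \int_a^t f(s)\,\diff{s}$ for $t \in [a,b]$; by Thm.~\ref{thm:existenceUniquenessPrimitives} together with Def.~\ref{def:integral}, $F \in \gsf([a,b],\rcrho)$ is well defined with $F(a)=0$ and $F'(t) = f(t)$ for every $t \in [a,b]$. The hypothesis gives $F(b) = \int_a^b f = 0$.

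The core step uses monotonicity of the integral (Thm.~\ref{thm:intRules}.\ref{enu:intMonotone}) twice. Since $0 \le f(s)$ for all $s \in [a,t]$, we get $F(t) = \int_a^t f \ge \int_a^t 0 = 0$. Applying the same monotonicity on $[t,b]$ together with additivity (Thm.~\ref{thm:intRules}.\ref{enu:additivityDomain}), we obtain
\[
0 \le \int_t^b f = F(b) - F(t) = -F(t),
\]
so $F(t) \le 0$. Combining the two inequalities yields $F(t) = 0$ for every $t \in [a,b]$.

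Finally, differentiating the identically zero function $F$ gives $f(t) = F'(t) = 0$ for all $t \in [a,b]$, which is the desired conclusion.

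I do not expect any real obstacle here: the proof is essentially the classical one, and what makes it go through verbatim in the GSF setting is that Thms.~\ref{thm:existenceUniquenessPrimitives}-\ref{thm:intRules} give precisely the needed primitive, additivity, and order properties for $\rcrho$-valued integrals. In particular, no argument at the level of $\eps$-representatives, no use of sharp continuity to build a positivity neighborhood around a hypothetical point $t_0$ with $f(t_0)>0$, and no invocation of Lem.~\ref{lem:invDense} to replace $f(t_0) \ne 0$ by a sharply invertible value is required; the antiderivative approach elegantly bypasses the usual subtleties of the non-Archimedean order on $\rcrho$.
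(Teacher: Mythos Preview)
Your proof is correct and takes a cleaner route than the paper's. The paper argues by contradiction: assuming $f(x)\ne 0$ for some $x\in[a,b]$, it descends to $\eps$-representatives, extracts a set $L\subseteq I$ with $0\in\overline L$ on which $f_\eps(x_\eps)>\rho_\eps^{q}$, modifies the defining net off $L$ so that the resulting $\bar f$ satisfies $\bar f(x)>\diff{\rho}^{q+1}$ (an \emph{invertible} positive lower bound, not merely a nonzero value), and then uses sharp continuity to propagate this bound to a whole subinterval $[c,d]$ with $c<d$; integrating over $[c,d]$ and comparing with $\int_a^b f=0$ yields the contradiction. Your antiderivative-sandwich argument never leaves the abstract integral calculus of Thm.~\ref{thm:intRules}: additivity, monotonicity, and the antisymmetry of $\le$ on $\rcrho$ suffice, with no need to manufacture an invertible lower bound from a merely nonzero one. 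The paper's ``upgrade $f(x)\ne 0$ to $\bar f(x)>0$ by subsequence modification'' is a reusable device in this non-Archimedean setting; your approach, by contrast, is shorter and entirely representative-free.
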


\begin{proof}
By contradiction, assume that $f(x)\ne0$ for some $x\in[a,b]$. Let
$(f_{\eps})$ be a net of smooth functions that define the GSF $f$
and let $[x_{\eps}]=x$ be a representative of $x$. The property
\begin{equation}
\forall q\in\N\,\forall^{0}\eps:\ f_{\eps}(x_{\eps})\le\rho_{\eps}^{q}\label{eq:contr1}
\end{equation}
implies $f(x)\le\diff{\rho}^{q}$ and hence also, letting $q\to+\infty$,
$f(x)\le0$. Since $f(x)\ge0$, this would imply $f(x)=0$. Therefore,
taking the negation of \eqref{eq:contr1} we get
\[
\exists q\in\N\,\exists L\subseteq I:\ 0\in\overline{L},\ \forall\eps\in L:\ f_{\eps}(x_{\eps})>\rho_{\eps}^{q},
\]
where $\overline{L}$ is the closure of $L$ in $I$. Set $\bar{f}_{\eps}:=f_{\eps}$
for $\eps\in L$ and $\bar{f}_{\eps}:=\frac{1}{2}\left(f_{\eps}+\rho_{\eps}^{q}\right)$
otherwise. Directly from Def.~\ref{def:netDefMap}, it follows that
$\bar{f}:=\left[\bar{f}_{\eps}(-)\right]\in\gsf([a,b],\rcrho_{\ge0})$
and $\bar{f}(x)>\diff{\rho}^{q+1}$. The sharp continuity of $\bar{f}$
at $x$ (Thm.~\ref{thm:propGSF}.\ref{enu:GSF-cont}) implies
\begin{equation}
\exists c,d:\ c<d,\ [c,d]\subseteq[a,b],\ \bar{f}|_{[c,d]}>\diff{\rho}^{q+1}.\label{eq:cont}
\end{equation}
Moreover, from $f\ge0$ and Thm\@.~\ref{thm:intRules} we obtain
\[
0=\int_{a}^{b}f\ge\int_{a}^{c}f+\int_{c}^{d}f+\int_{d}^{b}f\ge\int_{c}^{d}f.
\]
Thereby, we can find a negligible net $[z_{\eps}]=0$ such that $z_{\eps}\ge\int_{c_{\eps}}^{d_{\eps}}f_{\eps}$
for all $\eps$ sufficiently small, where $[c_{\eps}]=c$, $[d_{\eps}]=d$
with $c_{\eps}<d_{\eps}$ because $c<d$ (see Lem.~\ref{lem:mayer}).
In particular, for $\eps\in L$ sufficiently small $f_{\eps}=\bar{f}_{\eps}$
and hence $\int_{c_{\eps}}^{d_{\eps}}f_{\eps}=\int_{c_{\eps}}^{d_{\eps}}\bar{f}_{\eps}\le z_{\eps}$.
But then \eqref{eq:cont} and Thm\@.~\ref{thm:intRules}.\ref{enu:intMonotone}
yield
\[
\forall^{0}\eps\in L:\ z_{\eps}\ge\int_{c_{\eps}}^{d_{\eps}}\bar{f}_{\eps}\ge\int_{c_{\eps}}^{d_{\eps}}\rho_{\eps}^{q+1}=\rho_{\eps}^{q+1}\left(d_{\eps}-c_{\eps}\right).
\]
Therefore, $\rho_{\eps}^{q+1}\le\frac{z_{\eps}}{d_{\eps}-c_{\eps}}$.
However, $\left[\frac{z_{\eps}}{d_{\eps}-c_{\eps}}\right]=\frac{[z_{\eps}]}{d-c}=0$
and hence $\rho_{\eps}^{q+1}\le\rho_{\eps}^{q+2}$ by \eqref{eq:negligible},
a contradiction.
\end{proof}
The second preliminary result introduces the use of approximate identities
for convolution with GSF (see also \cite[Lem.~4.3]{GIO}):
\begin{lem}
\label{lem:approxId}Let $a$, $b\in\rti$ be such that $a<b$ and
let $f\in\gsf([a,b],\rti)$. Let $x\in[a,b]$ and $R\in\rti_{>0}$
be such that $B_{R}(x)\subseteq[a,b]$. Only in this statement, we
write $\forall^{0}t\in\rti_{>0}$ to denote $\exists r\in\rti_{>0}\,\forall t\in B_{r}(0)\cap\rti_{>0}$,
i.e.~``for $t$ sufficiently small (in the sharp topology)''. Assume
that $G_{t}\in\gsf(\rti,\rti)$ satisfy 
\begin{enumerate}
\item \label{enu:int1}$\forall^{0}t\in\rti_{>0}:\ \int_{-R}^{R}G_{t}=1$. 
\item \label{enu:G_tZero}For $t$ small, $(G_{t})_{t\in\rti_{>0}}$ is
zero outside every ball $B_{\delta}(0)$, $0<\delta<R$, i.e. 
\begin{equation}
\forall\delta\in\rti_{>0}\,\forall^{0}t\in\rti_{>0}\,\forall y\in[-R,-\delta]\cup[\delta,R]:\ G_{t}(y)=0.\label{eq:G_tIsZero}
\end{equation}
\item \label{enu:delta_bd} $\exists M\in\rti_{>0}\,\forall^{0}t\in\rti_{>0}\colon\ \int_{-R}^{R}\left|G_{t}(y)\right|\,\diff{y}\leq M$. 
\end{enumerate}
Then 
\[
\lim_{t\to0^{+}}\int_{-R}^{R}f(x-y)G_{t}(y)\diff{y}=f(x).
\]
Moreover $\int_{-R}^{R}f(x-y)G_{t}(y)\diff{y}=\int_{x-R}^{x+R}f(y)G_{t}(x-y)\diff{y}$. 
\end{lem}

\begin{proof}
We only have to generalize the classical proof concerning limits of
convolutions with strict delta nets. We first note that 
\[
\int_{-R}^{R}f(x-y)G_{t}(y)\diff{y}=\int_{x-R}^{x+R}f(y)G_{t}(x-y)\diff{y}
\]
so that these integrals exist because $(x-R,x+R)=B_{R}(x)\subseteq[a,b]$.
Using \ref{enu:int1}, for $t$ small, let's say for $0<t<S\in\rti_{>0}$,
we get 
\begin{align*}
\left|\int_{-R}^{R}f(x-y)G_{t}(y)\diff{y}-f(x)\right| & =\left|\int_{-R}^{R}\left[f(x-y)-f(x)\right]G_{t}(y)\diff{y}\right|\\
 & \le\int_{-R}^{R}\left|f(x-y)-f(x)\right|\cdot\left|G_{t}(y)\right|\diff{y}.
\end{align*}
For each $r\in\rti_{>0}$, sharp continuity of $f$ at $x$ yields
$\left|f(x-y)-f(x)\right|<r$ for all $y$ such that $|y|<\delta\in\rti_{>0}$,
and we can take $\delta<R$. Assuming that \ref{enu:G_tZero} holds
for all $0<t<s$, for $0<t<s\wedge S$, we have 
\begin{equation}
\left|\int_{-R}^{R}f(x-y)G_{t}(y)\diff{y}-f(x)\right|\le r\int_{-\delta}^{+\delta}\left|G_{t}(y)\right|\diff{y}\le rM,\label{eq:rTimesIntegG}
\end{equation}
where we used \ref{enu:delta_bd}. The right hand side of \eqref{eq:rTimesIntegG}
can be taken arbitrarily small in $\rti_{>0}$ because it holds for
all $r\in\rti_{>0}$.
\end{proof}

\subsection{Higher-order Euler\textendash Lagrange equation, D'Alembert principle
and du Bois\textendash Rey\-mond optimality condition}

In this section, we first compute the first variation $\delta J(q;h)$
and thereby we deduce the Euler-Lagrange equations both in integral
and differential forms, and the D'Alembert principle in differential
form.
\begin{lem}
\label{Lem:firstVarInt}If $q\in\gsf\left([t_{1},t_{2}],\rcrho\right)$
and $h\in\gsf_{0}(m)$, then
\begin{align*}
\delta J(q;h) & =\int_{t_{1}}^{t_{2}}h^{(m)}(t)\cdot\Biggl[\sum\limits _{i=0}^{m}(-1)^{i}\Biggl(\underbrace{\int_{t_{1}}^{t}\int_{t_{1}}^{s_{1}}\dots\int_{t_{1}}^{s_{m-i-1}}}_{m-i~\textnormal{times}}\Bigl(\partial_{i+2}L[q]^{m}(s_{m-i})\Bigr)\,\diff{s_{m-i}}\dots\diff{s_{1}}\Biggr)\Biggr]\,\diff{t}\\
 & =\int_{t_{1}}^{t_{2}}h(t)\cdot\left[\sum\limits _{i=0}^{m}(-1)^{i}\frac{\diff{^{i}}}{\diff{t}^{i}}\partial_{i+2}L[q]^{m}(t)\right].
\end{align*}
\end{lem}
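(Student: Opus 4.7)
The plan is to compute $\delta J(q;h)$ directly from the definition and then to push all the derivatives of $h$ up to order $m$ via iterated integration by parts, using that the boundary values $h^{(j)}(t_1) = h^{(j)}(t_2) = 0$ vanish for every $j = 0,\ldots,m-1$.

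First I would differentiate $J(q+sh) = \int_{t_1}^{t_2} L(t, q+sh, q'+sh', \ldots, q^{(m)}+sh^{(m)})\,\diff{t}$ with respect to $s$ at $s=0$. Combining Thm.~\ref{thm:intRules}.(viii) (differentiation under the integral sign for GSF parameter integrals) with Thm.~\ref{thm:rulesDer}.(v) (the chain rule for GSF), this yields
\begin{equation*}
\delta J(q;h) \;=\; \int_{t_1}^{t_2} \sum_{i=0}^{m} \partial_{i+2} L[q]^m(t)\cdot h^{(i)}(t)\,\diff{t}.
\end{equation*}
Each step here is justified because $q+sh$ is a GSF jointly in $(t,s)$, so the composition $L[q+sh]^m$ is a GSF on $[t_1,t_2]\times[-r,r]$ for small $r\in\rcrho_{>0}$, and Thm.~\ref{thm:rulesDer} gives the partial with respect to $s$ in closed form.

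Next, for each fixed $i\in\{0,\ldots,m-1\}$ I would repeatedly apply the GSF integration-by-parts formula (Thm.~\ref{thm:intRules}.(vi)) to the term $\int_{t_1}^{t_2} \partial_{i+2}L[q]^m(t)\,h^{(i)}(t)\,\diff{t}$. In each step, I introduce a primitive of the current coefficient via Def.~\ref{def:integral}, whose evaluation at $t_1$ vanishes, while the factor $h^{(j)}$ is differentiated up to $h^{(j+1)}$. After $m-i$ such steps, the coefficient has become precisely the $(m-i)$-fold iterated integral
\begin{equation*}
\underbrace{\int_{t_1}^{t}\int_{t_1}^{s_1}\cdots\int_{t_1}^{s_{m-i-1}}}_{m-i\text{ times}} \partial_{i+2}L[q]^m(s_{m-i})\,\diff{s_{m-i}}\cdots\diff{s_1},
\end{equation*}
the factor $h^{(i)}$ has become $h^{(m)}$, and a sign has been accumulated at each step. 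The boundary terms produced at each stage are of the form $h^{(j)}(t_k)\cdot(\text{iterated integral})(t_k)$ for $j\le m-1$ and $k\in\{1,2\}$; all of these vanish because $h\in\gsf_0(m)$. For $i=m$ no integration by parts is needed and the contribution is already in the required form. Summing over $i$ and collecting the signs produces the claimed formula.

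The only step that is not entirely routine is to certify that the iterated primitives remain GSF on $[t_1,t_2]$, so that Thm.~\ref{thm:intRules}.(vi) can indeed be applied at each stage. This is handled by Thm.~\ref{thm:existenceUniquenessPrimitives}, which produces the primitive as a $\gcf{k+1}$ (hence GSF, since $k=\infty$) map, together with the closure of GSF under composition (Thm.~\ref{thm:propGSF}.(iv)) used to form the integrand $\partial_{i+2}L[q]^m$ from $q$ and the partial derivatives of $L$. With these tools in place the argument proceeds formally as in the classical case.
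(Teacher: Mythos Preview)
Your proposal is correct and follows essentially the same route as the paper's proof: first differentiate under the integral sign to obtain $\delta J(q;h)=\int_{t_1}^{t_2}\sum_{i=0}^{m}\partial_{i+2}L[q]^{m}(t)\cdot h^{(i)}(t)\,\diff{t}$, then for each $i$ perform $m-i$ integrations by parts (passing to iterated primitives of $\partial_{i+2}L[q]^{m}$ and raising the order of $h^{(i)}$ to $h^{(m)}$), and finally discard the boundary terms using $h\in\gsf_{0}(m)$. Your explicit invocation of Thm.~\ref{thm:existenceUniquenessPrimitives} and of closure under composition to justify that each iterated primitive is again a GSF is a welcome addition the paper leaves implicit.
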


\begin{proof}
According to Def.~\ref{def:minimizer}, and to Thm\@.~\ref{thm:intRules}
for any $h\in\gsf_{0}(m)$ we have 
\begin{equation}
\delta J(q;h)=\int_{t_{1}}^{t_{2}}\left(\sum_{i=0}^{m}\partial_{i+2}L[q]^{m}(t)\cdot h^{(i)}(t)\right)\,\diff{t}.\label{pel}
\end{equation}
By repeated integration by parts (Thm\@.~\ref{thm:intRules}.\ref{enu:intByParts})
one has 
\begin{multline}
\sum\limits _{i=0}^{m}\int_{t_{1}}^{t_{2}}\partial_{i+2}L[q]^{m}(t)\cdot h^{(i)}(t)\,\diff{t}\\
=\sum\limits _{i=0}^{m}\Biggl\{\Biggl[\sum\limits _{j=1}^{m-i}(-1)^{j+1}h^{(i+j-1)}(t)\cdot\Biggl(\underbrace{\int_{t_{1}}^{t}\int_{t_{1}}^{s_{1}}\dots\int_{t_{1}}^{s_{j-1}}}_{j~\textnormal{times}}\Bigl(\partial_{i+2}L[q]^{m}(s_{j})\Bigr)\,\diff{s_{j}}\dots\diff{s_{2}}\,\diff{s_{1}}\Biggr)\Biggr]_{t_{1}}^{t_{2}}\\
+(-1)^{i}\int_{t_{1}}^{t_{2}}h^{(m)}(t)\cdot\Biggl(\underbrace{\int_{t_{1}}^{t}\int_{t_{1}}^{s_{1}}\dots\int_{t_{1}}^{s_{m-i-1}}}_{m-i~\textnormal{times}}\Bigl(\partial_{i+2}L[q]^{m}(s_{m-i})\Bigr)\,\diff{s_{m-i}}\dots\diff{s_{2}}\,\diff{s_{1}}\Biggr)\,\diff{t}\Biggr\}\label{eq:identity1}
\end{multline}
Since $h^{(i)}(t_{1})=0=h^{(i)}(t_{2})$, $i=0,\ldots,m-1$, the first
summands in \eqref{eq:identity1} vanish. Therefore, equations \eqref{pel}
and \eqref{eq:identity1} yield the first conclusion. The second equality
simply follows by $m$ times integration by parts.
\end{proof}
In order to prove the Euler-Lagrange equation in integral form (see
below Cor.~\ref{Cor:ELdeordm}), we first need to show the fundamental
lemma and the higher order du Bois\textendash Reymond lemma of the
calculus of variations for generalized functions.
\begin{lem}[Fundamental Lemma of the Calculus of Variations]
\label{lem:fund_lem_calc_var}Let $a$, $b\in\rti$ such that $a<b$,
and let $f\in\gsf([a,b],\rti)$. If 
\begin{equation}
\int_{a}^{b}f(t)h(t)\,\diff{t}=0\:\text{ for all }\:h\in\gsf_{0}(a,b),\label{eq:HpFundLem}
\end{equation}
then $f=0$. 
\end{lem}

\begin{proof}
Let $x\in[a,b]$. Because of Thm.~\ref{thm:propGSF}.\ref{enu:GSF-cont}
and Lem.~\ref{lem:approxOfBoundaryPointsWithInterior}, without loss
of generality we can assume that $x$ is a sharply interior point,
so that $B_{R}(x)\subseteq[a,b]$ for some $R\in\rti_{>0}$. Let $\phi\in\D_{[-1,1]}(\R)$
be such that $\int\phi=1$. Set $G_{t,\eps}(x):=\frac{1}{t_{\eps}}\phi\left(\frac{x}{t_{\eps}}\right)$,
where $x\in\R$ and $t\in\rti_{>0}$, and $G_{t}(x):=[G_{t,\eps}(x_{\eps})]$
for all $x\in\rti$. Then, for $t$ sufficiently small, we have $G_{t}(x-.)\in\gsf_{0}(a,b)$
and \eqref{eq:HpFundLem} yields $\int_{a}^{b}f(y)G_{t}(x-y)\diff{y}=0$.
For $t$ small, we both have that $G_{t}(x-.)=0$ on $[a,x-R]\cup[x+R,b]$
and the assumptions of Lem.~\ref{lem:approxId} hold. Therefore 
\begin{align*}
0 & =\int_{a}^{b}f(y)G_{t}(x-y)\diff{y}=\int_{x-R}^{x+R}f(y)G_{t}(x-y)\diff{y}=\\
 & =\int_{-R}^{R}f(x-y)G_{t}(y)\diff{y},
\end{align*}
and Lem.~\ref{lem:approxOfBoundaryPointsWithInterior} hence yields
$f(x)=0$. 
\end{proof}
\begin{lem}[Fundamental higher order du Bois\textendash Reymond lemma]
\label{lem:hodr}Let $a$, $b\in\rcrho$ be such that $a<b$, and
let $f\in\zs\left([a,b],\gf\right)$. If 
\begin{equation}
\int_{a}^{b}f(t)h^{(m)}(t)\,\diff{t}=0\:\text{ for all }\:h\in\gsf_{0}(a,b)\label{hodl}
\end{equation}
Then $f(t)\in\rcrho_{m-1}[t]$.
\end{lem}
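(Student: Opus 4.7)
The plan is to adapt the classical du Bois-Reymond argument. First, I would find a polynomial $P \in \rcrho_{m-1}[t]$ such that $g := f - P$ is $L^{2}$-orthogonal on $[a,b]$ (with respect to the $\rcrho$-valued pairing given by integration) to every polynomial of degree at most $m-1$. I would then build a test function $h_{m} \in \gsf_{0}(m)$ with $h_{m}^{(m)} = g$ as the $m$-fold iterated antiderivative of $g$ starting at $a$. Plugging $h_{m}$ into the hypothesis gives $\int_{a}^{b} f g\,\diff{t} = 0$, and combined with orthogonality this collapses to $\int_{a}^{b} g^{2}\,\diff{t} = 0$. Since $g^{2}\geq 0$, Lem.~\ref{lem:higherFundLem} produces $g^{2} = 0$ and hence $g = 0$.

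For the construction of $P(t) = \sum_{i=0}^{m-1} c_{i} (t-a)^{i}$, the coefficients $c_{i} \in \rcrho$ would be determined by the normal equations
$$\sum_{i=0}^{m-1} c_{i} \int_{a}^{b} (s-a)^{k+i}\,\diff{s} = \int_{a}^{b} (s-a)^{k} f(s)\,\diff{s} \qquad (k=0,\ldots,m-1),$$
whose coefficient matrix $G_{ki} = (b-a)^{k+i+1}/(k+i+1)$ factors as $G = (b-a)\cdot D H D$ with $D := \mathrm{diag}(1,b-a,\ldots,(b-a)^{m-1})$ and $H_{ki} := 1/(k+i+1)$ the classical Hilbert matrix. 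Hence $\det G = (b-a)^{m^{2}}\det H$, which is invertible in $\rcrho$: the factor $b-a$ is positive and invertible (by $a<b$ and Lem.~\ref{lem:mayer}), while $\det H$ is a nonzero real. Cramer's rule then yields a unique solution $(c_{i}) \in \rcrho^{m}$.

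Next I would define $h_{1}(t) := \int_{a}^{t} g(s)\,\diff{s}$ and iteratively $h_{j+1}(t) := \int_{a}^{t} h_{j}(s)\,\diff{s}$, each a GSF on $[a,b]$ by Thm.~\ref{thm:existenceUniquenessPrimitives}. A quick induction (verified by differentiation) gives the Cauchy iterated-integral formula
$$h_{j}(t) = \frac{1}{(j-1)!} \int_{a}^{t} (t-s)^{j-1} g(s)\,\diff{s},$$
so $h_{j}(b)$ is proportional to $\int_{a}^{b} (b-s)^{j-1} g(s)\,\diff{s}$. Since the family $\{(b-s)^{j-1}\}_{j=1}^{m}$ spans $\rcrho_{m-1}[s]$ and $g$ is orthogonal to this space by construction, all of $h_{1}(b),\ldots,h_{m}(b)$ vanish; equivalently, $h_{m}^{(i)}(b) = h_{m-i}(b) = 0$ for $i = 0,\ldots,m-1$. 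Together with the automatic vanishing $h_{m}^{(i)}(a) = 0$ for $i < m$, this shows $h_{m} \in \gsf_{0}(m)$.

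Applying the hypothesis to $h := h_{m}$ yields $0 = \int_{a}^{b} f g\,\diff{t} = \int_{a}^{b} g^{2}\,\diff{t} + \int_{a}^{b} P g\,\diff{t} = \int_{a}^{b} g^{2}\,\diff{t}$, the last integral vanishing by the same orthogonality. Invoking Lem.~\ref{lem:higherFundLem} on $g^{2} \ge 0$ gives $g^{2} = 0$, from which $g = 0$ follows immediately (if $g_{\eps}(t_{\eps})^{2}$ is $\rho$-negligible then so is $g_{\eps}(t_{\eps))$, since $|g_{\eps}(t_{\eps})| \le \rho_{\eps}^{n/2}$ for all $n$). Consequently $f = P \in \rcrho_{m-1}[t]$. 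The main obstacle I anticipate is the verification that the Gram matrix is invertible over the non-Archimedean ring $\rcrho$; this reduces to leveraging that $b-a > 0$ is invertible, together with the classical nonvanishing of the Hilbert determinant, after which the argument parallels the classical one.
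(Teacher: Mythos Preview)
Your proof is correct and follows the same overall scheme as the paper's: construct a test function $h\in\gsf_{0}(m)$ with $h^{(m)}=f-p$ for some $p\in\rcrho_{m-1}[t]$, deduce $\int_{a}^{b}(f-p)^{2}\,\diff{t}=0$, and conclude via Lem.~\ref{lem:higherFundLem}. The difference lies in how $p$ and $h$ are produced. You first take $p=P$ to be the $L^{2}$-orthogonal projection of $f$ onto $\rcrho_{m-1}[t]$, which requires inverting a Gram matrix over $\rcrho$ (handled cleanly by factoring out powers of $b-a$ and invoking the nonvanishing of the real Hilbert determinant), and then the boundary conditions at $b$ for the iterated primitive $h_{m}$ of $g=f-P$ drop out automatically from orthogonality via the Cauchy formula. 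The paper instead starts from the $m$-fold primitive $H$ of $f$ itself and subtracts a correction polynomial $P(t)=t^{m}z(t)$ of degree $\le 2m-1$ (so $P^{(k)}(a)=0$ for $k<m$ is automatic), choosing the coefficients $z_{k}$ by an explicit recursion to force $h^{(k)}(b)=0$; the resulting $p:=P^{(m)}\in\rcrho_{m-1}[t]$ is not obtained as an orthogonal projection, and no linear system needs to be solved. Your route is conceptually cleaner and makes the role of orthogonality transparent; the paper's is slightly more elementary in that it avoids any matrix inversion over the non-Archimedean ring and gives the coefficients by a direct recursion.
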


\begin{proof}
For all $h\in\gsf_{0}(a,b)$, using $m$ times integration by parts
in \eqref{hodl}, by $h^{(i)}(a)=h^{(i)}(b)=0$ for all $i=0,\ldots,m-1$
we get $\int_{a}^{b}f^{(m)}(t)h(t)\,\diff{t}=0$. Thereby, the fundamental
Lem.~\ref{lem:fund_lem_calc_var} yields $f^{(m)}=0$. Integrating
$m$ times (see Thm.~\ref{thm:existenceUniquenessPrimitives}), we
get the claim $f(t)\in\rcrho_{m-1}[t]$.
\end{proof}
\noindent As a simple consequence, we have the following
\begin{cor}
\label{cor:fundLem}Let $a$, $b\in\rcrho$ be such that $a<b$, and
let $f$, $g\in\gsf([a,b],\rcrho)$. If
\[
\int_{a}^{b}\left(f(t)h(t)+g(t)h'(t)\right)\,\diff{t}=0\:\text{ for all }\:h\in\gsf_{0}(a,b),
\]
then $g'=f$.
\end{cor}

\begin{proof}
Set $F(x):=\int_{a}^{x}f(t)\,\diff{t}$ for $x\in[a,b]$, so that
$F\in\gsf([a,b],\rcrho)$ and $F'=f$ by Def.~\ref{def:integral}.
Integrating by parts (Thm.~\ref{thm:intRules}.\ref{enu:intByParts}),
we get
\[
\int_{a}^{b}\left(f(t)h(t)+g(t)h'(t)\right)\,\diff{t}=\int_{a}^{b}\left(g(t)-F(t)\right)h'(t)\,\diff{t}=0
\]
for all $h\in\gsf_{0}(a,b)$. Therefore, Lem.~\ref{lem:hodr} implies
that $g(t)-F(t)$ is a constant (in $\rcrho$), and hence $g'=F'=f$
as claimed.
\end{proof}
Applying the higher-order du Bois\textendash Reymond Lem.~\ref{lem:hodr},
we arrive at the first form of the Euler-Lagrange equations:
\begin{cor}
\label{Cor:ELdeordm}If $q\in\gsf\left([t_{1},t_{2}],\rcrho\right)$
is a weak extremal of functional \eqref{eq:Pm}, then $q$ satisfies
the following higher-order Euler\textendash Lagrange integral equation:
\begin{equation}
\sum_{i=0}^{m}(-1)^{i}\Biggl(\underbrace{\int_{t_{1}}^{t}\int_{t_{1}}^{s_{1}}\dots\int_{t_{1}}^{s_{m-i-1}}}_{m-i~\textnormal{times}}\Bigl(\partial_{i+2}L[q]^{m}(t)\Bigr)\,\diff{s_{m-i}}\dots\diff{s_{1}}\Biggr)\in\rcrho_{m-1}[t].\label{eq:ELdeordmInt1}
\end{equation}
\end{cor}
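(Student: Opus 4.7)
The plan is to combine the two preceding preparatory results: the expression for $\delta J(q;h)$ computed in Lemma \ref{Lem:firstVarInt}, and the fundamental higher order du Bois\textendash Reymond lemma (Lemma \ref{lem:hodr}). Since $q$ is a weak extremal, we have $\delta J(q;h)=0$ for every $h\in\gsf_{0}(m)$, so applying Lemma \ref{Lem:firstVarInt} immediately gives
\[
\int_{t_{1}}^{t_{2}} h^{(m)}(t)\cdot F(t)\,\diff{t}=0\quad\forall h\in\gsf_{0}(m),
\]
where $F(t)$ denotes the bracketed sum appearing in Lemma \ref{Lem:firstVarInt}, that is, exactly the expression on the left-hand side of \eqref{eq:ELdeordmInt1}.

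The next step is to verify that $F\in\gsf([t_{1},t_{2}],\rcrho)$ so that Lemma \ref{lem:hodr} can be applied. This follows from the closure properties of GSF: each partial derivative $\partial_{i+2}L[q]^{m}(\cdot)$ is a GSF on $[t_{1},t_{2}]$ by Theorem \ref{thm:propGSF}\ref{enu:category} (closure under composition, applied to $L$ and $[q]^{m}$), and iterated one-dimensional integration of a GSF is again a GSF by Definition \ref{def:integral} and Theorem \ref{thm:existenceUniquenessPrimitives}. Hence the finite sum $F$ lies in $\gsf([t_{1},t_{2}],\rcrho)$.

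Finally, I invoke Lemma \ref{lem:hodr} with $f:=F$: since $\int_{t_{1}}^{t_{2}} h^{(m)}(t)\,F(t)\,\diff{t}=0$ for all $h\in\gsf_{0}(m)$, the lemma yields $F(t)\in\rcrho_{m-1}[t]$, which is precisely the statement \eqref{eq:ELdeordmInt1}. No step is really an obstacle here, since the work has been front-loaded into Lemmas \ref{Lem:firstVarInt} and \ref{lem:hodr}; the only subtlety is making sure the iterated-integral bracket really defines a single GSF in the variable $t$ (so that Lemma \ref{lem:hodr} applies), which is handled by the closure properties of $\gsf$ cited above.
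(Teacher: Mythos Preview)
Your proof is correct and follows exactly the approach indicated in the paper: the corollary is obtained by combining Lemma~\ref{Lem:firstVarInt} with the weak extremal condition $\delta J(q;h)=0$ and then applying the higher-order du Bois\textendash Reymond Lemma~\ref{lem:hodr}. Your additional remark that the bracketed expression is indeed a GSF (via closure under composition and iterated integration) makes explicit a point the paper leaves implicit.
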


\noindent From the second equality of Lem.~\ref{Lem:firstVarInt}
and the fundamental Lem.~\ref{lem:fund_lem_calc_var}, we obtain
\begin{cor}[Higher-order Euler\textendash Lagrange equations in differential form]
\label{cor:16}If the GSF $q\in\gsf\left([t_{1},t_{2}],\rcrho\right)$
is a weak extremal of functional \eqref{eq:Pm}, then 
\begin{equation}
\sum_{i=0}^{m}(-1)^{i}\frac{\diff{^{i}}}{\diff{t}^{i}}\partial_{i+2}L[q]^{m}(t)=0\qquad\forall t\in[t_{1},t_{2}].\label{eq:ELdeordm1}
\end{equation}
\end{cor}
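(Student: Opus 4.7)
The plan is to obtain Cor.~\ref{cor:16} as a direct differential consequence of the integral form in Cor.~\ref{Cor:ELdeordm}, which already encodes all the variational information. Concretely, I would take $\frac{\diff{^m}}{\diff{t^m}}$ of both sides of equation \eqref{eq:ELdeordmInt1} and read off what each side becomes. The right-hand side lies in $\rcrho_{m-1}[t]$, so its $m$-th derivative vanishes identically as a GSF (this is immediate from repeated application of Thm.~\ref{thm:rulesDer}, since $\frac{\diff{}}{\diff{t}}$ is $\rcrho$-linear and kills constants). The entire content is therefore in showing what happens to the left-hand side.

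For the left-hand side, I would isolate the $i$-th summand
\[
A_i(t):=(-1)^i\underbrace{\int_{t_{1}}^{t}\int_{t_{1}}^{s_{1}}\dots\int_{t_{1}}^{s_{m-i-1}}}_{m-i~\text{times}}\partial_{i+2}L[q]^{m}(s_{m-i})\,\diff{s_{m-i}}\dots\diff{s_{1}}
\]
(with the convention that, for $i=m$, no integration takes place and $A_m(t)=(-1)^m\partial_{m+2}L[q]^m(t)$). By the fundamental theorem of calculus for GSF, encoded in Def.~\ref{def:integral} and Thm.~\ref{thm:intRules}.\ref{enu:foundamental}, each application of $\frac{\diff{}}{\diff{t}}$ peels off one outer integral, because the integrand of the outermost integral depends on $t$ only through its upper endpoint. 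I would iterate this observation $m-i$ times to obtain $\frac{\diff{^{m-i}}}{\diff{t^{m-i}}}A_i(t)=(-1)^i\partial_{i+2}L[q]^m(t)$, and then differentiate $i$ more times to conclude
\[
\frac{\diff{^m}}{\diff{t^m}}A_i(t)=(-1)^i\frac{\diff{^i}}{\diff{t^i}}\partial_{i+2}L[q]^m(t),
\]
all derivatives existing as GSF by Thm.~\ref{thm:propGSF}.\ref{enu:category} and Thm.~\ref{thm:rulesDer}.

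Summing over $i=0,\ldots,m$ and using linearity of $\frac{\diff{^m}}{\diff{t^m}}$ (again Thm.~\ref{thm:rulesDer}) then gives
\[
\sum_{i=0}^{m}(-1)^i\frac{\diff{^i}}{\diff{t^i}}\partial_{i+2}L[q]^m(t)=\frac{\diff{^m}}{\diff{t^m}}\sum_{i=0}^{m}A_i(t)=\frac{\diff{^m}}{\diff{t^m}}P(t)=0,
\]
where $P\in\rcrho_{m-1}[t]$ is the polynomial furnished by Cor.~\ref{Cor:ELdeordm}, valid for every $t\in[t_1,t_2]$.

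I do not anticipate any serious obstacle: the only point requiring care is justifying that $\frac{\diff{}}{\diff{t}}\int_{t_1}^{t}G(s)\,\diff{s}=G(t)$ propagates through a nested integral whose inner layers are themselves GSF — but this is exactly the content of Thm.~\ref{thm:existenceUniquenessPrimitives} combined with the closure of GSF under integration (Def.~\ref{def:integral}) and under composition (Thm.~\ref{thm:propGSF}.\ref{enu:category}), so each intermediate iterated integral is itself a bona fide GSF on $[t_1,t_2]$ to which the fundamental theorem applies. Linearity and the action of $\frac{\diff{^m}}{\diff{t^m}}$ on $\rcrho_{m-1}[t]$ are routine in $\rcrho$. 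Consequently the corollary follows by pure bookkeeping once Cor.~\ref{Cor:ELdeordm} is in hand.
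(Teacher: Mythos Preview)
Your proposal is correct and follows exactly the approach indicated in the paper, which simply states that the differential form is obtained by taking the derivative of order $m$ of the integral Euler\textendash Lagrange equation \eqref{eq:ELdeordmInt1} from Cor.~\ref{Cor:ELdeordm}. You have supplied the bookkeeping that the paper leaves implicit --- peeling off the nested integrals via Def.~\ref{def:integral} and noting that the $m$-th derivative annihilates $\rcrho_{m-1}[t]$ --- but the strategy is identical.
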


\noindent Finally, Lem.~\ref{Lem:firstVarInt}, Def.~\ref{def:minimizer}.\ref{enu:Dalembert}
and the fundamental Lem.~\ref{lem:fund_lem_calc_var} directly yield
the D'Alembert principle in differential form:
\begin{cor}
\label{cor:DalembertDiff}Let $q\in\gsf\left([t_{1},t_{2}],\rcrho\right)$,
$Q\in\gsf([t_{1},t_{2}]\times\rcrho^{d\cdot(m+1)},\rcrho)$ and assume
that the functional $J$ satisfies at \emph{$q$ }the D'Alembert's
principle with generalized forces \emph{$Q$, then
\[
\sum_{i=0}^{m}(-1)^{i}\frac{\diff{^{i}}}{\diff{t}^{i}}\partial_{i+2}L[q]^{m}(t)=Q[q]^{m}(t)\qquad\forall t\in[t_{1},t_{2}].
\]
}
\end{cor}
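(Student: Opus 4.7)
The plan is to combine the two available expressions for $\delta J(q;h)$ and then strip away the iterated integrals using the higher-order du Bois--Reymond Lem.~\ref{lem:hodr}. First, the D'Alembert principle assumed at $q$ (Def.~\ref{def:minimizer}.\ref{enu:Dalembert}) gives one expression for $\delta J(q;h)$ as an integral of $h^{(m)}(t)$ times an $m$-fold iterated integral of $Q[q]^{m}$. Second, Lem.~\ref{Lem:firstVarInt} provides another expression for $\delta J(q;h)$ as an integral of $h^{(m)}(t)$ times an alternating sum of iterated integrals of $\partial_{i+2}L[q]^{m}$. Subtracting these two identities yields, for every $h\in\gsf_{0}(m)$,
\[
\int_{t_{1}}^{t_{2}} h^{(m)}(t)\cdot F(t)\,\diff{t}=0,
\]
where
\[
F(t):=\sum_{i=0}^{m}(-1)^{i}\underbrace{\int_{t_{1}}^{t}\!\!\int_{t_{1}}^{s_{1}}\!\!\dots\!\!\int_{t_{1}}^{s_{m-i-1}}}_{m-i~\text{times}}\!\!\partial_{i+2}L[q]^{m}(s_{m-i})\,\diff{s_{m-i}}\dots\diff{s_{1}}-\underbrace{\int_{t_{1}}^{t}\!\!\dots\!\!\int_{t_{1}}^{s_{m-1}}}_{m~\text{times}}\!\!Q[q]^{m}(s_{m})\,\diff{s_{m}}\dots\diff{s_{1}}.
\]

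Second, I would check that $F\in\gsf([t_{1},t_{2}],\rcrho)$: this follows from Thm.~\ref{thm:propGSF}.\ref{enu:category} (closure of GSF under composition and the algebra structure) together with the fact that iterated integration of a GSF again produces a GSF, by repeated application of Thm.~\ref{thm:existenceUniquenessPrimitives} and Def.~\ref{def:integral}. The higher-order du Bois--Reymond Lem.~\ref{lem:hodr} then applies and gives $F\in\rcrho_{m-1}[t]$.

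Third, I would differentiate $m$ times. The term of index $i$ in the first sum is an $(m-i)$-fold iterated integral of $\partial_{i+2}L[q]^{m}$, whose $m$-th derivative equals $\frac{\diff{^{i}}}{\diff{t}^{i}}\partial_{i+2}L[q]^{m}$ by the fundamental theorem Def.~\ref{def:integral}; analogously, the $m$-th derivative of the $m$-fold iterated integral of $Q[q]^{m}$ is $Q[q]^{m}$. Since $F\in\rcrho_{m-1}[t]$ has degree at most $m-1$, its $m$-th derivative vanishes. Hence
\[
\sum_{i=0}^{m}(-1)^{i}\frac{\diff{^{i}}}{\diff{t}^{i}}\partial_{i+2}L[q]^{m}(t)-Q[q]^{m}(t)=0,
\]
which is the claim.

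No step looks genuinely difficult: the heavy lifting (independence of representatives, legality of integration by parts for GSF, and the polynomial-remainder fundamental lemma) has already been done in Lem.~\ref{Lem:firstVarInt} and Lem.~\ref{lem:hodr}. The only point requiring a little care is the bookkeeping of the integration orders $m-i$ versus $m$ in the two expressions, so that the $m$-th derivative indeed produces $\frac{\diff{^{i}}}{\diff{t}^{i}}\partial_{i+2}L[q]^{m}$ from the $i$-th summand and $Q[q]^{m}$ from the forcing term; this is a routine unwinding of Def.~\ref{def:integral} applied $m$ times.
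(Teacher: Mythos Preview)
Your proposal is correct and follows essentially the same route as the paper: subtract the expression from Lem.~\ref{Lem:firstVarInt} and the one given by the D'Alembert principle (Def.~\ref{def:minimizer}.\ref{enu:Dalembert}), apply the higher-order du Bois--Reymond Lem.~\ref{lem:hodr} to the resulting $F$, and differentiate $m$ times. Your additional remark that $F\in\gsf([t_{1},t_{2}],\rcrho)$ via Thm.~\ref{thm:existenceUniquenessPrimitives} and closure under composition is a welcome explicit check that the paper leaves implicit.
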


Associated to a given function $q\in\gsf\left([t_{1},t_{2}],\rcrho\right)$,
it is convenient to introduce the following quantities (see \cite{Torres:proper}):
\begin{equation}
\varphi^{j}(q)(t):=\sum_{i=0}^{m-j}(-1)^{i}\frac{\diff{^{i}}}{\diff{t}^{i}}\partial_{i+j+2}L[q]^{m}(t)\quad\forall j=0,\ldots,m\,\forall t\in[t_{1},t_{2}]\label{eq:eqprin11}
\end{equation}
These operators are useful for our purposes because of the following
properties: 
\begin{equation}
\frac{\diff{}}{\diff{t}}\varphi^{j}(q)=\partial_{j+1}L[q]^{m}-\varphi^{j-1}(q)\quad\forall j=1,\ldots,m.\label{dr}
\end{equation}

\noindent We are now in conditions to prove a higher-order du Bois\textendash Reymond
optimality condition.
\begin{thm}
\label{thm:cDRifm}If $q\in\zs\left([t_{1},t_{2}],\gfs\right)$ is
a weak extremal of functional \eqref{eq:Pm}, then 
\begin{equation}
\frac{\diff{}}{\diff{t}}\left(L[q]^{m}-\sum_{j=1}^{m}\varphi^{j}(q)\cdot q^{(j)}\right)=\partial_{1}L[q]^{m}.\label{eq:DBRordm:2}
\end{equation}
\end{thm}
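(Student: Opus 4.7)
The plan is a direct computation based on the chain rule, the product rule, and the recursive identity~\eqref{dr}, followed by an application of the higher-order Euler--Lagrange equations in differential form (Cor.~\ref{cor:16}).

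First I would apply Thm.~\ref{thm:rulesDer} to compute the total derivative
\[
\frac{\diff{}}{\diff{t}}L[q]^{m}=\partial_{1}L[q]^{m}+\sum_{j=0}^{m}\partial_{j+2}L[q]^{m}\cdot q^{(j+1)},
\]
and, after re-indexing $k=j+1$, separate out the $k=m+1$ term to obtain
\[
\frac{\diff{}}{\diff{t}}L[q]^{m}=\partial_{1}L[q]^{m}+\sum_{k=1}^{m}\partial_{k+1}L[q]^{m}\cdot q^{(k)}+\partial_{m+2}L[q]^{m}\cdot q^{(m+1)}.
\]

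Next I would expand $\frac{\diff{}}{\diff{t}}\sum_{j=1}^{m}\varphi^{j}(q)\cdot q^{(j)}$ by the product rule of Thm.~\ref{thm:rulesDer} and substitute the identity \eqref{dr}, $\frac{\diff{}}{\diff{t}}\varphi^{j}(q)=\partial_{j+1}L[q]^{m}-\varphi^{j-1}(q)$, to get
\[
\frac{\diff{}}{\diff{t}}\sum_{j=1}^{m}\varphi^{j}(q)\cdot q^{(j)}=\sum_{j=1}^{m}\partial_{j+1}L[q]^{m}\cdot q^{(j)}-\sum_{j=1}^{m}\varphi^{j-1}(q)\cdot q^{(j)}+\sum_{j=1}^{m}\varphi^{j}(q)\cdot q^{(j+1)}.
\]
The last two sums telescope: shifting $j\mapsto j-1$ in the third, only the boundary terms $\varphi^{m}(q)\cdot q^{(m+1)}$ and $-\varphi^{0}(q)\cdot q^{(1)}$ survive.

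At this stage I use two observations. From the definition \eqref{eq:eqprin11}, $\varphi^{m}(q)=\partial_{m+2}L[q]^{m}$ (the sum reduces to the single $i=0$ term). Moreover, since $q$ is a weak extremal, Cor.~\ref{cor:16} gives precisely $\varphi^{0}(q)=0$. Combining the two computations yields
\[
\frac{\diff{}}{\diff{t}}\sum_{j=1}^{m}\varphi^{j}(q)\cdot q^{(j)}=\sum_{j=1}^{m}\partial_{j+1}L[q]^{m}\cdot q^{(j)}+\partial_{m+2}L[q]^{m}\cdot q^{(m+1)}=\frac{\diff{}}{\diff{t}}L[q]^{m}-\partial_{1}L[q]^{m},
\]
which is equivalent to \eqref{eq:DBRordm:2}.

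The only delicate point is the bookkeeping in the telescoping step and in lining up the chain-rule expansion of $L[q]^{m}$ with the collected partial derivatives $\partial_{j+1}L[q]^{m}\cdot q^{(j)}$; no deeper obstacle arises, since all the necessary calculus rules (product rule, chain rule, derivative under integrals) are inherited from the classical setting through Thm.~\ref{thm:rulesDer}, and the use of the Euler--Lagrange equations is entirely pointwise.
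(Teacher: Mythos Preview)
Your argument is correct and follows essentially the same route as the paper: expand $\frac{\diff{}}{\diff{t}}L[q]^{m}$ by the chain rule, expand $\frac{\diff{}}{\diff{t}}\sum_{j}\varphi^{j}(q)\cdot q^{(j)}$ via the product rule and the recursion~\eqref{dr}, telescope, and then use $\varphi^{m}(q)=\partial_{m+2}L[q]^{m}$ together with $\varphi^{0}(q)=0$ from Cor.~\ref{cor:16}. The only cosmetic difference is that the paper differentiates the whole expression $L[q]^{m}-\sum_{j}\varphi^{j}(q)\cdot q^{(j)}$ at once, while you handle the two pieces separately before subtracting.
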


\begin{proof}
Using, for simplicity, $\phi^{j}:=\phi^{j}(q)$, we have 
\begin{multline}
\frac{\diff{}}{\diff{t}}\left(L[q]^{m}-\sum_{j=1}^{m}\varphi^{j}\cdot q^{(j)}\right)=\\
=\left(\partial_{1}L[q]^{m}+\sum_{j=0}^{m}\partial_{j+2}L[q]^{m}\cdot q^{(j+1)}-\sum_{j=1}^{m}\left(\dot{\varphi}^{j}\cdot q^{(j)}+\varphi^{j}\cdot q^{(j+1)}\right)\right).\label{pr}
\end{multline}
Using \eqref{dr}, equation \eqref{pr} becomes 
\begin{multline}
\frac{d}{dt}\left(L[q]^{m}-\sum_{j=1}^{m}\varphi^{j}\cdot q^{(j)}\right)=\partial_{1}L[q]^{m}+\sum_{j=0}^{m}\partial_{j+2}L[q]^{m}\cdot q^{(j+1)}\\
-\sum_{j=1}^{m}\left(\left(\partial_{j+1}L[q]^{m}-\varphi^{j-1}\right)\cdot q^{(j)}+\varphi^{j}\cdot q^{(j+1)}\right).\label{pr2}
\end{multline}

\noindent We now simplify the second term on the right-hand side of
\eqref{pr2}, which forms a telescopic sum: 
\begin{multline}
\sum_{j=1}^{m}\left(\left(\partial_{j+1}L[q]^{m}-\varphi^{j-1}\right)\cdot q^{(j)}+\varphi^{j}\cdot q^{(j+1)}\right)=\\
=\sum_{j=0}^{m-1}\Bigl(\left(\partial_{j+2}L[q]^{m}-\varphi^{j}\right)\cdot q^{(j+1)}+\varphi^{j+1}\cdot q^{(j+2)}\Bigr)=\\
=\sum_{j=0}^{m-1}\left(\partial_{j+2}L[q]^{m}\cdot q^{(j+1)}\right)-\varphi^{0}\cdot\dot{q}+\varphi^{m}\cdot q^{(m+1)}.\label{pr3}
\end{multline}

\noindent Substituting \eqref{pr3} into \eqref{pr2} and using the
higher-order Euler\textendash Lagrange equations \eqref{eq:ELdeordm1},
and since, by definition, 
\[
\varphi^{m}=\partial_{m+2}L[q]^{m}
\]
and 
\[
\varphi^{0}=\sum_{i=0}^{m}(-1)^{i}\frac{\diff{}^{i}}{\diff{t}^{i}}\partial_{i+2}L[q]^{m}=0\,,
\]
we obtain the intended result, that is, 
\begin{multline*}
\frac{d}{dt}\left(L[q]^{m}-\sum_{j=1}^{m}\varphi^{j}\cdot q^{(j)}\right)=\\
=\partial_{1}L[q]^{m}+\partial_{m+2}L[q]^{m}\cdot q^{(m+1)}\\
+\varphi^{0}\cdot\dot{q}-\varphi^{m}\cdot q^{(m+1)}=\partial_{1}L[q]^{m}.
\end{multline*}
\end{proof}

\subsection{Higher-order Noether's theorem}

In our approach to Noether's theorem, we can use the non-Archimedean
language of $\rcrho$ to formalize some infinitesimal properties frequently
informally used in this topic.
\begin{defn}
\label{def:sy}Let $D\subseteq\rti^{k}$ be a sharply open set. We
say that $\Psi=\left\{ \psi(s,\cdot)\right\} _{s\in P}\in\gsf(D,D)$
is a \emph{one parameter group of diffeomorphisms of }$D$ if it satisfies:
\begin{enumerate}
\item \label{enu:diffGsf}$P$ is a sharply open set, $0\in P$ and $\psi\in\gsf(P\times D,D)$;
\item \label{enu:diffDiff}For each $s\in P$, the map $\psi(s,\cdot)\in\gsf(D,D)$
is invertible, and $\psi(s,\cdot)^{-1}\in\gsf(D,D)$;
\item \label{enu:diffId}$\psi(0,\cdot)=\text{Id}_{D}$;
\item \label{enu:diffAdd}$\forall s,s'\in P:\ s+s'\in P\ \Rightarrow\ \psi(s,\cdot)\circ\psi(s',\cdot)=\psi(s+s',\cdot)$.
\end{enumerate}
\end{defn}

\noindent We will see on Sec.~\ref{subsec:Pais=002013Uhlenbeck-oscillator}
the actual usefulness of the case of strict inclusion $D\subset\rti^{k}$.
Note that, because of properties \ref{enu:diffGsf} and \ref{enu:diffId},
from Taylor's formula Thm.~\ref{thm:Taylor}, we get
\[
\exists S\in\rcrho_{>0}\,\forall s\in[0,S]\cap P:\ \psi(s,q(t))\approx q(t)+s\frac{\partial\psi}{\partial s}(0,q(t)).
\]
Therefore, for a sufficiently small infinitesimal $s$, we can always
say that $\psi(s,t)$ is infinitely close to a transformation of the
form $q(t)\mapsto q(t)+s\eta(q(t))$, where $\eta(q):=\frac{\partial\psi}{\partial s}(0,q)$.
To write an equality sign instead of the infinitely close sign $\approx$,
we can use Thm\@.~\ref{thm:FR-forGSF} and the notation $\frac{\partial\psi}{\partial s}[0,q(t);s]$
for the generalized partial incremental ratio with respect to the
variable $s$ (we recall that it is another GSF):
\begin{align*}
\exists S\in\rcrho_{>0}\,\forall s\in[0,S]\cap P:\ \psi(s,q(t))=q(t)+s\frac{\partial\psi}{\partial s}[0,q(t);s]\\
\frac{\partial\psi}{\partial s}[0,q(t);s]\approx\frac{\partial\psi}{\partial s}(0,q(t)).
\end{align*}

\begin{defn}
\label{def:inva1}Let both $T=\left\{ \tau(s,\cdot)\right\} _{s\in P}\in\gsf(T',T')$
and $S=\left\{ \sigma(s,\cdot)\right\} _{s\in P}\in\gsf(S',S')$ be
one parameter groups of diffeomorphims on the open sets $T'\subseteq[t_{1},t_{2}]$
and $S'\subseteq\rti^{d}$ respectively. The functional \eqref{eq:Pm}
is said to be \emph{invariant under the action of }$T$ and $S$,
if for any weak extremal $q\in\zs\left([t_{1},t_{2}],S'\right)$ it
satisfies
\begin{equation}
L[q]^{m}(t)=L\left(\tau(s,t),\sigma(s,q(t)),\frac{\diff{\sigma}(s,q(t))}{\diff{\tau}(s,t)},\dots,\frac{\diff{^{m}\sigma}(s,q(t))}{\diff{\tau}^{m}(s,t)}\right)\frac{\partial\tau}{\partial t}(s,t)\label{eq:invdf11}
\end{equation}
for all $s\in P$ and all $t\in T'$, where the expressions $\frac{\diff{^{i}\sigma}(s,q(t))}{\diff{\tau}^{i}(s,t)}$,
$i=1,\ldots,m$ are defined in the following Rem.~\ref{rem:derSigTau}.\ref{enu:derSigTau}.
\end{defn}

\begin{rem}
\label{rem:derSigTau}\ 
\begin{enumerate}
\item From Def.~\ref{def:sy}.\ref{enu:diffDiff} we have $\tau\left(s,\tau(s,\cdot)^{-1}(t)\right)=t$,
thereby the chain rule Thm.~\ref{thm:rulesDer} yields that $\frac{\partial\tau}{\partial t}(s,t)$
is invertible for all $s\in P$, $t\in T'$.
\item \label{enu:derSigTau}Based on the previous remark, in \eqref{eq:invdf11}
the expressions $\frac{\diff{^{i}\sigma}(s,q(t))}{\diff{\tau}^{i}(s,t)}$,
$i=1,\ldots,m$ are defined as 
\begin{equation}
\frac{\diff{\sigma}(s,q(t))}{\diff{\tau}(s,t)}:=\frac{\frac{\partial\sigma(s,q(t))}{\partial t}(s,t)}{\frac{\partial\tau}{\partial t}(s,t)},\label{eq:invdf12}
\end{equation}
and 
\begin{equation}
\frac{\diff{^{i}\sigma}(s,q(t))}{\diff{\tau}^{i}(s,t)}:=\frac{\frac{\partial}{\partial t}\left(\frac{\diff{^{i-1}\sigma}(s,q(t))}{\diff{\tau}^{i-1}(s,t)}\right)(s,t)}{\frac{\partial\tau}{\partial t}(s,t)},\quad\forall i=2,\ldots,m.\label{eq:invdf22}
\end{equation}
As usual, because of Def.~\ref{def:sy}.\ref{enu:diffId}, we have
$\frac{\diff{^{i}\sigma}(0,q(t))}{\diff{\tau}^{i}(0,t)}=q^{(i)}(t)$.
\item Using again the generalized incremental ratios, i.e~Thm.~\ref{thm:FR-forGSF},
for all $s\in P$, all $t\in T'$ and for $h\in\rcrho$ sufficiently
small, we have
\begin{align*}
\sigma(s,q(t+h)) & =\sigma(s,q(t))+h\cdot\frac{\partial\sigma(s,q(t))}{\partial t}[s,t;h]\\
\tau(s,t+h) & =\tau(s,t)+h\cdot\frac{\partial\tau}{\partial t}[s,t;h].
\end{align*}
Therefore, the ratio (of differentials):
\[
\frac{\sigma(s,q(t+h))-\sigma(s,q(t))}{\tau(s,t+h)-\tau(s,t)}=\frac{\frac{\partial\sigma(s,q(t))}{\partial t}[s,t;h]}{\frac{\partial\tau}{\partial t}[s,t;h]}\approx\frac{\frac{\partial\sigma(s,q(t))}{\partial t}(s,t)}{\frac{\partial\tau}{\partial t}(s,t)}
\]
for all sufficiently small invertible $h$.
\item On the basis of Def.~\ref{def:integral}, condition \eqref{eq:invdf11}
is equivalent to
\[
\int_{t_{a}}^{t_{b}}L[q]^{m}(t)\,\diff{t}=\int_{t_{a}}^{t_{b}}L\left(\tau(s,t),\sigma(s,q(t)),\frac{\diff{\sigma}(s,q(t))}{\diff{\tau}(s,t)},\dots,\frac{\diff{^{m}\sigma}(s,q(t))}{\diff{\tau}^{m}(s,t)}\right)\frac{\partial\tau}{\partial t}(s,t)\,\diff{t}
\]
for all $s\in P$ and for any subinterval $[{t_{a}},{t_{b}}]\subseteq T'$
with $t_{a}<t_{b}$, even if $L$ can also be a distribution. Clearly,
if $L$ is not a continuous function, this equivalence is due to the
regularization process inherent to the embedding $\iota^{b}:\mathcal{D}'\ra\gsf(\csp{-},\rti)$
of Sec.~\ref{subsec:Embedding}.
\end{enumerate}
\end{rem}

The next lemma gives a necessary condition for the functional \eqref{eq:Pm}
to be invariant under the action of the parameter groups of diffeomorphisms
$T=\left\{ \tau(s,\cdot)\right\} _{s\in P}\in\gsf(T',T')$ and $S=\left\{ \sigma(s,\cdot)\right\} _{s\in P}\in\gsf(S',S')$.
\begin{lem}
\label{lem:cnsi}If the functional \eqref{eq:Pm} is invariant in
the sense of Def.~\ref{def:inva1}, then 
\begin{equation}
\frac{\partial L[q]^{m}}{\partial t}\frac{\partial\tau}{\partial t}(0,t)+\frac{\partial\tau}{\partial t}(0,t)\sum_{i=0}^{m}\frac{\partial L[q]^{m}}{\partial q^{(i)}}\cdot\eta^{i}(q,t)+L[q]^{m}\frac{\diff{}}{\diff{t}}\frac{\partial\tau}{\partial s}(0,t)=0\quad\forall t\in T'\label{civho21}
\end{equation}
for any weak extremal $q\in\zs\left([t_{1},t_{2}],S'\right)$. In
\eqref{civho21}, we set
\begin{equation}
\begin{cases}
\eta^{0}(q,\cdot):=\frac{\partial\sigma}{\partial s}(0,q(\cdot)),\\
\eta^{i}(q,\cdot):=\frac{\diff{\eta^{i-1}}(q,\cdot)}{\diff{t}}-q^{(i)}\frac{\diff{}}{\diff{t}}\frac{\partial\tau}{\partial s}(0,\cdot),\quad\forall i=1,\ldots,m.
\end{cases}\label{civho}
\end{equation}
\end{lem}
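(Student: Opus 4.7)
The plan is to follow the classical derivation of Noether's identity, using differentiation at $s=0$ of the invariance condition~\eqref{eq:invdf11}, but carefully justifying each step by means of the GSF calculus rules established earlier (Thm.~\ref{thm:rulesDer} for the chain and product rules, Thm.~\ref{thm:FR-forGSF} for the existence of partial derivatives, and Thm.~\ref{thm:propGSF}.\ref{enu:category} for closure under composition). First I would fix a weak extremal $q\in\gsf([t_1,t_2],S')$. Since the left-hand side of~\eqref{eq:invdf11} is independent of $s$, the GSF $s\in P\mapsto\text{RHS}(s,t)\in\rcrho$ has zero derivative at every $s\in P$, in particular at $s=0$. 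Thus the first task is to compute this derivative explicitly via product and chain rules.

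Using $\tau(0,t)=t$, $\sigma(0,q(t))=q(t)$, $\frac{\partial\tau}{\partial t}(0,t)=1$, and the fact (noted in Rem.~\ref{rem:derSigTau}.\ref{enu:derSigTau}) that $\frac{d^{i}\sigma(0,q(t))}{d\tau^{i}(0,t)}=q^{(i)}(t)$, differentiation of the product $L(\cdots)\cdot\frac{\partial\tau}{\partial t}(s,t)$ at $s=0$ produces three contributions:
\begin{equation*}
\frac{\partial L[q]^{m}}{\partial t}\,\frac{\partial\tau}{\partial s}(0,t)\,\frac{\partial\tau}{\partial t}(0,t)
+\frac{\partial\tau}{\partial t}(0,t)\sum_{i=0}^{m}\frac{\partial L[q]^{m}}{\partial q^{(i)}}\cdot\zeta^{i}(t)
+L[q]^{m}\,\frac{\partial}{\partial s}\biggl|_{s=0}\frac{\partial\tau}{\partial t}(s,t),
\end{equation*}
where $\zeta^{i}(t):=\frac{\partial}{\partial s}\big|_{s=0}\frac{d^{i}\sigma(s,q(t))}{d\tau^{i}(s,t)}$. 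Using Schwarz's theorem for the mixed partial derivatives of the GSF $(s,t)\mapsto\tau(s,t)$ (which follows from the analogous classical property at the defining-net level), the last summand equals $L[q]^{m}\cdot\frac{d}{dt}\frac{\partial\tau}{\partial s}(0,t)$. Setting the whole expression to zero gives the claimed identity~\eqref{civho21}, provided we recognise $\zeta^{i}=\eta^{i}$.

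The main technical step, and the only nontrivial one, is therefore to show $\zeta^{i}(t)=\eta^{i}(q,t)$ for $i=0,\ldots,m$. I would do this by induction on $i$. The base case $i=0$ is immediate since $\sigma(0,q(t))=q(t)$ gives $\zeta^{0}=\frac{\partial\sigma}{\partial s}(0,q(t))=\eta^{0}(q,t)$. For the inductive step, writing
\begin{equation*}
\frac{d^{i}\sigma(s,q(t))}{d\tau^{i}(s,t)}=\frac{\frac{\partial}{\partial t}\bigl(\frac{d^{i-1}\sigma(s,q(t))}{d\tau^{i-1}(s,t)}\bigr)}{\frac{\partial\tau}{\partial t}(s,t)}
\end{equation*}
from~\eqref{eq:invdf22}, the quotient rule (a consequence of Thm.~\ref{thm:rulesDer}, after noting that $\frac{\partial\tau}{\partial t}(s,t)$ is invertible for $s$ near $0$) together with the interchange of $\partial_s$ and $\partial_t$ produces
\begin{equation*}
\zeta^{i}(t)=\frac{d\zeta^{i-1}(t)}{dt}-q^{(i)}(t)\,\frac{d}{dt}\frac{\partial\tau}{\partial s}(0,t),
\end{equation*}
which coincides with the recursion defining $\eta^{i}$ in~\eqref{civho}. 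This completes the induction, identifies the coefficients in the differentiated identity with the $\eta^{i}$, and yields~\eqref{civho21}. The expected obstacle is purely bookkeeping in this inductive step: keeping track of the quotient rule and of the fact that evaluations at $s=0$ commute with $\frac{\partial}{\partial t}$, which at the level of the defining nets $\tau_\eps$, $\sigma_\eps$ reduces to the classical Schwarz theorem and hence passes to the GSF quotient thanks to Thm.~\ref{thm:propGSF}.\ref{enu:wellDef}.
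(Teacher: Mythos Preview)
Your proposal is correct and follows essentially the same route as the paper: differentiate the invariance identity \eqref{eq:invdf11} with respect to $s$ at $s=0$, invoke Schwarz's theorem at the level of defining nets to swap $\partial_s$ and $\partial_t$, and identify $\partial_s|_{s=0}\frac{d^{i}\sigma}{d\tau^{i}}$ with $\eta^{i}$ by the same recursion you set up inductively (the paper simply writes out the case $i=1$ and then the generic step). The only discrepancy is that your chain-rule computation correctly produces $\frac{\partial L[q]^{m}}{\partial t}\,\frac{\partial\tau}{\partial s}(0,t)$ in the first summand, whereas the printed statement and the paper's \eqref{civho1} have $\frac{\partial\tau}{\partial t}(0,t)$ there; since $\frac{\partial\tau}{\partial t}(0,t)=1$ this is harmless, and your version is in fact what the derivation yields.
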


\begin{proof}
Differentiating \eqref{eq:invdf11} with respect to $s$ at $s=0\in P$
and $t\in T'$ and using Rem~\ref{rem:derSigTau}.\ref{enu:derSigTau},
we obtain
\begin{equation}
\frac{\partial L[q]^{m}}{\partial t}\frac{\partial\tau}{\partial t}(0,t)+\frac{\partial\tau}{\partial t}(0,t)\sum_{i=0}^{m}\frac{\partial L[q]^{m}}{\partial q^{(i)}}\cdot\frac{\partial}{\partial s}\left.\frac{\diff{^{i}\sigma}(s,q(t))}{\diff{\tau}^{i}(s,t)}\right|_{s=0}+L[q]^{m}(t)\frac{\partial^{2}\tau}{\partial s\partial t}(0,t)=0.\label{civho1}
\end{equation}
Note explicitly that using two times Thm\@.~\ref{thm:FR-forGSF},
for any GSF $f$ we always have
\begin{align*}
\frac{\partial^{2}f}{\partial x\partial y}(x_{0},y_{0}) & =\left[\frac{\partial f_{\eps}^{2}}{\partial x_{\eps}\partial y_{\eps}}(x_{0\eps},y_{0\eps})\right]=\left[\frac{\partial f_{\eps}^{2}}{\partial y_{\eps}\partial x_{\eps}}(x_{0\eps},y_{0\eps})\right]=\\
 & =\frac{\partial^{2}f}{\partial y\partial x}(x_{0},y_{0})=\left.\frac{\diff{}}{\diff{x}}\right|_{x_{0}}\left(\frac{\partial f}{\partial y}(x,y_{0})\right)=\frac{\diff{}}{\diff{x}}\frac{\partial f}{\partial y}(x_{0},y_{0}).
\end{align*}
Using Rem.~\ref{rem:derSigTau}.\ref{enu:derSigTau} and \eqref{civho},
we have 
\begin{align}
\frac{\partial}{\partial s}\left.\left(\frac{\diff{\sigma}(s,q(t))}{\diff{\tau}(s,t)}\right)\right|_{s=0} & =\frac{\partial^{2}\sigma(s,q(t))}{\partial s\partial t}(0,t)-\dot{q}(t)\frac{\partial^{2}\tau}{\partial s\partial t}(0,t)\label{civho2}\\
 & =\frac{\diff{}}{\diff{t}}\left(\left.\frac{\partial\sigma(s,q(t))}{\partial s}\right|_{s=0}\right)(t)-\dot{q}(t)\frac{\partial^{2}\tau}{\partial s\partial t}(0,t)\\
 & =\frac{\diff{\eta^{0}(q,t)}}{\diff{t}}-\dot{q}(t)\frac{\partial^{2}\tau}{\partial s\partial t}(0,t)=\eta^{1}(q,t),
\end{align}
and for all $i=2,\ldots,m$
\begin{align}
\frac{\partial}{\partial s}\left.\left(\frac{\diff{^{i}\sigma}(s,q(t))}{\diff{\tau}^{i}(s,t)}\right)\right|_{s=0} & =\frac{\partial^{2}}{\partial s\partial t}\left.\left(\frac{\diff{^{i-1}\sigma}(s,q(t))}{\diff{\tau}^{i-1}(s,t)}\right)\right|_{s=0}-q^{(i)}(t)\frac{\partial^{2}\tau}{\partial s\partial t}(0,t)\nonumber \\
 & =\frac{\diff{}}{\diff{t}}\left(\frac{\partial}{\partial s}\left.\left(\frac{\diff{^{i-1}\sigma}(s,q(t))}{\diff{\tau}^{i-1}(s,t)}\right)\right|_{s=0}\right)(t)-q^{(i)}(t)\frac{\partial^{2}\tau}{\partial s\partial t}(0,t)\label{civho3}\\
 & =\frac{\diff{\eta^{i-1}}(q,t)}{\diff{t}}-q^{(i)}(t)\frac{\partial^{2}\tau}{\partial s\partial t}(0,t)=\eta^{i}(q,t).
\end{align}
Substituting \eqref{civho3} into~\eqref{civho1}, we obtain the
conclusion.
\end{proof}
\begin{defn}
\label{def:leicond1}Let $T'\subseteq[t_{1},t_{2}]$ be a sharply
open set. We say that a GSF $C[q]^{m}(\cdot)\in\gsf\left(T',\rcrho^{d}\right)$
is a \emph{constant of motion on $T'$, $S'$} if 
\begin{equation}
\frac{\diff{}}{\diff{t}}C[q]^{m}(t)=0\quad\forall t\in T'\label{eq:conslaw:td11}
\end{equation}
along all the weak extremals $q\in\zs\left([t_{1},t_{2}],S'\right)$.
\end{defn}

\noindent Note that Thm.~\ref{thm:existenceUniquenessPrimitives}
implies that condition \eqref{eq:conslaw:td11} is equivalent to ask
that $C[q]^{m}(\cdot)$ is constant on any interval $J\subseteq T'$.
In fact, if $t'\in J$, $F(t):=C[q]^{m}(t)-C[q]^{m}(t')$ is the unique
GSF such that $F(t')=0$ and $F'(t)=0$ on any closed interval contained
in $T'$ (and hence on an arbitrary interval by sharp continuity).
\begin{thm}
\label{thm:tn}If functional \eqref{eq:Pm} is invariant in the sense
of Def.~\ref{def:inva1}, then the quantity $C[q]^{m}(t)$ defined
for all $q\in\zs\left([t_{1},t_{2}],S'\right)$ and $t\in T'$ by
\begin{equation}
C[q]^{m}(t):=\sum_{i=1}^{m}\varphi^{i}(q,t)\cdot\eta^{i-1}(q,t)+\left(L[q]^{m}(t)-\sum_{i=1}^{m}\varphi^{i}(q,t)\cdot q^{(i)}(t)\right)\frac{\partial\tau}{\partial s}(0,t)\label{eq:LC}
\end{equation}
is a constant of motion on $T'$ and $S'$.
\end{thm}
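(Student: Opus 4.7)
The plan is to establish that $\frac{\diff{}}{\diff{t}} C[q]^{m}(t) = 0$ along any weak extremal $q \in \zs\left([t_{1}, t_{2}], S'\right)$; combined with the observation made right after Def.~\ref{def:leicond1}, this immediately yields that $C[q]^{m}$ is a constant of motion on $T'$ and $S'$. Three ingredients enter the computation: the higher-order du Bois--Reymond identity (Thm.~\ref{thm:cDRifm}), the recursion \eqref{dr} among the operators $\varphi^{j}(q)$, and the infinitesimal invariance identity of Lem.~\ref{lem:cnsi}.

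First I would differentiate $C$ with respect to $t$ via the product rule (Thm.~\ref{thm:rulesDer}) and the fundamental theorem for GSF (Def.~\ref{def:integral}). This produces four groups of terms: $\sum_{i=1}^{m} \dot{\varphi}^{i}(q) \cdot \eta^{i-1}(q, \cdot)$, $\sum_{i=1}^{m} \varphi^{i}(q) \cdot \dot{\eta}^{i-1}(q, \cdot)$, a term involving $\frac{\diff{}}{\diff{t}}\bigl(L[q]^{m} - \sum_{i=1}^{m} \varphi^{i}(q) \cdot q^{(i)}\bigr) \cdot \frac{\partial \tau}{\partial s}(0, t)$, and $\bigl(L[q]^{m} - \sum_{i=1}^{m} \varphi^{i}(q) \cdot q^{(i)}\bigr) \cdot \frac{\diff{}}{\diff{t}}\frac{\partial \tau}{\partial s}(0, t)$. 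Applying Thm.~\ref{thm:cDRifm} reduces the third of these to $\partial_{1} L[q]^{m} \cdot \frac{\partial \tau}{\partial s}(0, t)$.

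Next I would substitute $\dot{\varphi}^{i} = \partial_{i+1} L[q]^{m} - \varphi^{i-1}$ from \eqref{dr} into the first group and solve the defining recursion \eqref{civho} for $\dot{\eta}^{i-1} = \eta^{i} + q^{(i)} \cdot \frac{\diff{}}{\diff{t}}\frac{\partial \tau}{\partial s}(0, t)$ to substitute into the second group. At this point the contributions proportional to $\frac{\diff{}}{\diff{t}}\frac{\partial \tau}{\partial s}(0, t) \cdot \sum_{i=1}^{m} \varphi^{i}(q) \cdot q^{(i)}$ arising from the second and fourth groups cancel exactly. Reindexing, and using both $\varphi^{0}(q) = 0$ on weak extremals (Cor.~\ref{cor:16}) and $\varphi^{m}(q) = \partial_{m+2} L[q]^{m}$ from \eqref{eq:eqprin11}, the telescoping combination $\sum_{j=0}^{m-1} \partial_{j+2} L[q]^{m} \cdot \eta^{j} - \sum_{j=0}^{m-1} \varphi^{j}(q) \cdot \eta^{j} + \sum_{j=1}^{m} \varphi^{j}(q) \cdot \eta^{j}$ collapses to $\sum_{j=0}^{m} \partial_{j+2} L[q]^{m} \cdot \eta^{j}$. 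What remains is precisely the left-hand side of the identity supplied by Lem.~\ref{lem:cnsi}, evaluated with $\frac{\partial \tau}{\partial t}(0, t) = 1$ (since $\tau(0, \cdot) = \text{Id}$ by Def.~\ref{def:sy}.\ref{enu:diffId}), and thus vanishes.

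The main obstacle is the careful bookkeeping of the index shifts and the verification that the two $\frac{\diff{}}{\diff{t}}\frac{\partial \tau}{\partial s}(0, t) \cdot \sum \varphi^{i} q^{(i)}$ contributions cancel exactly; aside from this, no genuinely new analytical difficulty appears. Indeed, every calculus operation invoked (product rule, chain rule, integration by parts, fundamental theorem of calculus) has been established for GSF in the preceding sections in a form formally identical to the classical one, so the theorem extends the classical Noether framework to the singular GSF setting by essentially the same algebraic manipulation.
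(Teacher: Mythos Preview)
Your proposal is correct and follows essentially the same route as the paper's own proof: differentiate $C[q]^{m}$ via the product rule, invoke the du Bois--Reymond identity for the term multiplied by $\frac{\partial\tau}{\partial s}(0,t)$, substitute the recursions \eqref{dr} for $\dot\varphi^{i}$ and \eqref{civho} for $\dot\eta^{i-1}$, use $\varphi^{0}=0$ (Euler--Lagrange) and $\varphi^{m}=\partial_{m+2}L$, and reduce what remains to the infinitesimal invariance identity of Lem.~\ref{lem:cnsi}. Your explicit observation that $\frac{\partial\tau}{\partial t}(0,t)=1$ is needed to match the final expression with Lem.~\ref{lem:cnsi} is a useful clarification.
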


\begin{proof}
The proof follows directly from the previous results and no specific
property of GSF is needed. For simplicity, we use the notations $\phi^{i}:=\phi^{i}(q,\cdot)$
and $\eta^{i}:=\eta^{i}(q,\cdot)$.
\begin{equation}
C[q]^{m}(\cdot)=\varphi^{1}\cdot\eta^{0}+\sum_{i=2}^{m}\varphi^{i}\cdot\eta^{i-1}+\left(L[q]^{m}-\sum_{i=1}^{m}\varphi^{i}\cdot q^{(i)}\right)\frac{\partial\tau}{\partial s}(0,\cdot).\label{eq:LC1}
\end{equation}
Differentiating \eqref{eq:LC1} with respect to $t$, we obtain 
\begin{multline}
\frac{\diff{}}{\diff{t}}C[q]^{m}(t)=\eta^{0}\cdot\frac{\diff{}}{\diff{t}}\varphi^{1}+\varphi^{1}\cdot\frac{\diff{}}{\diff{t}}\eta^{0}+\sum_{i=2}^{m}\left(\eta^{i-1}\cdot\frac{\diff{}}{\diff{t}}\varphi_{1}^{i}+\phi^{i}\cdot\frac{\diff{}}{\diff{t}}\eta^{i-1}\right)\\
\qquad+\frac{\partial\tau}{\partial s}(0,\cdot)\frac{\diff{}}{\diff{t}}\left(L[q]^{m}-\sum_{i=1}^{m}\varphi^{i}\cdot q^{(i)}\right)\\
+\left(L[q]^{m}-\sum_{i=1}^{m}\varphi^{i}\cdot q^{(i)}\right)\frac{\diff{}}{\diff{t}}\frac{\partial\tau}{\partial s}(0,\cdot).\label{eq:TeNetm2}
\end{multline}
Using the higher order Euler\textendash Lagrange equation \eqref{eq:ELdeordm1},
the higher order du Bois\textendash Reymond condition \eqref{eq:DBRordm:2},
relations \eqref{eq:eqprin11} and \eqref{civho} in \eqref{eq:TeNetm2},
one gets 
\begin{multline}
\frac{\diff{}}{\diff{t}}C[q]^{m}(t)=\partial_{2}L[q]^{m}\cdot\frac{\partial\sigma}{\partial s}(0,q(\cdot))+\varphi^{1}\cdot\left(\eta^{1}+\dot{q}\frac{\diff{}}{\diff{t}}\frac{\partial\tau}{\partial s}(0,\cdot)\right)\\
+\sum_{i=2}^{m}\left\{ \left(\partial_{i+1}L[q]^{m}-\varphi^{i-1}\right)\cdot\eta^{i-1}+\varphi^{i}\cdot\left(\eta^{i}+q^{(i)}\frac{\diff{}}{\diff{t}}\frac{\partial\tau}{\partial s}(0,\cdot)\right)\right\} \\
+\partial_{1}L[q]^{m}\frac{\partial\tau}{\partial s}(0,\cdot)+\left(L[q]^{m}-\sum_{i=1}^{m}\varphi^{i}\cdot q^{(j)}\right)\frac{\diff{}}{\diff{t}}\frac{\partial\tau}{\partial s}(0,\cdot)\\
=\partial_{1}L[q]^{m}\frac{\partial\tau}{\partial s}(0,\cdot)+L[q]^{m}\frac{\diff{}}{\diff{t}}\frac{\partial\tau}{\partial s}(0,\cdot)+\partial_{2}L[q]^{m}\cdot\frac{\partial\sigma}{\partial s}(0,q(\cdot))\\
+\varphi^{1}\cdot\left(\eta^{1}+\dot{q}\frac{\diff{}}{\diff{t}}\frac{\partial\tau}{\partial s}(0,\cdot)\right)-\varphi^{1}\cdot\eta^{1}\\
-\varphi^{1}\cdot\dot{q}\frac{\diff{}}{\diff{t}}\frac{\partial\tau}{\partial s}(0,\cdot)+\varphi^{m}\cdot\eta^{m}+\sum_{i=2}^{m}\partial_{i+1}L[q]^{m}\cdot\eta^{i-1}=0.\label{eq:dems}
\end{multline}
\end{proof}

\section{\label{sec:Optimal-control}Optimal control problems}

Thm.~\ref{thm:tn} gives a Lagrangian formulation of Noether's principle
extended to the generalized smooth functions setting. In this section,
we adopt the Hamiltonian formalism in order to generalize Noether's
principle to the wider context of optimal control, see e.g.~\cite{CD:Djukic:1972}.

Let us consider the optimal control problem for GSF in Lagrange form,
i.e.~the minimization of the functional: 
\begin{equation}
\int_{t_{1}}^{t_{2}}L\left(t,q(t),u(t)\right)\,\diff{t}\longrightarrow\min\label{eq:JO}
\end{equation}

\noindent subject to the Cauchy problem

\begin{equation}
\tag{CP}\begin{cases}
\dot{q}(t)=\varphi\left(t,q(t),u(t)\right),\\
q(t_{1})=q_{1}\,.
\end{cases}\label{cp}
\end{equation}
We explicitly note here that, because our generalized functions are
set-theoretical maps, and because of the closure with respect to composition,
the natural notion of solution for a differential equation with GSF
is given by pointwise equality. On the other hand, Thm.~\ref{thm:existenceUniquenessPrimitives}
and Cor.~\ref{cor:fundLem} imply that, for GSF, the notion of pointwise
solution is equivalent to that of weak solution.

\noindent In problem \eqref{eq:JO}\textendash \eqref{cp}, $q_{1}\in\gfs\,,$
the Lagrangian $L:[t_{1},t_{2}]\times H\times K\ra\gf$, the state
equation $\varphi:[t_{1},t_{2}]\times H\times K\ra\gfs$, the state
$q:[t_{1},t_{2}]\ra H$ and the control $u:[t_{1},t_{2}]\ra K$ are
assumed to be GSF with respect to all the arguments, where $H\fcmp\rcrho^{d}$,
$K\fcmp\rcrho^{l}$ are solid functionally compact sets. Note that
a particular case of this assumption is e.g.~$H=[-\diff{\rho}^{-q},\diff{\rho}^{-q}]^{d}\supseteq\R^{d}$
because $\diff{\rho}^{-q}$ is an infinite number. Since Sobolev-Schwartz
distributions can be embedded as GSF (see Sec.~\ref{subsec:Embedding}),
this is clearly more general than the usual traditional case, where
state and control functions are assumed to be piecewise smooth.

\noindent In particular, we will always consider state and control
functions in the following spaces
\begin{align}
q\in\mathcal{Q} & :=\left\{ q\in\gsf([t_{1},t_{2}],H)\mid\Vert q-q_{1}\Vert_{0}\le r\right\} \label{eq:Q}\\
u\in\mathcal{A} & :=\gsf([t_{1},t_{2}],K),\label{eq:A}
\end{align}
where $r\in\rcrho_{>0}$ is a fixed generalized radius such that $\overline{B_{r}(q_{1})}\subseteq H$.
In this section, we use simplified notations, e.g., of the form $L(\cdot,q,\dot{q})$
or $\phi(\cdot,q,u)$ to denote compositions $t\in[t_{1},t_{2}]\mapsto L(t,q(t),\dot{q}(t))\in\rti$
and $t\in[t_{1},t_{2}]\mapsto\phi(t,q(t),u(t))\in\rti^{d}$.

In developing this topic, it is therefore essential to already have
suitable results about solutions of ODE with GSF. The Banach fixed
point theorem can be easily generalized to spaces of generalized continuous
functions with the sup-norm $\Vert-\Vert_{0}$ (see Def.~\ref{def:genNormsSpaceGSF}).
As a consequence, we have the following Picard-Lindel�f theorem for
ODE in the $\gckf$ setting, see also \cite{ErlGross,baglinipaolo},
and \cite{Sto09} for an updated reference about solution of (fractional)
ODE in the framework of Colombeau's algebra.
\begin{thm}
\label{thm:PicLindFiniteContr}Let $t_{0}\in\rti$, $y_{0}\in\RC{\rho}^{d}$,
$\alpha$, $r\in\rti_{>0}$. Let $F\in\gckf([t_{0}-\alpha,t_{0}+\alpha]\times\overline{B_{r}(y_{0})},\rti^{d})$.
Set $M:={\displaystyle \max_{\substack{t_{0}-\alpha\le t\le t_{0}+\alpha\\
|y-y_{0}|\le r
}
}}|F(t,y)|$, $L:={\displaystyle \max_{\substack{t_{0}-\alpha\le t\le t_{0}+\alpha\\
|y-y_{0}|\le r
}
}}\left|\partial_{y}F(t,y)\right|\in\rti$ and assume that
\begin{align*}
\alpha\cdot M & \le r,
\end{align*}
\begin{equation}
\lim_{n\to+\infty}\alpha^{n}L^{n}=0,\label{eq:limitAssPL}
\end{equation}
where the limit in \eqref{eq:limitAssPL} is clearly taken in the
sharp topology. Then there exists a unique solution $y\in\gcf{k+1}\left([t_{0}-\alpha,t_{0}+\alpha],\rti^{d}\right)$
of the Cauchy problem
\begin{equation}
\begin{cases}
y'(t)=F(t,y(t))\\
y(t_{0})=y_{0}.
\end{cases}\label{eq:ODE}
\end{equation}
This solution is given by
\begin{align*}
y & =\lim_{n\to+\infty}P^{n}(y_{0})\\
P(y)(t): & =y_{0}+\int_{t_{0}}^{t}F(s,y(s))\,\diff{s}\quad\forall t\in[t_{0}-\alpha,t_{0}+\alpha],
\end{align*}
and for all $n\in\N$ satisfies $\Vert y-P^{n}(y_{0})\Vert_{0}\le\alpha M\sum_{k=n}^{+\infty}\frac{\alpha^{n}L^{n}}{n!}$
and $\Vert y-y_{0}\Vert_{0}\le r$.
\end{thm}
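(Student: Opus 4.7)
The plan is to adapt the classical Picard iteration argument to the $\rti$-valued setting, working on the subspace $\mathcal{Q}:=\{y\in\gcf{k+1}([t_{0}-\alpha,t_{0}+\alpha],\rti^{d})\mid \norm{y-y_{0}}_{0}\le r\}$ with the sharp topology induced by $\norm{-}_{0}$ of Def.~\ref{def:genNormsSpaceGSF}. The ambient space $\gcf{k+1}([t_{0}-\alpha,t_{0}+\alpha],\rti^{d})$ is sequentially Cauchy complete in this topology (see the paragraph after Def.~\ref{def:sharpTopSpaceGSF}), and $\mathcal{Q}$ is sharply closed in it, hence also sequentially complete.

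First I would define the Picard operator $P(y)(t):=y_{0}+\int_{t_{0}}^{t} F(s,y(s))\,\diff{s}$ for $y\in\mathcal{Q}$. Such $P(y)$ is a well-defined $\gcf{k+1}$ map because GSF are closed under composition (Thm.~\ref{thm:propGSF}.\ref{enu:category}) and the primitive of a $\gckf$ function is $\gcf{k+1}$ (Thm.~\ref{thm:existenceUniquenessPrimitives}, Def.~\ref{def:integral}). Using monotonicity of the integral (Thm.~\ref{thm:intRules}.\ref{enu:intMonotone}) together with the hypothesis $\alpha M\le r$, one checks that $|P(y)(t)-y_{0}|\le \alpha M\le r$ for every $t$ in the interval, so $P(\mathcal{Q})\subseteq\mathcal{Q}$.

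The core step is the iterated Lipschitz estimate $\norm{P^{n}(y)-P^{n}(z)}_{0}\le \frac{\alpha^{n}L^{n}}{n!}\norm{y-z}_{0}$ for all $y,z\in\mathcal{Q}$ and $n\in\N$. Its base case comes from applying Taylor's formula Thm.~\ref{thm:Taylor} in the $y$-variable of $F$ to deduce the generalized mean value inequality $|F(t,y)-F(t,z)|\le L\,|y-z|$ (the segment $[z,y]\subseteq\overline{B_{r}(y_{0})}$ by convexity), combined with integral monotonicity; the inductive step uses the identity $\int_{t_{0}}^{t}\frac{|s-t_{0}|^{n-1}}{(n-1)!}\,\diff{s}=\frac{|t-t_{0}|^{n}}{n!}$. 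From this estimate I would deduce that $(P^{n}(y_{0}))_{n}$ is sharply Cauchy: $\norm{P^{n+m}(y_{0})-P^{n}(y_{0})}_{0}\le \frac{\alpha^{n}L^{n}}{n!}\cdot\alpha M$, and the assumption $\alpha^{n}L^{n}\to 0$ in the sharp topology forces the right-hand side sharply to zero. Sequential completeness gives a limit $y\in\mathcal{Q}$; sharp continuity of $P$ (itself a consequence of the Lipschitz bound) lets us pass to the limit in $P^{n+1}(y_{0})=P(P^{n}(y_{0}))$ to obtain $P(y)=y$. Differentiating the fixed-point equation via Def.~\ref{def:integral} then yields $y'=F(\cdot,y)$, $y(t_{0})=y_{0}$. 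Uniqueness in $\mathcal{Q}$ follows by iterating the Lipschitz estimate on two solutions and letting $n\to\infty$, and the quantitative error bound is obtained by telescoping $y-P^{n}(y_{0})=\sum_{k\ge n}\bigl(P^{k+1}(y_{0})-P^{k}(y_{0})\bigr)$ and summing the resulting series.

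The main obstacle is genuinely new compared with the classical case: in $\rti$ the standard Banach-style hypothesis \emph{``$\alpha L<1$''} is not enough to force contractivity in the sharp topology, because $\frac{1}{n!}$ is never infinitesimal (the sharp topology is discrete on $\N\hookrightarrow\rti$). This is precisely why the stronger sharp-convergence hypothesis $\lim_{n}\alpha^{n}L^{n}=0$ has to be imposed: it guarantees that for any sharp neighbourhood of $0$, the quantity $\frac{\alpha^{n}L^{n}}{n!}$ eventually lies inside it, which is exactly what is needed both to run the fixed-point argument and to pass to the limit uniformly in the sharp norm. A secondary subtlety is checking that each iterate remains in $\gcf{k+1}$ with $\rho$-moderate defining nets in all derivatives up to order $k+1$; this is handled recursively by reading off the higher derivatives of $P(y)$ from $y'=F(\cdot,y)$ and invoking Thm.~\ref{thm:propGSF} to control moderateness.
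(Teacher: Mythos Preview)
Your proposal is correct and follows precisely the route the paper indicates: the paper does not prove this theorem in the text but states that it follows from a generalization of the Banach fixed point theorem to spaces of generalized continuous functions with the sup-norm $\norm{-}_{0}$, referring to \cite{ErlGross,baglinipaolo} for details. Your Picard-iteration argument on the closed set $\mathcal{Q}$, with the iterated Lipschitz estimate $\norm{P^{n}(y)-P^{n}(z)}_{0}\le\frac{\alpha^{n}L^{n}}{n!}\norm{y-z}_{0}$ and the use of the sharp-topology hypothesis $\alpha^{n}L^{n}\to 0$ to force Cauchyness, is exactly that scheme made explicit; your observation that $\alpha L<1$ alone would not suffice in the sharp topology is the key non-Archimedean point.
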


Finally, we have the following Gr�nwall-Bellman inequality in integral
form:
\begin{thm}
\label{thm:Groenwall}Let $\alpha\in\RC{\rho}_{>0}$. Let $u$, $a$,
$b\in\gckf\left([0,\alpha],\RC{\rho}\right)$ and assume that $\left\Vert a\right\Vert _{0}\cdot\alpha<N\cdot\log\left(\diff{\rho}^{-1}\right)$
for some $N\in\mathbb{N}$. Assume that $a(t)\geq0$ for all $t\in[0,\alpha]$,
and that $u(t)\leq b(t)+\int_{0}^{t}a(s)u(s)\,\diff{s}$. Then
\begin{enumerate}
\item \label{enu: Gronw 1}For every $t\in[0,\alpha]$ we have
\[
u(t)\leq b(t)+\int_{0}^{t}a(s)b(s)e^{\int_{s}^{t}a(r)\,\diff{r}}\,\diff{s};
\]
\item \label{enu: Gronw 2}If $b(t)\le b(s)$ for all $t\le s$, i.e.~if
$b$ is non-decreasing, then for every $t\in[0,\alpha]$ we have
\[
u(t)\leq b(t)e^{\int_{0}^{t}a(s)\,\diff{s}};
\]
\end{enumerate}
\end{thm}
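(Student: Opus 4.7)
The plan is to adapt the classical Grönwall argument via an integrating factor to the GSF setting, with the principal attention paid to ensuring that the exponentials that appear remain moderate (so that they define bona fide GSF) and that the calculus rules used (product rule, fundamental theorem, integration of inequalities) hold in this framework, for which Thm.~\ref{thm:rulesDer}, Thm.~\ref{thm:existenceUniquenessPrimitives}, Thm.~\ref{thm:intRules} are available.

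First I would set $v(t):=\int_{0}^{t}a(s)u(s)\,\diff{s}$, which by Def.~\ref{def:integral} belongs to $\gckf([0,\alpha],\rcrho)$ and satisfies $v'(t)=a(t)u(t)$ and $v(0)=0$. Next I would introduce the integrating factor $\mu(t):=\exp\!\bigl(-\int_{0}^{t}a(s)\,\diff{s}\bigr)$. To see that $\mu$ and its reciprocal are well-defined GSF, observe that $A(t):=\int_{0}^{t}a(s)\,\diff{s}\ge 0$ and, by Lem.~\ref{lem:normSpaceGSF} together with Thm.~\ref{thm:intRules}.\ref{enu:intMonotone}, $0\le A(t)\le\|a\|_{0}\cdot\alpha<N\log(\diff{\rho}^{-1})$; taking any defining net $(A_{\eps})$, this yields $\exp(A_{\eps}(t_{\eps}))\le\rho_{\eps}^{-N}$ for $\eps$ small, hence $(\exp\circ A)\in\gckf([0,\alpha],\rcrho_{>0})$ by Def.~\ref{def:netDefMap} and Thm.~\ref{thm:propGSF}.\ref{enu:category} (the exponential is the $\iota^{b}$-embedding of the smooth $\exp$). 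This is the only step where the non-Archimedean nature of $\rcrho$ plays a real role, and it is precisely why the quantitative hypothesis $\|a\|_{0}\cdot\alpha<N\log(\diff{\rho}^{-1})$ is assumed.

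For (i), using $u\le b+v$ and $a\ge 0$ we get pointwise $v'(t)\le a(t)b(t)+a(t)v(t)$; multiplying by $\mu(t)>0$ and applying the product rule (Thm.~\ref{thm:rulesDer}) together with $\mu'=-a\mu$ yields
\[
\bigl(\mu\cdot v\bigr)'(t)=\mu(t)\bigl(v'(t)-a(t)v(t)\bigr)\le\mu(t)a(t)b(t)\qquad\forall t\in[0,\alpha].
\]
Integrating from $0$ to $t$ by Thm.~\ref{thm:intRules}.\ref{enu:foundamental}, \ref{enu:intMonotone} (here using $v(0)=0$ and the preservation of $\le$ under integration), then multiplying the resulting inequality by $\mu(t)^{-1}=\exp(\int_{0}^{t}a)$ and using $\mu(t)^{-1}\mu(s)=\exp(\int_{s}^{t}a)$, we obtain
\[
v(t)\le\int_{0}^{t}a(s)b(s)\exp\!\Bigl(\int_{s}^{t}a(r)\,\diff{r}\Bigr)\diff{s},
\]
and adding $b(t)$ to both sides and using $u\le b+v$ yields (i).

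For (ii), assume $b$ is non-decreasing on $[0,\alpha]$, so $b(s)\le b(t)$ for all $s\le t$. Applying (i) and pulling $b(t)$ out of the integral,
\[
u(t)\le b(t)+b(t)\int_{0}^{t}a(s)\exp\!\Bigl(\int_{s}^{t}a(r)\,\diff{r}\Bigr)\diff{s}.
\]
The integrand equals $-\frac{\diff{}}{\diff{s}}\exp\!\bigl(\int_{s}^{t}a(r)\,\diff{r}\bigr)$ by the chain rule of Thm.~\ref{thm:rulesDer}, so Thm.~\ref{thm:intRules}.\ref{enu:foundamental} gives $\int_{0}^{t}a(s)\exp(\int_{s}^{t}a\,\diff r)\diff{s}=\exp(\int_{0}^{t}a(r)\diff{r})-1$, and (ii) follows.

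The main obstacle is the first step: one must verify that the integrating factor and its reciprocal are GSF, not merely $\rho$-moderate real-valued expressions, and that the inequality $\mu v'-a\mu v\le\mu ab$ can be integrated termwise. Once moderateness of $\mu,\mu^{-1}$ is established via the hypothesis on $\|a\|_{0}\cdot\alpha$, every remaining step is a direct transcription of the classical argument using the calculus rules collected in Thm.~\ref{thm:rulesDer} and Thm.~\ref{thm:intRules}.
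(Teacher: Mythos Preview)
The paper does not actually supply a proof of Thm.~\ref{thm:Groenwall}; it is stated as a preliminary result (alongside the Picard--Lindel\"of theorem, for which the reader is referred to \cite{ErlGross,baglinipaolo}) and then used. So there is nothing in the paper to compare your argument against line by line.

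That said, your proof is correct and is exactly the expected one: the classical integrating-factor Gr\"onwall argument, transported to the GSF setting via Thm.~\ref{thm:rulesDer} and Thm.~\ref{thm:intRules}, with the one genuinely nonclassical point being the verification that $\exp\bigl(\pm\int_{0}^{t}a\bigr)$ are GSF. You correctly identify that the hypothesis $\|a\|_{0}\cdot\alpha<N\log(\diff{\rho}^{-1})$ is precisely what makes $\exp(A(t))$ $\rho$-moderate (bounded by $\diff{\rho}^{-N}$), so that both $\mu$ and $\mu^{-1}$ lie in $\gckf([0,\alpha],\rcrho_{>0})$ and multiplication by $\mu^{-1}>0$ preserves the order. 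The remaining steps---$(\mu v)'\le\mu ab$, integration using Thm.~\ref{thm:intRules}.\ref{enu:foundamental} and \ref{enu:intMonotone}, and the antiderivative computation for part~(ii)---are all valid in this framework for the reasons you give.
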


In this section, we always assume that the state equation $\phi$
satisfies the assumptions of Thm.~\ref{thm:PicLindFiniteContr},
i.e.~setting:
\begin{align}
M_{K} & :={\displaystyle \max_{\substack{t_{1}\le t\le t_{2}\\
|q-q_{1}|\le r\\
k\in K
}
}}|\phi(t,q,k)|\nonumber \\
L_{K} & :=\max_{\substack{t_{1}\le t\le t_{2}\\
|q-q_{1}|\le r\\
k\in K
}
}\left|\partial_{q}\phi(t,q,k)\right|,\label{eq:lipConst}
\end{align}
we assume that
\begin{align}
 & (t_{2}-t_{1})M_{K}\le r\nonumber \\
 & \lim_{n\to+\infty}(t_{2}-t_{1})^{n}L_{K}^{n}=0.\label{eq:PLT-LipCond}
\end{align}
Therefore, Thm.~\ref{thm:PicLindFiniteContr} allows us to state
that
\begin{equation}
\forall u\in\mathcal{A}\,\exists!q^{u}\in\mathcal{Q}:\ \text{ \eqref{cp} holds for all }t\in[t_{1},t_{2}].\label{eq:q^u}
\end{equation}
Note that Thm.~\ref{thm:PicLindFiniteContr} in particular yields
both $q^{u}\in\gsf([t_{1},t_{2}],\rti^{d})$ and $\Vert q^{u}-q_{1}\Vert_{0}\le r$,
so that $q^{u}(t)\in\overline{B_{r}(q_{1})}\subseteq H$ for all $t\in[t_{1},t_{2}]$,
and thereby $q^{u}\in\mathcal{Q}$ (see \eqref{eq:Q}). Therefore,
\eqref{eq:q^u} holds for all solid functionally compact sets $H\fcmp\rti^{d}$
and $K\fcmp\rti^{l}$ such that both $\overline{B_{r}(q_{1})}\subseteq H$
and \eqref{eq:PLT-LipCond} hold. Note also explicitly, the importance
in this deduction of the closure of GSF (of which, we recall, Sobolev-Schwartz
distributions are a particular case) with respect to composition (Thm.~\ref{thm:propGSF}.\ref{enu:category}).

Finally, observe that the constant $L_{K}\in\rti$ defined in \eqref{eq:lipConst}
and Taylor Thm.~\ref{thm:Taylor} yield the Lipschitz condition
\begin{equation}
\forall u\in\mathcal{A}\,\forall t\in[t_{1},t_{2}]\,\forall q,\tilde{q}\in\overline{B_{r}(q_{1})}:\ \left|\phi(t,q,u(t))-\phi(t,\tilde{q},u(t))\right|\le L_{K}\left|q-\tilde{q}\right|.\label{eq:lipAbs}
\end{equation}

\subsection{Weak Pontryagin Maximum Principle}

Based on \eqref{eq:q^u}, we can introduce the notation and the optimization
problem
\begin{align}
\forall u & \in\mathcal{A}:\ I[u]:=\int_{t_{1}}^{t_{2}}L\left(t,q^{u}(t),u(t)\right)\,\diff{t}\label{eq:JOu}\\
\text{Find }v & \in\mathcal{A}:\ \eqref{cp}\text{ and }\exists r\in\rti_{>0}\,\exists l\in\N\,\forall u\in\mathcal{A}\cap B_{r}^{l}(v):\ I[v]\le I[u].\label{eq:optCtrlProbl}
\end{align}
 In order to develop the necessary optimality condition for Problem
\eqref{eq:JOu}, we first assume that $u\in\accentset{\circ}{\mathcal{A}}$
in the sharp topology of the $\rcrho$-graded Fr�chet space $(\mathcal{A},\Vert-\Vert_{0})$,
see Sec.~\ref{subsec:EVTandFcmp}, i.e.
\[
\exists r\in\rti_{>0}:\ B_{r}(u)=\left\{ v\in\mathcal{A}\mid\max_{t_{1}\le t\le t_{2}}\left|v(t)-u(t)\right|<r\right\} \subseteq\mathcal{A}.
\]
Thereby, for each direction $\bar{u}\in\gsf_{0}(t_{1},t_{2})$, and
for some $\delta=\delta(u,\bar{u})\in(0,1)$ sufficiently small
\begin{equation}
\forall h\in(-\delta,\delta):\ u+h\bar{u}\in\mathcal{A},\label{eq:deltaSmall}
\end{equation}
and hence we can evaluate $I[u+h\bar{u}]\in\rti$. Note explicitly
that the control $u\in\mathcal{A}=\gsf([t_{1},t_{2}],K)$, whereas
the direction $\bar{u}$ lies in the $\rti$-module $\gsf_{0}(t_{1},t_{2})\subseteq\gsf([t_{1},t_{2}],\rti^{d})$;
this is indispensable to apply the fundamental Lem.~\ref{lem:fund_lem_calc_var}
(see e.g.~the proof of Thm.~\ref{ocu}).

To define the first variation of $I[-]:\mathcal{A}\ra\rcrho$ at $u\in\mathcal{A}$
in the direction $\bar{u}\in\gsf_{0}(t_{1},t_{2})$, we can intuitively
think at a sort of first order Taylor sum of $I[u+h\bar{u}]$ at $u$:
\begin{defn}
\label{def:incrRatio}Let $\delta\in\rti_{>0}$ be such that \eqref{eq:deltaSmall}
holds. We say that $R:(-\delta,\delta)\ra\rti$ is \emph{an} \emph{incremental
ratio of} $I[-]$ \emph{at} $u\in\mathcal{A}$ \emph{in the direction
}$\bar{u}\in\gsf_{0}(t_{1},t_{2})$, if there exists a function $\hat{R}:(-\delta,\delta)\ra\rti$
(called \emph{remainder}) such that:
\begin{enumerate}
\item \label{enu:diffFR}$\forall h\in(-\delta,\delta):\ I[u+h\bar{u}]=I[u]+h\cdot R(h)+\hat{R}(h)$
\item \label{enu:diffCont}$R$ is continuous at $0$ in the sharp topology
\item \label{enu:diffRem}$\exists A\in\rti_{>0}\,\forall h\in(-\delta,\delta):\ \left|\hat{R}(h)\right|\le A\cdot h^{2}$.
\end{enumerate}
\end{defn}

\noindent We first prove that $R(0)\in\rti$ is unique:
\begin{thm}
Let $u\in\accentset{\circ}{\mathcal{A}}$ and $\bar{u}\in\gsf_{0}(t_{1},t_{2})$.
If $R$, $S$ are incremental ratios of $I[-]$ at $u$ in the direction
$\bar{u}$, then $R(0)=S(0)$.
\end{thm}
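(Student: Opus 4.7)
The plan is to exploit the two representations of $I[u+h\bar u]$ simultaneously and then pass to the limit $h\to 0$, the only subtlety being that $\rti$ is not a field, so division by $h$ must be handled with care.

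First I would subtract the two identities from Def.~\ref{def:incrRatio}.\ref{enu:diffFR}, namely $I[u+h\bar u] = I[u]+h\cdot R(h)+\hat R(h)$ and $I[u+h\bar u] = I[u]+h\cdot S(h)+\hat S(h)$, to obtain the key relation
\[
h\cdot\bigl(R(h)-S(h)\bigr) = \hat S(h)-\hat R(h)\quad\forall h\in(-\delta,\delta).
\]
Using Def.~\ref{def:incrRatio}.\ref{enu:diffRem} for both remainders, pick $A$, $B\in\rti_{>0}$ with $|\hat R(h)|\le A h^{2}$ and $|\hat S(h)|\le B h^{2}$; this immediately gives the estimate $|h|\cdot|R(h)-S(h)|\le (A+B)\,h^{2}$.

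Next comes the main obstacle: we cannot simply cancel $|h|$ because $\rti$ is only a ring. To get around this I would invoke Lem.~\ref{lem:invDense}, which provides, for every $n\in\N_{>0}$, an invertible $h_{n}\in (-\tfrac{\delta}{n},\tfrac{\delta}{n})\setminus\{0\}$ such that $h_{n}\ne 0$ and $h_{n}$ is invertible in $\rti$. For such $h_n$ we may multiply both sides of the estimate by $|h_n|^{-1}\in\rti$ (preserving the order since $|h_n|^{-1}>0$ by Lem.~\ref{lem:mayer}) to obtain
\[
|R(h_{n})-S(h_{n})|\le (A+B)\,|h_{n}|\quad\forall n\in\N_{>0}.
\]

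Finally, by construction $h_{n}\to 0$ in the sharp topology, so Def.~\ref{def:incrRatio}.\ref{enu:diffCont} yields $R(h_{n})\to R(0)$ and $S(h_{n})\to S(0)$. Since the absolute value is sharply continuous (it is a $\gcf{0}$ map, as noted after Thm.~\ref{thm:rulesDer}), passing to the limit in the displayed inequality gives $|R(0)-S(0)|\le 0$, and therefore $R(0)=S(0)$ by the property $|x|=0\Rightarrow x=0$ of the generalized norm. I expect no further difficulty beyond the careful invocation of Lem.~\ref{lem:invDense} to guarantee a sharp-topology null sequence of invertible scalars.
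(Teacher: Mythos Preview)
Your strategy is essentially the paper's: subtract the two representations, bound by the remainder estimates, restrict to invertible $h$ using Lem.~\ref{lem:invDense}, and then invoke continuity at $0$. There is, however, a genuine gap in the step ``by construction $h_n\to 0$ in the sharp topology''. You chose $h_n\in(-\delta/n,\delta/n)$ with $n\in\N_{>0}$, but in $\rti$ the sequence $(\delta/n)_{n\in\N}$ does \emph{not} tend to $0$ sharply: since the sharp topology admits infinitesimal radii (e.g.\ $\diff{\rho}$), and $\delta/n\ge\diff{\rho}$ for every $n\in\N$ whenever $\delta$ is not itself infinitesimal of that order, the ball $B_{\diff{\rho}}(0)$ contains none of your $h_n$. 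Consequently you cannot invoke the sharp continuity of $R$ and $S$ at $0$ to pass to the limit.

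The repair is immediate and keeps your argument intact: replace the radii $\delta/n$ by $\diff{\rho}^{n}$. Since $\delta\in\rti_{>0}$, Lem.~\ref{lem:mayer} gives some $m\in\N$ with $\diff{\rho}^{m}<\delta$, so for $n\ge m$ one has $(-\diff{\rho}^{n},\diff{\rho}^{n})\subseteq(-\delta,\delta)$, and Lem.~\ref{lem:invDense} still furnishes an invertible $h_n$ there. Now $|h_n|<\diff{\rho}^{n}$ does converge to $0$ sharply (the balls $B_{\diff{\rho}^{m}}(0)$ form a base at $0$), and the rest of your proof goes through verbatim. The paper avoids this pitfall by not discretising: it works with the full limit $\lim_{h\to 0}$ and uses density of invertibles together with continuity of $|R(\cdot)-S(\cdot)|$ at $0$ to identify the limit along invertible $h$ with $|R(0)-S(0)|$.
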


\begin{proof}
We have $R:(-\delta_{1},\delta_{1})\ra\rti$ and $S:(-\delta_{2},\delta_{2})\ra\rti$
from Def.~\ref{def:incrRatio}. Set $\delta:=\delta_{1}\wedge\delta_{2}\wedge1$,
so that $0<\delta\le1$ by Lem.~\ref{lem:mayer}. Let $\hat{R}$
and $\hat{S}$ be remainders of $R$, $S$ respectively. Take an invertible
$h\in(-\delta,\delta)$; from Def.~\ref{def:incrRatio}.\ref{enu:diffFR},
we get $R(h)-S(h)=h^{-1}\cdot\left(\hat{R}(h)-\hat{S}(h)\right)$.
Therefore, $\left|R(h)-S(h)\right|\le(A+\hat{A})h$ from Def.~\ref{def:incrRatio}.\ref{enu:diffRem},
and hence $\lim_{\substack{h\to0\\
h\text{ invertible}
}
}\left|R(h)-S(h)\right|=0$. But invertible elements are dense in the sharp topology (Lem.~\ref{lem:invDense}),
and thus $\lim_{h\to0}\left|R(h)-S(h)\right|=0=R(0)-S(0)$ using Def.~\ref{def:incrRatio}.\ref{enu:diffCont}.
\end{proof}
\begin{defn}
\label{def:diffI^u}Assume that there exists an incremental ratio
of $I[-]$. Then, we define the \emph{first variation of $I[-]$ at
}$u\in\accentset{\circ}{\mathcal{A}}$ \emph{in the direction} $\bar{u}\in\gsf_{0}(t_{1},t_{2})$\emph{
}as \emph{$\delta I^{u}(u;\bar{u}):=R(0)$}, where $R$ is any incremental
ratio of $I[-]$. Moreover, we say that $u\in\mathcal{A}$ is \emph{a
weak extremal of $I[-]$} if for any $\bar{u}\in\gsf_{0}(t_{1},t_{2})$,
$\delta I(u;\bar{u})=0$.
\end{defn}

Using Def.~\ref{def:incrRatio} and similarly to Thm.~\ref{thm:necessCondsForMinimizer},
we can prove the following
\begin{thm}
\label{thm:necCond}If $v\in\accentset{\circ}{\mathcal{A}}$ solves
problem \eqref{eq:optCtrlProbl} and there exists an incremental ratio
of $I[-]$, then $v$ a weak extremal of $I[-]$.
\end{thm}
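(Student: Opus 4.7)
The plan is to mimic the strategy of Thm.~\ref{thm:necessCondsForMinimizer}, restricting $I[-]$ to the one-parameter family $s\mapsto v+s\bar u$ and extracting a one-sided inequality on $R(0)$ from both sides $s\to0^+$ and $s\to0^-$.

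First I would fix an arbitrary direction $\bar u\in\mathcal{A}$ and produce $\delta\in\rti_{>0}$ small enough so that, for all $s\in(-\delta,\delta)$, simultaneously $v+s\bar u\in\mathcal{A}$ and $v+s\bar u\in B_r^l(v)$, where $r\in\rti_{>0}$ and $l\in\N$ are the parameters from the local-minimality assumption \eqref{eq:optCtrlProbl}. The first inclusion uses $v\in\accentset{\circ}{\mathcal A}$, i.e.\ the existence of some $\rho\in\rti_{>0}$ with $B_\rho(v)\subseteq\mathcal A$, combined with $\|s\bar u\|_0=|s|\cdot\|\bar u\|_0$. The second uses $\|s\bar u\|_l=|s|\cdot\|\bar u\|_l$. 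In both cases we need $|s|$ to be small enough that $|s|\cdot\|\bar u\|_l<r$ and $|s|\cdot\|\bar u\|_0<\rho$; since $\|\bar u\|_0$ and $\|\bar u\|_l$ are nonnegative elements of $\rcrho$ that may fail to be invertible, I would invoke Lem.~\ref{lem:invDense} to choose invertible majorants $\tilde M\ge\|\bar u\|_l\vee\|\bar u\|_0$, and then take $\delta:=(\rho\wedge r)\cdot\tilde M^{-1}\in\rti_{>0}$ (shrinking if necessary so that $\delta$ also lies in the domain of the incremental ratio of Def.~\ref{def:incrRatio}).

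Second, for $s\in(-\delta,\delta)$ the local minimality of $v$ gives $I[v+s\bar u]\ge I[v]$, so Def.~\ref{def:incrRatio}.\ref{enu:diffFR} yields
\[
s\cdot R(s)+\hat R(s)\ge 0,\qquad |\hat R(s)|\le A\cdot s^{2}.
\]
For invertible $s>0$ this gives $R(s)\ge -\hat R(s)/s\ge -A|s|$, while for invertible $s<0$ it gives $R(s)\le -\hat R(s)/s\le A|s|$. Letting $s\to0$ through invertible elements on each side, using the sharp continuity of $R$ at $0$ (Def.~\ref{def:incrRatio}.\ref{enu:diffCont}) and the density of invertible elements in the sharp topology (Lem.~\ref{lem:invDense}), I obtain $R(0)\ge 0$ and $R(0)\le 0$ respectively. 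Hence $\delta I^{u}(v;\bar u)=R(0)=0$ by Def.~\ref{def:diffI^u}, and since $\bar u\in\mathcal A$ was arbitrary, $v$ is a weak extremal.

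The main obstacle I anticipate is purely technical: because $\rcrho$ is a non-Archimedean ring rather than a field, a nonnegative element such as $\|\bar u\|_l$ need not be invertible, so the naive choice ``$\delta=r/\|\bar u\|_l$'' is not legitimate. The density of invertibles (Lem.~\ref{lem:invDense}) and the monotonicity properties of the sharp order are what make the one-sided limits and the bounding of $\delta$ rigorous. Apart from that, the argument is a direct transcription of the classical ``local minimum implies vanishing of the first variation'' reasoning into the $\gsf$ framework, and requires no new principle beyond those already established in the preceding sections.
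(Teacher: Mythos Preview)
Your proposal is correct and follows essentially the same route as the paper's proof: restrict to the line $s\mapsto v+s\bar u$, use the local minimality to get $sR(s)+\hat R(s)\ge0$, divide by invertible $s$ from both sides to obtain $|R(s)|\le A|s|$, and conclude $R(0)=0$ by sharp continuity. Your treatment is in fact a bit more explicit than the paper's on the technical point of manufacturing an invertible bound $\tilde M\ge\|\bar u\|_l\vee\|\bar u\|_0$ in order to choose $\delta$; the paper simply asserts that such a $\delta$ exists.
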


\begin{proof}
Let $\bar{u}\in\gsf_{0}(t_{1},t_{2})$ and let $\delta\in(0,1)$ be
such that \eqref{eq:deltaSmall} holds and such that $v+h\bar{u}\in B_{r}^{l}(v)$
for all $h\in(-\delta,\delta)$. From Def.~\ref{def:incrRatio}.\ref{enu:diffFR}
of incremental ratio, we have $I[v+h\bar{u}]=I[v]+hR(h)+\hat{R}(h)\ge I[v]$.
Therefore, for all $k\in(0,\delta)$, we get $kR(k)\ge-\hat{R}(k)$
and hence $-R(k)\le Ak=A|k|$ by Def.~\ref{def:incrRatio}.\ref{enu:diffRem}.
Similarly, for all $k\in(-\delta,0)$, we get $R(k)\le-Ak=A|k|$.
Thereby, $|R(k)|=R(k)\vee-R(k)\le A|k|$ for all invertible $k\in(-\delta,\delta)$,
which implies $R(0)=\delta I(v;\bar{u})=0$.
\end{proof}
We now prove that, under sufficiently weak conditions, an incremental
ratio of $I[-]$ always exists and $\delta I(u;\cdot)$ is an $\rti$-linear
continuous map. We first show the following continuity conditions
for the map $u\in\mathcal{A}\mapsto q^{u}\in\mathcal{Q}$ (see \cite{loic}
for a similar proof):
\begin{thm}[Stability of order 1 and 2]
\label{RE1}Let $u\in\accentset{\circ}{\mathcal{A}}$, $\bar{u}\in\gsf_{0}(t_{1},t_{2})$
and $\delta\in(0,1)$ as in \eqref{eq:deltaSmall}. Assume that
\begin{equation}
\exists N\in\N:\ L_{K}\cdot(t_{2}-t_{1})\le N\log(\diff{\rho}^{-1}).\label{eq:boundExp}
\end{equation}
Then, there exist constants $A$, $\bar{A}\in\rti$ (depending only
on $u$, $\bar{u}$ and clearly on $\phi$) such that for all $\mid h\mid<\delta$,
we have

\begin{align}
 & \left\Vert q^{u+h\bar{u}}-q^{u}\right\Vert _{0}\leq\left|h\right|A\label{re1}\\
 & \left\Vert q^{u+h\bar{u}}-q^{u}-h\bar{q}\right\Vert _{0}\leq\bar{A}h^{2},\label{eq:re2}
\end{align}
where $\bar{q}\in\gsf([t_{1},t_{2}],\rti^{d})$ is the unique global
solution of the following Cauchy's problem 
\begin{equation}
\tag{LCP\ensuremath{_{1}}}\left\lbrace \begin{array}{l}
\dot{\bar{q}}=\dfrac{\partial\varphi}{\partial q}(t,q^{u},u)\cdot\bar{q}+\dfrac{\partial\varphi}{\partial u}(t,q^{u},u)\cdot\bar{u}\\[10pt]
\bar{q}(t_{1})=0
\end{array}\right.\label{re8}
\end{equation}
\end{thm}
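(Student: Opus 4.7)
The plan is to linearize \eqref{cp} around the control $u$ and its associated state $q^u$, and then to apply the Grönwall--Bellman inequality (Thm.~\ref{thm:Groenwall}) twice: once to obtain \eqref{re1}, and once more to obtain \eqref{eq:re2} after feeding in a Taylor expansion with second-order remainder (Thm.~\ref{thm:Taylor}.\ref{enu:integralRest}). For \eqref{re1}, I would write both $q^u$ and $q^{u+h\bar u}$ in integral form using Thm.~\ref{thm:existenceUniquenessPrimitives}, subtract, and insert $\pm\phi(s,q^u(s),u(s)+h\bar u(s))$ to split the integrand. The first piece is handled by the Lipschitz estimate \eqref{eq:lipAbs} with constant $L_{u+h\bar u}$ and produces a term of the form $L_{u+h\bar u}\int_{t_1}^{t}|q^{u+h\bar u}-q^u|\,\diff{s}$; the second piece is bounded by $|h|$ times a sharply finite constant coming from the extreme value theorem~\ref{thm:extremeValues} applied to $\partial_u\phi$ and to $\bar u$ on the functionally compact cylinder $[t_1,t_2]\times\overline{B_r(q_1)}\times K$. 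Grönwall's inequality with a $b$ that is constant in $t$ then yields an estimate of the shape $|q^{u+h\bar u}(t)-q^u(t)|\le|h|\cdot C_1(t_2-t_1)\,e^{L_{u+h\bar u}(t_2-t_1)}$, and hypothesis \eqref{eq:boundExp} is precisely what guarantees that $e^{L_{u+h\bar u}(t_2-t_1)}\le\diff{\rho}^{-N}$ is a moderate element of $\rti$, so the announced constant $A$ is a well-defined generalized number.

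Before addressing \eqref{eq:re2}, I would check that $\bar q$ exists globally on $[t_1,t_2]$: the system \eqref{re8} is a linear Cauchy problem whose coefficients are GSF by closure under composition (Thm.~\ref{thm:propGSF}.\ref{enu:category}); its $\bar q$-Lipschitz constant is $L_u$, and \eqref{eq:boundExp} with $h=0$ combined with the series expansion of the exponential shows that $\lim_n(t_2-t_1)^n L_u^n=0$ in the sharp topology, so Thm.~\ref{thm:PicLindFiniteContr} produces the desired $\bar q\in\gsf([t_1,t_2],\rti^d)$. Setting $w_h:=q^{u+h\bar u}-q^u-h\bar q$, so that $w_h(t_1)=0$, I would then expand
\[
\dot w_h=\phi(t,q^{u+h\bar u},u+h\bar u)-\phi(t,q^u,u)-h\,\partial_q\phi(t,q^u,u)\bar q-h\,\partial_u\phi(t,q^u,u)\bar u
\]
via Taylor's theorem with integral remainder at $(q^u(t),u(t))$ with increment $(q^{u+h\bar u}-q^u,\,h\bar u)$. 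The first-order terms reorganize into $\partial_q\phi(t,q^u,u)\cdot w_h$ plus a remainder $R_2$ quadratic in $(q^{u+h\bar u}-q^u,\,h\bar u)$; using \eqref{re1} and the sharp boundedness of $D^2\phi$ on the compact cylinder, one obtains $|R_2(t)|\le C_3\,h^2$ uniformly in $t$. Integrating and applying Grönwall a second time with $a:=L_u$ and $b:=C_3(t_2-t_1)h^2$ then produces \eqref{eq:re2}.

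The step I expect to be most delicate is ensuring that every intermediate supremum lies in $\rti$ rather than being a non-moderate infinity. This hinges on two pieces of the hypothesis: the functional compactness of $H$ and $K$ (so that $\partial_q\phi$, $\partial_u\phi$, $D^2\phi$, $\bar u$ and $\bar q$ all attain $\rti$-valued maxima on $[t_1,t_2]\times\overline{B_r(q_1)}\times K$ via Thm.~\ref{thm:extremeValues}), and the logarithmic bound \eqref{eq:boundExp} (so that the exponential factor produced by Grönwall stays moderate; this is precisely what is needed to meet the hypothesis $\Vert a\Vert_0\cdot\alpha<N\log(\diff{\rho}^{-1})$ of Thm.~\ref{thm:Groenwall}). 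Once those moderateness checks are in place, the two bounds follow by essentially routine non-Archimedean transcriptions of the classical sensitivity argument, and no further peculiarity of GSF needs to be invoked.
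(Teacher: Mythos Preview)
Your proposal is correct and follows essentially the same route as the paper: integral form of \eqref{cp}, the splitting via $\pm\phi(s,q^{u}(s),u(s)+h\bar u(s))$, Lipschitz estimate \eqref{eq:lipAbs} for one piece, first-order Taylor plus the extreme value theorem for the other, and then Gr\"onwall together with \eqref{eq:boundExp} to control the exponential. The paper only says that \eqref{eq:re2} is ``similar'' and does not spell out the second-order step; your sketch with $w_h$ and the quadratic Taylor remainder is the natural way to fill that in. One small refinement: the intermediate point $\xi$ in the Taylor remainder lies on the segment $[u(s),u(s)+h\bar u(s)]$, which need not be contained in $K$; the paper handles this by first boxing that segment into a product $[B_1,B_2]^l$ (via the extreme value theorem on $u$, $\bar u$) and then bounding $\partial_u\phi$ on $[t_1,t_2]\times\overline{B_r(q_1)}\times[B_1,B_2]^l$ rather than on $\ldots\times K$.
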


\begin{proof}
We prove in detail \eqref{re1}; the proof of \eqref{eq:re2} is similar.
Let $h\in(-\delta,\delta)$. From the evolution ODE \eqref{cp}, for
all $t\in[t_{1},t_{2}]$, we have
\begin{align}
\left|q^{u+h\bar{u}}(t)-q^{u}(t)\right| & \le\left|\int_{t_{1}}^{t}\left(\varphi(s,q^{u+h\bar{u}}(s),u(s)+h\bar{u}(s))-\varphi(s,q^{u}(s),u(s))\right)\,\diff{s}\right|\le\nonumber \\
 & \le\int_{t_{1}}^{t}\left|\varphi(s,q^{u+h\bar{u}}(s),u(s)+h\bar{u}(s))-\varphi(s,q^{u}(s),u(s))\right|\,\diff{s}\label{eq:re4Init}
\end{align}
for all $s\in[t_{1},t]$, we have 
\begin{multline}
\left|\varphi\big(s,q^{u+h\bar{u}}(s),u(s)+h\bar{u}(s)\big)-\varphi\big(s,q^{u}(s),u(s)\big)\right|\le\\
\leq\left|\varphi\big(s,q^{u+h\bar{u}}(s),u(s)+h\bar{u}(s)\big)-\varphi\big(s,q^{u}(s),u(s)+h\bar{u}(s)\big)\right|+\\
+\left|\varphi\big(s,q^{u}(s),u(s)+h\bar{u}(s)\big)-\varphi\big(s,q^{u}(s),u(s)\big)\right|.\label{re3}
\end{multline}
Now, we apply the Lipschitz condition \eqref{eq:lipAbs} to the first
summand, and a first order Taylor expansion with Lagrange remainder
(Thm.~\ref{thm:Taylor}.\ref{enu:LagrangeRest}) to the second one,
obtaining 
\begin{equation}
\left|\varphi\big(s,q^{u+h\bar{u}}(s),u(s)+h\bar{u}(s)\big)-\varphi\big(s,q^{u}(s),u(s)+h\bar{u}(s)\big)\right|\le L_{K}\cdot\left|q^{u+h\bar{u}}(s)-q^{u}(s)\right|\label{eq:re4Lip}
\end{equation}
\begin{equation}
\left|\varphi\big(s,q^{u}(s),u(s)+h\bar{u}(s)\big)-\varphi\big(s,q^{u}(s),u(s)\big)\right|\le\vert h\vert\left|\dfrac{\partial\varphi}{\partial u}\big(s,q^{u}(s),\xi)\cdot\bar{u}(s)\right|\label{eq:re4Taylor}
\end{equation}
where $\xi=\xi(u,h,\bar{u},s)\in[u(s),u(s)+h\bar{u}(s)]\subseteq\rti^{l}$.
The extreme value Thm.~\ref{thm:extremeValues} applied to $u$,
$\bar{u}$ on the functionally compact set $[t_{1},t_{2}]$ yields
the existence of constants $B_{1}$, $B_{2}\in\gf_{>0}$ such that
$[u(s),u(s)+h\bar{u}(s)]\subseteq[B_{1},B_{2}]^{l}$; since $|h|<\delta$,
we can always assume that these constants do not depend on $h$ but
only on $u$ and $\bar{u}$. In the same way, applying the extreme
value theorem with $\dfrac{\partial\varphi}{\partial u}$ on the functionally
compact set $[t_{1},t_{2}]\times\overline{B_{r}(q_{1})}\times[B_{1},B_{2}]^{l}$,
we get the existence of a constant $C\in\rti$ (depending only on
$u$, $\bar{u}$) such that 
\[
\forall s\in[t_{1},t_{2}]:\ \left|\dfrac{\partial\varphi}{\partial u}\big(s,q^{u}(s),\xi)\cdot\bar{u}(s)\right|\leq C.
\]
Thereby, considering \eqref{eq:re4Taylor}:
\begin{equation}
\int_{t_{1}}^{t}\left|\varphi\big(s,q^{u}(s),u(s)+h\bar{u}(s)\big)-\varphi\big(s,q^{u}(s),u(s)\big)\right|\,\diff{s}\leq|h|C\cdot(t_{2}-t_{1}).\label{re5}
\end{equation}
And, using inequalities \eqref{eq:re4Init}, \eqref{re3}, \eqref{eq:re4Lip},
and \eqref{re5}, we have: 
\begin{equation}
\forall t\in[t_{1},t_{2}]:\ \left|q^{u+h\bar{u}}(t)-q^{u}(t)\right|\leq L_{K}\cdot\int_{t_{1}}^{t}\left|q^{u+h\bar{u}}(s)-q^{u}(s)\right|\,\diff{s}+\vert h\vert C\cdot(t_{2}-t_{1}).\label{re6}
\end{equation}
Finally, we apply Gr�nwall-Bellman Thm.~\ref{thm:Groenwall} to \eqref{re6}
and use assumption \eqref{eq:boundExp} to obtain
\begin{equation}
\Vert q^{u+h\bar{u}}-q^{u}\Vert_{0}\leq\vert h\vert C\cdot(t_{2}-t_{1})e^{L_{K}(t_{2}-t_{1})}\le\vert h\vert C\cdot(t_{2}-t_{1})\diff{\rho}^{-N}.\label{re7}
\end{equation}
Setting $A:=C\cdot(t_{2}-t_{1})\diff{\rho}^{-N}$, \eqref{re7} proves
the claim, because also $N$ does not depend on $h$ but only on $K$,
$\phi$ and $q_{1}$.
\end{proof}
Note that the existence of a solution $\bar{q}$ of the Cauchy problem
\eqref{re8} can be directly proved by setting for all $t\in[t_{1},t_{2}]$
\begin{align}
a(t) & :=\dfrac{\partial\varphi}{\partial q}(t,q^{u}(t),u(t))\label{eq:a}\\
b(t) & :=\dfrac{\partial\varphi}{\partial u}(t,q^{u}(t),u(t))\label{eq:b}\\
\bar{q}(t) & :=\exp\left(\int_{t_{1}}^{t}a\right)\cdot\int_{t_{1}}^{t}b(s)\cdot\exp\left(-\int_{t_{1}}^{t}a\right)\,\diff{s}.\nonumber 
\end{align}
In fact, $\bar{q}\in\gsf([t_{1},t_{2}],\rti^{d})$ because primitives
of GSF are GSF (Thm.~\ref{thm:existenceUniquenessPrimitives}), because
of the closure with respect to composition (Thm.~\ref{thm:propGSF}.\ref{enu:category}),
and because of condition \eqref{eq:boundExp}, which implies that
$\exp\left(\int_{t_{1}}^{t}a\right)\in\rti$. The uniqueness of the
solution follows from the Picard-Lindel�f Thm.~\ref{thm:PicLindFiniteContr}
considering e.g.~$r:=2(t_{2}-t_{1})L_{K}$. Finally, note that assumption
\eqref{eq:boundExp} always holds if $L_{K}\cdot(t_{2}-t_{1})$ is
a finite number or if $t_{2}-t_{1}$ is sufficiently small.
\begin{thm}
\label{th:cno}In the assumptions of Thm.~\ref{RE1}, there always
exists an incremental ratio of $I[-]$, and the following equality
holds: 
\begin{equation}
\delta I(u;\bar{u})=\int_{t_{1}}^{t_{2}}\left(\dfrac{\partial L}{\partial q}(s,q^{u}(s),u(s))\cdot\bar{q}(s)+\dfrac{\partial L}{\partial u}(t,q^{u}(s),u(s))\cdot\bar{u}(s)\right)\,\diff{s}\label{re10}
\end{equation}
where $\bar{q}=\bar{q}(\bar{u})\in\mathcal{Q}$ is the unique global
solution of problem \eqref{re8}. Moreover, $\bar{u}\in\gsf_{0}(t_{1},t_{2})\mapsto\delta I(u;\bar{u})\in\rti$
is an $\rti$-continuous linear map, i.e.~it is $\rti$-linear and
satisfies
\begin{equation}
\exists J\in\rti_{>0}\,\exists r\in\rti_{>0}\,\forall\bar{u},\bar{v}\in B_{r}(0):\ \left|\delta I(u;\bar{u})-\delta I(u;\bar{v})\right|\le J\cdot\Vert\bar{u}-\bar{v}\Vert_{0}.\label{eq:IBO}
\end{equation}
\end{thm}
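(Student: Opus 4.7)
The plan is to write
$$I[u+h\bar u]-I[u]=\int_{t_1}^{t_2}\bigl[L(s,q^{u+h\bar u}(s),u(s)+h\bar u(s))-L(s,q^u(s),u(s))\bigr]\,\diff{s}$$
and apply Taylor's formula with integral remainder at order zero (Thm.~\ref{thm:Taylor}.\ref{enu:integralRest}) to the GSF $t\in[0,1]\mapsto L\bigl(s,q^u(s)+t(q^{u+h\bar u}(s)-q^u(s)),u(s)+th\bar u(s)\bigr)$. By the first-order stability \eqref{re1} and the extreme value Thm.~\ref{thm:extremeValues} applied to $u$ and $\bar u$, all the intermediate evaluation points lie in a fixed functionally compact set $[t_1,t_2]\times\overline{B_r(q_1)}\times[B_1,B_2]^l$, uniformly in $h\in(-\delta,\delta)$ for $\delta$ small enough. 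This identifies the integrand as $h$ times a quantity linear in $h^{-1}(q^{u+h\bar u}-q^u)$ and $\bar u$, whose coefficients are $\partial_q L$ and $\partial_u L$ evaluated along the connecting segment.

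Next, the second-order stability \eqref{eq:re2} provides a decomposition $q^{u+h\bar u}-q^u=h\bar q+h^2\xi(h)$ with $\Vert\xi(h)\Vert_0\le\bar A$ uniformly in $h$. Substituting yields $I[u+h\bar u]-I[u]=hR(h)$, where $R(h)$ is the double integral over $(s,t)\in[t_1,t_2]\times[0,1]$ of $\partial_q L(\cdots)\cdot(\bar q(s)+h\xi(h)(s))+\partial_u L(\cdots)\cdot\bar u(s)$ along the segment parametrized by $t$. Therefore in Def.~\ref{def:incrRatio} we may take $\hat R\equiv 0$, and it remains to check that $R$ is sharply continuous at $h=0$ and that $R(0)$ has the claimed form. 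Sharp continuity follows because, on the fixed functionally compact set above, $\partial_q L$ and $\partial_u L$ are sharply continuous with uniform bounds (Thm.~\ref{thm:extremeValues}), while the evaluation points converge uniformly in $(s,t)$ to $(s,q^u(s),u(s))$ as $h\to 0$ by \eqref{re1}, and $h\xi(h)\to 0$ uniformly by \eqref{eq:re2}. Setting $h=0$ then collapses the $t$-integration to $[0,1]$ of a $t$-independent integrand, yielding exactly the formula \eqref{re10}.

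For the $\rcrho$-linearity of $\bar u\mapsto\delta I(u;\bar u)$: the map $\bar u\mapsto\bar q(\bar u)$ is $\rcrho$-linear by uniqueness of the solution of the linear Cauchy problem \eqref{re8} (Thm.~\ref{thm:PicLindFiniteContr}), and composition with the $\rcrho$-linear integration and with pointwise multiplication by $\partial_q L(s,q^u,u)$, $\partial_u L(s,q^u,u)$ preserves linearity. For the Lipschitz estimate \eqref{eq:IBO}: using the integrating-factor representation of the solution of \eqref{re8} built from $a$ in \eqref{eq:a}, assumption \eqref{eq:boundExp} which controls $\exp\bigl(\int_{t_1}^{\cdot}a\bigr)$ in $\rcrho$, and the extreme value theorem bounding $b$ from \eqref{eq:b} on the relevant functionally compact set, we obtain a constant $C\in\rcrho_{>0}$ with $\Vert\bar q(\bar u)\Vert_0\le C\Vert\bar u\Vert_0$. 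Combined with extreme-value bounds on $\partial_q L$ and $\partial_u L$ evaluated at $(s,q^u(s),u(s))$, this yields $|\delta I(u;\bar u-\bar v)|\le K\Vert\bar u-\bar v\Vert_0$ with $K:=(t_2-t_1)\bigl(\Vert\partial_q L(\cdot,q^u,u)\Vert_0\,C+\Vert\partial_u L(\cdot,q^u,u)\Vert_0\bigr)\in\rcrho_{>0}$. The main obstacle is to ensure that every Taylor evaluation and the limit $h\to 0$ take place inside a single functionally compact set on which GSF enjoy uniform sharp continuity; this uniformity, supplied by \eqref{re1} and \eqref{eq:boundExp}, is what upgrades pointwise convergence of the integrands to sharp-topology continuity of $R$ at $0$.
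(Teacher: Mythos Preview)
Your argument is essentially correct and close in spirit to the paper's, but the organization differs. The paper expands $L$ to first order at the fixed base point $(t,q^u(t),u(t))$, obtaining
\[
I[u+h\bar u]-I[u]=h\cdot R+\hat R(h),
\]
with $R=\int_{t_1}^{t_2}\bigl(\partial_qL(\cdot,q^u,u)\cdot\bar q+\partial_uL(\cdot,q^u,u)\cdot\bar u\bigr)$ \emph{constant in $h$} and $\hat R(h)$ collecting the $\partial_qL\cdot v^h$ term (with $v^h:=q^{u+h\bar u}-q^u-h\bar q$) together with the second-order Taylor remainder. Continuity of $R$ is then trivial, and the $O(h^2)$ bound on $\hat R$ follows from Thm.~\ref{thm:Taylor} and $\|v^h\|_0\le\bar Ah^2$. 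By contrast you use the zeroth-order (integral mean-value) form along the segment, which gives an $h$-dependent $R(h)$ and forces you to prove sharp continuity at $0$. That continuity argument is fine (Lipschitz bounds on $\partial_qL,\partial_uL$ over the fixed functionally compact set together with \eqref{re1} give $|R(h)-R(0)|\le C|h|$), but your claim that one may take $\hat R\equiv 0$ is not quite clean: writing $q^{u+h\bar u}-q^u=h\bar q+h^2\xi(h)$ with $\|\xi(h)\|_0\le\bar A$ presupposes dividing $v^h$ by $h^2$, which is not defined for non-invertible $h\in(-\delta,\delta)\subseteq\rcrho$. The easy fix is to keep the $v^h$-contribution in $\hat R$, i.e.\ take $R(h)=\int\!\!\int[\partial_qL(\cdots_h)\cdot\bar q+\partial_uL(\cdots_h)\cdot\bar u]$ and $\hat R(h)=\int\!\!\int\partial_qL(\cdots_h)\cdot v^h$; then $R(h)$ is defined for all $h$ and $|\hat R(h)|\le(t_2-t_1)\|\partial_qL\|_0\,\bar A\,h^2$.

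For the Lipschitz bound \eqref{eq:IBO}, the paper estimates $\bar q(\bar u)-\bar q(\bar v)$ directly from the integral form of \eqref{re8}, obtaining $\|\bar q(\bar u)-\bar q(\bar v)\|_0\le(1-(t_2-t_1)L_u)^{-1}\|b\|_0(t_2-t_1)\|\bar u-\bar v\|_0$, using that \eqref{eq:PLT-LipCond} makes $1-(t_2-t_1)L_u$ invertible. Your route via the integrating-factor formula and \eqref{eq:boundExp} mirrors the paper's own remark after Thm.~\ref{RE1} and is acceptable, with the caveat that for $d>1$ the expression $\exp\bigl(\int_{t_1}^{\cdot}a\bigr)$ must be read as the fundamental solution of $\dot X=a(t)X$ rather than a literal matrix exponential; the bound from \eqref{eq:boundExp} still controls its norm via Gr\"onwall. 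Either way one gets $\|\bar q(\bar u)\|_0\le C\|\bar u\|_0$, and the final estimate on $\delta I$ follows by linearity exactly as you wrote.
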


\begin{proof}
Since $u\in\accentset{\circ}{\mathcal{A}}$, there always exists $\delta\in(0,1)$
such that \eqref{eq:deltaSmall} holds. Set $v^{h}:=q^{u+h\bar{u}}-q^{u}-h\bar{q}$
for all $h\in(-\delta,\delta)$, so that
\begin{equation}
\Vert\upsilon^{h}\Vert_{0}\leq\bar{A}h^{2}\label{eq:v^h^2}
\end{equation}
by Thm.~\ref{RE1}. For $t\in[t_{1},t_{2}]$ and $k=(0,k_{q},k_{u})\in\rti\times\rti^{d}\times\rti^{l}$,
let
\begin{multline*}
R_{L}(t,q^{u}(t),u(t);k):=L(t,q^{u}(t)+k_{q},u(t)+k_{u})-L(t,q^{u}(t),u(t))\\
-\partial_{q}L(t,q^{u}(t),u(t))\cdot k_{q}-\partial_{u}L(t,q^{u}(t),u(t))\cdot k_{u}
\end{multline*}
be the remainder of the first order Taylor formula of $L$ at the
point $(t,q^{u}(t),u(t))$ with increment $k=(0,k_{q},k_{u})$ (see
Thm.~\ref{thm:Taylor}). Thereby, for all $h\in(-\delta,\delta)$,
we get
\begin{align*}
I[u+h\bar{u}]-I[u] & =\int_{t_{1}}^{t_{2}}\left\{ L(\cdot,q^{u+h\bar{u}},u+h\bar{u})-L(\cdot,q^{u},u)\right\} =\\
 & =\int_{t_{1}}^{t_{2}}\left\{ L(\cdot,q^{u}+h\bar{q}+v^{h},u+h\bar{u})-L(\cdot,q^{u},u)\right\} =\\
 & =\int_{t_{1}}^{t_{2}}\left\{ \partial_{q}L(\cdot,q^{u},u)\cdot(h\bar{q}+v^{h})+\partial_{u}L(\cdot,q^{u},u)\cdot h\bar{u}+\right.\\
 & \phantom{=\int_{t_{1}}^{t_{2}}\{}\left.+R_{L}(\cdot,q^{u},u;(\cdot,h\bar{q}+v^{h},h\bar{u}))\right\} =\\
 & =h\int_{t_{1}}^{t_{2}}\left\{ \partial_{q}L(\cdot,q^{u},u)\cdot\bar{q}+\partial_{u}L(\cdot,q^{u},u)\cdot\bar{u}\right\} +\\
 & \phantom{=}+\int_{t_{1}}^{t_{2}}\left\{ \partial_{q}L(\cdot,q^{u},u)\cdot v^{h}+R_{L}(\cdot,q^{u},u;(\cdot,h\bar{q}+v^{h},h\bar{u}))\right\} .
\end{align*}
Setting 
\begin{align*}
R & :=\int_{t_{1}}^{t_{2}}\left\{ \partial_{q}L(\cdot,q^{u},u)\cdot\bar{q}+\partial_{u}L(\cdot,q^{u},u)\cdot\bar{u}\right\} \\
\hat{R} & (h):=\int_{t_{1}}^{t_{2}}\left\{ \partial_{q}L(\cdot,q^{u},u)\cdot v^{h}+R_{L}(\cdot,q^{u},u;(\cdot,h\bar{q}+v^{h},h\bar{u}))\right\} ,
\end{align*}
Taylor Thm.~\ref{thm:Taylor} and \eqref{eq:v^h^2} yield the existence
of a constant $D\in\rti_{>0}$ such that $|\hat{R}(h)|\le Dh^{2}$.
This proves that $R=R(0)=\delta J(u;\bar{u})$ is the incremental
ratio of $I[-]$ with remainder $\hat{R}$ (Def.~\ref{def:incrRatio}),
which is claim \eqref{re10}.

Using the notation $\bar{q}=\bar{q}(\bar{u})$ and the uniqueness
of solution of problem \eqref{re8}, it follows that $\bar{q}(\alpha\bar{u}+\beta\bar{v})=\alpha\bar{q}(\bar{u})+\beta\bar{q}(\bar{v})$
for all $\alpha$, $\beta\in\rti$ and $\bar{u}$, $\bar{v}\in\gsf([t_{1},t_{2}],K)$.
Thereby, \eqref{re10} implies that $\delta I(u;\cdot):\gsf_{0}(t_{1},t_{2})\ra\rti$
is an $\rti$-linear map. Finally, with the simplified notations \eqref{eq:a}
and \eqref{eq:b}, we have
\begin{align*}
\bar{q}(\bar{u})-\bar{q}(\bar{v}) & =\int_{t_{1}}^{(\cdot)}a\cdot\left\{ \bar{q}(\bar{u})-\bar{q}(\bar{v})\right\} +\int_{t_{1}}^{(\cdot)}b\cdot(\bar{u}-\bar{v})\\
\Vert\bar{q}(\bar{u})-\bar{q}(\bar{v})\Vert_{0} & \le\Vert\bar{q}(\bar{u})-\bar{q}(\bar{v})\Vert_{0}\cdot(t_{2}-t_{1})L_{K}+\Vert b\Vert_{0}(t_{2}-t_{1})\Vert\bar{u}-\bar{v}\Vert_{0}.
\end{align*}
Note that $\Vert a\Vert_{0}\le L_{K}$ by \eqref{eq:lipConst}. But
assumption \eqref{eq:PLT-LipCond} implies that $(t_{2}-t_{1})^{N}L_{K}^{N}<1$
for some $N\in\N$ and hence that $1-(t_{2}-t_{1})L_{K}$ is invertible,
yielding
\[
\Vert\bar{q}(\bar{u})-\bar{q}(\bar{v})\Vert_{0}\le(1-(t_{2}-t_{1})L_{K})^{-1}\Vert b\Vert_{0}(t_{2}-t_{1})\Vert\bar{u}-\bar{v}\Vert_{0}.
\]
Now, the conclusion \eqref{eq:IBO} follows from this Lipschitz property
and \eqref{re10}.
\end{proof}
Note that the Lipschitz constant $J\in\rti$ in \eqref{eq:IBO} can
be an infinite number, e.g.~if the Lagrangian $L$ shows some kind
of singularity as a generalized function.
\begin{defn}
\label{def:Hamiltonian}Let $\mathcal{H}$ be the \emph{Hamiltonian}
associated to the Lagrangian $L$ and the state equation \eqref{cp},
i.e.~the GSF
\begin{equation}
\fonction{\mathcal{H}}{[t_{1},t_{2}]\times H\times K\times\rti^{d}}{\gf}{(t,q,u,p)}{L(t,q,u)+p\cdot\varphi(t,q,u).}\label{eq:Ham}
\end{equation}
Moreover, for any control $u\in\mathcal{A}$, let $p^{u}\in\gsf([t_{1},t_{2}],\rti^{d})$
denote the \emph{adjoint variable (generalized momentum)}, i.e.~the
unique GSF solution of the Cauchy problem
\begin{equation}
\tag{LCP\ensuremath{_{2}}}\left\lbrace \begin{array}{l}
\dot{p}^{u}=-\dfrac{\partial\mathcal{H}}{\partial q}(t,q^{u},u,p^{u})=-\dfrac{\partial L}{\partial q}(t,q^{u},u)-\left(\dfrac{\partial\varphi}{\partial q}(t,q^{u},u)\right)^{\text{T}}\cdot p^{u}\\[10pt]
p^{u}(t_{2})=0.
\end{array}\right.\label{re11}
\end{equation}
As we showed above, this problem has a unique solution on $[t_{1},t_{2}]$
because of our assumptions \eqref{eq:PLT-LipCond}.
\end{defn}

We want to close this section, giving a proof of the \emph{weak Pontryagin
Maximum Principle}, i.e.~a theorem where instead of the usual condition
\[
\mathcal{H}(t,q^{v}(t),v(t),p^{v}(t))=\min_{k\in K}\mathcal{H}(t,q^{v}(t),k,p^{v}(t))\quad\forall t\in[t_{1},t_{2}]
\]
(if $v$ is an optimal control, i.e.~it solves problem \eqref{eq:optCtrlProbl};
see e.g.~\cite{Shv05,BeLeVi01,Hes66,CD:MR29:3316b}) we have instead
the necessary condition $\partial_{u}\mathcal{H}(t,q^{v}(t),v(t),p^{v}(t))=0$
assuming that $v\in\mathcal{A}$ is a local minimum of the functional
$I[-]$.

Directly from the definition of Hamiltonian, we get that, for any
control $u\in\mathcal{A}$, the pair $(q^{u},p^{u})$ is a solution
of the following \textit{\emph{Hamiltonian system}}:
\begin{equation}
\tag{HS}\left\lbrace \begin{array}{l}
\dot{q}=\dfrac{\partial\mathcal{H}}{\partial p}(t,q,u,p)\\[10pt]
\dot{p}=-\dfrac{\partial\mathcal{H}}{\partial q}(t,q,u,p).
\end{array}\right.\label{re12}
\end{equation}

Next, we prove the following optimality condition for the functional
\eqref{eq:JOu}:
\begin{thm}
\label{ocu}In the assumptions of Thm.~\ref{RE1}, let $u\in\accentset{\circ}{\mathcal{A}}$
and $\bar{u}\in\gsf([t_{1},t_{2}],K)$. Then
\[
\delta I(u,\bar{u})=\int_{t_{1}}^{t_{2}}\dfrac{\partial\mathcal{H}}{\partial u}(\cdot,q^{u},u,p^{u})\cdot\bar{u}.
\]
Therefore $u$ is a weak extremal of $I[-]$ if and only if $(q^{u},u,p^{u})$
satisfy the equation: 
\begin{equation}
\tag{GSE}\dfrac{\partial H}{\partial u}(t,q^{u},u,p^{u})=0.\label{eqse}
\end{equation}
\end{thm}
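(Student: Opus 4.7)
The plan is to deduce both assertions from the explicit formula for $\delta I(u;\bar{u})$ already established in Theorem \ref{th:cno}, eliminating the dependence on the auxiliary $\bar{q}$ by invoking the adjoint equation \eqref{re11}. Recall Theorem \ref{th:cno} gives
\[
\delta I(u;\bar{u})=\int_{t_{1}}^{t_{2}}\left(\dfrac{\partial L}{\partial q}(\cdot,q^{u},u)\cdot\bar{q}+\dfrac{\partial L}{\partial u}(\cdot,q^{u},u)\cdot\bar{u}\right)\,\diff{t},
\]
with $\bar{q}$ the unique solution of the linearized Cauchy problem \eqref{re8}. The strategy is to read $-\partial_{q}L(\cdot,q^{u},u)=\dot{p}^{u}+\left(\partial_{q}\varphi(\cdot,q^{u},u)\right)^{\text{T}}\cdot p^{u}$ directly from the adjoint equation \eqref{re11}, substitute, and clean up.

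Concretely, I would first substitute this expression for $\partial_{q}L$, producing
\[
\int_{t_{1}}^{t_{2}}\partial_{q}L\cdot\bar{q}\,\diff{t}=-\int_{t_{1}}^{t_{2}}\dot{p}^{u}\cdot\bar{q}\,\diff{t}-\int_{t_{1}}^{t_{2}}p^{u}\cdot(\partial_{q}\varphi\cdot\bar{q})\,\diff{t}.
\]
Next, I integrate the first integral on the right by parts via Theorem \ref{thm:intRules}.\ref{enu:intByParts}; since $\bar{q}(t_{1})=0$ by \eqref{re8} and $p^{u}(t_{2})=0$ by \eqref{re11}, the boundary contribution $[p^{u}\cdot\bar{q}]_{t_{1}}^{t_{2}}$ vanishes, leaving $\int_{t_{1}}^{t_{2}}p^{u}\cdot\dot{\bar{q}}\,\diff{t}$. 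Substituting $\dot{\bar{q}}$ from \eqref{re8}, the $p^{u}\cdot(\partial_{q}\varphi\cdot\bar{q})$ term cancels exactly against the corresponding piece still present, and what survives is $\int_{t_{1}}^{t_{2}}p^{u}\cdot(\partial_{u}\varphi\cdot\bar{u})\,\diff{t}$. Combined with the original $\int_{t_{1}}^{t_{2}}\partial_{u}L\cdot\bar{u}\,\diff{t}$ term and the definition \eqref{eq:Ham} of $\mathcal{H}$, one gets precisely the claimed formula.

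For the second part, \eqref{eqse} plainly implies $\delta I(u;\bar{u})=0$ for every $\bar{u}\in\mathcal{A}$. The converse is where I expect the main obstacle: Corollary \ref{cor:fundLem} is stated for scalar test functions in $\gsf_{0}$, vanishing at the endpoints, while $\bar{u}$ ranges over $\mathcal{A}=\gsf([t_{1},t_{2}],K)$ and takes values in $\rcrho^{l}$. I would handle this by exploiting $u\in\accentset{\circ}{\mathcal{A}}$: there is sharp room in $\mathcal{A}$ around $u$, so for every scalar $h\in\gsf_{0}$ and every coordinate direction $e_{i}\in\R^{l}$ the perturbation $\lambda h\, e_{i}$ (with $\lambda\in\rcrho_{>0}$ sharply small) lies in the admissible range, and by the $\rcrho$-linearity of $\delta I(u;\cdot)$ shown in Theorem \ref{th:cno} the vanishing condition transfers to the scalar integrand $\left(\partial_{u}\mathcal{H}(\cdot,q^{u},u,p^{u})\right)_{i}\cdot h$. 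Applying Corollary \ref{cor:fundLem} componentwise then forces $\partial_{u}\mathcal{H}(\cdot,q^{u},u,p^{u})=0$ pointwise on $[t_{1},t_{2}]$, which is \eqref{eqse}.
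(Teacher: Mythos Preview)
Your argument is essentially the paper's own: start from the expression of Theorem~\ref{th:cno}, eliminate $\bar q$ by combining the adjoint equation \eqref{re11} with the linearized state equation \eqref{re8} via one integration by parts using the boundary data $\bar q(t_{1})=0$, $p^{u}(t_{2})=0$, and recognize $\partial_{u}\mathcal{H}$. The paper adds and subtracts $(\partial_{q}\varphi)^{\text{T}}p^{u}\cdot\bar q$ before integrating by parts, whereas you substitute $\partial_{q}L=-\dot p^{u}-(\partial_{q}\varphi)^{\text{T}}p^{u}$ directly; these are the same computation organized differently.

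On the converse you are actually more careful than the paper, which simply invokes Cor.~\ref{cor:fundLem}. Your concern that the admissible directions are $\bar u\in\mathcal{A}=\gsf([t_{1},t_{2}],K)$ rather than scalar $h\in\gsf_{0}$ is legitimate. One small slip: you write that $\lambda h\,e_{i}$ itself lies in $\mathcal{A}$, but $K$ need not contain a neighborhood of $0$. What $u\in\accentset{\circ}{\mathcal{A}}$ gives you is that $u$ and $u+\lambda h\,e_{i}$ both lie in $\mathcal{A}$ for $\lambda$ sharply small; since the established formula $\int\partial_{u}\mathcal{H}\cdot\bar u$ is $\rcrho$-linear in $\bar u$, subtracting the two vanishing integrals yields $\int(\partial_{u}\mathcal{H})_{i}\,h=0$ for every $h\in\gsf_{0}$ (first for small $h$, then for all $h$ by homogeneity), and Cor.~\ref{cor:fundLem} applies componentwise as you intend.
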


\begin{proof}
Let $\bar{u}\in\gsf_{0}(t_{1},t_{2})$. Thm.~\ref{th:cno} asserts
that 
\begin{equation}
\delta I(u;\bar{u})=\int_{t_{1}}^{t_{2}}\left(\dfrac{\partial L}{\partial q}(\cdot,q^{u},u)\cdot\bar{q}+\dfrac{\partial L}{\partial u}(\cdot,q^{u},u)\cdot\bar{u}\right).\label{re13}
\end{equation}
Equation \eqref{re13} can be written as 
\begin{multline}
\delta I(u;\bar{u})=\int_{t_{1}}^{t_{2}}\left(\left(\dfrac{\partial L}{\partial q}(\cdot,q^{u},u)+\left(\dfrac{\partial\varphi}{\partial q}(\cdot,q^{u},u)\right)^{\text{T}}\cdot p^{u}\right)\cdot\bar{q}\right.\\
\left.-\left(\dfrac{\partial\varphi}{\partial q}(\cdot,q^{u},u)\cdot\bar{q}\right)\cdot p^{u}+\dfrac{\partial L}{\partial u}(\cdot,q^{u},u)\cdot\bar{u}\right).\label{re14}
\end{multline}

\noindent By \eqref{re8} and \eqref{re11}, we have

\begin{equation}
\bar{q}=\int_{t_{1}}^{(\cdot)}\left(\dfrac{\partial\varphi}{\partial q}(\cdot,q^{u},u)\cdot\bar{q}+\dfrac{\partial\varphi}{\partial u}(\cdot,q^{u},u)\cdot\bar{u}\right)\label{re15}
\end{equation}
and

\begin{equation}
p^{u}=-\int_{t_{1}}^{(\cdot)}\left(\dfrac{\partial L}{\partial q}(\cdot,q^{u},u)+\left(\dfrac{\partial\varphi}{\partial q}(\cdot,q^{u},u)\right)^{\text{T}}\cdot p^{u}\right).\label{re16}
\end{equation}
Now, using in \eqref{re14} equalities \eqref{re15}, \eqref{re16}
and integrating by parts (Thm.~\ref{thm:intRules}.\ref{enu:intByParts})
with $\bar{q}(t_{1})=0$, $p^{u}(t_{2})=0$, we get

\begin{multline*}
\delta I(u;\bar{u})=\int_{t_{1}}^{t_{2}}\left(p^{u}\cdot\left(\dfrac{\partial\varphi}{\partial q}(\cdot,q^{u},u)\cdot\bar{q}+\dfrac{\partial\varphi}{\partial u}(t,q^{u},u)\cdot\bar{u}\right)\right.\\
\left.-\left(\dfrac{\partial\varphi}{\partial q}(\cdot,q^{u},u)\cdot\bar{q}\right)\cdot p^{u}+\dfrac{\partial L}{\partial u}(\cdot,q^{u},u)\cdot\bar{u}\right)=\\
=\int_{t_{1}}^{t_{2}}\left(\left(\dfrac{\partial\varphi}{\partial u}(\cdot,q^{u},u)\right)^{\text{T}}\cdot p^{u}+\dfrac{\partial L}{\partial u}(\cdot,q^{u},u)\right)\cdot\bar{u}=\\
=\int_{t_{1}}^{t_{2}}\dfrac{\partial\mathcal{H}}{\partial u}(\cdot,q^{u},u,p^{u})\cdot\bar{u}.
\end{multline*}

\noindent The proof is now completed applying the fundamental lemma
Cor.~\ref{cor:fundLem}.
\end{proof}
Summarizing these results yields the weak Pontryagin Maximum Principle:
\begin{cor}[Weak Pontryagin Maximum Principle]
\label{th:P}In the assumptions of Thm.~\ref{RE1}, if $v\in\accentset{\circ}{\mathcal{A}}$
is a local extremal of $I[-]$, i.e.~it solves problem \eqref{eq:optCtrlProbl},
then $(q^{v},v,p^{v})\in\gsf([t_{1},t_{2}],\rti^{d})$ are solution
of the system
\begin{equation}
\tag{WPS}\left\lbrace \begin{array}{l}
\dot{q}=\dfrac{\partial\mathcal{H}}{\partial p}(t,q,v,p)\\[10pt]
\dot{p}=-\dfrac{\partial\mathcal{H}}{\partial q}(t,q,v,p)\\[10pt]
\dfrac{\partial\mathcal{H}}{\partial u}(t,q,v,p)=0\\[10pt]
\big(q(t_{1}),p(t_{2})\big)=(0,0).
\end{array}\right.\label{wps}
\end{equation}
Moreover, if $(q,u,p)\in\gsf([t_{1},t_{2}],\rti^{d})$ solve \eqref{wps},
then necessarily $(q,p)=(q^{u},p^{u})$.
\end{cor}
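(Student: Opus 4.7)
The proof is essentially an assembly of the preceding results, so I would not expect any serious new obstacle: essentially all the analytic work has been done in Thm.~\ref{thm:necCond}, Thm.~\ref{th:cno}, Thm.~\ref{ocu} and in the discussion of the Hamiltonian system \eqref{re12}. The plan is to split the corollary into two implications and handle each by direct quotation of the machinery above.

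For the forward direction, assume $v\in\accentset{\circ}{\mathcal{A}}$ is a local extremal of $I[-]$. First I would apply Thm.~\ref{thm:necCond} to conclude that $v$ is a weak extremal of $I[-]$, i.e.~$\delta I(v;\bar u)=0$ for every $\bar u\in\mathcal{A}$. Next I would invoke Thm.~\ref{ocu}, whose equivalence ``$v$ is a weak extremal $\iff$ \eqref{eqse} holds along $(q^{v},v,p^{v})$'' yields the third equation of \eqref{wps}. The first two equations of \eqref{wps} are, by the very definition of $\mathcal{H}$ in \eqref{eq:Ham} together with the Cauchy problems \eqref{cp} and \eqref{re11}, just a restatement of the Hamiltonian system \eqref{re12} evaluated at $u=v$, so they are automatic for the pair $(q^{v},p^{v})$. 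Finally, the boundary conditions come directly from the initial datum $q^{v}(t_{1})=q_{1}$ in \eqref{cp} (or $0$ under the paper's convention) and from the terminal condition $p^{v}(t_{2})=0$ built into \eqref{re11}.

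For the converse, suppose $(q,u,p)\in\gsf([t_{1},t_{2}],\rti^{d})$ solve the system \eqref{wps}. I would exploit the Hamiltonian expression $\mathcal{H}=L+p\cdot\varphi$, so that the first equation reads $\dot q=\partial_{p}\mathcal{H}(t,q,u,p)=\varphi(t,q,u)$; coupled with $q(t_{1})=q_{1}$, this is precisely the Cauchy problem \eqref{cp} for the control $u$, whose unique solution, under the standing hypotheses \eqref{eq:PLT-LipCond} guaranteeing applicability of the Picard--Lindel\"of Thm.~\ref{thm:PicLindFiniteContr}, is by definition $q=q^{u}$. Substituting this into the second equation yields
\[
\dot p=-\partial_{q}\mathcal{H}(t,q^{u},u,p)=-\partial_{q}L(t,q^{u},u)-\bigl(\partial_{q}\varphi(t,q^{u},u)\bigr)^{\mathrm{T}}\!\cdot p,
\]
which together with $p(t_{2})=0$ is exactly the linear Cauchy problem \eqref{re11}; its unique GSF solution (again guaranteed by Thm.~\ref{thm:PicLindFiniteContr} as noted after Def.~\ref{def:Hamiltonian}) is by definition $p=p^{u}$.

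The only potentially delicate point is to ensure that the uniqueness statement for \eqref{re11} genuinely applies in the setting of the converse direction, i.e.~that condition \eqref{eq:PLT-LipCond} yields a Lipschitz/contraction bound on the linear right-hand side of \eqref{re11}; this is precisely the argument already carried out just after Def.~\ref{def:Hamiltonian} and in the proof of Thm.~\ref{th:cno} (using $\|a\|_{0}=L_{u}$ and the invertibility of $1-(t_{2}-t_{1})L_{u}$), so no new estimate is needed. Modulo this bookkeeping, the corollary follows without further calculation.
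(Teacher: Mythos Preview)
Your proposal is correct and follows essentially the same route as the paper: the forward direction is obtained by combining Thm.~\ref{thm:necCond}, the Hamiltonian system \eqref{re12}, and Thm.~\ref{ocu}, while the converse is just uniqueness for the Cauchy problems \eqref{cp} and \eqref{re11} under the standing hypotheses \eqref{eq:PLT-LipCond}. Your write-up is merely a more explicit unpacking of the paper's one-line proof, and your parenthetical remark about $q(t_{1})=q_{1}$ versus $0$ correctly flags a typographical slip in the stated boundary condition of \eqref{wps}.
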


\begin{proof}
By Thm.~\ref{thm:necCond}, we get that $v$ is also a weak extremal
of the functional $I[-]$. Therefore, the conclusion follows by \eqref{re12},
Thm.~\ref{ocu} and \eqref{re8}, \eqref{re11}.
\end{proof}
\begin{defn}
\label{def:extPont} Any triple $(q,u,p)\in\gsf([t_{1},t_{2}],\rti^{d})$
satisfying the system \eqref{wps} is called a \emph{weak Pontryagin
extremal}.
\end{defn}

The next proposition generalizes the du Bois\textendash Reymond necessary
optimality condition Thm\@.~\ref{thm:cDRifm} (as usual, set in
the following theorem: $m=1$, $\phi(t,q,u)=u$ so that $\dot{q}=u$).
Its proof follows directly from \eqref{eq:Ham} and the system \eqref{wps}.
\begin{thm}
\label{prp:tdH}In the previous assumptions, the following property
holds for any weak Pontryagin extremals $(q,u,p)$:
\begin{equation}
\frac{\diff{}}{\diff{t}}\left(\mathcal{H}(t,q(t),u(t),p(t))\right)=\frac{\partial{\mathcal{H}}}{\partial{t}}(t,q(t),u(t),p(t))\quad\forall t\in[t_{1},t_{2}].\label{eq:H8}
\end{equation}
\end{thm}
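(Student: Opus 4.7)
The plan is to compute the total derivative of $t\mapsto\mathcal{H}(t,q(t),u(t),p(t))$ along a weak Pontryagin extremal by applying the chain rule for GSF, and then to use the three equations of system \eqref{wps} to observe that only the partial derivative with respect to $t$ survives. Since $L$ and $\varphi$ are GSF in all their arguments, the composition $(t,q,u,p)\mapsto \mathcal{H}(t,q,u,p)$ defined by \eqref{eq:Ham} is a GSF (Thm.~\ref{thm:propGSF}.\ref{enu:category}), and the curves $q$, $u$, $p\in\gsf([t_{1},t_{2}],\rti^{d})$ are GSF by assumption, so $t\mapsto\mathcal{H}(t,q(t),u(t),p(t))$ is again a GSF to which the differentiation rules of Thm.~\ref{thm:rulesDer} can be applied.

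First I apply the chain rule of Thm.~\ref{thm:rulesDer}, which gives
\[
\frac{\diff{}}{\diff{t}}\mathcal{H}(t,q(t),u(t),p(t))=\frac{\partial\mathcal{H}}{\partial t}+\frac{\partial\mathcal{H}}{\partial q}\cdot\dot q+\frac{\partial\mathcal{H}}{\partial u}\cdot\dot u+\frac{\partial\mathcal{H}}{\partial p}\cdot\dot p,
\]
where each partial derivative of $\mathcal{H}$ is evaluated at $(t,q(t),u(t),p(t))$. Next, by Def.~\ref{def:extPont}, the triple $(q,u,p)$ satisfies \eqref{wps}, so I substitute $\dot q=\partial_{p}\mathcal{H}$, $\dot p=-\partial_{q}\mathcal{H}$ and $\partial_{u}\mathcal{H}=0$ into the above expression. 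The second and fourth terms then become $\partial_{q}\mathcal{H}\cdot\partial_{p}\mathcal{H}$ and $-\partial_{p}\mathcal{H}\cdot\partial_{q}\mathcal{H}$, respectively, which cancel by commutativity of the $\rti$-valued pointwise product in Thm.~\ref{thm:rulesDer}; the third term vanishes by \eqref{eqse}.

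There is essentially no obstacle in this argument: the result is the standard classical identity for the Hamiltonian along optimal trajectories, and the sole point to verify is that the generalized calculus developed earlier (in particular the chain rule for GSF and the pointwise notion of solution for ODEs involving GSF, as explained in the paragraph preceding \eqref{eq:JO}) legitimizes the term-by-term manipulation. Both are already established in Thm.~\ref{thm:rulesDer} and Thm.~\ref{thm:propGSF}, so the conclusion $\frac{\diff{}}{\diff{t}}\mathcal{H}=\frac{\partial\mathcal{H}}{\partial t}$ follows immediately and holds for all $t\in[t_{1},t_{2}]$.
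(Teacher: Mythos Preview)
Your argument is correct and is exactly the approach the paper has in mind: it simply states that the result follows directly from \eqref{eq:Ham} and \eqref{wps}, and your chain-rule computation with the substitutions $\dot q=\partial_{p}\mathcal{H}$, $\dot p=-\partial_{q}\mathcal{H}$, $\partial_{u}\mathcal{H}=0$ is the standard way to spell this out. The only cosmetic point is that the cancellation of $\partial_{q}\mathcal{H}\cdot\partial_{p}\mathcal{H}$ with $-\partial_{p}\mathcal{H}\cdot\partial_{q}\mathcal{H}$ is really commutativity of the dot product in $\rti^{d}$ rather than of a ``pointwise product'', but this follows immediately from the commutativity of $\rti$.
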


\subsection{Noether's Principle for optimal control}

In classical optimal control, (see e.g.~\cite{CD:Djukic:1972,Gogodze88,Sag68})
the optimal problem \eqref{eq:optCtrlProbl} is equivalent to minimize
the augmented functional $I[q,u,p]$ defined by 
\begin{equation}
I[q,u,p]:=\int_{t_{1}}^{t_{2}}\left({\mathcal{H}}\left(t,q(t),u(t),p(t)\right)-p(t)\cdot\dot{q}(t)\right)\,\diff{t},\label{eq:COA1}
\end{equation}

\noindent with $\mathcal{H}$ given by \eqref{eq:Ham}. The notion
of variational invariance for problem \eqref{eq:optCtrlProbl} is
then defined with the help of the augmented functional \eqref{eq:COA1}.
In the GSF setting, we follow the same approach:
\begin{defn}
\label{def:inv:gt1}Let $T=\left\{ \tau(s,\cdot)\right\} _{s\in P}\in\gsf(T',T')$,
$S=\left\{ \sigma(s,\cdot)\right\} _{s\in P}\in\gsf(S',S')$, $U=\left\{ \upsilon(s,\cdot)\right\} _{s\in P}\in\gsf(U',U')$,
and $A=\left\{ \pi(s,\cdot)\right\} _{s\in P}\in\gsf(A',A')$ be one
parameter groups of diffeomorphisms of the open sets $T'\subseteq\rti$,
$S'$, $A'\subseteq\rti^{d}$ and $U'\subseteq\rti^{l}$. The augmented
functional \eqref{eq:COA1} is said to be \emph{invariant under the
action of} $T$, $S$, $U$, $A$, if for any \emph{weak Pontryagin
extremal} $(q,u,p)$ such that $q\in\gsf(T',S')$, $u\in\gsf(T',U')$
and $p\in\gsf(T',A')$, the following equality holds
\begin{multline}
\left\{ \mathcal{H}\left(\tau(s,t),\sigma(s,q(t)),\upsilon(s,u(t)),\pi(s,p(t))\right)-\pi(s,p(t))\cdot\frac{\diff{\sigma}(s,q(t))}{\diff{\tau}(s,t)}\right\} \frac{\partial\tau}{\partial t}(s,t)=\\
=\mathcal{H}\left(t,q(t),u(t),p(t)\right)-p(t)\cdot\dot{q}(t).\label{eq:condInv}
\end{multline}
for all $s\in P$ and all $t\in T'$.
\end{defn}

\begin{thm}[Necessary condition of invariance for problem \eqref{eq:optCtrlProbl}]
\label{th:cnsi}If the augmented functional \eqref{eq:COA1} is invariant
in the sense of Def.~\ref{def:inv:gt1}, then for all weak Pontryagin
extremals $(q,u,p)$ such that $q\in\gsf(T',S')$, $u\in\gsf(T',U')$
and $p\in\gsf(T',A')$, we have
\begin{equation}
\frac{\partial{\mathcal{H}}}{\partial t}(\cdot,q,u,p)\frac{\partial\tau}{\partial s}(0,\cdot)+\frac{\partial{\mathcal{H}}}{\partial q}(\cdot,q,u,p)\cdot\frac{\partial\sigma}{\partial s}(0,q)+{\mathcal{H}}(\cdot,q,u,p)\frac{\diff{}}{\diff{t}}\frac{\partial\tau}{\partial s}(0,\cdot)=p\cdot\frac{\diff{}}{\diff{t}}\frac{\partial\sigma}{\partial s}(0,q)\label{eq:H4}
\end{equation}
\end{thm}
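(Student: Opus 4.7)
The plan is to mimic the proof of Lemma \ref{lem:cnsi}, differentiating the invariance identity \eqref{eq:condInv} with respect to the group parameter $s$ at $s=0$, and then simplifying the resulting expression by invoking the weak Pontryagin system \eqref{wps}. Since the right-hand side of \eqref{eq:condInv} is independent of $s$, differentiating at $s=0$ yields an equation of the form ``$\{\cdots\}=0$'', and the task reduces to identifying which terms survive after the Pontryagin relations are applied.

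First I would note that at $s=0$ we have $\tau(0,t)=t$, $\sigma(0,q)=q$, $\upsilon(0,u)=u$, $\pi(0,p)=p$, $\frac{\partial\tau}{\partial t}(0,t)=1$, and, following exactly the argument of \eqref{civho2} in the proof of Lem.~\ref{lem:cnsi} (which works verbatim here because GSF are closed under composition, Thm.~\ref{thm:propGSF}.\ref{enu:category}, and mixed partials of GSF commute by Thm.~\ref{thm:FR-forGSF}),
\[
\left.\frac{\partial}{\partial s}\right|_{s=0}\!\frac{\diff\sigma(s,q(t))}{\diff\tau(s,t)}=\frac{\diff}{\diff t}\frac{\partial\sigma}{\partial s}(0,q)-\dot q\cdot\frac{\diff}{\diff t}\frac{\partial\tau}{\partial s}(0,t).
\]
Applying $\partial/\partial s|_{s=0}$ to the left-hand side of \eqref{eq:condInv} using the chain rule of Thm.~\ref{thm:rulesDer} produces
\[
\begin{aligned}
0&=\partial_{t}\mathcal H\cdot\tfrac{\partial\tau}{\partial s}(0,\cdot)+\partial_{q}\mathcal H\cdot\tfrac{\partial\sigma}{\partial s}(0,q)+\partial_{u}\mathcal H\cdot\tfrac{\partial\upsilon}{\partial s}(0,u)+\partial_{p}\mathcal H\cdot\tfrac{\partial\pi}{\partial s}(0,p)\\
&\quad-\tfrac{\partial\pi}{\partial s}(0,p)\cdot\dot q-p\cdot\Bigl[\tfrac{\diff}{\diff t}\tfrac{\partial\sigma}{\partial s}(0,q)-\dot q\cdot\tfrac{\diff}{\diff t}\tfrac{\partial\tau}{\partial s}(0,\cdot)\Bigr]\\
&\quad+\bigl(\mathcal H(\cdot,q,u,p)-p\cdot\dot q\bigr)\cdot\tfrac{\diff}{\diff t}\tfrac{\partial\tau}{\partial s}(0,\cdot),
\end{aligned}
\]
where all the partial derivatives of $\mathcal H$ are evaluated at $(\cdot,q,u,p)$.

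Then I would invoke the fact that $(q,u,p)$ is a weak Pontryagin extremal, i.e.\ the system \eqref{wps}: the relation $\partial_{u}\mathcal H=0$ kills the $\partial_{u}\mathcal H$-term, while $\partial_{p}\mathcal H=\dot q$ makes the combination $\partial_{p}\mathcal H\cdot\frac{\partial\pi}{\partial s}(0,p)-\frac{\partial\pi}{\partial s}(0,p)\cdot\dot q$ vanish, so every occurrence of the generator $\frac{\partial\pi}{\partial s}(0,p)$ disappears. The two copies of $p\dot q\cdot\frac{\diff}{\diff t}\frac{\partial\tau}{\partial s}(0,\cdot)$ with opposite signs cancel as well, and after transposing the surviving term $-p\cdot\frac{\diff}{\diff t}\frac{\partial\sigma}{\partial s}(0,q)$ to the right-hand side one obtains exactly \eqref{eq:H4}.

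The only subtlety I anticipate is bookkeeping: making sure that the chain rule is applied correctly through the composite $\frac{\diff\sigma(s,q(t))}{\diff\tau(s,t)}$ (which is why I want to quote the calculation \eqref{civho2}--\eqref{civho3} rather than redo it), and checking that all compositions remain GSF so that pointwise differentiation and the product/quotient rules of Thm.~\ref{thm:rulesDer} legitimately apply on $T'\subseteq\rcrho$. Apart from this, the argument is a direct algebraic simplification and does not require any new generalized-function input beyond what is already used in the proof of Lem.~\ref{lem:cnsi}.
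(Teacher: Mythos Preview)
Your proposal is correct and follows exactly the approach of the paper: differentiate the invariance identity \eqref{eq:condInv} with respect to $s$ at $s=0$, use Rem.~\ref{rem:derSigTau} (in particular the computation \eqref{civho2}) to handle $\partial_s|_{s=0}\frac{\diff\sigma}{\diff\tau}$, and then invoke \eqref{wps} to eliminate the $\partial_u\mathcal H$ and $\partial_p\mathcal H$ contributions. The paper's own proof is a single sentence stating precisely this, so your write-up is simply a more detailed version of the same argument.
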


\begin{proof}
We obtain \eqref{eq:H4} by differentiating \eqref{eq:condInv} with
respect to $s$ at $s=0$ and then considering Rem.~\ref{rem:derSigTau}
and \eqref{wps}.
\end{proof}
Note that for the particular case of calculus of variations ($\varphi(t,q,u)=u$
and hence $\mathcal{H}=L+p\cdot\dot{q}$) one obtains from \eqref{eq:H4}
the necessary condition of invariance Lem.~\ref{lem:cnsi}.
\begin{defn}[Constants of Motion/conservation law for problem \eqref{eq:optCtrlProbl}]
\label{def:NCLs} We say that a function $t\in T'\mapsto C(t,q(t),u(t),p(t))\in\rti$
is a \emph{constant of motion on $T'$, $S'$, $U'$, $A'$} if along
any weak Pontryagin extremal $(q,u,p)$ such that $q\in\gsf(T',S')$,
$u\in\gsf(T',U')$ and $p\in\gsf(T',A')$, we have
\begin{equation}
\frac{\diff{}}{\diff{t}}C(t,q(t),u(t),p(t))=0\quad\forall t\in T'.\label{eq:CL:NC}
\end{equation}
\end{defn}

\noindent By the uniqueness of Thm\@.~\ref{thm:existenceUniquenessPrimitives},
condition \eqref{eq:CL:NC} implies that
\[
t\in J\mapsto C(t,q(t),u(t),p(t))\in\rti
\]
is constant along any interval $J\subseteq T'$.
\begin{thm}[Noether's theorem for optimal control]
\label{th:TNNC}If the augmented functional \eqref{eq:COA1} is invariant
in the sense of Def.~\ref{def:inv:gt1}, then the quantity defined
for all $q\in\gsf(T',S')$, $u\in\gsf(T',U')$, $p\in\gsf(T',A')$
and $t\in T'$ by
\begin{equation}
C(t,q(t),u(t),p(t)):={\mathcal{H}}(t,q(t),u(t),p(t))\frac{\partial\tau}{\partial s}(0,t)-p(t)\cdot\frac{\partial\sigma}{\partial s}(0,q(t))\quad\forall t\in T'\label{eq:H7}
\end{equation}
is a constant of motion \emph{on $T'$, $S'$, $U'$, $A'$}.
\end{thm}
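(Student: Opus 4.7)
The plan is to imitate the classical Noether computation, differentiating $C(t,q(t),u(t),p(t))$ with respect to $t$ along an arbitrary weak Pontryagin extremal $(q,u,p)$ with $q\in\gsf(T',S')$, $u\in\gsf(T',U')$, $p\in\gsf(T',A')$, and then collapsing the result to zero by combining three ingredients already established in the paper: the Hamiltonian system \eqref{wps}, the total time derivative formula for $\mathcal{H}$ from Thm.~\ref{prp:tdH}, and the necessary condition of invariance \eqref{eq:H4} from Thm.~\ref{th:cnsi}. All manipulations are legitimate in the GSF framework because of closure under composition (Thm.~\ref{thm:propGSF}.\ref{enu:category}), the product and chain rules (Thm.~\ref{thm:rulesDer}), and the symmetry of mixed partials noted in the proof of Lem.~\ref{lem:cnsi}.

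First, applying the product rule to \eqref{eq:H7} I would compute, for $t\in T'$,
\[
\frac{\diff{}}{\diff{t}}C = \frac{\diff{\mathcal{H}}}{\diff{t}}\,\frac{\partial\tau}{\partial s}(0,t) + \mathcal{H}\,\frac{\diff{}}{\diff{t}}\frac{\partial\tau}{\partial s}(0,t) - \dot{p}\cdot\frac{\partial\sigma}{\partial s}(0,q) - p\cdot\frac{\diff{}}{\diff{t}}\frac{\partial\sigma}{\partial s}(0,q),
\]
where $\mathcal{H}$ is evaluated at $(t,q(t),u(t),p(t))$. Next I would substitute $\frac{\diff{\mathcal{H}}}{\diff{t}} = \frac{\partial\mathcal{H}}{\partial t}$ (Thm.~\ref{prp:tdH}) and $\dot{p} = -\frac{\partial\mathcal{H}}{\partial q}$ (from \eqref{wps}) to obtain
\[
\frac{\diff{}}{\diff{t}}C = \frac{\partial\mathcal{H}}{\partial t}\frac{\partial\tau}{\partial s}(0,t) + \mathcal{H}\,\frac{\diff{}}{\diff{t}}\frac{\partial\tau}{\partial s}(0,t) + \frac{\partial\mathcal{H}}{\partial q}\cdot\frac{\partial\sigma}{\partial s}(0,q) - p\cdot\frac{\diff{}}{\diff{t}}\frac{\partial\sigma}{\partial s}(0,q).
\]
Finally, invoking Thm.~\ref{th:cnsi}, the necessary invariance identity \eqref{eq:H4} states exactly that the first three terms add up to $p\cdot\frac{\diff{}}{\diff{t}}\frac{\partial\sigma}{\partial s}(0,q)$, so the right-hand side vanishes identically on $T'$. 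According to Def.~\ref{def:NCLs}, this means $C$ is a constant of motion on $T'$, $S'$, $U'$, $A'$.

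Since each of the constituent results has already been verified for GSF, I do not foresee any genuine obstacle; the only delicate point to double-check is that Thm.~\ref{prp:tdH} and Thm.~\ref{th:cnsi} apply to the \emph{same} extremal triple appearing in \eqref{eq:H7}, which is guaranteed by the hypothesis that $(q,u,p)$ is a weak Pontryagin extremal with $q\in\gsf(T',S')$, $u\in\gsf(T',U')$, $p\in\gsf(T',A')$. The only care needed is to use the generalized incremental ratio and symmetry of mixed partials (as in the proof of Lem.~\ref{lem:cnsi}) when interchanging $\frac{\partial}{\partial s}$ and $\frac{\diff{}}{\diff{t}}$ on $\frac{\partial\tau}{\partial s}(0,\cdot)$ and $\frac{\partial\sigma}{\partial s}(0,q(\cdot))$; this is handled by Thm.~\ref{thm:FR-forGSF} and Thm.~\ref{thm:rulesDer} and causes no difficulty in the sharp topology.
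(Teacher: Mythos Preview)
Your proposal is correct and follows essentially the same route as the paper: differentiate $C$ via the product rule, replace $\frac{\diff{\mathcal{H}}}{\diff{t}}$ by $\frac{\partial\mathcal{H}}{\partial t}$ using Thm.~\ref{prp:tdH}, and then combine the invariance identity \eqref{eq:H4} with the adjoint equation $\dot{p}=-\partial_q\mathcal{H}$ from \eqref{wps} to make everything cancel. The only cosmetic difference is the order in which you substitute $\dot{p}=-\partial_q\mathcal{H}$ and invoke \eqref{eq:H4}; the paper applies \eqref{eq:H4} first and then cancels $\partial_q\mathcal{H}+\dot{p}$, while you substitute $\dot{p}$ first and then recognize \eqref{eq:H4} directly, but this is immaterial.
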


\begin{proof}
By Thm.~\ref{prp:tdH} we have
\begin{align*}
\frac{\diff{}}{\diff{t}}\left({\mathcal{H}}(\cdot,q,u,p)\frac{\partial\tau}{\partial s}(0,\cdot)-p\cdot\frac{\partial\sigma}{\partial s}(0,q)\right) & =\frac{\partial{\mathcal{H}}}{\partial t}(\cdot,q,u,p)\frac{\partial\tau}{\partial s}(0,\cdot)+{\mathcal{H}}(\cdot,q,u,p)\frac{\diff{}}{\diff{t}}\frac{\partial\tau}{\partial s}(0,\cdot)-\\
 & \phantom{=}-\dot{p}\cdot\frac{\partial\sigma}{\partial s}(0,q)-p\cdot\frac{\diff{}}{\diff{t}}\frac{\partial\sigma}{\partial s}(0,q).
\end{align*}
Considering \eqref{eq:H4} and \eqref{wps}, we obtain:
\begin{align*}
\frac{\diff{}}{\diff{t}}\left({\mathcal{H}}(\cdot,q,u,p)\frac{\partial\tau}{\partial s}(0,\cdot)-p\cdot\frac{\partial\sigma}{\partial s}(0,q)\right) & =\frac{\partial\mathcal{H}}{\partial q}(\cdot,q,u,p)\cdot\frac{\partial\sigma}{\partial s}(0,q)+\dot{p}\cdot\frac{\partial\sigma}{\partial s}(0,q)=\\
 & =\left(\frac{\partial\mathcal{H}}{\partial q}(\cdot,q,u,p)+\dot{p}\right)\frac{\partial\sigma}{\partial s}(0,q)=0.
\end{align*}
\end{proof}
Repeating the usual calculations, it is possible to prove Thm\@.~\ref{thm:tn}
also as a particular case of Thm.~\ref{th:TNNC} (cf.~e.g.~\cite{MR2351637}).

\section{\label{sec:Examples-and-applications}Examples and applications}

\subsection{Modeling singular dynamical systems}

In this section we show several applications of the calculus of variations
with GSF we introduced above. As we already mentioned in the introduction,
we will \emph{not} consider mathematical models of singular dynamical
systems at the times when singularities occur. Indeed, this would
clearly require new physical ideas, e.g.~in order to consider the
nonlinear behavior of objects or materials for the entire duration
of the singularity. Like in every mathematical model, the correct
point of view concerns J.~von Neumann's \emph{reasonably wide area
}of applicability of a mathematical model:
\begin{quotation}
To begin, we must emphasize a statement which I am sure you have heard
be-fore, but which must be repeated again and again. It is that the
sciences do not try to explain, they hardly ever try to interpret,
they mainly make models. By a model is meant a mathematical construct
which, with the addition of some verbal interpretations, describes
observed phenomena. The justification of such a mathematical construct
is solely and precisely that it is expected to work - that is correctly
to describe phenomena from a reasonably wide area. \cite[pag.~492]{vonN}
\end{quotation}
Therefore, it is not epistemologically correct to use the theory described
in the present article to deduce a physical property of our modeled
systems when a singularity occurs. Stating it with a language typically
used in physics, \emph{we consider physical systems where the duration
of the singularity is negligible with respect to the durations of
the other phenomena that take place in the system}. Mathematically,
this means to consider as infinitesimal the duration of the singularities.
As a consequence, several quantities changing during this infinitesimal
interval of time have infinite derivatives. We can hence paraphrase
the latter sentence saying that the amplitude (of the derivatives)
of these physical quantities is much larger than all the other quantities
we can estimate in the system. However, this is a logical consequence
of our lacking of interest to include in our mathematical model what
happens during the singularity, constructing at the same time a beautiful
and sufficiently powerful mathematical model, and not because these
quantities really become infinite.

As we will see below for the singularly variable length pendulum (Sec.~\ref{subsec:SingVarLenPendulum}),
only a rigorous mathematical theory of infinitesimal quantities is
able to resolve, e.g., the apparent inconsistency of considering infinitesimal
oscillations and at the same time neglecting the infinitesimal duration
of the singularity.

On the other hand, the aforementioned ``wide area'' is now able
to include in a single equation the dynamical properties of our modeled
systems, without being forced to subdivide into cases of the type
``before/after the occurrence of each singularity''. Which is not
even reasonable in several cases, e.g.~in the motion of a particle
in a granular medium or of a ray of light in an optical fiber.

Finally, note that remaining far from the singularity (from the point
of view of the physical interpretation), is what allow us to state
that in several cases this kind of models are already experimentally
validated.

\subsection{How to use numerical solutions}

The applications we are going to present always end up with an ODE.
Existence and uniqueness of the solution is therefore guaranteed by
Thm\@.~\ref{thm:PicLindFiniteContr}. Clearly, if an explicit analytic
solution is possible, this is preferable, but this is a rare event.
On the other hand, in numerous cases we have to deal with a differential
equation whose singularities are generated by Heaviside's functions
or Dirac's deltas and linear or nonlinear operations with them. Embeddings
of these generalized functions as GSF are studied and quite well-known,
see Sec.~\ref{subsec:Embedding}. Their pictures, Fig.~\ref{fig:MollifierHeaviside},
are clearly obtained by numerical methods, but their properties can
be fully justified by suitable theorems, see e.g.~example \ref{enu:deltaCompDelta}.
In the same way, we can consider numerical solutions of our differential
equations as empirical laboratories helping us to guess suitable properties
and hence conjectures on the solutions. In principle, these properties
must be justified by corresponding theorems. From this point of view,
the fact that GSF share with ordinary smooth functions a lot of classical
theorems (such as the intermediate value, the extreme value, the mean
value, Taylor theorems, etc.) is usually of great help.

\subsection{\label{subsec:SingVarLenPendulum}Singularly variable length pendulum}

As a first example, we want to study the dynamics of a pendulum with
singularly variable length, e.g.~because it is wrapping on a parallelepiped
(see Fig.~\ref{fig:variable_length_pendulum}; see \cite{MaHoAh12}
for a similar but non-singular case).

The pendulum length function is therefore $\Lambda(\theta)=H(\theta_{0}-\theta)L_{1}+L_{2}$,
where $H$ is the (embedding of the) Heaviside function. We always
assume that $L_{1}$, $L_{2}\in\rti_{>0}$ are finite and non-infinitesimal
numbers. From this it follows that for all $\theta$, $H(\theta_{0}-\theta)>\frac{\diff{\rho}-L_{2}}{L_{1}}\approx-\frac{L_{2}}{L_{1}}$
and hence that also $\Lambda(\theta)>\diff{\rho}$ is invertible (recall
that $H$ has negative infinitesimal oscillations in an infinitesimal
neighborhood of the origin, see Fig.~\ref{fig:MollifierHeaviside}).

The kinetic energy is given by: 
\begin{equation}
T(\theta,\dot{\theta})=\frac{1}{2}m\dot{\theta}^{2}\Lambda(\theta)^{2}.\label{e1c}
\end{equation}
The potential energy (the zero level being the suspension point of
the pendulum) is: 
\begin{equation}
U(\theta)=-mg\Lambda(\theta)\cos\theta-mg(1-H(\theta_{0}-\theta))L_{1}\cos\theta_{0}.\label{e1d}
\end{equation}

\noindent Let us define the Lagrangian \textit{$L$} for this problem
as 
\begin{equation}
L(\theta,\dot{\theta}):=T(\theta,\dot{\theta})-U(\theta).\label{e1a}
\end{equation}
\begin{figure}
\centering{}\includegraphics[scale=0.2]{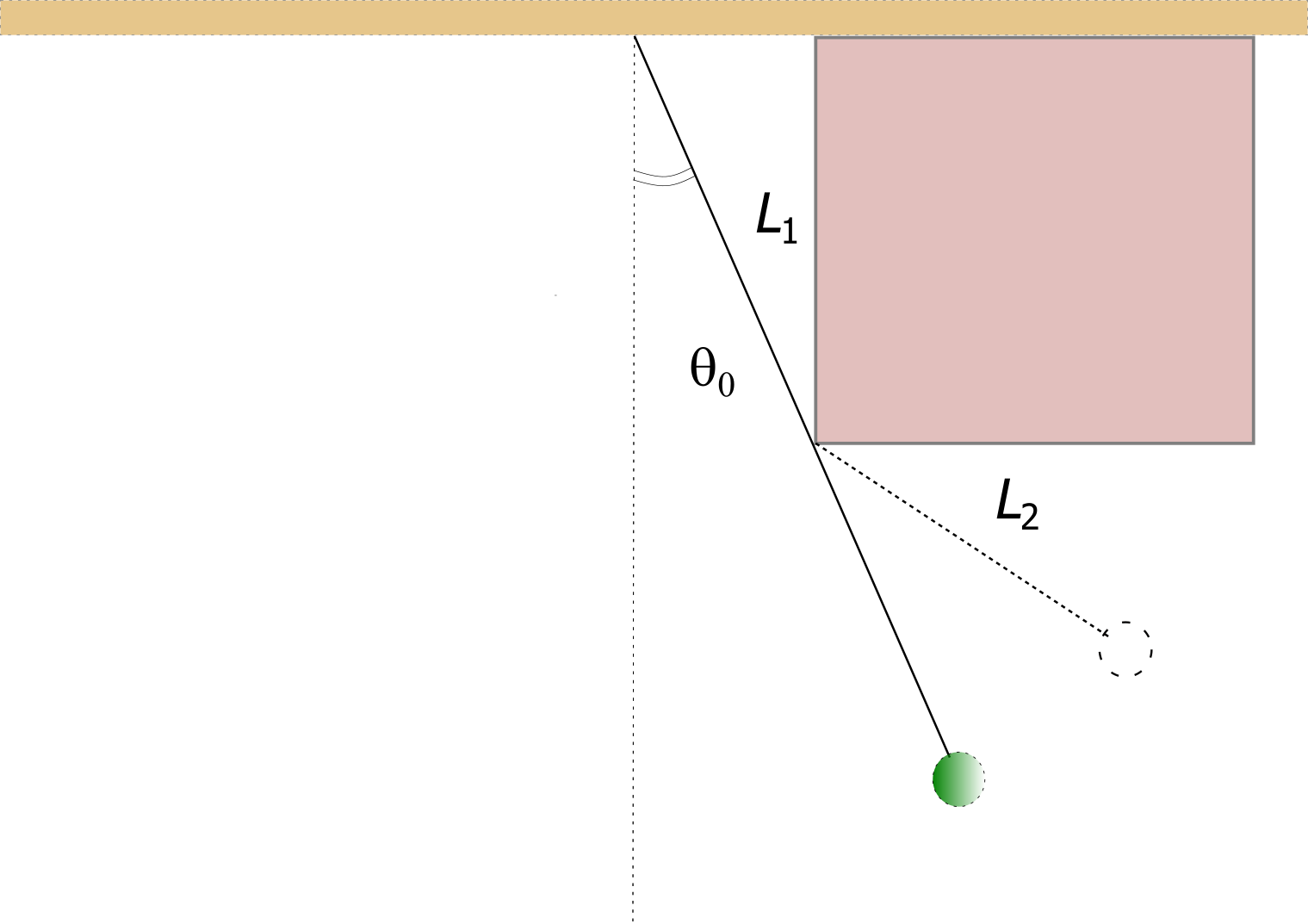}\caption{\label{fig:variable_length_pendulum} Oscillations of a pendulum wrapping
on a parallelepiped}
\end{figure}

\noindent The equation of motion is given by the Euler\textendash Lagrange
equation, Cor.~\ref{cor:16}, and can be written as: 
\begin{equation}
\frac{\partial L}{\partial\theta}=\frac{\diff{}}{\diff{t}}\frac{\partial L}{\partial\dot{\theta}}.\label{e1b}
\end{equation}

\noindent Thereby
\begin{equation}
\frac{\diff{}}{\diff{t}}\frac{\partial L}{\partial\dot{\theta}}=\frac{\diff{}}{\diff{t}}\frac{\partial}{\partial\dot{\theta}}\left(\frac{1}{2}m\dot{\theta}^{2}\Lambda(\theta)^{2}\right)=\frac{\diff{}}{\diff{t}}\left(m\dot{\theta}\Lambda(\theta)^{2}\right)=m\Lambda(\theta)^{2}\ddot{\theta}+2m\dot{\theta}\Lambda(\theta)\dot{\Lambda}(\theta),\label{e1e}
\end{equation}
where $\dot{\Lambda}(\theta):=\frac{\diff{}}{\diff{t}}\Lambda(\theta(t))$.
From \eqref{e1d}, the left side of the Euler\textendash Lagrange
equation \eqref{e1b} reduces to 
\begin{equation}
\frac{\partial L}{\partial\theta}=\frac{\partial T}{\partial\theta}+\frac{\partial(-U)}{\partial\theta}=m\dot{\theta}^{2}\Lambda(\theta)\Lambda'(\theta)+mg\Lambda'(\theta)\left(\cos\theta-\cos\theta_{0}\right)-mg\Lambda(\theta)\sin\theta,\label{e1f}
\end{equation}
where
\begin{equation}
\Lambda'(\theta)=\frac{\diff{}}{\diff{\theta}}\left(H(\theta_{0}-\theta)L_{1}+L_{2}\right)=-\delta(\theta_{0}-\theta)L_{1},\label{e1g}
\end{equation}
and $\delta$ is the Dirac delta function. We then obtain the following
equation of motion: 
\begin{equation}
m\dot{\theta}^{2}\Lambda(\theta)\Lambda'(\theta)+mg\Lambda'(\theta)\left(\cos\theta-\cos\theta_{0}\right)-mg\Lambda(\theta)\sin\theta=m\Lambda(\theta)^{2}\ddot{\theta}+2m\dot{\theta}\Lambda(\theta)\dot{\Lambda}(\theta).\label{e1h}
\end{equation}
Taking into account that $\dot{\Lambda}(\theta)=\Lambda'(\theta)\dot{\theta}$,
we finally obtain the equation of motion for the variable length pendulum:
\begin{equation}
\ddot{\theta}+\dot{\theta}\frac{\dot{\Lambda}(\theta)}{\Lambda(\theta)}-g\frac{\dot{\Lambda}(\theta)}{\dot{\theta}\Lambda(\theta)^{2}}\left(\cos\theta-\cos\theta_{0}\right)+\frac{g}{\Lambda(\theta)}\sin\theta=0.\label{e1j}
\end{equation}
Note in \eqref{e1j} the nonlinear operations on the Sobolev-Schwartz
distribution $\Lambda$, on the GSF $\theta$ and the composition
$t\mapsto\Lambda(\theta(t))$. Since the Lagrangian $L$ is autonomous,
in the usual way Noether's Thm.~\ref{thm:tn} implies that the mechanical
energy of the system is a constant of motion: 
\begin{equation}
E(\theta,\dot{\theta})=T(\theta,\dot{\theta})+U(\theta)=\frac{1}{2}m\dot{\theta}^{2}\Lambda(\theta)^{2}-mg\Lambda(\theta)\cos\theta-mg(1-H(\theta_{0}-\theta))L_{1}\cos\theta_{0}=\textnormal{constant}.\label{e1k}
\end{equation}

Even if local (in time) existence and uniqueness of the solution $\theta$
of \eqref{e1k} is easily granted by Thm\@.~\ref{thm:PicLindFiniteContr}
and by the extreme value Thm.~\ref{thm:extremeValues}, the existence
of a global solution is not so easy to show (see in \cite{baglinipaolo}
the general theory) and is out of the scope of the present work. Before
showing the numerical solution of \eqref{e1j}, let us consider the
simplest case of the dynamics far from the singularity and that of
small oscillations. The former, as we mentioned above, is the only
physically meaningful one.

\subsubsection{Description far from singularity and small oscillations}

For simplicity, let us consider the simplest case $\theta_{0}=0$.
Furthermore, we consider that the pendulum is initially at rest and
starts its movement at $t_{1}\in\rti$. The initial conditions we
use are hence: 
\begin{equation}
\begin{cases}
\theta(t_{1})=\theta_{1};\\
\dot{\theta}(t_{1})=0,
\end{cases}\label{e1l}
\end{equation}
with $\theta_{1}<0$. Assuming that at some time $t_{3}\in\rti$ we
have $\theta(t_{3})>0$, by the intermediate value theorem for GSF,
there exists $t_{2}\in\rti$ where we have the singularity, i.e.~$\theta(t_{2})=0$
and the length of the pendulum \emph{smoothly} changes from $L_{1}+L_{2}$
to $L_{2}$ after the rope touches the parallelepiped. By example
\ref{enu:deltaCompDelta}, it follows that this change happens in
an infinitesimal interval, because by contradiction it is possible
to prove that if $\Lambda(\theta)\in(L_{2},L_{1}+L_{2})$, then $|\theta|\le\frac{-1}{\log\diff{\rho}}\approx0$.
\begin{defn}
Let $x$, $y\in\rti$. We say that $x$ \emph{is far from $y$ }if
$|x-y|\ge\diff{\rho}^{a}$ for all $a\in\R_{>0}$. More generally,
we say that $x$ \emph{is far from} $y$ \emph{with respect to the
class of infinitesimals $\mathcal{I}\subset\rti$}, if $|x-y|\ge i$
for all $i\in\mathcal{I}$.
\end{defn}

\noindent For example, if $|x|\ge r$ for some $r\in\R_{>0}$, then
$x$ is far from $0$, but also the infinitesimal number $x=\frac{-1}{k\log\diff{\rho}}$
($k\in\R_{>0})$ is far from $0$; similarly, the infinitesimal $x=\frac{-1}{k\log\log\diff{\rho}}$
if far from $0$ with respect to all the infinitesimals of the type
$\frac{-1}{h\log\diff{\rho}}$ for $h\in\R_{>0}$.

If $\theta$ is far from $0$ and $b\ge\diff{\rho}^{-a}$, $a\in\R_{>0}$,
then $|b\theta|\ge\diff{\rho}^{-a}|\theta|\ge\diff{\rho}^{-a/2}\ge1$.
Therefore, from example \ref{enu:deltaCompDelta}, it follows that
$H(-\theta)\in\{0,1\}$ and hence $\dot{\Lambda}(\theta(t))=0$. Equation
\eqref{e1j} becomes
\begin{equation}
\theta(t)\text{ is far from }0\ \Rightarrow\ \begin{cases}
\ddot{\theta}+\frac{g}{L_{1}+L_{2}}\sin\theta(t)=0 & \text{if }\theta(t)<0,\\
\ddot{\theta}+\frac{g}{L_{2}}\sin\theta(t)=0 & \text{if }\theta(t)>0.
\end{cases}\label{eq:ODEfar}
\end{equation}
If we assume that $\theta(t_{1})=\theta_{1}<0$ and $\theta(t_{3})>0$
are far from $0$, the sharp continuity of $\theta$ yields the existence
of $\delta_{1}$, $\delta_{3}\in\rti_{>0}$ such that
\begin{align}
\forall t & \in[t_{1},t_{1}+\delta_{1})\cup(t_{3}-\delta_{3},t_{3}]:\ \theta(t)\text{ is far from }0\nonumber \\
\forall t & \in[t_{1},t_{1}+\delta_{1}):\ \theta(t)<0\label{eq:intFar}\\
\forall t & \in(t_{3}-\delta_{3},t_{3}]:\ \theta(t)>0\nonumber 
\end{align}
(and hence $t_{2}\notin[t_{1},t_{1}+\delta_{1})\cup(t_{3}-\delta_{3},t_{3}]$
because $\theta(t_{2})=0$). Assuming that $t_{1}$, $t_{3}$ are
far from $t_{2}$, without loss of generality we can also assume to
have taken $\delta_{i}$ so small that also $t_{1}+\delta_{1}$ and
$t_{3}-\delta_{3}$ are far from $t_{2}$.

We now employ the non Archimedean framework of $\rti$ in order to
formally consider small oscillations, i.e.~$\theta_{1}\approx0$.
We first note that we cannot only assume $\theta_{1}$ infinitesimal,
because if $\theta_{1}$ is not far from $0$ then our solution will
not be physically meaningful. However, we already have seen that we
can take $\theta_{1}$ far from $0$ and infinitesimal at the same
time, e.g.~$\theta_{1}=\frac{-1}{\log\diff{\rho}}$. In other words,
$\theta_{1}$ is a ``large'' infinitesimal with respect to all the
infinitesimals of the form $\diff{\rho}^{a}$. Let $\vartheta_{1}$,
$\vartheta_{3}$ be the solution of the linearized problems
\begin{equation}
\begin{cases}
\ddot{\vartheta}_{1}+\frac{g}{L_{1}+L_{2}}\vartheta_{1}=0, & t_{1}\le t<t_{1}+\delta_{1}\\
\dot{\vartheta}_{1}(t_{1})=0,\ \vartheta_{1}(t_{1})=\theta_{1}
\end{cases}\quad\begin{cases}
\ddot{\vartheta}_{3}+\frac{g}{L_{2}}\vartheta_{3}=0, & t_{3}-\delta_{3}<t\le t_{3}\\
\dot{\vartheta}_{1}(t_{3})=\dot{\theta}(t_{3}),\ \vartheta_{1}(t_{3})=\theta(t_{3}),
\end{cases}\label{eq:linODE}
\end{equation}
i.e.~$\vartheta_{1}(t)=\theta_{1}\cos\left(\omega(t-t_{1})\right)$,
$\omega:=\sqrt{\frac{g}{L_{1}+L_{2}}}$, and $\vartheta_{3}(t)=\theta(t_{3})\cos\left(\omega'(t_{3}-t)\right)-\frac{\dot{\theta}(t_{3})}{\omega'}\sin\left(\omega'(t_{3}-t)\right)$,
$\omega'=\sqrt{\frac{g}{L_{2}}}$. We want to show that $\theta(t)\approx\vartheta_{i}(t)$
at least in an infinitesimal neighborhood of $t_{1}$ and $t_{3}$
exactly because $\theta_{1}\approx0$. For simplicity, we proceed
only for $\vartheta_{1}$, the other case being similar. For any $t\in[t_{1},t_{1}+\delta_{1})$,
we have that $\theta(t)<0$ is far from $0$ from \eqref{eq:intFar},
and hence $\ddot{\theta}+\frac{g}{L_{1}+L_{2}}\sin\theta(t)=0$ from
\eqref{eq:ODEfar}. Recalling the initial conditions, we obtain
\[
\theta(t_{1}+h)-\theta_{1}=-\omega^{2}\int_{t_{1}}^{t_{1}+h}\sin\theta(s)\,\diff{s}\quad\forall h\in(0,\delta_{1}).
\]
Similarly, integrating \eqref{eq:linODE}, we get
\[
\vartheta_{1}(t_{1}+h)-\theta_{1}=-\omega^{2}\int_{t_{1}}^{t_{1}+h}\vartheta(s)\,\diff{s}\quad\forall h\in(0,\delta_{1}).
\]
Using Taylor Thm.~\ref{thm:Taylor} at $t_{1}$ with increment $h$
of these integral GSF, we obtain
\begin{align*}
\theta(t_{1}+h)-\vartheta(t_{1}+h) & =-\omega^{2}\left\{ \sin\theta_{1}-\theta_{1}+h\cos\theta_{1}\cdot\dot{\theta}(t_{1})-h\dot{\vartheta}_{1}(t_{1})+h^{2}R(h)\right\} =\\
 & =-\omega^{2}\left\{ \sin\theta_{1}-\theta_{1}+h^{2}R(h)\right\} ,
\end{align*}
where $R(-)$ is a suitable GSF. Thereby, $\theta(t_{1}+h)-\vartheta(t_{1}+h)\approx-\omega^{2}h^{2}R(h)\approx0$
for all $h\approx0$ sufficiently small because $\sin\theta_{1}\approx\theta_{1}$
since $\theta_{1}\approx0$.

Since each $t\in[t_{1},t_{1}+\delta_{1})\cup(t_{3}-\delta_{3},t_{3}]$
is far from $t_{2}$, we can also formally join the two solutions
$\vartheta_{i}$ using the Heaviside's function:
\begin{equation}
\theta(t)\approx\vartheta_{1}(t)+H(t_{2}-t)\left(\vartheta_{3}(t)-\vartheta_{1}(t)\right)\quad\forall t\in[t_{1},t_{1}+h)\cup(t_{3}-h,t_{3}].\label{eq:join}
\end{equation}
For the motivations previously stated, this infinitesimal approximation
cannot be extended to a neighborhood of $t_{2}$.

We close this section noting that all these deductions can be repeated
using any GSF $H\in\gsf(\rti,\rti)$ satisfying for all $x$ far from
zero $H(x)=1$ if $x>0$ and $H(x)=0$ if $x<0$. This allows us to
consider e.g.~a GSF that does not show the infinitesimal oscillations
of the Heaviside's function in an infinitesimal neighborhood of the
origin.

\subsubsection{Numerical Solution}

The numerical solution of equation \eqref{e1j} has been computed
using Mathematica Solver NDSolve (see \cite{NDSolve}). Initial conditions
we used are: 
\begin{equation}
\begin{cases}
\theta(0)=0\text{ rad},\\
\dot{\theta}(0)=1\text{ rad/s}.
\end{cases}\label{eq:initial_cond_case_1}
\end{equation}
The graph of $\theta(t)$, and its derivative $\dot{\theta}(t)$,
based on the Mathematica definitions of $H(x)$ and $\delta(x)$ (see
\cite{Hdelta}) are shown in Figure \ref{fig:damping osc - solution and first derivative}.
\begin{figure}
\centering{}\includegraphics[width=0.75\textwidth]{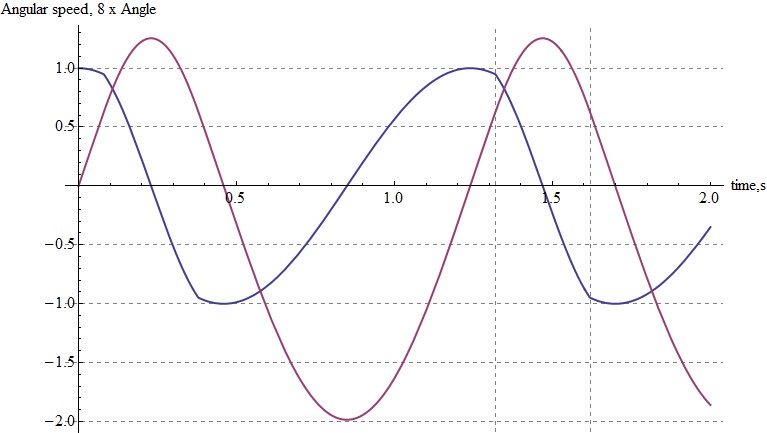}\caption{\label{fig:damping osc - solution and first derivative}8 times re-scaled
solution (violet line) in radians and its derivative in rad/s (at
$\theta=\theta_{0}=\pi/40$ rad we can see a corner point). Parameters
used: $L_{1}=0.4\text{ m}$, $L_{2}=0.2\text{ m}$, $g=9.8\text{ m/\ensuremath{\text{s}^{2}}}$.}
\end{figure}

In Figure \ref{fige2} we show the second derivative graph. Directly
from \eqref{e1j} and \eqref{e1g} we can prove that when $\theta(t)=\theta_{0}$,
$\ddot{\theta}(t)$ is an infinite number and hence $\dot{\theta}(t)$
has a corner point. Note finally that no infinitesimal oscillations
are represented in an infinitesimal neighborhood of the singularities.
This is due to the Mathematica implementation of $H$ and $\delta$:
we could hence say that these graphs represent the solution far from
the singularities.
\begin{figure}
\centering{}\includegraphics[width=0.75\textwidth]{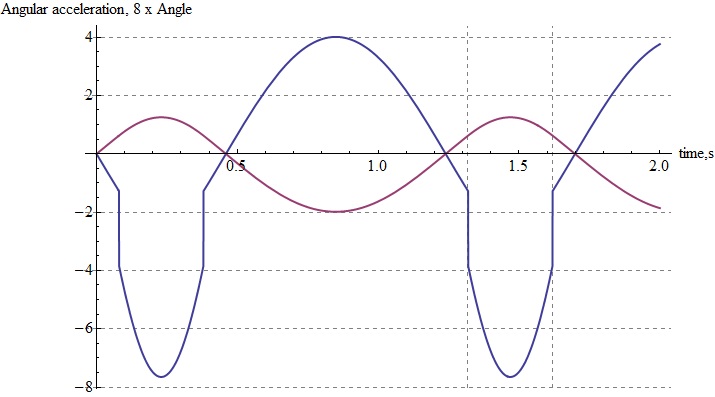}\caption{\label{fige2}8 times re-scaled solution in radians (violet line)
and its second derivative in rad/$\text{s}^{2}$}
\end{figure}

\subsection{Oscillations damped by two media}

\begin{figure}
\centering{}\includegraphics[scale=0.2]{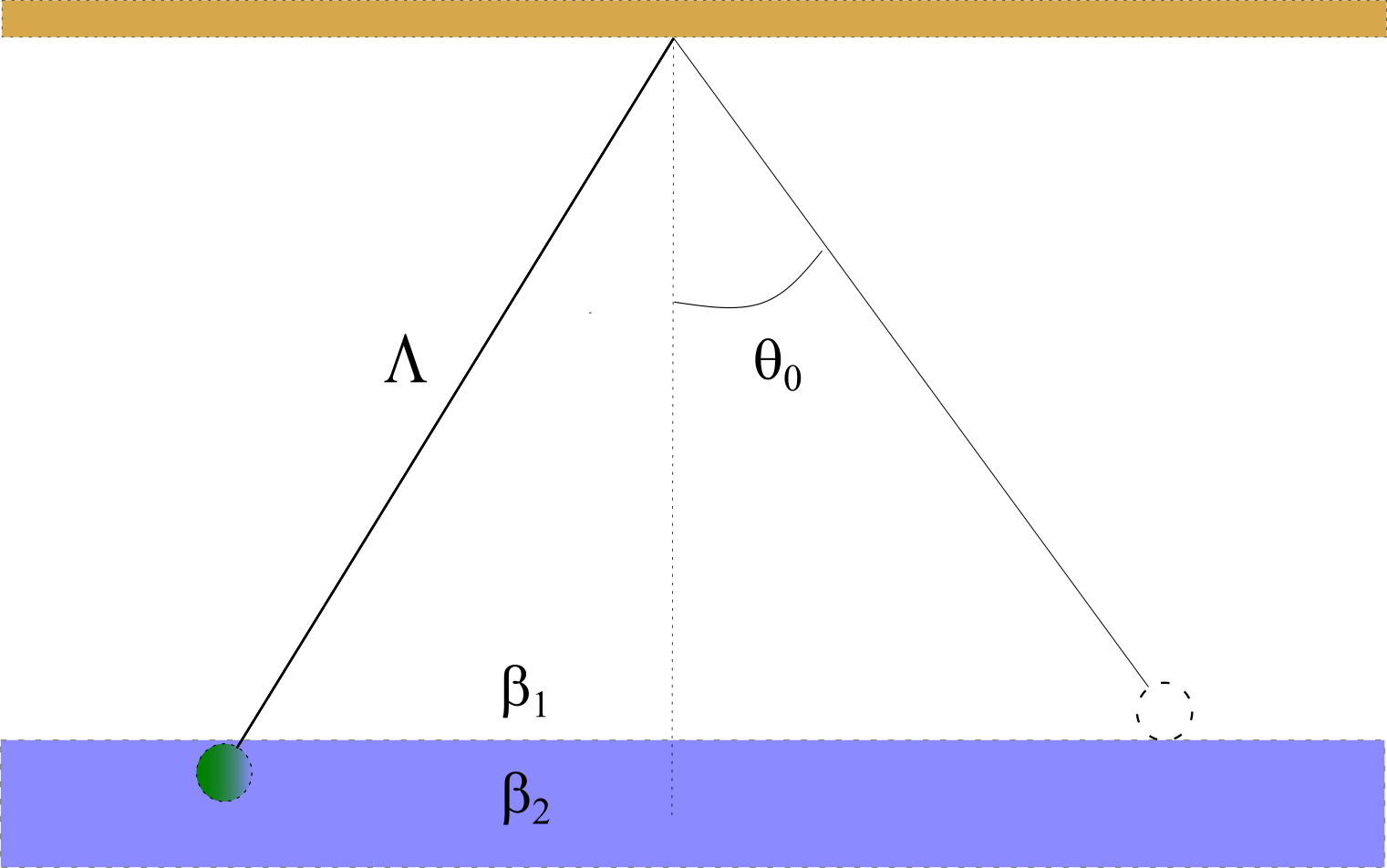}\caption{\label{fig:water_oscillations}Simple pendulum moving in two media}
\end{figure}

The second example concerns oscillations of a pendulum in the interface
of two media. Since we have to neglect the dynamics occurring at singular
times (i.e.~at the changing of the medium), this can be considered
only a toy model approximating the case of a very small but sufficiently
heavy moving particle.

We hence want to model the system employing a ``jump'' in the damping
coefficient $\beta$, i.e.~a finite change occurring in an infinitesimal
interval of time, see Fig.~\ref{fig:water_oscillations}. Since the
frictional forces acting in this case are not conservative, it is
well-known that the Euler-Lagrange equations cannot be assumed to
describe the dynamics of the system and we have to use the D'Alembert
principle Cor.~\ref{cor:DalembertDiff}; see \cite{LazoCesar} for
a deeper study.

The kinetic energy is given by: 
\begin{equation}
T(\dot{\theta})=\frac{1}{2}m\dot{\theta}^{2}\Lambda^{2},\label{e2c}
\end{equation}
and the potential energy (the zero level is the suspension point of
the pendulum) is: 
\begin{equation}
U(\theta)=-mg\Lambda\cos\theta.\label{e2d}
\end{equation}

\noindent In case of fluid resistance proportional to the velocity,
we can introduce the generalized forces $Q$ as: 
\begin{equation}
Q(\dot{\theta})=-r\Lambda^{2}\dot{\theta},\label{e2e}
\end{equation}
where $r$ is a proportional coefficient depending on the media. Let's
define the Lagrangian \textit{$L$} as 
\begin{equation}
L(\theta,\dot{\theta}):=T(\dot{\theta})-U(\theta).\label{e2a}
\end{equation}
We hence assume that the equation of motion for this non-conservative
system is given by the D'Alembert's principle, so that Cor.~\ref{cor:DalembertDiff}
gives
\begin{equation}
\frac{\diff{}}{\diff{t}}\frac{\partial L}{\partial\dot{\theta}}-\frac{\partial L}{\partial\theta}=Q.\label{e2b}
\end{equation}
Inserting \eqref{e2c}, \eqref{e2d} and \eqref{e2e} into \eqref{e2b}
we obtain the following equation of motion: 
\begin{equation}
m\Lambda^{2}\ddot{\theta}+mg\Lambda\sin\theta=-r\Lambda^{2}\dot{\theta}.\label{e2f}
\end{equation}
By introducing the damping coefficient $\beta(\theta):=r(\theta)/(2m)$
(we clearly assume that the mass $m\in\rti>0$ is invertible) we obtain
the classical form of the equation of motion for damped oscillations:
\begin{equation}
\ddot{\theta}+2\beta(\theta)\dot{\theta}+\frac{g\sin\theta}{\Lambda}=0.\label{eq:damping_equation}
\end{equation}

If the pendulum crosses the boundary between two media (see Fig.~\ref{fig:water_oscillations})
with damping coefficients $\beta_{1}$ and $\beta_{2}$, we can model
the system using the Heaviside function $H$:
\begin{equation}
\beta(\theta)=\beta_{1}+\left(H(\theta+\theta_{0})-H(\theta-\theta_{0})\right)(\beta_{2}-\beta_{1}),\label{eq:beta_dependence}
\end{equation}
where $\theta=\pm\theta_{0}$ are the angles at which we have the
changing of the medium (singularities).

The numerical solution of \eqref{eq:damping_equation} with $\beta$
defined by \eqref{eq:beta_dependence} and initial conditions \eqref{eq:initial_cond_case_1}
is presented in Fig.~\ref{fig:Solution-beta-gen}. The numerical
solution has been computed using Mathematica Solver NDSolve, but with
an implementation of the Heaviside's function $H$ corresponding to
Thm\@.~\ref{thm:embeddingD'}, i.e.~as represented in Fig.~\ref{fig:MollifierHeaviside}.

\begin{figure}
\centering{}\includegraphics[scale=0.3]{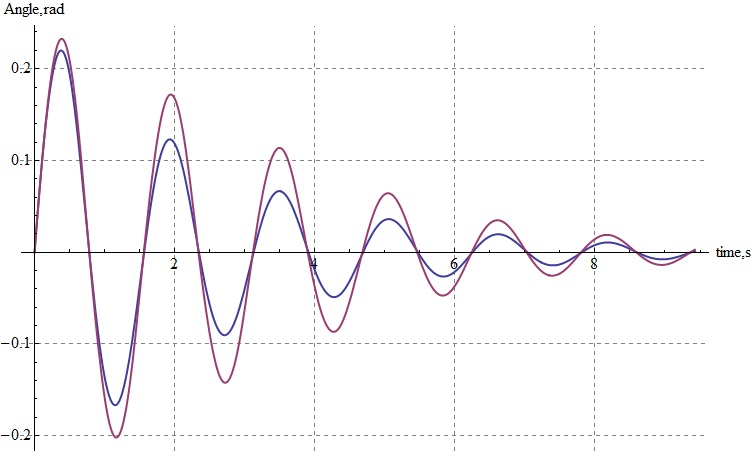}\caption{\label{fig:Solution-beta-gen}Solution $\theta$ of \eqref{eq:damping_equation}
(blue line). For comparison, the violet line is the case $\beta=\text{const}.=\beta_{1}$.
Used parameters: $\beta_{1}=0.0064$ (air), $\beta_{2}=0.3859$ (water),
$\theta_{0}=\pi/40\text{ rad}$, $\Lambda=0.6\text{ m}$, $g=9.8\text{ m/\ensuremath{\text{s}^{2}}}$.}
\end{figure}

We also include the graphs of the angular frequency $\dot{\theta}$
(which shows corner points) and of the angular acceleration $\ddot{\theta}$
(which shows ``jumps'', i.e.~infinite derivatives at singular times,
as we can directly see from \eqref{eq:damping_equation} and \eqref{eq:beta_dependence}).

Note that, to simplify our analysis, we considered a fixed length
$\Lambda$ for the pendulum. However, in the framework of the suggested
model, we could also consider a singularly variable length pendulum
dumped by two media.

Once again, we note that the model can be refined by considering,
instead of the Heaviside's function, any GSF $H\in\gsf(\rti,\rti)$
satisfying for all $x$ far from zero $H(x)=1$ if $x>0$ and $H(x)=0$
if $x<0$; e.g.~this refinement could allow one to consider different
``large'' infinitesimal neighborhoods of the origin in order to
take into account a better modeling near the singularities.

\begin{figure}
\centering{}\includegraphics[scale=0.3]{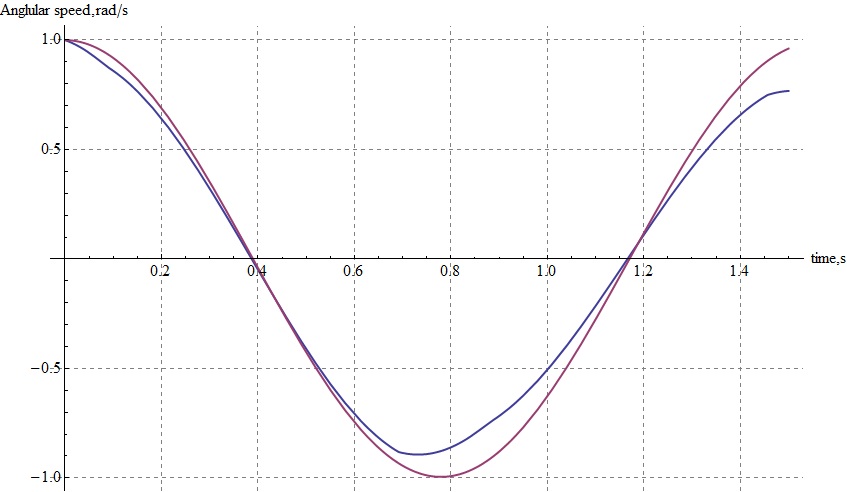}\includegraphics[scale=0.3]{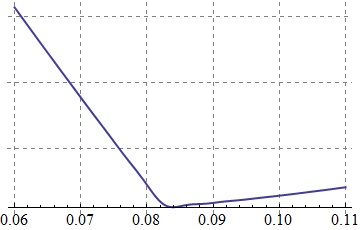}\caption{\label{fig:Difference-in-Derivative-Damping}First derivative $\dot{\theta}$
of the solution of \eqref{eq:damping_equation} (blue line). The case
with $\beta=\text{const}=\beta_{1}$ is also shown for comparison
(violet line). Note the corner points at the singular moments, for
example at $t=0.083\text{ s}$ (scaled in the right figure).}
\end{figure}

\begin{figure}
\begin{centering}
\includegraphics[scale=0.3]{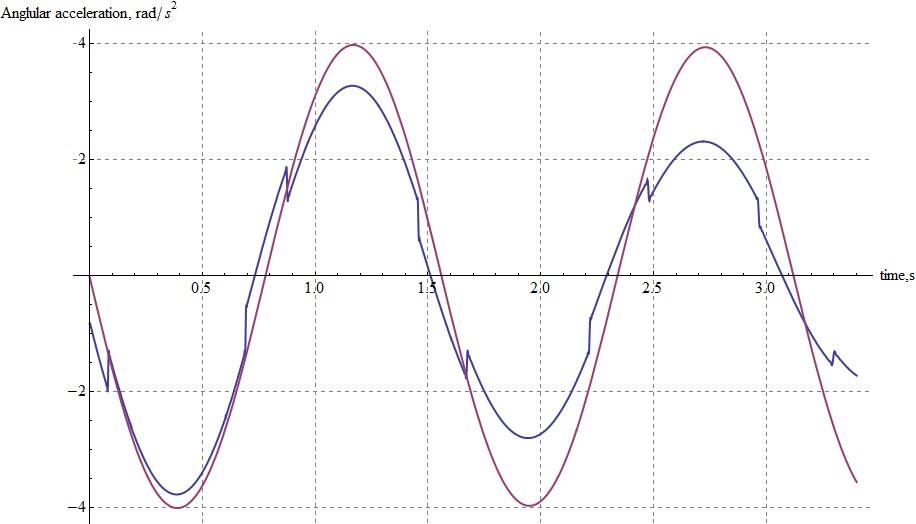}\includegraphics[scale=0.3]{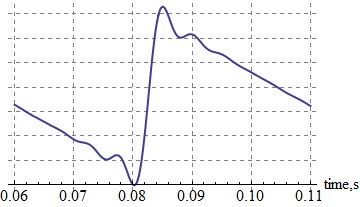}
\par\end{centering}
\caption{Second derivative $\ddot{\theta}$ of the solution of \eqref{eq:damping_equation}
(blue line). The case with $\beta=\text{const}=\beta_{1}$ is also
shown for comparison (violet line). Note the \textquotedblleft jumps\textquotedblright{}
at the singular moments, for example at $t=0.083\text{ s}$ (scaled
in the right figure). The infinitesimal oscillations are caused by
the embedding as GSF of the Heaviside function.}
\end{figure}

\subsection{\label{subsec:Pais=002013Uhlenbeck-oscillator}Pais\textendash Uhlenbeck
oscillator with singular frequencies}

An interesting higher order system with several applications in Physics
is the Pais\textendash Uhlenbeck (PU) oscillator \cite{PU}. The PU
oscillator is usually studied as a toy model of recent higher-derivative
theories like gravity, quantum-mechanical field theories and others
(see for example \cite{PU0,PU1,PU2,PU3}), but it can also describe
simple real word systems such as, for example, electrical circuits
\cite{PU3}. Actually, the PU oscillator is a bi-harmonic oscillator
with two natural frequencies and is described by a fourth-order equation
of motion. An example of PU GSF oscillator would be a PU electric
circuit with singularly variable frequencies. We can think at an (exogenous
or endogenous) force acting on the system and rapidly changing these
frequencies.

The oscillator we consider is given by the following Lagrangian function
\begin{equation}
L(t,q,\dot{q},\ddot{q})=\frac{m}{2}\left[\ddot{q}^{2}-\left(\omega_{1}^{2}(t)+\omega_{2}^{2}(t)\right)\dot{q}^{2}+\omega_{1}^{2}(t)\omega_{2}^{2}(t)q^{2}\right]\quad\forall t\in[t_{1},t_{3}]\label{PU1}
\end{equation}
where the natural frequencies 
\begin{equation}
\begin{split}\omega_{1}(t) & =\omega'_{1}+H(t_{\text{s}}-t)\left(\hat{\omega}_{1}-\omega'_{1}\right),\\
\omega_{2}(t) & =\omega'_{2}+H(t_{\text{s}}-t)\left(\hat{\omega}_{2}-\omega'_{2}\right)
\end{split}
\label{PU1b}
\end{equation}
changes from $\omega'_{i}$ to $\hat{\omega}_{i}$ ($i=1,2$) at $t=t_{\text{s}}\in(t_{1},t_{2})$,
and $m$, $\omega'_{1}$, $\omega'_{2}$, $\hat{\omega}_{1}$, $\hat{\omega}_{2}\in\rti_{>0}$
are constants. The resulting equation of motion for the PU oscillator
can be obtained from the Euler-Lagrange equation Cor.~\ref{cor:16}.
We have: 
\begin{equation}
q^{(4)}+\left(\omega_{1}^{2}+\omega_{2}^{2}\right)\ddot{q}+2\left(\omega_{1}\dot{\omega}_{1}+\omega_{2}\dot{\omega}_{2}\right)\dot{q}+\omega_{1}^{2}\omega_{2}^{2}q=0,\label{PU2}
\end{equation}
where 
\begin{equation}
\dot{\omega}_{i}(t)=-\delta(t_{\text{s}}-t)\left(\hat{\omega}_{i}-\omega'_{i}\right)\qquad(i=1,2).\label{PU2b}
\end{equation}
Once again, note the nonlinear operations in \eqref{PU2} on the Sobolev-Schwartz
distributions $\omega_{i}$.

We now consider an open set of the form $T':=(t_{1},t_{\text{s}}-\delta)\cup(t_{\text{s}}+\delta,t_{2})$
and assume that
\[
\exists b\in\R_{>0}\,\forall t\in T'\,\forall s\in(-\diff{\rho}^{b},\diff{\rho}^{b}):\ t+s\text{ is far from }t_{\text{s}}
\]
(in particular any $t\in T'$ is far from $t_{\text{s}}$). For example,
it suffices to take $\delta\ge-\frac{1}{h\log\diff{\rho}}$, $h\in\R_{>1}$,
so that if $t\in(t_{1},t_{\text{s}}-\delta)$, then $t_{\text{s}}-t-s\ge\delta-s\ge-\frac{1}{h\log\diff{\rho}}-\diff{\rho}^{b}\ge-\frac{1}{(h-1)\log\diff{\rho}}\ge\diff{\rho}^{a}$
for any $a$, $b\in\R_{>0}$; similarly we can proceed if $t\in(t_{\text{s}}+\delta,t_{2})$.
Then $\omega_{i}(t+s)=\omega_{i}(t)$ for all $t\in T'$ and all $s\in(-\diff{\rho}^{b},\diff{\rho}^{b})=:P$,
and hence the Lagrangian \eqref{PU1} is invariant for the translation
$\tau(t,s):=t+s$ defined only for $t\in T'$ and with parameter $s\in P$.
From the Noether's Thm.~\ref{thm:tn}, we hence obtain the following
conserved quantity on the intervals $(t_{1},t_{\text{s}}-\delta)$
and $(t_{\text{s}}+\delta,t_{2})$: 
\begin{equation}
E=\frac{m}{2}\left[2\dot{q}q^{(3)}-\ddot{q}^{2}+\left(\omega_{1}^{2}+\omega_{2}^{2}\right)\dot{q}^{2}+\omega_{1}^{2}\omega_{2}^{2}q^{2}\right]\label{PU3}
\end{equation}
that coincides with the total energy for the PU oscillator \cite{PU2,PU}.
As it is shown in Fig.~\ref{fig:energy}, the total energy \eqref{PU3}
is not preserved on the entire $T'=(t_{1},t_{\text{s}}-\delta)\cup(t_{\text{s}}+\delta,t_{2})$
and we have a decreasing of energy switching from $\omega'_{i}$ to
$\hat{\omega}_{i}>\omega'_{i}$.

For the numerical solution of the PU oscillator's equation of motion
\eqref{PU2}, we used Mathematica Solver NDSolve (see \cite{NDSolve})
and Mathematica implementation of $H$ and $\delta$ (see \cite{Hdelta}),
and considered the initial conditions:

\begin{equation}
\begin{cases}
q(0)=1\text{ rad},\\
\dot{q}(0)=2\text{ rad/s},\\
\ddot{q}(0)=0\text{ rad/s}^{2},\\
q^{(3)}(0)=1\text{ rad/s}^{3}.
\end{cases}\label{eq:PU-IC}
\end{equation}
See Fig.~\ref{fig:PU} and Fig.~\ref{fig:PUder34}, where we can
note the corner point for the third derivative $q^{(3)}$ and the
jump for the fourth derivative $q^{(4)}$ at $t=t_{\text{s}}$. The
analytical solution of \eqref{PU2} for constant $\omega_{i}$ is
given by $q(t;\omega_{1},\omega_{2}):=A_{1}\sin(\omega_{1}t+\phi_{1})+A_{2}\sin(\omega_{2}t+\phi_{2})$,
where $A_{i}$ and $\phi_{i}$ are integration constants that for
our initial conditions \eqref{eq:PU-IC} are given by $A_{1}=\pm6.02827$,
$A_{2}=\pm1.81181$, $\phi_{1}=-2.88742$ or $\phi_{1}=0.254175$,
$\phi_{2}=0.288674$ or $\phi_{2}=-2.85292=0.288674-\pi$. Exactly
as we did in Sec.~\ref{subsec:SingVarLenPendulum}, we can hence
prove that $q(t)=q(t;\omega'_{1},\omega'_{2})$ for $t\in(t_{1},t_{\text{s}}-\delta)$
and $q(t)=q(t;\hat{\omega}_{1},\hat{\omega}_{2})$ for $t\in(t_{\text{s}}+\delta,t_{2})$.
The non-singular solution $q(-;\omega'_{1},\omega'_{2})$ is also
presented as a violet line in Fig.~\ref{fig:PU}.

\begin{figure}
\centering{}%
\begin{tabular}{|c|c|}
\hline 
\includegraphics[scale=0.25]{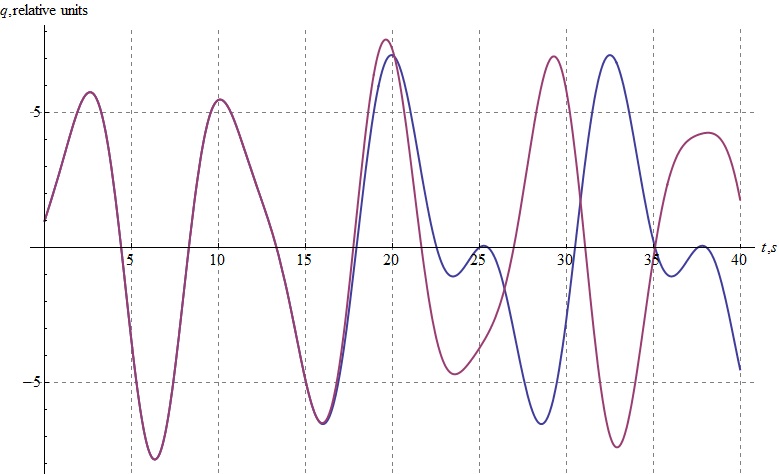} & \includegraphics[scale=0.25]{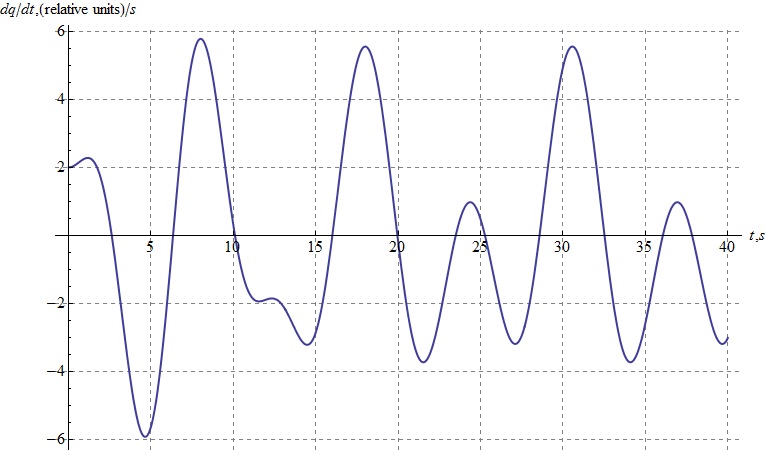}\tabularnewline
\hline 
\hline 
\multicolumn{2}{|c|}{\includegraphics[scale=0.25]{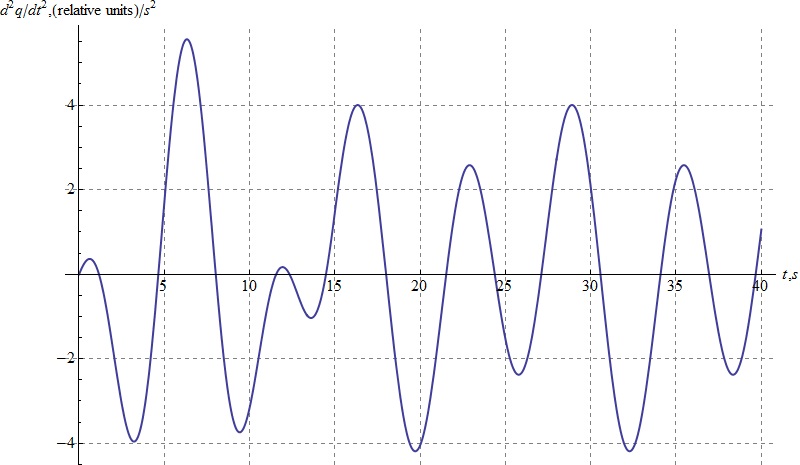}}\tabularnewline
\hline 
\end{tabular}\caption{\label{fig:PU}Numerical solution of the PU oscillator and its derivatives
$\dot{q}$, $\ddot{q}$. Switching time is at $t_{\text{s}}=15\text{ s}$,
$m=1\text{ kg}$, $\omega'_{1}=0.5\text{ rad/s}$, $\hat{\omega}_{1}=0.7\text{ rad/s}$,
$\omega'_{2}=1\text{ rad/s}$, $\hat{\omega}_{2}=1.2\text{ rad/s}$.}
\end{figure}

\begin{figure}
\centering{}%
\begin{tabular}{|c|c|}
\hline 
\includegraphics[scale=0.25]{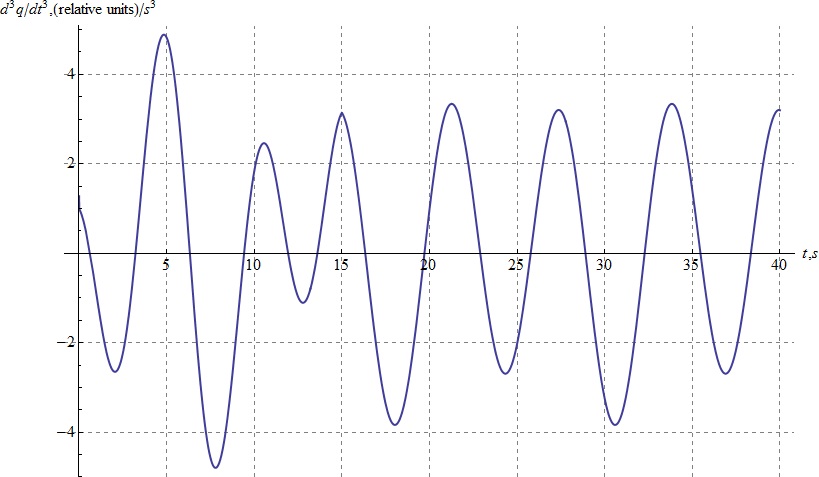} & \includegraphics[scale=0.25]{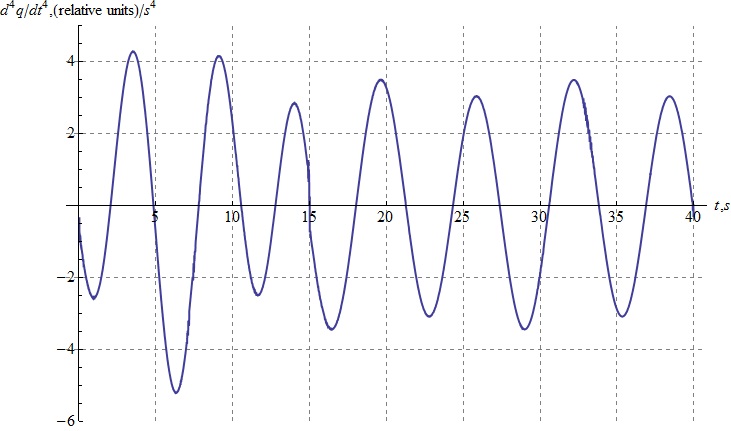}\tabularnewline
\hline 
\end{tabular}\caption{\label{fig:PUder34}Third and fourth derivatives of the numerical
solution of the PU oscillator.}
\end{figure}

\begin{figure}
\centering{}\includegraphics[scale=0.6]{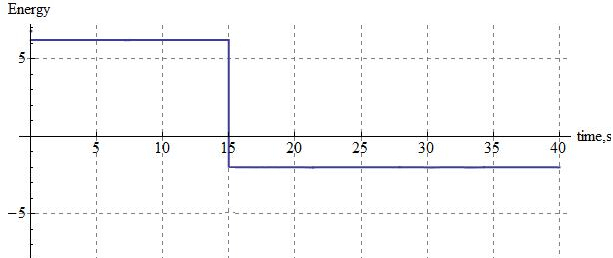}\caption{\label{fig:energy}Energy of the PU oscillator.}
\end{figure}

\section{Conclusions}

The present paper concretely shows the possibility to develop calculus
of variations and optimal control for a class of generalized functions
extending Sobolev-Schwartz distributions. On the one hand, we have
clearly only presented basic results, but, on the other hand, this
paves the way for a lot of further results in pure and applied mathematics,
theoretical physics and engineering.

We have also shown that generalized smooth function theory has features
that closely resemble classical smooth functions. In contrast, some
differences have to be carefully considered, such as the fact that
the new ring of scalars $\rti$ is not a field, it is not totally
ordered, it is not ordered complete, so that its theory of supremum
and infimum is more involved (see \cite{MTAG}), and its intervals
are not connected in the sharp topology because the set of all the
infinitesimals is a clopen set. Almost all these properties are necessarily
shared by other non-Archimedean rings because their opposite are incompatible
with the existence of infinitesimal numbers.

Conversely, the ring of Robinson-Colombeau generalized numbers $\rti$
is a framework where the use of infinitesimal and infinite quantities
is available, it is defined using elementary mathematics, and with
a strong connection with infinitesimal and infinite functions of classical
analysis (see Def.~\ref{def:nonArchNumbs}). As we have shown in
Sec.~\ref{subsec:SingVarLenPendulum} with the singularly variable
length pendulum, this leads to a better understanding and opens the
possibility to define new models of physical systems.

\end{document}